\documentclass[10pt]{amsart}

\usepackage{graphicx}
\usepackage{amsmath,amssymb,amsfonts}
\usepackage{amsthm}
\usepackage{appendix}
\usepackage{xcolor}
\usepackage{physics}
\usepackage{enumerate}
\usepackage{bbm}
\usepackage[a4paper,margin=3cm]{geometry}
\usepackage[colorlinks=true,citecolor=magenta,linkcolor=blue]{hyperref}
\usepackage{fancyhdr}
\numberwithin{equation}{section}
\theoremstyle{plain}
\newtheorem{theorem}{Theorem}[section]

\newtheorem{proposition}[theorem]{Proposition}
\newtheorem{lemma}[theorem]{Lemma}

\newtheorem{corollary}[theorem]{Corollary}

\theoremstyle{definition}
\newtheorem{defn}[theorem]{Definition}

\theoremstyle{remark}
\newtheorem{remark}[theorem]{Remark}

\DeclareMathOperator{\diam}{\mathrm{diam}}
\DeclareMathOperator{\supp}{\mathrm{supp}}
\DeclareMathOperator{\vari}{\mathrm{var}}

\newcommand{\N}{\mathbb{N}}
\newcommand{\R}{\mathbb{R}}
\newcommand{\Z}{\mathbb{Z}}

\newcommand{\opera}{\mathcal{L}}
\newcommand{\ind}{\mathbbm{1}}
\newcommand{\prob}{\mathbb{P}}
\newcommand{\probmeas}{\mathcal{P}}
\newcommand{\F}{\mathcal{F}}
\newcommand{\B}{\mathcal{B}}
\newcommand{\parti}{\mathcal{Z}}
\newcommand{\supnorm}[1]{\Vert #1 \Vert_{\infty}}
\newcommand{\bvnorm}[1]{\Vert #1 \Vert_{BV}}

\newcommand{\normopera}{\tilde{\opera}}
\newcommand{\fullnormopera}{\hat{\opera}}
\newcommand{\pres}[1]{\mathcal{E}P(#1)}
\newcommand{\M}{\mathcal{M}}

\begin{document}
\title{Invariant and conditionally invariant measures for random open interval maps with countably many branches}
\author{Cunyi Nan}
\date{\today}
\address{Graduate School of Mathematics, Nagoya University,
Furocho, Chikusa-ku, Nagoya, 464-8602, Japan}
\email{nan.cunyi.x5@s.mail.nagoya-u.ac.jp}
\subjclass[2020]{Primary 37H05; Secondary 37D35, 37E05}
\thanks{{\it Keywords and phrases}: thermodynamic formalism; random dynamical systems; open dynamical systems; escape rate; conditionally invariant probability measure}
\begin{abstract}
    In this paper, building on previous work, we extend the thermodynamic formalism for random open dynamical systems generated by piecewise monotone interval maps with countably many branches. Under summable and contracting assumptions on the potential, we establish the Ruelle-Perron-Frobenius type theorem for the associated random open operator and prove exponential decay of correlations. In addition, we investigate the escape rate for the hole and conditionally invariant measure for the open system.
\end{abstract}
\maketitle
\tableofcontents
\section{Introduction}
The study of open dynamical systems starts by introducing a \emph{hole} into a designated closed system; the state points are allowed to escape through this specified region under iterates of maps, which models loss of mass or energy in physics. G. Pianigiani and J. A. Yorke started the study of expanding maps with holes in \cite{pianigiani}; they showed that under suitable conditions there exists a \emph{conditionally invariant probability measure} for this system, which is regarded as the initial work on open systems. The subsequent studies of hyperbolic systems with holes are abundant, for instance, \cite{collet2} on the limit Cantor sets in expanding systems; \cite{collet1} on subshifts of finite type; \cite{chernov1,chernov2} on Anosov diffeomorphisms; and \cite{liverani4} on Lasota-Yorke maps. We refer to studies of one-dimensional systems without Markov partitions in \cite{chernov3,demers2}. C. Bose \cite{bose} revisited Lasota-Yorke maps with holes by Ulam's method. Regarding symbolic dynamics, constructing cylindrical holes in topological Markov shifts with countable alphabet is carried out in papers \cite{demers3,tanaka} and in the book \cite{poll} for graph directed Markov systems. The conditionally invariant probability measures describe the escape of mass or particles from the system when conditioned on holes, while the \emph{escape rate} describes the speed at which orbits of points first enter an area of the phase space by quantifying the size of survivor sets; see \cite{demers} for relations with pressures and Lyapunov exponents.

Random dynamical systems are characterized by a base system $(\Omega,\F,\prob,\theta)$ where $(\Omega,\F,\prob)$ is a complete probability space and $\theta:\Omega\to\Omega$ is an invertible ergodic $\prob$-preserving map. The map between fiber spaces depends on the random state $\omega\in\Omega$; it replaces the iterates of a fixed map in a deterministic dynamical system with random composition. The potential function also becomes random, leading to a family of random transfer operators acting on fiber spaces. Each operator describes the evolution of observables along one realization of random dynamics. For a systematic description, we suggest \cite{gundlach,kifer}.

We say a map $T:I\to I$ on a compact interval $I$ is a \emph{piecewise monotone interval map} with finitely or countably many branches if there exists a finite or countable partition of $I$ into intervals such that the map is continuous, strictly monotone, and maps intervals to intervals for every element in the partition. The invariant measures and decomposition of transfer operators for this system are carried out in \cite{hofbauer2,baladi}. Using an effective method of \emph{Birkhoff cone technique} from \cite{birkhoff1}, C. Liverani et al. \cite{liverani3} obtained density functions, invariant measures, and decay of correlations for a covering system under the assumption of contracting potential. The key step is to define the functional and the related Birkhoff cones. One common way to construct the desired density functions is to show that the Birkhoff cone is strongly contracted after sufficiently many iterates of operators such that the Hilbert metric of any two elements in the cone tends to zero exponentially fast. The previously defined functional is identified with the invariant measure by Riesz representation theorem. However, such procedures are not directly applicable to random dynamical systems. J. Buzzi's two papers \cite{buzzi2,buzzi1} pointed out the cones on some fibers may not enjoy strict contraction under iterates of operators; such random states are called bad fibers. To overcome this difficulty, the bad fibers with their orbits are grouped together in the \emph{blocks} to control the upper bound of coefficient in the Lasota-Yorke inequality. Once the total length of bad blocks is small enough, the contraction and invariance of cones follow.

In the style of J. Buzzi, J. Atnip et al. \cite{atnip1} considered random dynamical systems of piecewise monotone interval maps with countably many branches, combined with a random covering condition. It is natural to consider introducing holes into these random systems; the fiberwise holes are determined by the canonical projection of the hole subset onto the fiber spaces. J. Atnip et al. \cite{atnip2} dealt with non-transitive random dynamical systems with holes under a strongly contracting potential condition. In \cite{atnip3}, the theory of thermodynamic formalism for piecewise monotone random interval maps with finite branches and holes is developed. 

We state what is different and new in this paper compared with the finite-branch case. First, the summability hypothesis on the potential is essential to control the tail distributions from countably many branches and to ensure that the sum of the weight function is convergent, which is not mentioned in \cite{atnip3}. Under this condition, the associated random Ruelle-Perron-Frobenius operator is well defined on the Banach space of bounded variation functions. We define the random functional related to the random open operator and identify it with a Borel probability measure on the fiber space in the following sections. Second, specific partitions of the interval are chosen to control the variation of the weight function and its iterates, in order to ensure the coefficients in the Lasota-Yorke type inequality do not blow up to infinity. Due to the existence of holes, we divide the coarsest finite partition into good and bad subcollections according to the values of the random functional on the indicator function of each element. Third, we complete the second modified Lasota-Yorke inequality in the sense of J. Buzzi; see Proposition \ref{prop6.13}. The second term on the right-hand side related to the defined random functional is first provided in the finite-branch case \cite{atnip3} but without proof; we complete the details in the countable-branch case. Some auxiliary lemmas previously stated in the finite-branch setting are adapted to the countable framework and proved in full detail. Finally, although it is subtle, we relax the definitions of good fibers in \cite{buzzi2,atnip3} to a more flexible pair of parameters, which leads to changes in the Birkhoff cone parameters and the corresponding propositions. As a consequence, we obtain a Ruelle-Perron-Frobenius type theorem for the random open system, including the existence and uniqueness of a strictly positive random density function and the construction of invariant measures. At the end of the paper, we calculate the escape rate and a conditionally invariant measure characterizing the random open system.
\subsection{Statement of the main result}
We record the main results of the paper in this subsection. We begin with the setting of piecewise monotone interval maps. Let $I\subseteq\R$ be a compact interval with its Borel $\sigma$-algebra $\B$. Let $(\Omega,\F,\prob)$ be a complete probability space, and let $\theta:\Omega\to\Omega$ be an invertible ergodic map preserving the probability measure $\prob$; we say the quadruple $(\Omega,\F,\prob,\theta)$ is a \emph{base system}. For each $\omega\in\Omega$, let $T_{\omega}:I\to I$ be a \emph{piecewise monotone random interval map} with countably many branches. We remove the singular set $S_\omega$ of map $T_\omega$ from the whole interval to make it a fiber space $X_\omega$, and then we obtain the family of fiber spaces $\{X_\omega\}_{\omega\in\Omega}$. The global phase space is denoted by $X:=\bigcup_{\omega\in\Omega}\{\omega\}\times X_\omega$, and we study the dynamics of $T_{\omega}|_{X_\omega}:X_{\omega}\to X_{\theta\omega}$ at each state. We define the bounded functions and bounded variation functions on $I$ by $B(I)$ and $BV(I)$, respectively. Given a function $f:\Omega\times I\to\R$, we say $f\in BV_{\Omega}(I)$ if its $\omega$-section $f_{\omega}(\cdot):=f(\omega,\cdot):I\to\R$ has bounded variation for every $\omega\in\Omega$.

Let $H\subseteq X$ be a hole, a measurable subset with respect to the product $\sigma$-algebra $\F\otimes\B$ on the global phase space $\Omega\times I$. Its family of projections onto $I$ is denoted by $\{H_\omega\}_{\omega\in\Omega}$. Let $\varphi_{c}:\Omega\times I\to\R\cup\{-\infty\}$ be a summable contracting potential for the open system, and let $\opera_{\omega}:BV(X_\omega)\to BV(X_{\theta\omega})$ be the associated open operator defined by $\opera_{\omega}f:=\opera_{\omega,c}(f\cdot\ind_{\omega})$ where $\opera_{\omega,c}$ is the closed random Ruelle-Perron-Frobenius operator and $\ind_\omega$ is the indicator function of $J_\omega:=X_{\omega}\setminus H_{\omega}$; see Section \ref{sec2} for more details. For every $n\geq0$, define $K_{\omega,n}:=\{x\in J_{\omega}:T_{\omega}^{i}(x)\in J_{\theta^{i}\omega},0\leq i\leq n\}$ to be the $(\omega,n)$-\emph{survivor set}. The global $(\omega,\infty)$-survivor set is denoted by $K_{\infty}$.

The first main theorem focuses on the density function and the unique invariant measure supported in survivor sets with respect to the family of open operators $\{\opera_{\omega}\}_{\omega\in\Omega}$.
Let $\probmeas_{\Omega}(\Omega\times I)$ denote the set of all random probability measures on $\Omega\times I$; the disintegration of Borel probability measures can thereafter be used to construct the unique random probability measure for the open system. The proof is provided in Section \ref{sec8}.
\begin{theorem}\label{main1}
    (1) There exists a unique random probability measure $\nu\in\probmeas_{\Omega}(\Omega\times I)$ with $\supp(\nu)\subseteq K_{\infty}$ and disintegration $\{\nu_\omega\}_{\omega\in\Omega}$ such that \[\nu_{\theta\omega}(\opera_\omega f)=\lambda_{\omega}\nu_{\omega}(f)\] for all $f\in BV(I)$, where $\lambda_\omega:=\nu_{\theta\omega}(\opera_{\omega}\ind_\omega)$ is a positive number and $\log\lambda_\omega\in L^1(\prob)$ for $\prob$-a.e. $\omega\in\Omega$.

    (2) There exists a unique positive function $q\in BV_{\Omega}(I)$ such that $\nu(q)=1$ and \[\opera_{\omega}q_{\omega}=\lambda_{\omega}q_{\theta\omega}\] for $\prob$-a.e. $\omega\in\Omega$.

    (3) The measure $\mu:=q\nu$ is $T$-invariant and ergodic random probability measure absolutely continuous with respect to $\nu$ and $\supp(\mu)\subseteq K_{\infty}$.
\end{theorem}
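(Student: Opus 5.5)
The plan follows the Buzzi/Atnip et al. Birkhoff-cone strategy, carried out fiberwise and then glued measurably across $\Omega$.

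\textbf{Step 1: the conformal family $\nu_\omega$.} I would build $\nu_\omega$ as a limit of normalized pullbacks. For $f\in BV(I)$ set
\[
\Lambda_{\omega,n}(f):=\frac{\inf_{X_{\theta^n\omega}}\opera_\omega^n f}{\inf_{X_{\theta^n\omega}}\opera_\omega^n\ind_\omega},
\]
or, equivalently, use the random functional introduced in Section \ref{sec2} and its iterates. Using the second modified Lasota--Yorke inequality (Proposition \ref{prop6.13}) together with the summability of $\varphi_c$, I would show that $\Lambda_{\omega,n}$ converges to a positive linear functional $\Lambda_\omega$ on $BV(I)$ with $|\Lambda_\omega(f)|\le\supnorm{f}$. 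The Riesz representation theorem, applied after passing to the compactification of $X_\omega$ that handles the countably many singularities, then identifies $\Lambda_\omega$ with a Borel probability $\nu_\omega$. The identity $\Lambda_{\theta\omega}(\opera_\omega f)=\lambda_\omega\Lambda_\omega(f)$ follows by shifting the index, and it forces $\lambda_\omega=\nu_{\theta\omega}(\opera_\omega\ind_\omega)$. Iterating the conformality relation gives $\nu_\omega(J_\omega^c)=0$ and $\nu_\omega(K_{\omega,n}^c)=0$ for all $n$, hence $\supp\nu\subseteq K_\infty$. Integrability of $\log\lambda_\omega$ would come from
\[
\inf\opera_\omega\ind_\omega\le\lambda_\omega\le\supnorm{\opera_\omega\ind_\omega}
\]
combined with the integrability assumptions on $\log\supnorm{\opera_\omega\ind_\omega}$ and on the covering and lower bounds. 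Measurability in $\omega$ holds because each $\Lambda_{\omega,n}$ is measurable.

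\textbf{Step 2: cone contraction.} This is the core of the proof and the step I expect to be the main obstacle. I would use the random cones $\mathcal C_{\omega,a}=\{f\ge0:\vari(f)\le a\,\nu_\omega(f)\}$, possibly with extra constraints on the good and bad partition elements. Good fibres are those where the Lasota--Yorke coefficients, in the relaxed two-parameter sense of this paper, are small after $N$ steps. On good fibres $\opera_\omega^N$ maps $\mathcal C_{\omega,a}$ into $\mathcal C_{\theta^N\omega,a'}$ with $a'<a$ and finite Hilbert diameter. Bad fibres are grouped into Buzzi blocks. Ergodicity of $\theta$, via Birkhoff's ergodic theorem, ensures the density of bad blocks is small, so along a.e. orbit the iterates $\opera^{n}_{\theta^{-n}\omega}$ contract the projective metric by a factor $\le Ce^{-\kappa n}$.

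\textbf{Step 3: density, uniqueness, and the invariant measure.} The contraction makes
\[
\frac{\opera^n_{\theta^{-n}\omega}\ind}{\nu_\omega(\opera^n_{\theta^{-n}\omega}\ind)}
\]
Cauchy in the Hilbert metric, and hence in sup norm relative to $\nu_\omega$ by the standard comparison lemma. The limit $q_\omega$ is positive, of bounded variation, measurable in $\omega$, satisfies $\nu_\omega(q_\omega)=1$, and obeys $\opera_\omega q_\omega=\lambda_\omega q_{\theta\omega}$. For uniqueness in (2), any other such $q'$ lies in the cone, and contraction forces $q'=q$. For uniqueness in (1), suppose $\nu'$ is another conformal family. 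Then
\[
\nu'_{\omega}(f)=\lim_n \frac{\nu'_{\theta^n\omega}(\opera^n_\omega f)}{\nu'_{\theta^n\omega}(\opera^n_\omega\ind)},
\]
and the cone contraction identifies this limit with $\nu_\omega(f)$, which also pins down the $\lambda$'s. For (3), invariance follows from
\[
\mu_{\theta\omega}(f)=\lambda_\omega^{-1}\nu_{\theta\omega}\bigl(\opera_\omega(q_\omega\, f\circ T_\omega)\bigr)=\mu_\omega(f\circ T_\omega),
\]
using $\opera_\omega(g\cdot f\circ T_\omega)=f\,\opera_\omega g$. Ergodicity would be obtained from exponential decay of correlations, which is itself a consequence of the cone contraction. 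Finally, $\supp\mu\subseteq\supp\nu\subseteq K_\infty$.
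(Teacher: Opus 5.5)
There is a genuine gap in Step 1, and it makes Steps 1 and 2 circular.

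\textbf{Linearity cannot come before contraction.} You claim that the normalized functionals converge to a \emph{linear} functional using only Proposition \ref{prop6.13} and summability, before any cone contraction. Lasota--Yorke bounds cannot give this. Proposition \ref{prop6.13} bounds $\vari(\normopera^{n}_{\theta^{-n}\omega}f)$ by a decaying term plus $C_{\epsilon}(\omega)F_{\omega}(\normopera^{n}_{\theta^{-n}\omega}|f|)$, and the second term does not decay. So nothing forces the ratio $\opera^{n}_{\omega}f/\opera^{n}_{\omega}\ind_{\omega}$ to flatten out to a constant, which is what additivity of the limit requires. That proposition is also itself stated in terms of $F_\omega$.

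In the paper the logic runs the other way:
\begin{enumerate}
\item $F_{\omega}(f)=\lim_{n}\inf_{S(n,n)}\opera^{n}_{\omega}f/\opera^{n}_{\omega}\ind_{\omega}$ exists for free, because this infimum of the ratio is nondecreasing in $n$.
\item A priori $F_\omega$ is only positive, homogeneous, monotone and superadditive. That is exactly enough to make $\Lambda_{\omega,a}=\{f\ge0:\vari(f)\le aF_{\omega}(f)\}$ a convex cone.
\item The good/bad block contraction is proved for these cones.
\item Only then does Proposition \ref{prop6.34} deduce linearity of $F_\omega$ from $\Theta_{\Lambda_{\theta^{n}\omega,+}}(\normopera^{n}_{\omega}f,\normopera^{n}_{\omega}\ind_{\omega})\to0$.
\item The Riesz step (Proposition \ref{prop6.35}) then produces $\nu_\omega$.
\end{enumerate}
Your cones $\{f\ge0:\vari(f)\le a\,\nu_{\omega}(f)\}$ presuppose the output of Step 1, so the argument as written is circular. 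You need to define the cones with the nonlinear functional and postpone the Riesz step until after the contraction.

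\textbf{Your normalization does not work in the open system.} Your choice $\inf_{X_{\theta^{n}\omega}}\opera^{n}_{\omega}f/\inf_{X_{\theta^{n}\omega}}\opera^{n}_{\omega}\ind_{\omega}$ is not equivalent to $F_\omega$. A ratio of infima has no monotonicity in $n$. Moreover, $\opera^{n}_{\omega}\ind_{\omega}$ vanishes off its support $S(n,n)$, so the denominator can be $0$; this is why the paper takes the infimum of the ratio over $S(n,n)$ only. For the same reason your lower bound $\inf\opera_{\omega}\ind_{\omega}\le\lambda_{\omega}$ is typically vacuous here. The paper instead identifies $\lambda_\omega=\rho_\omega=F_{\theta\omega}(\opera_{\omega}\ind_{\omega})$ (Propositions \ref{prop6.31} and \ref{prop6.34}). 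Likewise $q_\omega$ is only bounded away from $0$ on $S(1,\infty)$ (Proposition \ref{prop6.33}), not everywhere.

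Once the functional is set up this way, the rest of your Step 3 follows the paper's route. This covers invariance via $\opera_{\omega}(g\cdot f\circ T_{\omega})=f\,\opera_{\omega}g$, ergodicity from decay of correlations, and uniqueness from the asymptotics of $\opera^{n}_{\omega}f$.
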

We have statistical descriptions of the open system; the second main theorem below reveals the limit of the normalized open operator, as well as an exponential rate of decay of correlations for observables in this random open system. The proof is in the end part of Section \ref{sec7}.
\begin{theorem}\label{main2}
    For all $f\in BV_{\Omega}(I)$ with $\supnorm{f_\omega},\log\vari(f_\omega),\log\inf|f_\omega|\in L^1(\prob)$ for every $\omega\in\Omega$, there exists a measurable finite function $\hat{C}_{f}:\Omega\to(0,\infty)$ and $\iota\in(0,1)$ such that for $\prob$-a.e. $\omega\in\Omega$ and all $n\geq1$, we have
    \[\supnorm{(\lambda_{\omega}^{n})^{-1}\opera_{\omega}^{n}f_{\omega}-\nu_{\omega}(f_{\omega})q_{\theta^{n}\omega}}\leq \hat{C}_{f}(\omega)\bvnorm{f_\omega}\iota^n.\]

    There exists a measurable finite function $D_{f}:\Omega\to(0,\infty)$ and $\kappa\in(\iota,1)$ such that for $\prob$-a.e. $\omega\in\Omega$, for all $n\geq1$, all $f\in BV_{\Omega}(I)$ and all $g\in L^1(\mu)$, we have
    \[|\mu_{\omega}\left(f_{\omega}(g_{\theta^{n}\omega}\circ T_{\omega}^{n})\right)-\mu_{\theta^{n}\omega}(g_{\theta^{n+p}\omega})\mu_{\omega}(f_{\omega})|\leq D_f(\omega)\bvnorm{f_{\omega}}\Vert g_{\theta^{n}\omega}\Vert_{L^1(\mu_{\theta^{n}\omega})}\kappa^n.\]
\end{theorem}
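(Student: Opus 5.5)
The proof of Theorem \ref{main2} will run through the random Birkhoff cone machinery built in the later sections, in the style of Buzzi. We work with the cones
\[
\mathcal{C}_{\omega,a}:=\{f\in BV(I): f\geq 0,\ \vari(f)\leq a\,\nu_\omega(f)\}
\]
and their Hilbert metrics $\Theta_\omega$. The second modified Lasota--Yorke inequality (Proposition \ref{prop6.13}) will be combined with a decomposition of the $\theta$-orbit into good and bad blocks. This should give the following: for $\prob$-a.e. $\omega$ there is a block length $R(\omega)$ such that $\opera_\omega^{R}$ maps $\mathcal{C}_{\omega,a}$ into $\mathcal{C}_{\theta^R\omega,a}$ with finite $\Theta_{\theta^R\omega}$-diameter $\Delta<\infty$. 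Birkhoff's theorem then gives contraction by the factor $\tanh(\Delta/4)<1$ on each block. Ergodicity of $\theta$ ensures that the fraction of bad blocks is small along a.e. orbit. Hence, after $n$ steps, the number of contracting blocks is at least $cn$ for all $n\geq N(\omega)$. This yields $\Theta_{\theta^n\omega}(\opera^n_\omega f,\opera^n_\omega g)\leq C(\omega)\iota_0^{n}$ for $f,g$ in the cone. The tempered random constant $C(\omega)$ will absorb the finitely many initial bad steps.

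For the first estimate, we split $f_\omega=f^+-f^-$ with $f^\pm:=f_\omega^\pm+ B\bvnorm{f_\omega}$. The constant $B$ is chosen so that both $f^\pm$ lie in $\mathcal{C}_{\omega,a}$; this is possible because adding a constant increases $\nu_\omega$ but not the variation. Since $\lambda^n_\omega q_{\theta^n\omega}=\opera^n_\omega q_\omega$ and $\nu_{\theta^n\omega}(\opera^n_\omega h)=\lambda^n_\omega\nu_\omega(h)$, we have
\[
\Theta_{\theta^n\omega}\bigl((\lambda^n_\omega)^{-1}\opera^n_\omega f^\pm,\ \nu_\omega(f^\pm)q_{\theta^n\omega}\bigr)\leq C(\omega)\iota_0^n .
\]
The standard comparison of Liverani converts the Hilbert metric into a sup-norm bound. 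It gives $\supnorm{h-g}\leq (e^{\Theta}-1)\nu(g)\supnorm{q}$-type estimates for cone elements with the same $\nu$-mass, and the normalization $\nu_{\theta^n\omega}$ fixes the masses. Summing the $\pm$ contributions gives the first estimate with $\hat C_f(\omega)$ depending on $C(\omega)$, $B$ and $\supnorm{q_{\theta^n\omega}}$. The integrability hypotheses $\supnorm{f_\omega},\log\vari(f_\omega),\log\inf|f_\omega|\in L^1(\prob)$ are used to make the constants tempered. Tempered constants can be swallowed by slightly increasing $\iota_0$ to $\iota$, via Birkhoff's ergodic theorem applied to $\log$ of the random constants. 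The bound on $\supnorm{q_{\theta^n\omega}}$ also needs to be tempered, which follows from $q\in\mathcal{C}_{a}$ and $\nu(q)=1$, giving $\supnorm{q}\leq 1+a$.

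For decay of correlations, we write $\mu_\omega=q_\omega\nu_\omega$ and use duality:
\[
\mu_\omega\bigl(f_\omega\,(g\circ T^n_\omega)\bigr)=\nu_{\theta^n\omega}\bigl(g\,(\lambda^n_\omega)^{-1}\opera^n_\omega(f_\omega q_\omega)\bigr).
\]
Here $g$ is the fiber function $g_{\theta^n\omega}$; the index $\theta^{n+p}$ in the statement we read as $\theta^{n}$. Subtracting $\mu_\omega(f_\omega)\mu_{\theta^n\omega}(g)=\nu_{\theta^n\omega}\bigl(g\,\nu_\omega(f_\omega q_\omega)q_{\theta^n\omega}\bigr)$ bounds the difference by
\[
\nu_{\theta^n\omega}(|g|)\,\supnorm{(\lambda^n_\omega)^{-1}\opera^n_\omega(f_\omega q_\omega)-\nu_\omega(f_\omega q_\omega)q_{\theta^n\omega}}.
\]
We apply the first part to $f_\omega q_\omega$, using $\bvnorm{fq}\leq 2\bvnorm{f}\bvnorm{q}$. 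It then remains to compare $\nu_{\theta^n\omega}(|g|)$ with $\Vert g\Vert_{L^1(\mu_{\theta^n\omega})}$, which requires a lower bound $\inf_{J} q_{\theta^n\omega}\geq c(\theta^n\omega)>0$ with $c$ tempered. This lower bound comes from the positivity part of Theorem \ref{main1} together with the cone-diameter estimate. Absorbing $1/c(\theta^n\omega)$ by a tempered-growth argument forces the larger rate $\kappa\in(\iota,1)$.

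The main obstacle is the block argument establishing the uniform-rate contraction with tempered $C(\omega)$ in the open setting. Mass can escape through holes, so the positivity of $\nu_\omega$ on the good partition elements must be controlled along bad blocks. I would first try to bound the number of bad steps using the ergodic theorem applied to $\ind_{\{\text{bad}\}}$ and to $\log$ of the Lasota--Yorke coefficients.
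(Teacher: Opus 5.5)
For the first estimate your route is essentially the paper's: the block contraction of Proposition \ref{prop6.29}, followed by Lemma \ref{lem4.3} with $\rho=\nu$. Shifting by the constant $B\bvnorm{f_\omega}$ is in fact a sturdier reduction than the paper's normalisation $f^\pm/\nu(f^\pm)$, because it never divides by $\nu_\omega(f^\pm)$, which can vanish. Two things still need repair.
\begin{itemize}
\item $q_{\theta^n\omega}$ is only known to lie in a cone of fixed aperture at the ends of good/bad blocks, not at every $n$. So $\supnorm{q_{\theta^n\omega}}\leq 1+a$ for all $n$ is not justified. Use instead the temperedness of $\bvnorm{q_{\theta^n\omega}}$ (Lemma \ref{lem6.39}).
\item For the same reason, the final Hilbert-metric comparison has to be made in $\Lambda_{\theta^n\omega,+}$, as the paper does, not in $\mathcal{C}_{\theta^n\omega,a}$.
\end{itemize}

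The genuine gap is in the last step of the correlation estimate. You bound the error by $\nu_{\theta^n\omega}(|g|)\supnorm{h_n}$, where $h_n:=(\lambda_\omega^n)^{-1}\opera_\omega^n(f_\omega q_\omega)-\nu_\omega(f_\omega q_\omega)q_{\theta^n\omega}$. You then want $\nu_{\theta^n\omega}(|g|)\lesssim\Vert g\Vert_{L^1(\mu_{\theta^n\omega})}$ from a lower bound $\inf_J q_{\theta^n\omega}\geq c>0$. In the open system no such bound exists.
\begin{itemize}
\item Since $q_\omega=\lambda_{\theta^{-1}\omega}^{-1}\opera_{\theta^{-1}\omega}q_{\theta^{-1}\omega}$, the density vanishes off $S_{\omega,1}=T_{\theta^{-1}\omega}(J_{\theta^{-1}\omega})$. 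This is why Proposition \ref{prop6.33} only gives positivity on $S(1,\infty)$.
\item By contrast, $\nu_\omega$ is built from the forward dynamics alone and can charge $J_\omega\setminus S_{\omega,1}$.
\item Hence $\mu_\omega\ll\nu_\omega$ but not conversely. For $g\in L^1(\mu)$ the quantity $\nu_{\theta^n\omega}(|g|)$ need not even be finite: take $g$ large on $\{q_{\theta^n\omega}=0\}$.
\end{itemize}
The paper avoids this by dualising with respect to $\mu$. It writes the difference as $\mu_{\theta^n\omega}\bigl(g\,\fullnormopera^n_\omega(f_\omega-\mu_\omega(f_\omega))\bigr)$ and applies Corollary \ref{coro6.42}, so only the set $\{q_{\theta^n\omega}>0\}$ ever enters.

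Your $\nu$-duality can be rescued in two ways.
\begin{itemize}
\item Since $|h_n|\leq 2\supnorm{f_\omega}q_{\theta^n\omega}$, the integrand $|g||h_n|$ lives on $\{q_{\theta^n\omega}>0\}$. You still need a lower bound on $q$, but only on that set.
\item Better: take $F\geq 0$ with $\nu_\omega(F)>0$ and $\Theta:=\Theta_{\Lambda_{\theta^n\omega,+}}(\normopera^n_\omega F,\normopera^n_\omega q_\omega)$. Equal $\nu$-masses and the structure of the positive cone give the pointwise bound $|\normopera^n_\omega F-\nu_\omega(F)q_{\theta^n\omega}|\leq(e^{\Theta}-1)\nu_\omega(F)\,q_{\theta^n\omega}$. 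Apply it to $F=f^\pm q_\omega$, where $\nu_\omega(F)\geq B\bvnorm{f_\omega}>0$ thanks to your shift. This yields the bound in terms of $\Vert g\Vert_{L^1(\mu_{\theta^n\omega})}$ directly, with no lower bound on $q$ at all.
\end{itemize}
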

In the third main theorem, we investigate escape rate and random conditionally invariant probability measure for random open systems. Let $\varphi:=\varphi_{c}|_{J}$ be the restriction of summable contracting potential $\varphi_c$ to $J:=\bigcup_{\omega\in\Omega}\{\omega\}\times J_{\omega}$ and let $\pres{\cdot}$ denote the expected pressure of potential; see Section \ref{sec9.1}. Recall $\nu_c$ is the random probability measure for the closed system, we define the \emph{escape rate} for the hole $H$ by \[R_{\nu_{\omega,c}}(H):=-\lim_{n\to\infty}\frac{1}{n}\log\nu_{\omega,c}(K_{\omega,n})\] in terms of measure of survivor set if the supremum limit and the infimum limit coincide. We calculate it by differences of two expected pressures. We can find the unique random conditionally invariant probability measure that is absolutely continuous with respect to $\nu_c$. We provide the proof in Section \ref{sec9}.
\begin{theorem}\label{main3}
    Given a hole $H\subseteq X$, for $\prob$-a.e. $\omega\in\Omega$ we have \[R_{\nu_{\omega,c}}(H)=\pres{\varphi_c}-\pres{\varphi}.\]

    The random probability measure $\tau:=(\ind_{J}q)\nu_c$ is the unique random conditionally invariant probability measure absolutely continuous with respect to $\nu_c$ and $\supp(\tau)\subseteq J$. 
\end{theorem}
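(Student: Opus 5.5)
The plan is to express $\nu_{\omega,c}(K_{\omega,n})$ through iterates of the open operator and then use the convergence in Theorem \ref{main2} together with the conformality relations from Theorem \ref{main1} and its closed analogue.

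\textbf{Step 1: write the survivor mass through the open operator.} Let $\lambda_{\omega,c}$ denote the closed eigenvalues, so that $\nu_{\theta\omega,c}(\opera_{\omega,c}f)=\lambda_{\omega,c}\nu_{\omega,c}(f)$. Since $\opera_\omega f=\opera_{\omega,c}(f\ind_\omega)$, induction gives
\[
\opera_\omega^{n}f=\opera_{\omega,c}^{n}\bigl(f\,\ind_{K_{\omega,n-1}}\bigr).
\]
Hence
\[
\nu_{\omega,c}(K_{\omega,n-1})=(\lambda^n_{\omega,c})^{-1}\,\nu_{\theta^n\omega,c}\bigl(\opera_\omega^n\ind_\omega\bigr).
\]

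\textbf{Step 2: insert the asymptotics of Theorem \ref{main2}.} Applying Theorem \ref{main2} with $f=\ind_\omega$ (whose variation is controlled by the partition hypotheses) gives
\[
\opera_\omega^n\ind_\omega=\lambda_\omega^n\bigl(\nu_\omega(\ind_\omega)q_{\theta^n\omega}+O(\hat C(\omega)\iota^n)\bigr),
\]
and $\nu_\omega(\ind_\omega)=1$ because $\supp\nu\subseteq K_\infty$. Write
\[
\nu_{\omega,c}(K_{\omega,n-1})=\frac{\lambda_\omega^n}{\lambda_{\omega,c}^n}\Bigl(\nu_{\theta^n\omega,c}(q_{\theta^n\omega})+O(\hat C(\omega)\iota^n)\Bigr).
\]
Taking $\frac1n\log$ and applying the ergodic theorem to $\log\lambda_\omega,\log\lambda_{\omega,c}\in L^1(\prob)$ yields
\[
-\lim_{n\to\infty}\frac1n\log\nu_{\omega,c}(K_{\omega,n})=\int\log\lambda_{\omega,c}\,d\prob-\int\log\lambda_\omega\,d\prob .
\]
By the definition of expected pressure in Section \ref{sec9.1}, the two integrals are $\pres{\varphi_c}$ and $\pres{\varphi}$, which gives the escape rate formula.

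\textbf{Step 3: the main obstacle.} The difficulty is showing that the bracket term is not exponentially small, i.e.
\[
\frac1n\log\nu_{\theta^n\omega,c}(q_{\theta^n\omega})\to0 .
\]
I would first try the following. The function $q$ is bounded below on $J$ by a tempered function coming from the cone construction, $\log\inf q_\omega\in L^1$, and $\nu_{\omega,c}(J_\omega)>0$ is tempered. Then $\log\nu_{\omega,c}(q_\omega)\in L^1(\prob)$, and the standard lemma that $\frac1n g(\theta^n\omega)\to0$ for $g\in L^1$ applies. Temperedness of $\hat C$ is handled the same way; one may also pass to the limit along $n$ so that the error term is eventually dominated.

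\textbf{Step 4: conditional invariance.} Set $\tau_\omega=\ind_{J_\omega}q_\omega\,\nu_{\omega,c}/\nu_{\omega,c}(\ind_{J_\omega}q_\omega)$, normalised fibrewise. Conditional invariance, $\tau_{\theta\omega}(T_\omega^{-1}A\cap J_\omega)=\alpha_\omega\tau_{\theta\omega}(A)$, follows from the duality $\nu_{\theta\omega,c}(g\,\opera_{\omega,c}h)=\lambda_{\omega,c}\nu_{\omega,c}((g\circ T_\omega)h)$ combined with $\opera_{\omega,c}(\ind_\omega q_\omega)=\opera_\omega q_\omega=\lambda_\omega q_{\theta\omega}$. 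This produces the constant $\alpha_\omega$, expressed in terms of $\lambda_\omega/\lambda_{\omega,c}$ and the normalisations.

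For uniqueness, let $\tau'=h\nu_c$ be another such measure. Its density satisfies $\opera_{\omega,c}(\ind_\omega h_\omega)=c_\omega h_{\theta\omega}$, so $\ind_\omega h_\omega$ is an eigen-density for the open operator. The uniqueness of $q$ in Theorem \ref{main1}(2), or equivalently the convergence in Theorem \ref{main2} applied to $h$, then forces $h\ind_J\propto q\ind_J$. Since $\tau'$ is supported in $J$, this gives $\tau'=\tau$.
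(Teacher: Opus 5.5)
Your Steps 1--2 match the paper's proof of Proposition~\ref{prop6.49}, and you correctly isolate the one nontrivial point, $\frac1n\log\nu_{\theta^n\omega,c}(q_{\theta^n\omega})\to0$. The upper half is available: $\nu_{\theta^n\omega,c}(q_{\theta^n\omega})\le\bvnorm{q_{\theta^n\omega}}\le C(\omega,\epsilon)e^{n\epsilon}$ by Lemma~\ref{lem6.39}. Your route to the lower half, however, rests on facts that are not established.

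\begin{itemize}
\item $q_\omega$ is \emph{not} bounded below on $J_\omega$. It is a limit of normalized $\opera^n_{\theta^{-n}\omega}f_{\theta^{-n}\omega}$, which are supported in the decreasing sets $S_{\omega,n}$. Hence $q_\omega$ vanishes off $S_{\omega,\infty}$, and Proposition~\ref{prop6.33} gives positivity only there.
\item Nothing in the paper makes $\log\inf q_\omega$ integrable or $\nu_{\omega,c}(J_\omega)$ tempered. Nontriviality of $H$ is only a global condition.
\end{itemize}

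So the claim $\log\nu_{\omega,c}(q_\omega)\in L^1(\prob)$ is unsupported. The paper instead sandwiches $\nu_{\theta^n\omega,c}(q_{\theta^n\omega})$ between $\inf_{S(n,\infty)}q_{\theta^n\omega}$ and $\supnorm{q_{\theta^n\omega}}$. The temperedness of both is derived in the proof of Proposition~\ref{prop6.47} from Lemma~\ref{lem6.40} with $f=\ind$, and you should route the lower bound through that. Either way, passing from a pointwise bound on $S_{\theta^n\omega,\infty}$ to a bound on $\nu_{\theta^n\omega,c}(q_{\theta^n\omega})$ also needs $\nu_{\theta^n\omega,c}(S_{\theta^n\omega,\infty})$ not to decay exponentially. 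The paper's inequality $\inf_{S(n,\infty)}q_{\theta^n\omega}\le\nu_{\theta^n\omega,c}(q_{\theta^n\omega})$ assumes this tacitly; you should state it.

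Step 4 has a genuine error in the formulation. Read $\tau_{\theta\omega}$ on your left side as $\tau_\omega$. The relation $\tau_\omega(T^{-1}_\omega A\cap J_\omega)=\alpha_\omega\tau_{\theta\omega}(A)$ is then false for a measure supported in $J$. By duality its left side equals $\frac{\lambda_\omega}{Z_\omega\lambda_{\omega,c}}\nu_{\theta\omega,c}(\ind_A q_{\theta\omega})$, where $Z_\omega:=\nu_{\omega,c}(\ind_\omega q_\omega)$. For $A=H_{\theta\omega}$ this is positive as soon as $q_{\theta\omega}=\lambda_\omega^{-1}\opera_\omega q_\omega$ charges the hole, while $\tau_{\theta\omega}(H_{\theta\omega})=0$. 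The paper's definition conditions on $K_{\omega,n}$, which contains $T^{-n}_\omega J_{\theta^n\omega}$. That inserts the factor $\ind_{J_{\theta^n\omega}}$ into the duality (Lemma~\ref{lem6.51}) and gives
\[
\tau_\omega(K_{\omega,n}\cap T^{-n}_\omega A)=\frac{\lambda^n_\omega Z_{\theta^n\omega}}{\lambda^n_{\omega,c}Z_\omega}\,\tau_{\theta^n\omega}(A).
\]
This is exactly why $\ind_J q\,\nu_c$, rather than $q\,\nu_c$, is the right object. Your fibrewise normalization by $Z_\omega$ is genuinely needed, since $q$ is normalized against $\nu$, not $\nu_c$.

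The same slip affects uniqueness. For a competitor $h\nu_c$ supported in $J$, the correct equation is $\ind_{J_{\theta\omega}}\opera_\omega h_\omega=c'_\omega h_{\theta\omega}$; yours would force $\opera_\omega h_\omega$ to vanish on $H_{\theta\omega}$. Pass to $\tilde h_{\theta\omega}:=\opera_\omega h_\omega/c'_\omega$, which is an honest eigenfunction of $\opera$ with $\ind_J\tilde h=h$. Even then, invoking Theorem~\ref{main1}(2) or Theorem~\ref{main2} requires two things:
\begin{itemize}
\item $\tilde h$ must lie in the regular class (bounded variation with the integrability hypotheses of Theorem~\ref{main2});
\item $\nu_\omega(\tilde h_\omega)>0$, so that $c'_\omega=\lambda_\omega\nu_\omega(\tilde h_\omega)/\nu_{\theta\omega}(\tilde h_{\theta\omega})$ matches $\lambda_\omega$ up to a coboundary.
\end{itemize}
For an arbitrary $L^1(\nu_c)$ density this argument breaks down; absolutely continuous conditionally invariant measures with non-regular densities are in general far from unique. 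Uniqueness must therefore be asserted within that regular class. The paper's one-line appeal to the uniqueness of $\nu_c$ does not address this either.
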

The results obtained in this paper extend the works of J. Atnip et al. to the setting of piecewise monotone random interval maps with countably many branches and holes. Our approach combines techniques developed by C. Liverani for interval maps with countably many branches \cite{liverani3}, by C. Liverani and V. Maume-Deschamps for Lasota-Yorke maps with holes \cite{liverani4}, and by J. Buzzi for the decomposition into good and bad fibers in random Lasota-Yorke maps \cite{buzzi1,buzzi2}. In this sense, the present work can be viewed as an extension of \cite{atnip1} to open systems, carried out within the framework used in \cite{liverani4} and further adapted to the random setting considered in \cite{atnip3}, with the proofs following these general schemes and necessary modifications.
\subsection{Plan of the paper}
The paper is organized as followings. We start with preliminaries and settings of piecewise monotone random interval map with countable branches in Section \ref{sec2.1}, while the function spaces, potentials and transfer operators are defined in Section \ref{sec2.2}. We introduce holes into such system and construct random functionals in Section \ref{sec2.3}. To obtain the contraction of Birkhoff cones, we prove two Lasota-Yorke type inequalities, divide good and bad fibers, and find their finite diameters in Section \ref{sec4} and Section \ref{sec5}. The density functions and invariant measures are constructed in Section \ref{sec6}, the exponential rate of decay of correlations is obtained in Section \ref{sec7}. The invariant measures previously constructed can be raised to random probability measures in \ref{sec8}. We calculate and compare expected pressures in the closed and open systems, we end this paper with quantifying escape rate and random conditionally invariant probability measure in Section \ref{sec9}.
\section{Piecewise monotone random interval maps with holes}\label{sec2}
\subsection{Settings}\label{sec2.1}
Let $(\Omega,\F,\prob)$ be a complete probability space, and let $\theta:\Omega\to\Omega$ be an invertible ergodic map preserving probability measure $\prob$. Let $I\subseteq\R$ be a compact interval; we denote its Borel $\sigma$-algebra by $\B$. For each $\omega\in\Omega$ consider a surjective map $T_\omega:I\to I$ with a countable partition $\parti_{\omega}^*=\{Z_{\omega,i}\}_{i=1}^{\infty}$ of $I$ such that each $Z_{\omega,i}$ is an interval and $I\setminus\bigcup_{i=1}^{\infty}Z_{\omega,i}$ has zero Lebesgue measure. For each $i\geq1$, $T_{\omega}|_{Z_{\omega,i}}$ can be extended to a homeomorphism $T_{\omega,i}:=T_{\omega}|_{Z_{\omega,i}}$ from the interval $Z_{\omega,i}$ onto its image which is an interval. We say the family of interval maps $\{T_{\omega}\}_{\omega\in\Omega}$ is \emph{piecewise monotone random interval map with countably many branches} provided $T_{\omega,i}$ is continuous and strictly monotone on each $Z_{\omega,i}\in\parti_{\omega}^*$. For $n\geq1$, we write the $n$-fold composition of $T_{\omega}$ as $T_{\omega}^{n}:=T_{\theta^{n-1}\omega}\circ\cdots\circ T_{\omega}$ and its inverse as $T_{\omega}^{-n}:=(T_{\omega}^{n})^{-1}$.

Let $D_\omega$ be the set of discontinuous points of $T_\omega$, that is, the collection of all endpoints of intervals $\{Z_{\omega,i}\}_{i=1}^{\infty}$. The singular set $S_\omega$ of $T_\omega$ is defined by $$S_\omega:=\bigcup_{k=0}^{\infty}T_{\omega}^{-k}(D_{\theta^{k}\omega})=D_{\omega}\cup T_{\omega}^{-1}D_{\theta\omega}\cup\cdots\cup T_{\omega}^{-n}D_{\theta^{n}\omega}\cup\cdots,$$ it is backward $T_\omega$-invariant and countable. Indeed, $S_{\omega}$ is countable since $D_{\theta^{k}\omega}$ is countable for every $k\geq0$. To show it is backward $T_\omega$-invariant, we observe that $$T_{\omega}^{-1}S_{\theta\omega}=T_{\omega}^{-1}\left(D_{\theta\omega}\cup T_{\theta\omega}^{-1}D_{\theta^2 \omega}\cup\cdots\right)\subseteq T_{\omega}^{-1}D_{\theta\omega}\cup T_{\omega}^{-2}D_{\theta^{2}\omega}\cup\cdots\subseteq S_\omega.$$ For each $\omega\in\Omega$, we assume the partition $\parti_{\omega}^{*}$ is a \emph{generator}, which means that $S_\omega$ is dense in $I$; see \cite{hofbauer}. 

Let $X_{\omega}:=I\setminus S_{\omega}$ be the subspace that inherits the subspace topology of $I$. By an interval $Z\subseteq X_\omega$ we mean $Z=\tilde{Z}\cap X_\omega$ for some intervals $\tilde{Z}\subseteq I$. We have $T_{\omega}(X_\omega)\subseteq X_{\theta\omega}$, which allows us to consider the dynamics of $T_{\omega}|_{X_\omega}:X_\omega\to X_{\theta\omega}$.

The skew-product map $T:\Omega\times I\to\Omega\times I$ is given by $$T(\omega,x):=(\theta\omega,T_{\omega}x),$$ then $T^{n}(\omega,x)=(\theta^{n}\omega,T_{\omega}^{n}x)$. We assume the map $T$ is measurable with respect to the product $\sigma$-algebra $\F\otimes\B$. Let $X:=\bigcup_{\omega\in\Omega}\{\omega\}\times X_\omega\subseteq\Omega\times I$ be the global phase space of the random dynamical system.

For each $\omega\in\Omega$, let $\parti_{\omega}^{1}:=\{Z\cap X_\omega:Z\in\parti_\omega^{*}\}$ be the collection of nonempty open intervals in $I$, and let $$\parti_{\omega}^{n}:=\bigvee_{i=0}^{n-1}T_{\omega}^{-i}(\parti_{\theta^{i}\omega}^{1})=\parti_{\omega}^{1}\vee T_{\omega}^{-1}\parti_{\theta\omega}^{1}\vee\cdots\vee T_{\omega}^{-(n-1)}\parti_{\theta^{n-1}\omega}^{1}$$ be given inductively for $n\geq2$, each element of $\parti_{\omega}^{n}$ would be $Z_{0}\cap T_{\omega}^{-1}Z_{1}\cap\cdots\cap T_{\omega}^{-(n-1)}Z_{n-1}$ for $Z_{i}\in\parti_{\theta^{i}\omega}^{1}$. The partition $\parti_{\omega}^{n}$ is a monotonicity partition for $T^n$ where the map $T^n$ is continuous and strictly monotone on every element. It consists of countably many non-degenerate intervals and it is obviously a countable partition of $X_\omega$, which we assume it to be generating, that is, $\bigvee_{n=1}^{\infty}\parti_{\omega}^{n}=\B$.
\subsection{Transfer operator and potential}\label{sec2.2}
The set of all bounded real-valued functions on $I$ is denoted by $B(I)$. For $f\in B(I)$, given a subset $A\subseteq I$, the variation of $f$ on subset $A$ is defined by
\[\vari_{A}(f):=\sup\left\{\sum_{i=0}^{k-1}|f(x_{i+1})-f(x_i)|:x_{0}<\cdots< x_{k},x_{i}\in A\right\},\] where the supremum is taken over all finite points $\{x_i\}_{i=0}^{k}$ in the subset $A$. When $A=I$, we omit the subscript and write it as $\vari(f)$. The set of all bounded variation functions is denoted by $BV(I):=\{f\in B(I):\vari(f)<\infty\}$. It is clear that when $f\in BV(I)$, then $f\in BV(X_\omega)$ for every $\omega\in\Omega$ since $\vari_{X_\omega}(f)\leq\vari(f)<\infty$. Let $\supnorm{\cdot}$ be the supremum norm on $B(I)$, and let $\bvnorm{\cdot}:=\supnorm{\cdot}+\vari(\cdot)$ be the $BV$-norm on $BV(I)$.

Given a function $f:\Omega\times I\to\R$, we denote $f_{\omega}(\cdot):=f(\omega,\cdot):I\to\R$ for each $\omega\in\Omega$.
\begin{defn}
    A function $f:\Omega\times I\to\R$ is \emph{random bounded} if (1) $f_\omega\in B(I)$ for each $\omega\in\Omega$; (2) for every $x\in I$ the map $\omega\mapsto f_{\omega}(x)$ is measurable; (3) the function $\omega\mapsto\supnorm{f_\omega}$ is measurable. The collection of all random bounded functions on $\Omega\times I$ is denoted by $B_{\Omega}(I)$. A function $f\in B_{\Omega}(I)$ is said to be of \emph{random bounded variation} if $f_\omega\in BV(I)$ for each $\omega\in\Omega$ and we let $BV_{\Omega}(I)$ be the collection of all random bounded variation functions.
\end{defn}
Consider a potential $\varphi_{c}:\Omega\times I\to\R\cup\{-\infty\}$ with fibers $\{\varphi_{\omega,c}\}_{\omega\in\Omega}$ on $I$, we let $g_{\omega,c}^{1}:=e^{\varphi_{\omega,c}}$ be the weight function for each $\omega\in\Omega$. We say such potential $\varphi_c$ is \emph{summable} if (1) $\inf g_{\omega,c}^{1}|_{Z}>0$ for all $Z\in\parti_{\omega}^{1}$; (2) $g_{\omega,c}^{1}\in BV(I)$; (3) $S_{\omega}^{1}:=\sum_{Z\in\parti_{\omega}^*}\sup_{Z}g_{\omega,c}^{1}<\infty$. The condition (3) allows us to define the \emph{random Ruelle-Perron-Frobenius operator} $\opera_{\omega,c}:=\opera_{\varphi,\omega,c}:B(X_\omega)\to B(X_{\theta\omega})$ associated with the summable potential $\varphi_c$ acting on bounded functions by $$\opera_{\omega,c}f(x):=\sum_{y\in T_{\omega}^{-1}x}f(y)g_{\omega,c}^{1}(y)$$ for each $f\in B(X_\omega)$ and $x\in X_{\theta\omega}$, it is well defined since the sum is convergent. In \cite[Lemma 2.4]{atnip1}, we observe that $g_{\omega,c}^{n}:=\exp(S_{n}\varphi_{\omega,c})$ is of bounded variation where $$S_{n}\varphi_{\omega,c}=\sum_{i=0}^{n-1}\varphi_{\theta^{i}\omega,c}\circ T_{\omega}^{i}$$ is the Birkhoff sum with respect to $T_\omega$, and then $S_{\omega}^{n}$ is finite. Hence the iterates $\opera_{\omega,c}^{n}:B(X_\omega)\to B(X_{\theta^{n}\omega})$ are also well defined for each $n\geq1$ and are given by $$\opera_{\omega,c}^{n}f(x)=\sum_{y\in  T_{\omega}^{-n}x}f(y)g_{\omega,c}^{n}(y).$$ 

Given a measurable space $Y$, let $\M(Y)$ denote the set of all Borel probability measures on $Y$. For each $\omega$, let $\B_\omega$ denote the Borel $\sigma$-algebra of $X_\omega$, the restriction of $\B$ to the set $X_\omega$. Let $\M_{\prob}(\Omega\times I)$ be the set of all Borel probability measures $\mu$ on $\Omega\times I$ with marginal $\prob$, that is, $\mu\circ\pi_{\Omega}^{-1}=\prob$ where $\pi_{\Omega}:\Omega\times I\to\Omega$ is the canonical projection. We take a note on a special class of measures.
\begin{defn}[{\cite[Definition 3.1]{crauel}}]\label{def2.2}
    A map $\mu:\Omega\times\B\to[0,1]$, $(\omega,B)\mapsto\mu_{\omega}(B)$ is a \emph{random probability measure} on $\Omega\times I$ if

    (1) For every $B\in\B$, the map $\omega\mapsto\mu_{\omega}(B)$ is measurable;

    (2) for $\prob$-a.e. $\omega\in\Omega$, the map $B\mapsto\mu_{\omega}(B)$ is a Borel probability measure.
\end{defn}
Let $\probmeas_{\Omega}(\Omega\times I)$ denote the set of all random probability measures on $\Omega\times I$, a random probability measure $\mu\in\probmeas_{\Omega}(\Omega\times I)$ has a disintegration of $\{\mu_{\omega}\}_{\omega\in\Omega}$ on each fiber $\omega\in\Omega$. We say $\mu$ is supported in $\Omega\times I$ if $\supp(\mu)\subseteq\Omega\times I$ and consequently $\supp(\mu_\omega)\subseteq I$ for $\prob$-a.e. $\omega\in\Omega$. Let $A\subseteq\Omega\times I$ be a measurable subset with $A_{\omega}\subseteq I$ for every $\omega\in\Omega$, we denote its measure by $\mu(A)$, while the measure of $A_\omega$ is $\mu_{\omega}(A_\omega)$.

Let $\ind_{X_\omega}$ be the constantly one function on $X_\omega$, we assume

(1) $\log S_{\omega}^{1}\in L^1(\prob)$;

(2) $\log\inf\opera_{\omega,c}\ind_{X_\omega}\in L^1(\prob)$.

Therefore, we obtain the following log-integrable quantities.
\begin{lemma}[{\cite[Lemma 2.13]{atnip1}}]
    For all $n\geq1$, we have $$\log\inf\opera_{\omega,c}^{n}\ind_{X_\omega},\log\supnorm{\opera_{\omega,c}^{n}\ind_{X_\omega}},\log S_{\omega}^{n},\log\supnorm{g_{\omega,c}^{n}}\in L^1(\prob).$$
\end{lemma}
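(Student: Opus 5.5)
The plan is to obtain all four integrability statements from the two standing hypotheses $\log S_{\omega}^{1}\in L^1(\prob)$ and $\log\inf\opera_{\omega,c}\ind_{X_\omega}\in L^1(\prob)$, using submultiplicativity/supermultiplicativity along orbits together with the $\theta$-invariance of $\prob$. Measurability of each quantity in $\omega$ I would take from the measurability of $T$ and of $\varphi_c$ in the standing setting, since the suprema and infima can be computed over countable dense sets of $X_\omega$. The argument then reduces to two-sided bounds of the form $\sum_{i=0}^{n-1}\log(\cdot)\circ\theta^i$.

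\textbf{Step 1 (upper bounds).} Since $g_{\omega,c}^{n}=\prod_{i=0}^{n-1}g^1_{\theta^i\omega,c}\circ T^i_\omega$, each element of $\parti_\omega^n$ is $Z_0\cap T_\omega^{-1}Z_1\cap\cdots$. Summing the supremum over elements of $\parti^n_\omega$ and factoring successively gives
\[
S_\omega^n\le \prod_{i=0}^{n-1}S^1_{\theta^i\omega}.
\]
For $x\in X_{\theta^n\omega}$, the point $x$ has at most one preimage in each element of $\parti^n_\omega$, so
\[
\supnorm{\opera^n_{\omega,c}\ind_{X_\omega}}\le S^n_\omega .
\]
We also have $\supnorm{g^n_{\omega,c}}\le S^n_\omega$ directly. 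Hence each of these positive parts is bounded by $\sum_i\log^+ S^1_{\theta^i\omega}$, which is integrable by invariance of $\prob$.

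\textbf{Step 2 (lower bounds).} Iterate the positive operator:
\[
\opera^{n}_{\omega,c}\ind=\opera^{n-1}_{\theta\omega,c}(\opera_{\omega,c}\ind)\ge \inf\opera_{\omega,c}\ind\cdot\opera^{n-1}_{\theta\omega,c}\ind .
\]
By induction this yields
\[
\inf\opera^n_{\omega,c}\ind_{X_\omega}\ge\prod_{i=0}^{n-1}\inf\opera_{\theta^i\omega,c}\ind_{X_{\theta^i\omega}},
\]
so $\log^-\inf\opera^n_{\omega,c}\ind_{X_\omega}$ is integrable. Then chain the inequalities
\[
\inf\opera^n\ind\le\supnorm{\opera^n\ind}\le S^n_\omega,
\]
which gives integrable lower bounds for $\log\supnorm{\opera^n_{\omega,c}\ind}$ and $\log S^n_\omega$. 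For the positive part of $\log\inf\opera^n\ind$, use $\log\inf\le\log\supnorm{\cdot}\le\log S^n_\omega$ from Step 1.

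\textbf{Step 3 (the obstacle).} The remaining issue, which I expect to be the main one, is the lower bound for $\log\supnorm{g^n_{\omega,c}}$, because $g^n$ can be small on every branch. I would use
\[
\inf\opera^n_{\omega,c}\ind\le\supnorm{\opera^n\ind}\le\sum_{Z\in\parti^n_\omega}\sup_Z g^n_{\omega,c}.
\]
This controls $S^n_\omega$ but not the single supremum $\supnorm{g^n}$. The step I would try first is to bound $\log\supnorm{g^n_{\omega,c}}$ below by $\sum_i\log\supnorm{g^1_{\theta^i\omega,c}}$ minus distortion terms. Failing that, I would invoke the argument of \cite[Lemma 2.13]{atnip1} for the closed system directly, since the statement is exactly that lemma. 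Indeed, the cited result handles this via the summability and bounded-variation hypotheses: $\supnorm{g^1_\omega}\ge \vari$-controlled comparisons with $\inf\opera\ind$ on a full branch. Since no holes enter yet, the closed-system proof applies verbatim, and I would cite it for this last bound.
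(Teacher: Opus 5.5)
Your Steps 1 and 2 are correct. The chain $\prod_{i=0}^{n-1}\inf\opera_{\theta^i\omega,c}\ind_{X_{\theta^i\omega}}\le\inf\opera^n_{\omega,c}\ind_{X_\omega}\le\supnorm{\opera^n_{\omega,c}\ind_{X_\omega}}\le S^n_\omega\le\prod_{i=0}^{n-1}S^1_{\theta^i\omega}$ places the first three logarithms between integrable Birkhoff sums. The bound $\supnorm{g^n_{\omega,c}}\le S^n_\omega$ then gives $\log^+\supnorm{g^n_{\omega,c}}\in L^1(\prob)$. This is more than the paper itself provides, since it only cites \cite[Lemma 2.13]{atnip1} without an argument.

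The gap is Step 3: you never prove the lower bound for $\log\supnorm{g^n_{\omega,c}}$, and neither of your suggested fixes can supply it. The reason is that this bound does not follow from summability together with $\log S^1_\omega,\log\inf\opera_{\omega,c}\ind_{X_\omega}\in L^1(\prob)$. With countably many branches, $\opera_{\omega,c}\ind$ can be of order one while every individual weight is tiny.

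For example, let $T_\omega=T$ have full increasing linear branches on $Z_i=[1-2^{1-i},1-2^{-i})$ for $i\geq1$, and pick a measurable $N:\Omega\to\N$ with $\int\log N_\omega\,\dd\prob(\omega)=\infty$. Set $g^1_{\omega,c}=\frac{1}{2N_\omega}$ on $Z_i$ for $i\le N_\omega$, and $g^1_{\omega,c}=\frac{1}{2N_\omega}2^{-(i-N_\omega)}$ on $Z_i$ for $i>N_\omega$. Then $g^1_{\omega,c}$ is monotone, hence $BV$, and positive on each branch. Moreover $\opera_{\omega,c}\ind\equiv S^1_\omega=\frac12+\frac{1}{2N_\omega}$, so every standing hypothesis holds. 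Yet $\log\supnorm{g^1_{\omega,c}}=-\log(2N_\omega)\notin L^1(\prob)$.

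So the fourth assertion needs an additional assumption. Consistent with this, Definition \ref{def6.4} separately requires $-\infty<\int\log\supnorm{g^N_{\omega,c}}\,\dd\prob-\int\log\inf\opera^N_{\omega,c}\ind_{X_\omega}\,\dd\prob$. Appealing to the closed-system proof ``verbatim'' or to a ``full branch'' comparison cannot close the gap, since the example above already has full branches and no holes.

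Your first idea also fails for a second reason. The hypotheses give no distortion control, since $g^1_{\omega,c}$ is only assumed $BV$. The points where $g^1_{\omega,c}$ is near its supremum can be mapped into a region where $g^1_{\theta\omega,c}$ is tiny. Hence even an assumed $\log\supnorm{g^1_{\omega,c}}\in L^1(\prob)$ would not yield $\supnorm{g^n_{\omega,c}}\gtrsim\prod_{i}\supnorm{g^1_{\theta^i\omega,c}}$ up to tempered factors.
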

We require the potential to satisfy the summable and contracting conditions simultaneously.
\begin{defn}[{\cite[Definition 2.15]{atnip1}}]\label{def6.4}
    A summable potential $\varphi_{c}:\Omega\times I\to\R\cup\{-\infty\}$ is said to be \emph{contracting} if for $\prob$-a.e. $\omega\in\Omega$, we have $$\lim_{n\to\infty}\frac{1}{n}\log\supnorm{g_{\omega,c}^n}<\lim_{n\to\infty}\frac{1}{n}\log\inf\opera_{\omega,c}^{n}\ind_{X_\omega}$$ and there exists $N\geq1$ such that $$-\infty<\int_{\Omega}\log\supnorm{g_{\omega,c}^N}\,\dd\prob(\omega)-\int_{\Omega}\log\inf\opera_{\omega,c}^{N}\ind_{X_\omega}\,\dd\prob(\omega)<0.$$
\end{defn}
We cite a version of the Ruelle-Perron-Frobenius theorem in piecewise monotone random interval map with countably many branches.
\begin{theorem}[{\cite[Theorem 2.19]{atnip1}}]
    Let $\varphi_{c}:\Omega\times I\to\R\cup\{-\infty\}$ be a summable contracting potential, and let $\opera_{\omega,c}:BV(X_\omega)\to BV(X_{\theta\omega})$ be the associated random Ruelle-Perron-Frobenius operator, then the following holds.

    (1) There exists a unique random probability measure $\nu_{c}\in\probmeas_{\Omega}(\Omega\times I)$ whose disintegration is $\{\nu_{\omega,c}\}_{\omega\in\Omega}$ such that $$\nu_{\theta\omega,c}(\opera_{\omega,c}f)=\lambda_{\omega,c}\nu_{\omega,c}(f)$$ for each $f\in BV(I)$, here $\lambda_{\omega,c}:=\nu_{\theta\omega,c}(\opera_{\omega,c}\ind)>0$ and $\log\lambda_{\omega,c}\in L^1(\prob)$ for $\prob$-a.e. $\omega\in\Omega$.

    (2) There exists a unique positive function $q_{c}\in BV_{\Omega}(I)$ such that $\nu_{c}(q_{c})=1$ and $$\opera_{\omega,c}q_{\omega,c}=\lambda_{\omega,c}q_{\theta\omega,c}$$ for $\prob$-a.e. $\omega\in\Omega$.

    (3) The probability measure $\mu_{c}:=q_{c}\nu_{c}$ is ergodic $T$-invariant random probability measure supported in $X$.
\end{theorem}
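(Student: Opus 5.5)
Since this result is quoted from \cite[Theorem 2.19]{atnip1}, the plan is to recover it by the same cone scheme that the rest of the paper uses for the open operator, specialised to the trivial hole $H=\emptyset$. Then $J_\omega=X_\omega$, $\ind_\omega=\ind_{X_\omega}$, and $\opera_\omega=\opera_{\omega,c}$.

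\textbf{Step 1: a random Lasota--Yorke inequality.} First I would establish, for $\opera_{\omega,c}^n$, an inequality of the form
\[\vari(\opera_{\omega,c}^n f)\le A_\omega^{(n)}\vari(f)+B_\omega^{(n)}\Lambda_\omega(f),\]
where $\Lambda_\omega$ is a suitable random functional.
\begin{itemize}
\item Split $\opera^n_{\omega,c}f$ over the elements of $\parti^n_\omega$.
\item Summability ($S^n_\omega<\infty$, with the log-integrability of the preceding Lemma) lets one group the countably many branches into a finite coarse partition, so that the tail contributes only a small multiple of $\supnorm{g^n_{\omega,c}}$.
\item Normalise by $\inf\opera^n_{\omega,c}\ind_{X_\omega}$. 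The contracting condition in Definition \ref{def6.4} then gives $A^{(N)}_\omega/\inf\opera^N_{\omega,c}\ind<1$ on average for the $N$ provided there.
\end{itemize}

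\textbf{Step 2: good and bad fibers.} Following Buzzi, call $\omega$ good when the normalised coefficients at time $N$ are below fixed thresholds, and group bad fibers into blocks. Ergodicity of $\theta$ and the $L^1$ bounds make the density of bad blocks arbitrarily small. Along a good block, the cone
\[\mathcal{C}_{\omega,a}=\{f\ge0:\ \vari(f)\le a\,\Lambda_\omega(f)\}\]
is mapped into $\mathcal{C}_{\theta^N\omega,a'}$ with $a'<a$. I would then prove that the image has finite Hilbert diameter. By Birkhoff's theorem this gives a strict contraction of the Hilbert metric, at an exponential rate along the orbit.

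\textbf{Step 3: constructing the objects.}
\begin{itemize}
\item \emph{Density.} Contraction yields that $\opera^n_{\theta^{-n}\omega,c}\ind/\Lambda(\cdot)$ is Cauchy. Its limit, normalised, is $q_{\omega,c}$. Positivity follows from the cone lower bound $\inf f\gtrsim \Lambda(f)$ for elements of the contracted cone.
\item \emph{Measure.} Define $\nu_{\omega,c}(f)=\lim_n \Lambda_{\theta^n\omega}(\opera^n_{\omega,c}f)/\Lambda_{\theta^n\omega}(\opera^n_{\omega,c}\ind)$ on the cone. Extend it by linearity to $BV$, and obtain a Borel probability via Riesz, since the functional is positive and monotone under sup-norm limits.
\item \emph{Equivariance and integrability.} The relation $\nu_{\theta\omega,c}\circ\opera_{\omega,c}=\lambda_{\omega,c}\nu_{\omega,c}$ follows directly. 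The bounds $\inf\opera_{\omega,c}\ind\le\lambda_{\omega,c}\le\supnorm{\opera_{\omega,c}\ind}$ give $\log\lambda_{\omega,c}\in L^1(\prob)$.
\item \emph{Uniqueness.} Any other equivariant family, or any other fixed density, lies in the cone after finitely many steps, and the exponential contraction forces it to coincide with the one constructed.
\item \emph{Measurability.} Measurability in $\omega$ follows because everything is a pointwise limit of measurable quantities.
\item \emph{Invariance and ergodicity.} $\mu_c=q_c\nu_c$ is invariant by the computation $\nu_{\theta\omega,c}(q_{\theta\omega,c}\,g\circ T_\omega)=\lambda_{\omega,c}^{-1}\nu_{\theta\omega,c}(\opera_{\omega,c}(q_{\omega,c}\,g\circ T_\omega))$. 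Ergodicity follows from the exponential decay of correlations implied by the cone contraction.
\end{itemize}

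\textbf{Main obstacle.} I expect the main difficulty to be the finite diameter in Step 2 under countably many branches. There one must show that $\inf_Z\opera^N f$ is comparable to $\Lambda(f)$ uniformly over the cone. This requires a covering-type property and a careful choice of the finite coarse partition, so that the tail mass of the countably many branches stays controlled. I would try first to take the coarse partition from the summability tail estimate and bound the small intervals separately via the variation.
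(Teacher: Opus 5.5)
The paper gives no proof of this statement; it is imported from \cite[Theorem 2.19]{atnip1}. Your outline (the functional $\Lambda_\omega$, a Lasota--Yorke inequality with a $\Lambda_\omega$-term, Buzzi's good/bad blocks, Hilbert-metric contraction, Riesz) is essentially the scheme of that reference.

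\textbf{Main gap: a missing hypothesis.} The genuine gap is the one you call the ``main obstacle'', but no clever coarse partition can resolve it, because a hypothesis is missing. The source works under a \emph{random covering condition}. The paper's introduction mentions it, but the statement as transcribed leaves it implicit: for a.e.\ $\omega$ and every nondegenerate interval $Z$ there is a finite $M$ with $\inf\opera^{M}_{\omega,c}\ind_Z>0$.

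You need it already in Step~1. To convert the term $\sum_A\inf_A|f|$ over the finite coarse partition into $B^{(n)}_\omega\Lambda_\omega(|f|)$, while keeping the coefficient of $\vari(f)$ of order $\supnorm{g^n_{\omega,c}}$, you need $\min_A\Lambda_\omega(\ind_A)>0$ with a log-integrable lower bound. Otherwise the only available estimate is $\inf_A|f|\le\vari(f)+\Lambda_\omega(|f|)$, which puts the number of partition elements in front of $\vari(f)$ and destroys contraction. You need covering again for $\inf\opera^{M}_{\omega,c}f\gtrsim\Lambda_\omega(f)$ on the cone, i.e.\ for finite diameter, positivity of $q_{\omega,c}$ and uniqueness.

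Without covering the conclusion is false. Let $T_\omega\equiv T$ have four affine branches of slope $2$, sending $[0,\frac14)$ and $[\frac14,\frac12)$ onto $[0,\frac12)$, and $[\frac12,\frac34)$ and $[\frac34,1]$ onto $[\frac12,1]$. Let $\varphi_c\equiv-\log 2$. Then $\opera_c\ind\equiv1$ and $\supnorm{g^n_c}=2^{-n}$, so $\varphi_c$ is summable and contracting. Yet normalised Lebesgue measure on either half is conformal with $\lambda_c=1$, so uniqueness fails. Moreover $\opera^n_c\ind_{[0,1/2)}=\ind_{[0,1/2)}$ while $\Lambda(\ind_{[0,1/2)})=0$, so your Step~1 inequality with $A^{(n)}\to0$ fails. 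Hence ``bounding the small intervals separately via the variation'' cannot work; covering must be added to the hypotheses. Specialising the paper's open-system machinery to $H=\varnothing$ does not avoid this either: its assumptions on $\delta_{\omega,n}$ and the conditions defining $\Omega_F$ are exactly where a covering-type property is built in.

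\textbf{Smaller gap in Step~3.} Riesz applied to $\Lambda_\omega$ on $C(I)$ only gives a measure agreeing with $\Lambda_\omega$ on continuous functions. Indicators of intervals are not sup-norm limits of continuous functions, yet you need $\nu_{\omega,c}=\Lambda_\omega$ on all of $BV(I)$ (e.g.\ on the discontinuous $\opera_{\omega,c}f$) to get equivariance. Add a regularity step as follows.
For a neighbourhood $V$ of a point $p$ and any $n$, one has $\Lambda_\omega(\ind_V)\le\sup\opera^n_{\omega,c}\ind_V/\inf\opera^n_{\omega,c}\ind_{X_\omega}$.
Also $\sup\opera^n_{\omega,c}\ind_V\le 2\supnorm{g^n_{\omega,c}}+o(1)$ as $V$ shrinks to $p$, since summability controls the branches accumulating at $p$.
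So the contracting condition makes $\Lambda_\omega(\ind_V)$ arbitrarily small. This yields $\nu_{\omega,c}(Z)=\Lambda_\omega(\ind_Z)$ for intervals, as in Proposition~\ref{prop6.35}.
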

\subsection{Holes and random functionals}\label{sec2.3}
We want to introduce holes to this closed system to make it an open one. Let $H\subseteq X$ be a measurable subset with respect to the product $\sigma$-algebra $\F\otimes\B$. Suppose the hole $H$ is nontrivial, that is $\nu_{c}(H)\nu_{c}(X\setminus H)\neq0$. For each $\omega\in\Omega$, the subsets $H_\omega\subseteq X_\omega$ are uniquely determined by $$\{\omega\}\times H_\omega=H\cap(\{\omega\}\cap X_\omega).$$ Define $J_\omega:=X_{\omega}\setminus H_{\omega}$ and denote $\ind_\omega:=\ind_{J_\omega}$, then we define $$J:=\bigcup_{\omega\in\Omega}\{\omega\}\times J_\omega\subseteq X \subseteq\Omega\times I$$ to be the complement of the hole $H$ in $X$. For each $n\geq0$, define $$K_{\omega,n}:=\{x\in J_\omega:T_{\omega}^{i}(x)\in J_{\theta^{i}\omega}:0\leq i\leq n\}=\bigcap_{i=0}^{n}T_{\omega}^{-i}(J_{\theta^{i}\omega})$$ to be the set of points in $J_\omega$ which survive for at least $n$ iterates, meaning they do not enter into the hole after time $n$, we call such sets \emph{$(\omega,n)$-survivor sets}. It is obvious that $(\omega,0)$-survivor set is indeed $J_\omega$. It is natural to define $K_{\omega,\infty}:=\bigcap_{n\geq0}K_{\omega,n}$ since the $(\omega,n)$-survivor sets are descending. We define \[K_{\infty}:=\bigcup_{\omega\in\Omega}\{\omega\}\times K_{\omega,\infty}\] and we also assume it to be nonempty.

The \emph{random open operator} $\opera_{\omega}:B(X_\omega)\to B(X_{\theta\omega})$ associated with a summable potential $\varphi_c:\Omega\times I\to\R\cup\{-\infty\}$ is defined by $$\opera_{\omega}f:=\opera_{\omega,c}(f\ind_{\omega})$$ for each $f\in B(X_\omega)$, more precisely, \[\opera_{\omega}f(x)=\sum_{y\in T_{\omega}^{-1}x}f(y)\ind_{\omega}(y)g_{\omega,c}^{1}(y)\] for all $x\in X_{\theta\omega}$. Since $g_{\omega,c}^{n}$ is of bounded variation and $S_{\omega}^{n}$ is finite, the iterate of open transfer operator is well defined and is given by $$\opera_{\omega}^{n}f:=\opera_{\omega,c}^{n}(f\ind_{K_{\omega,n-1}}).$$ For ease of calculation, we denote $g_{\omega}^{n}:=g_{\omega,c}^{n}|_{K_{\omega,n-1}}$ and hence $g_{\omega}^{n}=\exp(S_{n}\varphi_{\omega})$ where $\varphi_{\omega}:=\varphi_{\omega,c}|_{J_\omega}$ for each $\omega\in\Omega$.

Let $S_{\omega,n}:=\{x\in X_{\omega}:\opera_{\theta^{-n}\omega}^{n}\ind_{\theta^{-n}\omega}(x)\neq0\}$ be the support of $\opera_{\theta^{-n}\omega}^{n}\ind_{\theta^{-n}\omega}$, then it is descending as $n\to\infty$ and we assume the limit set $S_{\omega,\infty}$ is nonempty. For $i\in\Z$ and $j\in\N$, we let $$S(i,j):=S_{\theta^{i}\omega,j}=\text{the support of }\opera_{\theta^{-j+i}\omega}^{j}\ind_{\theta^{-j+i}\omega}.$$ For each $n\geq1$, we have $$\inf g_{\omega,c}^{n}\leq\underset{K_{\omega,n-1}}{\inf}\,g_{\omega}^{n}\leq\supnorm{g_{\omega}^{n}}\leq\supnorm{g_{\omega,c}^{n}}$$ and as a consequence $$\underset{S(n,\infty)}{\inf}\,\opera_{\omega}^{n}\ind_{\omega}\leq\supnorm{\opera_{\omega}^{n}\ind_\omega}\leq\supnorm{\opera_{\omega,c}^{n}\ind_{X_\omega}}.$$ Therefore, we have $\log\supnorm{\opera_{\omega}^{n}\ind_\omega}\in L^1(\prob)$ by $\log\supnorm{\opera_{\omega,c}^{n}\ind_{X_\omega}}\in L^1(\prob)$.

Now we adapt from the construction of functionals from \cite{liverani3,liverani4} in deterministic systems and \cite{atnip3} in random systems. For every $\omega\in\Omega$, we define the random functional $F_\omega:BV(X_\omega)\to\R$ by $$F_{\omega}(f):=\lim_{n\to\infty}\underset{x\in S(n,n)}{\inf}\,\frac{\opera_{\omega}^{n}f(x)}{\opera_{\omega}^{n}\ind_\omega(x)}$$ for every $f\in BV(X_\omega)$. Since $\inf f\leq F_{\omega}(f)\leq\supnorm{f}$ and the sequence is monotone increasing, the limit exists and hence $F_\omega$ is well defined. We then have $F_{\omega}(\ind_\omega)=1$ and $F_{\omega}(\ind_{X_\omega})=1$ as well. By linking this functional with the variation, we obtain a useful inequality that
\[f(x)\leq\vari(f)+F_{\omega}(f)\] for all $f\in BV(X_\omega)$ and $x\in X_\omega$. Take the supremum norm on $X_\omega$, we have $\supnorm{f}\leq\vari(f)+F_{\omega}(f)$.

We set $\rho_\omega:=F_{\theta\omega}(\opera_\omega\ind_\omega)$, it immediately implies $$\underset{J_\omega}{\inf}\,g_{\omega}\leq\rho_\omega\leq\supnorm{\opera_{\omega}\ind_\omega}$$ and $\log\rho_\omega\in L^1(\prob)$ due to the result of $\log\supnorm{\opera_{\omega}\ind_{\omega}}\in L^1(\prob)$. Furthermore, by Birkhoff's ergodic theorem, we have $$\lim_{n\to\infty}\frac{1}{n}\log\rho_{\omega}^{n}=\int_{\Omega}\log\rho_{\omega}\,\dd\prob(\omega),$$ where $\rho_{\omega}^{n}:=\prod_{i=0}^{n-1}\rho_{\theta^{i}\omega}$. In the next lemma we have an easy observation of $F_{\theta^{n}\omega}$.
\begin{lemma}\label{lem6.6}
    For all $f\in BV(X_\omega)$ and all $n\geq1$, we have $$F_{\theta^{n}\omega}(\opera_{\omega}^{n}\ind_{\omega})F_{\omega}(f)\leq F_{\theta^{n}\omega}(\opera_{\omega}^{n}f),$$ and in particular, $$\rho_{\omega}^{n} F_{\omega}(f)\leq F_{\theta^{n}\omega}(\opera_{\omega}^{n}f),\quad\rho_{\omega}^{n}\leq F_{\theta^{n}\omega}(\opera_{\omega}^{n}\ind_\omega).$$
\end{lemma}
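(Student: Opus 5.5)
The argument unfolds the definition of $F_{\theta^n\omega}$ and rests on one factorization of the ratio, using the cocycle property $\opera_{\theta^n\omega}^{m}\opera_\omega^n=\opera_\omega^{n+m}$. This holds because $\opera_\omega^n f=\opera^n_{\omega,c}(f\ind_{K_{\omega,n-1}})$, and composing open operators multiplies the survivor indicators, giving $K_{\omega,n+m-1}$. Note also that $S(n+m,n+m)$ for $\omega$ coincides with the support set used in the definition of $F_{\theta^n\omega}$ at level $m$, namely $S_{\theta^{n+m}\omega,m}$ shifted appropriately; at the very least the infimum can be taken over the support of $\opera^{n+m}_\omega\ind_\omega$.

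Fix $m\ge1$ and $x$ with $\opera_{\omega}^{n+m}\ind_\omega(x)\neq0$ (in particular $\opera^m_{\theta^n\omega}\ind_{\theta^n\omega}(x)\ne 0$, since $\opera^n_\omega\ind_\omega$ is supported in $J$-compatible points and is bounded by $\|\opera^n_\omega\ind_\omega\|_\infty$ times $\ind$). Write
\[
\frac{\opera_{\theta^n\omega}^{m}(\opera_\omega^n f)(x)}{\opera_{\theta^n\omega}^{m}\ind_{\theta^n\omega}(x)}
=\frac{\opera_{\omega}^{n+m} f(x)}{\opera_{\omega}^{n+m}\ind_\omega(x)}\cdot
\frac{\opera_{\theta^n\omega}^{m}(\opera_\omega^n \ind_\omega)(x)}{\opera_{\theta^n\omega}^{m}\ind_{\theta^n\omega}(x)}.
\]
Now suppose first that $F_\omega(f)\ge0$. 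The first factor is at least $\inf_{S(n+m,n+m)}\opera^{n+m}_\omega f/\opera^{n+m}_\omega\ind_\omega$, which increases to $F_\omega(f)$ as $m\to\infty$. The second factor is at least the level-$m$ infimum defining $F_{\theta^n\omega}(\opera^n_\omega\ind_\omega)$. Both factors being eventually nonnegative, taking infima and letting $m\to\infty$ yields $F_{\theta^n\omega}(\opera^n_\omega f)\ge F_\omega(f)F_{\theta^n\omega}(\opera^n_\omega\ind_\omega)$.

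For general sign I would reduce to this case. Both sides are shift-compatible: $F(f+c\ind_\omega)=F(f)+c$ on the left, and on the right $\opera^n_\omega(f+c\ind_\omega)=\opera^n_\omega f+c\opera^n_\omega\ind_\omega$. However, $F$ is only superadditive, not additive, so shifting does not preserve inequalities exactly. I therefore expect sign handling to be the main obstacle, and I would instead avoid the product splitting altogether. Take $x$ with $\opera^{n+m}_\omega\ind_\omega(x)\ne0$ and write $a_m:=\inf_{S(n+m,n+m)}\opera^{n+m}_\omega f/\opera^{n+m}_\omega\ind_\omega$. Then $\opera^{n+m}_\omega f(x)\ge a_m\,\opera^{m}_{\theta^n\omega}(\opera^n_\omega\ind_\omega)(x)$. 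Dividing by $\opera^m_{\theta^n\omega}\ind_{\theta^n\omega}(x)$ gives the ratio bound $a_m\cdot r_m(x)$, where $r_m(x)\in[b_m,\|\opera^n_\omega\ind_\omega\|_\infty]$ and $b_m$ is the level-$m$ infimum for $F_{\theta^n\omega}(\opera^n_\omega\ind_\omega)$. If $a_m\ge0$ this is $\ge a_m b_m$; otherwise $a_m<0$ forces $F_\omega(f)\le0$ in the limit. In the case $F_\omega(f)<0$ I would apply the argument to $f-F_\omega(f)\ind_\omega+\epsilon$-type shifts, using $\opera^n_\omega$ linearity, where the exact form of the claimed inequality is only needed for $f\ge0$ in later use (cones of positive functions). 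I would state that restriction if the negative case cannot be closed.

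The particular cases follow directly. Taking $f=\ind_\omega$ and $F_\omega(\ind_\omega)=1$ gives the case $n$ itself, and iterating the inequality with $f=\opera_\omega\ind_\omega$ gives $F_{\theta^n\omega}(\opera^n_\omega\ind_\omega)\ge\prod\rho_{\theta^i\omega}=\rho^n_\omega$ by induction: $F_{\theta^{n}\omega}(\opera_{\theta^{n-1}\omega}\opera^{n-1}_\omega\ind_\omega)\ge\rho_{\theta^{n-1}\omega}F_{\theta^{n-1}\omega}(\opera^{n-1}_\omega\ind_\omega)$. Multiplying by $F_\omega(f)$, for $F_\omega(f)\ge0$, gives the first particular claim.
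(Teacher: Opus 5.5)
Your argument is the paper's, up to renaming $n$ and $m$. Both proofs rest on the pointwise bound $\opera^{n+m}_\omega f\ge a\,\opera^{m}_{\theta^n\omega}(\opera^n_\omega\ind_\omega)$, with $a$ an approximant of $F_\omega(f)$. The paper takes this infimum at the level of the first block of iterates and pushes the bound through the remaining positive operator; you read it off directly at level $n+m$. Both then divide by $\opera^m_{\theta^n\omega}\ind_{\theta^n\omega}$, take the infimum, let the level go to infinity, and get the particular cases by iterating the one-step inequality.

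For $F_\omega(f)\ge0$ your version is correct, with one correction. $S(n+m,n+m)$ does not coincide with the set $S(n+m,m)$ over which the level-$m$ infimum for $F_{\theta^n\omega}$ is taken; it is only contained in it. This is harmless: since $|\opera^{n+m}_\omega f|\le\supnorm{f}\,\opera^{n+m}_\omega\ind_\omega$, both sides of your unsplit inequality vanish off $S(n+m,n+m)$, so it holds on all of $S(n+m,m)$. Keep that form and drop the product factorization, which is undefined at those points. If $F_\omega(f)=0$ and every $a_m<0$, "eventually nonnegative" fails. Your bound $r_m\le\supnorm{\opera^n_\omega\ind_\omega}$ still closes this case, since it gives infima $\ge a_m\supnorm{\opera^n_\omega\ind_\omega}\to0$.

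The gap you flag for $F_\omega(f)<0$ is genuine, and the paper's proof has it too, unnoticed. When $a_m<0$, the infimum over $x$ of $a_m r_m(x)$ is $a_m\sup_x r_m(x)$, not $a_m\inf_x r_m(x)$. Your shift idea cannot repair this, because the negative case is not bookkeeping. For $f=-\ind_\omega$ the claimed inequality is equivalent to $\lim_m\sup_x r_m(x)\le\lim_m\inf_x r_m(x)$ (both limits exist by the same monotonicity that defines $F$). That says $\opera^{n+m}_\omega\ind_\omega/\opera^m_{\theta^n\omega}\ind_{\theta^n\omega}$ becomes asymptotically constant on its support. This is exactly the linearity of $F$, which the paper only obtains later, in Proposition~\ref{prop6.34}, from the cone contraction. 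That cone-contraction argument itself uses the present lemma for nonnegative functions, so it cannot be invoked here.

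So the correct statement at this point is the restricted one you propose: the lemma for $F_\omega(f)\ge0$, in particular for $f\ge0$. This covers every later use: $|f|$ in Section~\ref{sec3}, elements of the cones $\Lambda_{\omega,a}$, indicators $\ind_Z$, and functions in $\mathcal{T}^+$. For arbitrary $f$ the inequality is recovered afterwards, as an equality, once $F_{\theta\omega}(\opera_\omega f)=\lambda_\omega F_\omega(f)$ is established.
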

\begin{proof}
    The proof is partially referred to \cite[Lemma 3.6]{liverani4} and \cite[Lemma 1.5.4]{atnip3}. For $n,m\geq1$ and $x\in S(n+m,n)$,
    \begin{equation*}
        \begin{aligned}
            \frac{\opera_{\theta^{m}\omega}^{n}(\opera_{\omega}^{m}f)(x)}{\opera_{\theta^{m}\omega}^{n}\ind_{\theta^{m}\omega}(x)}&=\frac{\opera_{\theta^{n}\omega}^{m}(\opera_{\omega}^{n}f)(x)}{\opera_{\theta^{m}\omega}^{n}\ind_{\theta^{m}\omega}(x)} \\
            &=\frac{\opera_{\theta^{n}\omega}^{m}\left(\ind_{S(n,n)}\cdot\frac{\opera_{\omega}^{n}f}{\opera_{\omega}^{n}\ind_{\omega}}\cdot\opera_{\omega}^{n}\ind_\omega\right)(x)}{\opera_{\theta^{m}\omega}^{n}\ind_{\theta^{m}\omega}(x)} \\
            &\geq\frac{\opera_{\theta^{m}\omega}^{n}(\opera_{\omega}^{m}\ind_\omega)(x)}{\opera_{\theta^{m}\omega}^{n}\ind_{\theta^{m}\omega}(x)}\cdot\underset{x\in S(n,n)}{\inf}\,\frac{\opera_{\omega}^{n}f(x)}{\opera_{\omega}^{n}\ind_{\omega}(x)}.
        \end{aligned}
    \end{equation*}
    Take the infimum over $x\in S(n+m,n)$ and let $n\to\infty$, we have $$F_{\theta^{m}\omega}(\opera_{\omega}^{m}f)\geq F_{\theta^{m}\omega}(\opera_{\omega}^{m}\ind_\omega) F_{\omega}(f),$$ changing the index $m$ to $n$ completes the first inequality. By the first inequality, we have $$F_{\theta^{n}\omega}(\opera_{\theta^{n-1}\omega}f)\geq F_{\theta^{n}\omega}(\opera_{\theta^{n-1}\omega}\ind_{\theta^{n-1}\omega})F_{\theta^{n-1}\omega}(f)=\rho_{\theta^{n-1}\omega}F_{\theta^{n-1}\omega}(f)$$ for every $n\geq1$ and $f\in BV(X_{\theta^{n-1}\omega})$. Recall that $\rho_{\omega}^{n}:=\prod_{i=0}^{n-1}\rho_{\theta^{i}\omega}$, iterating such inequality gives us the second inequality. The third one is obtained by putting $f=\ind_\omega$ and $F_{\omega}(\ind_\omega)=1$.
\end{proof}

We define some partitions related to the random functional $F_\omega$. Recall the countable partition $\parti_{\omega}^{n}$ is a monotonicity partition of the map $T_{\omega}^{n}$, meaning $T_{\omega}^{n}$ is continuous and strictly monotone on each element. For each $\omega\in\Omega$ and $n\geq1$, given $a\geq0$ and $b\geq1$, let $P_{\omega,n}(a,b)$ be the collection of all finite partitions of $I$ such that for each finite partition $\mathcal{U}=\{U_i\}\in P_{\omega,n}(a,b)$, $$\vari_{U_i}(g_{\omega}^{n})\leq a\supnorm{g_{\omega}^{n}}$$ and $$\sum_{\substack{Z\in\parti_{\omega}^{n} \\ Z\cap U_{i}\neq\varnothing}}\underset{Z\cap U_i}{\sup}\,g_{\omega}^{n}\leq b\supnorm{g_{\omega}^{n}}.$$ hold for finite indexes. From \cite[Lemma 2.10, Remark 2.11]{atnip1}, such partition exists. Given such a finite partition $\mathcal{U}$, choose the coarsest finite partition $\tilde{\mathcal{U}}$ among those finer than $\parti_{\omega}^{n}$ and $\mathcal{U}$ such that elements are either disjoint from $K_{\omega,n-1}$ or contained in $K_{\omega,n-1}$. Let $$\parti_{\omega,g}^{n}:=\{U\in\tilde{\mathcal{U}}:U\subseteq K_{\omega,n-1},F_{\omega}(\ind_U)>0\},\quad\parti_{\omega,b}^{n}:=\{U\in\tilde{\mathcal{U}}:U\subseteq K_{\omega,n-1},F_{\omega}(\ind_U)=0\}$$ be the good and bad subcollections of partition $\tilde{\mathcal{U}}$ contained in $K_{\omega,n-1}$ respectively. For all $n\geq1$, we assume there exists $\delta_{\omega,n}>0$ with $\log\delta_{\omega,n}\in L^1(\prob)$ such that $$\underset{U\in\parti_{\omega,g}^{n}}{\inf}\,F_{\omega}(\ind_U)\geq 2\delta_{\omega,n}.$$ Finally, we say two elements in $\parti_{\omega,g}^{n}\cup\parti_{\omega,b}^{n}$ are \emph{contiguous} if either they share a boundary point or they are separated by a connected component of $\bigcup_{i=0}^{n-1}T_{\omega}^{-i}(H_{\theta^{i}\omega})$. Let $\eta_{\omega}^{n}$ denote the maximum number of contiguous elements in $\parti_{\omega,b}^{n}$ and assume $\log\eta_{\omega}^{n}\in L^1(\prob)$ for each $n\geq1$.

Compare with Definition \ref{def6.4}, we say the summable potential $\varphi_{c}:\Omega\times I\to\R\cup\{-\infty\}$ is \emph{contracting} for the open system if $$\lim_{n\to\infty}\frac{1}{n}\log\supnorm{g_{\omega}^{n}}+\limsup_{n\to\infty}\frac{1}{n}\log\eta_{\omega}^{n}<\lim_{n\to\infty}\frac{1}{n}\log\underset{S(n,\infty)}{\inf}\,\opera_{\omega}^{n}\ind_\omega.$$
\begin{remark}
    Since $\inf_{S(1,\infty)}\opera_{\omega}\ind_{\omega}\leq\rho_{\omega}$, we see the contracting condition also implies $$\lim_{n\to\infty}\frac{1}{n}\log\supnorm{g_{\omega}^{n}}+\limsup_{n\to\infty}\frac{1}{n}\log\eta_{\omega}^{n}<\lim_{n\to\infty}\frac{1}{n}\log\rho_{\omega}^{n}=\int_{\Omega}\log\rho_{\omega}\,\dd\prob(\omega).$$ It is clear that $$\lim_{n\to\infty}\frac{1}{n}\log\supnorm{g_{\omega}^{n}}<\lim_{n\to\infty}\frac{1}{n}\log\rho_{\omega}^{n}.$$
\end{remark}

\section{Lasota-Yorke type inequality}\label{sec3}
We start this section with a useful estimation of the variations of iterated functions.
\begin{proposition}\label{prop6.11}
    For all $\omega\in\Omega$, all $f\in BV(X_\omega)$ and all $n\geq1$, there exist two measurable positive constants $A_{\omega,n}$ and $B_{\omega,n}$ such that $$\vari(\opera_{\omega}^{n}f)\leq A_{\omega,n}\vari(f)+B_{\omega,n}F_{\omega}(|f|).$$
\end{proposition}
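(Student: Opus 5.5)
We bound $\vari(\opera_\omega^n f)=\vari\bigl(\opera_{\omega,c}^n(f\ind_{K_{\omega,n-1}})\bigr)$ by splitting over the finite partition $\tilde{\mathcal U}$ built from some $\mathcal U\in P_{\omega,n}(a,b)$. The argument follows Liverani's countable-branch scheme \cite{liverani3} and Liverani–Maume-Deschamps \cite{liverani4}, adapted to the random setting.

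\emph{Decomposition.} Write
\[
\opera_{\omega}^{n}f=\sum_{U\in\parti_{\omega,g}^{n}\cup\parti_{\omega,b}^{n}}\ \sum_{\substack{Z\in\parti_{\omega}^{n}\\ Z\cap U\neq\varnothing}}\bigl(f\,g_{\omega}^{n}\,\ind_{Z\cap U}\bigr)\circ T_{\omega,Z}^{-n}.
\]
Elements of $\tilde{\mathcal U}$ disjoint from $K_{\omega,n-1}$ contribute nothing. Since $T_\omega^n$ is a monotone homeomorphism on each $Z\cap U$, each summand is the pushforward of a function supported on an interval. Its variation is therefore at most
\[
\vari_{Z\cap U}(f g_\omega^n)+2\sup_{Z\cap U}|f g_\omega^n|,
\]
the second term coming from the two jumps at the endpoints of the image interval.

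\emph{Summing over $Z$.} Use $\vari_{Z\cap U}(fg)\le \sup_{Z\cap U}|f|\,\vari_{Z\cap U}(g)+\sup_{Z\cap U} g\,\vari_{Z\cap U}(f)$. Summing over the (possibly infinitely many) $Z$ meeting a fixed $U$, the defining properties of $P_{\omega,n}(a,b)$ give
\[
\sum_Z \vari_{Z\cap U}(f g_\omega^n)+2\sup_{Z\cap U}|fg_\omega^n|\le (a+3b)\supnorm{g_\omega^n}\bigl(\vari_U(f)+\sup_U|f|\bigr).
\]
One subtlety must be checked here: $\sum_Z\vari_{Z\cap U}(g_\omega^n)\le\vari_U(g_\omega^n)$ requires that the $Z\cap U$ be disjoint subintervals. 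This holds, so summability is used only through the $b$-bound. The $\sup_{Z\cap U}g$ weights are absorbed by $b\supnorm{g_\omega^n}$ because $\vari_{Z\cap U}f\le\vari_U f$. This is the step where countably many branches require the summability hypothesis.

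\emph{Controlling $\sup_U|f|$.} This is the main step.

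For a good element $U\in\parti_{\omega,g}^n$, we have $F_\omega(\ind_U)\ge2\delta_{\omega,n}$. Using $\inf_U|f|\,\ind_U\le|f|$ together with positivity and monotonicity of $F_\omega$, we get $\inf_U|f|\le F_\omega(|f|)/(2\delta_{\omega,n})$. Hence
\[
\sup_U|f|\le\vari_U(f)+\frac{F_\omega(|f|)}{2\delta_{\omega,n}}.
\]

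For a bad element $U$, the estimate $F_\omega(\ind_U)=0$ gives nothing, so we follow \cite[Lemma 3.5]{liverani4}. Each maximal run of contiguous bad elements has at most $\eta_\omega^n$ members. Such a run is adjacent to a good element, possibly across a component of $\bigcup_{i<n}T_\omega^{-i}H_{\theta^i\omega}$ on which $f$ is still defined. Along the run we then have $\sup_U|f|\le\vari_{\text{run}\cup U'}(f)+\inf_{U'}|f|$, where $U'$ is the neighbouring good element. Summing over bad elements, each point of the variation is counted at most $\eta_\omega^n+1$ times.

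The exceptional case is when every element is bad. This cannot happen: $F_\omega(\ind_{K_{\omega,n-1}})>0$, since $F_\omega(\opera\text{-stuff})$ is bounded below via Lemma~\ref{lem6.6}, and $F_\omega$ is finitely additive over the finite partition.

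\emph{Collecting terms.} Combining the above yields the claimed inequality with
\[
A_{\omega,n}=(a+3b)(2+\eta_\omega^n)\supnorm{g_\omega^n},\qquad B_{\omega,n}=(a+3b)\,\#\parti^n_{\omega,g}\,\supnorm{g_\omega^n}\,(2\delta_{\omega,n})^{-1}.
\]
Alternatively, $\#\parti^n_{\omega,g}$ can be bounded by $(2\delta_{\omega,n})^{-1}$ using additivity, since $\sum_U F_\omega(\ind_U)\le1$. Measurability of both constants follows from measurability of $\supnorm{g_\omega^n}$, $\eta_\omega^n$ and $\delta_{\omega,n}$.

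The main obstacle is the bad-element bookkeeping: one must verify that contiguity through hole components still allows comparison with a good neighbour, and that boundary jumps are not double counted. Keeping the factor $\eta_\omega^n$ in $A_{\omega,n}$ is exactly what the open contracting condition is designed to absorb later.
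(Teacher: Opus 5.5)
Your scheme is the paper's. You split over the elements of $\tilde{\mathcal U}$ inside $K_{\omega,n-1}$, use the $a$- and $b$-bounds from $P_{\omega,n}(a,b)$, write $\sup_U|f|\le\vari_U(f)+\inf_U|f|$, treat good elements via $F_\omega(\ind_U)\ge 2\delta_{\omega,n}$, and compare bad runs with a good neighbour as in Liverani--Maume-Deschamps. Your splitting into the pieces $Z\cap U$ is in fact more careful than the paper's single inverse branch $T_{\omega,U}^{-n}$ on an element of a finite partition.

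\textbf{The failing step.} Your exclusion of the case $\parti_{\omega,g}^{n}=\varnothing$ does not work. At this point $F_\omega$ is only monotone, positively homogeneous and superadditive. Its linearity is Proposition~\ref{prop6.34}, which comes from the cone contraction that rests on the very inequality you are proving, so appealing to additivity here is circular. Superadditivity only gives $\sum_U F_\omega(\ind_U)\le F_\omega(\ind_{K_{\omega,n-1}})=1$, which is the wrong direction for forcing some $F_\omega(\ind_U)>0$. For an infimum-type functional this can genuinely fail: if every $T_\omega$ maps each of two complementary subintervals onto itself, then $\opera_\omega^m\ind_U$ vanishes on part of $S(m,m)$ for every $U$ inside one of them. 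The hypothesis on $\delta_{\omega,n}$ is also vacuous when $\parti_{\omega,g}^{n}$ is empty.

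The repair is easy and makes the exclusion unnecessary. If there is no good element, consecutive elements of $\parti_{\omega,b}^{n}$ either share an endpoint or are separated by a component of $\bigcup_{i<n}T_\omega^{-i}(H_{\theta^i\omega})$. So they form a single contiguous run of at most $\eta_\omega^n$ elements. For each of them $\sup_U|f|\le\vari(f)+\inf|f|\le\vari(f)+F_\omega(|f|)$, because $F_\omega(h)\ge\inf h$. This gives constants of the same form.

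\textbf{Smaller points.} Your bound $\#\parti_{\omega,g}^{n}\le(2\delta_{\omega,n})^{-1}$ is correct, but it rests on superadditivity, $\sum_U F_\omega(\ind_U)\le F_\omega(\sum_U\ind_U)\le 1$, not on additivity. The paper avoids counting altogether. It fixes $x\in S(n_0,n_0)$ and sums $\opera_\omega^{n_0}(|f|\ind_U)(x)\le\opera_\omega^{n_0}|f|(x)$ over the good $U$. This gives $\sum_{U\in\parti_{\omega,g}^{n}}\inf_U|f|\le\delta_{\omega,n}^{-1}F_\omega(|f|)$ directly, one power of $\delta_{\omega,n}^{-1}$ better than your route.

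Your stated $B_{\omega,n}$ is also missing a factor. Each bad run $\mathcal R$ contributes $|\mathcal R|\inf_{U'}|f|$ with $|\mathcal R|\le\eta_\omega^n$, and a good $U'$ may neighbour two runs. So the coefficient of $F_\omega(|f|)$ must carry a factor of order $1+2\eta_\omega^n$, as in the paper's $B_{\omega,n}$. This does not affect the existence statement.
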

\begin{proof}
    The proof is based on \cite{liverani3,liverani4} in the deterministic case and \cite{atnip1} in random weighted covering case, on \cite[Lemma 1.5.1]{atnip3} as well.
    
    For intervals $U\in\tilde{\mathcal{U}}\setminus(\parti_{\omega,g}^{n}\cup\parti_{\omega,b}^{n})$, the value of $\opera_{\omega}^{n}\ind_{U}$ turns out to be zero since $U\cap K_{\omega,n-1}=\varnothing$. Therefore, we only need to consider estimation on intervals in $\parti_{\omega,g}^{n}\cup\parti_{\omega,b}^{n}$. Rewrite $$\opera_{\omega}^{n}f=\sum_{U\in\parti_{\omega,g}^{n}\cup\parti_{\omega,b}^{n}}(\ind_{U}fg_{\omega}^{n})\circ T_{\omega,U}^{-n},$$ where $T_{\omega,U}^{-n}:T_{\omega}^{n}(J_\omega)\to U$ is the inverse branch sending $T_{\omega}^{n}(x)$ to $x\in U$. Since $\ind_{U}\circ T_{\omega,U}^{-n}=\ind_{T_{\omega}^{n}(U)}$, the above expression is in fact $$\opera_{\omega}^{n}f=\sum_{U\in\parti_{\omega,g}^{n}\cup\parti_{\omega,b}^{n}}\ind_{T_{\omega}^{n}(U)}((fg_{\omega}^{n})\circ T_{\omega,U}^{-n})$$ and its variation is changed to $$\vari(\opera_{\omega}^{n}f)\leq\sum_{U\in\parti_{\omega,g}^{n}\cup\parti_{\omega,b}^{n}}\vari(\ind_{T_{\omega}^{n}(U)}((fg_{\omega}^{n})\circ T_{\omega,U}^{-n})).$$ Since for $U\in\parti_{\omega,g}^{n}\cup\parti_{\omega,b}^{n}$, by $$\vari(\ind_{T_{\omega}^{n}(U)}((fg_{\omega}^{n})\circ T_{\omega,U}^{-n}))\leq \vari_{U}(fg_{\omega}^{n})+2\,\underset{U}{\sup}\,|fg_{\omega}^{n}|,$$ we then have
    \begin{equation}\label{ls}
        \begin{aligned}
            \vari(\opera_{\omega}^{n}f)&\leq\sum_{U\in\parti_{\omega,g}^{n}\cup\parti_{\omega,b}^{n}}\vari_{U}(fg_{\omega}^{n})+2\,\underset{U}{\sup}\,|fg_{\omega}^{n}| \\
            &\leq \sum_{U\in\parti_{\omega,g}^{n}\cup\parti_{\omega,b}^{n}}\vari_{U}(fg_{\omega}^{n})+2\sum_{\substack{Z\in\parti_{\omega}^{n} \\ Z\cap U\neq\varnothing}}\underset{Z\cap U}{\sup}\,|fg_{\omega}^{n}| \\
            &\leq \sum_{U\in\parti_{\omega,g}^{n}\cup\parti_{\omega,b}^{n}}\vari_{U}(f)\supnorm{g_{\omega}^{n}}+\vari_{U}(g_{\omega}^{n})\supnorm{f\ind_{U}}+2\supnorm{f\ind_U}\sum_{\substack{Z\in\parti_{\omega}^{n} \\ Z\cap U\neq\varnothing}}\underset{Z\cap U}{\sup}\,g_{\omega}^{n} \\
            &\leq \sum_{U\in\parti_{\omega,g}^{n}\cup\parti_{\omega,b}^{n}}(a+2b+1)\supnorm{g_{\omega}^{n}}\vari_{U}(f)+(a+2b)\supnorm{g_{\omega}^{n}}\underset{U}{\inf}\,|f| \\
            &= (a+2b+1)\supnorm{g_{\omega}^{n}}\vari(f) \\
            &\qquad+(a+2b)\supnorm{g_{\omega}^{n}}\left(\sum_{U\in\parti_{\omega,g}^{n}}\underset{U}{\inf}\,|f|+\sum_{U\in\parti_{\omega,b}^{n}}\underset{U}{\inf}\,|f|\right)
        \end{aligned}
    \end{equation} We are interested in the last two sums. For the sum on the good part $\parti_{\omega,g}^{n}$, since we assumed there exists $\delta_{\omega,n}>0$ such that $$\underset{U\in\parti_{\omega,g}^{n}}{\inf}\,F_{\omega}(\ind_U)\geq 2\delta_{\omega,n}>0.$$ So there exists an integer $n_0(\omega,n)\geq1$ such that for all $x\in S(n_0(\omega,n),n_0(\omega,n))$, we have $$\underset{U\in\parti_{\omega,g}^{n}}{\inf}\,\frac{\opera_{\omega}^{n_0(\omega,n)}\ind_U(x)}{\opera_{\omega}^{n_0(\omega,n)}\ind_\omega(x)}\geq\delta_{\omega,n}.$$ Hence for such $x$ and $U$ in the good part, by $$\opera_{\omega}^{n_0(\omega,n)}(|f|\ind_U)(x)\geq\opera_{\omega}^{n_0(\omega,n)}\ind_U(x)\underset{U}{\inf}\,|f|\geq\delta_{\omega,n}\opera_{\omega}^{n_0(\omega,n)}\ind_\omega(x)\,\underset{U}{\inf}\,|f|$$ we obtain $$\sum_{U\in\parti_{\omega,g}^{n}}\underset{U}{\inf}\,|f|\leq\delta_{\omega,n}^{-1}\sum_{U\in\parti_{\omega,g}^{n}}\frac{\opera_{\omega}^{n_0(\omega,n)}(|f|\ind_U)(x)}{\opera_{\omega}^{n_0(\omega,n)}\ind_\omega(x)}\leq\delta_{\omega,n}^{-1}\frac{\opera_{\omega}^{n_0(\omega,n)}|f|(x)}{\opera_{\omega}^{n_0(\omega,n)}\ind_\omega(x)}.$$ Regarding the sum on the bad part $\parti_{\omega,b}^{n}$, we obtain that $$\sum_{U\in\parti_{\omega,b}^{n}}\underset{U}{\inf}\,|f|\leq 2\eta_{\omega}^{n}\left(\vari(f)+\sum_{U\in\parti_{\omega,g}^{n}}\underset{U}{\inf}\,|f|\right)$$ from \cite[Lemma 2.5]{liverani4} and \cite[Lemma 1.5.1]{atnip3}. Put the two sums into inequality (\ref{ls}), we now have
    \begin{equation*}
        \begin{aligned}
            \vari(\opera_{\omega}^{n}f)&\leq (a+2b+1)\supnorm{g_{\omega}^{n}}\vari(f) \\
            &\qquad+(a+2b)\supnorm{g_{\omega}^{n}}\left(2\eta_{\omega}^{n}\vari(f)+(1+2\xi_{\omega}^{n})\sum_{U\in\parti_{\omega,g}^{n}}\underset{U}{\inf}\,|f|\right) \\
            &\leq (a+2b+1+(2a+4b)\eta_{\omega}^{n})\supnorm{g_{\omega}^{n}}\vari(f) \\
            &\qquad+(a+2b)(1+2\eta_{\omega}^{n})\supnorm{g_{\omega}^{n}}\delta_{\omega,n}^{-1}\frac{\opera_{\omega}^{n_0(\omega,n)}|f|(x)}{\opera_{\omega}^{n_0(\omega,n)}\ind_\omega(x)} \\
            &\leq (a+2b+1+(2a+4b)\eta_{\omega}^{n})\supnorm{g_{\omega}^{n}}\vari(f) \\
            &\qquad+(a+2b)(1+2\eta_{\omega}^{n})\supnorm{g_{\omega}^{n}}\delta_{\omega,n}^{-1}F_{\omega}(|f|),
        \end{aligned}
    \end{equation*}
    where the last inequality is obtained by taking the infimum over $x\in S(n_0(\omega,n),n_0(\omega,n))$, such sequence is increasing, which allows us to replace it with $F_{\omega}(|f|)$ as $n\to\infty$. Therefore, the positive measurable constants are given by $$A_{\omega,n}:=(a+2b+1+(2a+4b)\eta_{\omega}^{n})\supnorm{g_{\omega}^{n}}$$ and $$B_{\omega,n}:=(a+2b)(1+2\eta_{\omega}^{n})\supnorm{g_{\omega}^{n}}\delta_{\omega,n}^{-1}.$$
\end{proof}

Since $\rho_{\omega}:=F_{\theta\omega}(\opera_{\omega}\ind_{\omega})$ is positive, we can set $\normopera_{\omega}:=\rho_{\omega}^{-1}\opera_{\omega}$, then $\normopera_{\omega}^{n}=(\rho_{\omega}^{n})^{-1}\opera_{\omega}^{n}$ for every $n\geq1$. Lemma \ref{lem6.6} immediately implies that $F_{\omega}(f)\leq F_{\theta^{n}\omega}(\normopera_{\omega}^{n}f)$ for all $\omega\in\Omega$ and $n\geq1$, which is useful in these estimations afterwards. As a consequence of Proposition \ref{prop6.11}, for all $f\in BV(X_\omega)$ and all $n\geq1$ we have $$\vari(\normopera_{\omega}^{n}f)\leq C_{\omega,n}\vari(f)+D_{\omega,n}F_{\omega}(|f|)$$ where $C_{\omega,n}=(\rho_{\omega}^{n})^{-1}A_{\omega,n}$ and $D_{\omega,n}=(\rho_{\omega}^{n})^{-1}B_{\omega,n}$. Since $$\log\eta_{\omega}^{n},\log\supnorm{g_{\omega}^{n}},\log\delta_{\omega,n},\log\rho_{\omega}^{n}\in L^1(\prob),$$ it follows that $\log C_{\omega,n},\log D_{\omega,n}\in L^1(\prob)$. We define $N_{c}\geq1$ to be the smallest integer $n\geq1$ such that $$-\infty<\int_{\Omega}\log C_{\omega,n}\,\dd\prob(\omega)<0.$$
\begin{lemma}\label{lem6.12}
    For all $f\in BV(X_\omega)$ and all $1\leq n\leq N_{c}$, there exists a measurable function $L_{\omega}\in(0,\infty)$ with $L_{\omega}^{n}:=\prod_{i=0}^{n-1}L_{\theta^i \omega}\geq 6^n$ and $\log L_{\omega}\in L^1(\prob)$ such that $$\bvnorm{\normopera_{\omega}^{n}f}\leq L_{\omega}^{n}(\vari(f)+F_{\theta^{n}\omega}(\normopera_{\omega}^{n}|f|)).$$
\end{lemma}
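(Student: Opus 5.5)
We bound the two pieces of $\bvnorm{\normopera_{\omega}^{n}f}=\supnorm{\normopera_{\omega}^{n}f}+\vari(\normopera_{\omega}^{n}f)$ separately. Both will be bounded by a measurable coefficient times $\vari(f)+F_{\theta^{n}\omega}(\normopera_{\omega}^{n}|f|)$. We then define $L_\omega$ so that its cocycle dominates these coefficients for every $1\le n\le N_c$ at once.

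\emph{Variation term.} By the normalized consequence of Proposition \ref{prop6.11},
\[
\vari(\normopera_{\omega}^{n}f)\leq C_{\omega,n}\vari(f)+D_{\omega,n}F_{\omega}(|f|).
\]
Lemma \ref{lem6.6}, in its normalized form $F_{\omega}(h)\leq F_{\theta^{n}\omega}(\normopera_{\omega}^{n}h)$ applied to $h=|f|$, turns the last term into $D_{\omega,n}F_{\theta^{n}\omega}(\normopera_{\omega}^{n}|f|)$.

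\emph{Supremum term.} Use the pointwise inequality $h(x)\leq\vari(h)+F(h)$ on the fiber $\theta^{n}\omega$, applied to $h=|\normopera_{\omega}^{n}f|\leq\normopera_{\omega}^{n}|f|$. This gives
\[
\supnorm{\normopera_{\omega}^{n}f}\leq\supnorm{\normopera_{\omega}^{n}|f|}\leq\vari(\normopera_{\omega}^{n}|f|)+F_{\theta^{n}\omega}(\normopera_{\omega}^{n}|f|).
\]
Since $\vari(|f|)\leq\vari(f)$, the same variation bound with $|f|$ in place of $f$ controls $\vari(\normopera_{\omega}^{n}|f|)$. Adding the two pieces,
\[
\bvnorm{\normopera_{\omega}^{n}f}\leq 2C_{\omega,n}\vari(f)+(2D_{\omega,n}+1)F_{\theta^{n}\omega}(\normopera_{\omega}^{n}|f|).
\]
So it suffices to have $L_\omega^{n}\geq\max\{2C_{\omega,n},\,2D_{\omega,n}+1\}$ for each $1\le n\le N_c$.

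\emph{Choice of $L_\omega$ (main obstacle).} The difficulty is that the coefficients depend on $n$ in a non-multiplicative way, while $L^n_\omega$ must be a cocycle product. Set
\[
M_\omega:=\max_{1\le n\le N_c}\max\{2C_{\omega,n},\,2D_{\omega,n}+1,\,1\}, \qquad L_\omega:=6\,M_\omega .
\]
Then $L_\omega\geq 6$, so $L^n_\omega\geq 6^n$. Also $L_{\theta^{i}\omega}\geq 6\geq1$ for every $i$, hence $L_\omega^{n}\geq L_\omega\geq M_\omega$, and this dominates the coefficients at the base point $\omega$ for every $n\le N_c$.

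Measurability of $L_\omega$ follows because $C_{\omega,n}$ and $D_{\omega,n}$ are measurable and the maximum is finite. For integrability, $\log C_{\omega,n},\log D_{\omega,n}\in L^1(\prob)$, and $\log M_\omega\ge 0$ is bounded above by $\log 3$ plus the sum over $n\le N_c$ of $|\log C_{\omega,n}|+|\log D_{\omega,n}|$. Hence $\log L_\omega\in L^1(\prob)$.

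The only point to check carefully is the inequality $|\normopera^n_\omega f|\le\normopera^n_\omega|f|$, which holds by positivity of the operator. The measurability of $F$-related quantities is inherited from that of $\delta_{\omega,n}$ and $\rho_\omega$.
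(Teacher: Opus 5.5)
Your proposal is correct and follows essentially the same route as the paper. You bound the supremum by the variation plus $F_{\theta^{n}\omega}(\normopera_{\omega}^{n}|f|)$, apply the normalized bound with $C_{\omega,n},D_{\omega,n}$ together with Lemma \ref{lem6.6} to reach $2C_{\omega,n}\vari(f)+(2D_{\omega,n}+1)F_{\theta^{n}\omega}(\normopera_{\omega}^{n}|f|)$, and take $L_\omega$ as a maximum over $1\le n\le N_c$ that is at least $6$. The paper's choice $\max\{6,2C_{\omega,i},2D_{\omega,i}+1\}$ in place of your $6M_\omega$ changes nothing. Passing through $\normopera_{\omega}^{n}|f|$ for the supremum term is a harmless variation, and your explicit check that $\log L_\omega\in L^1(\prob)$ fills in a detail the paper leaves implicit.
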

\begin{proof}
    We have
    \begin{equation*}
        \begin{aligned}
            \bvnorm{\normopera_{\omega}^{n}f}&=\vari(\normopera_{\omega}^{n}f)+\supnorm{\normopera_{\omega}^{n}f}\leq 2\vari(\normopera_{\omega}^{n}f)+F_{\theta^{n}\omega}(\normopera_{\omega}^{n}|f|) \\
            &\leq 2(C_{\omega,n}\vari(f)+D_{\omega,n}F_{\omega}(|f|))+F_{\theta^{n}\omega}(\normopera_{\omega}^{n}|f|) \\
            &\leq 2C_{\omega,n}\vari(f)+(2D_{\omega,n}+1)F_{\theta^{n}\omega}(\normopera_{\omega}^{n}|f|).
        \end{aligned}
    \end{equation*} Set $\tilde{L}_{\omega,n}:=\max\{6,2C_{\omega,n},2D_{\omega,n}+1\}$, then letting $L_{\omega}:=\max\{\tilde{L}_{\omega,i}:1\leq i\leq N_{c}\}$ and $L_{\omega}^{n}:=\prod_{i=0}^{n-1}L_{\theta^i \omega}\geq 6^n$ finishes the lemma.
\end{proof}
We establish another adequate inequality in the style of J. Buzzi \cite{buzzi2} and J. Atnip et al. \cite{atnip1} for the following discussions, rather than the Lasota-Yorke type inequality in Proposition \ref{prop6.11}. For such $N_{c}\geq1$ that $-\infty<\int_{\Omega}\log C_{\omega,N_{c}}\,\dd\prob(\omega)<0$, define the negative random Birkhoff average $$\xi:=-\frac{1}{N_{c}}\int_{\Omega}\log C_{\omega,N_{c}}\,\dd\prob(\omega)>0.$$

According to Lemma \ref{lem6.12}, since $\log L_{\omega}$ is integrable, we similarly define its random Birkhoff average of $L_\omega$ at iterate $N_{c}$ as $\zeta:=\frac{1}{N_{c}}\int_{\Omega}\log L_{\omega}^{N_{c}}\,\dd\prob(\omega)>0$. Birkhoff's ergodic theorem also implies that $$\zeta=\lim_{n\to\infty}\frac{1}{nN_{c}}\sum_{k=0}^{n-1}\log L_{\theta^{kN_{c}}\omega}^{N_{c}}.$$
\begin{proposition}\label{prop6.13}
    For every $\epsilon>0$, there exists a finite measurable function $C_{\epsilon}(\omega)>0$ such that for $\prob$-a.e. $\omega\in\Omega$,  all $n\geq1$ and all $f\in BV(X_{\theta^{-n}\omega})$ we have $$\vari(\normopera_{\theta^{-n}\omega}^{n}f)\leq C_{\epsilon}(\omega)e^{-(\xi-\epsilon)n}\vari(f)+C_{\epsilon}(\omega)F_{\omega}(\normopera_{\theta^{-n}\omega}^{n}|f|).$$
\end{proposition}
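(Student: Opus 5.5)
Write $n=kN_c+r$ with $0\le r<N_c$ and iterate the one-block inequality from Proposition~\ref{prop6.11} (normalized) along the orbit $\theta^{-n}\omega,\theta^{-n+N_c}\omega,\dots$, ending at $\omega$. This is a Buzzi-type argument. The key tool is the monotonicity of the functional, $F_{\theta^{j}\omega'}(\normopera^{j}_{\omega'}h)\ge F_{\omega'}(h)$ from Lemma~\ref{lem6.6}. With it, every $F$-term that appears at an intermediate time can be pushed forward to the final fiber, giving $F_{\omega}(\normopera^{n}_{\theta^{-n}\omega}|f|)$.

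First step (one block). For $\omega'$ and $h\in BV(X_{\omega'})$, the normalized consequence of Proposition~\ref{prop6.11} gives
\[\vari(\normopera^{N_c}_{\omega'}h)\le C_{\omega',N_c}\vari(h)+D_{\omega',N_c}F_{\omega'}(|h|).\]
Fix $f\in BV(X_{\theta^{-n}\omega})$ and set $f_j:=\normopera^{r+jN_c}_{\theta^{-n}\omega}f$, a function on the fiber $\theta^{-n+r+jN_c}\omega$. Since $|\normopera h|\le\normopera|h|$ and $\normopera$ is positive, Lemma~\ref{lem6.6} gives
\[F(|f_j|)\le F(\normopera^{r+jN_c}|f|)\le F_{\omega}(\normopera^{n}_{\theta^{-n}\omega}|f|)=:\Phi.\]
For the initial remainder block of length $r$, use Proposition~\ref{prop6.11} with $n=r$, or Lemma~\ref{lem6.12}. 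This gives $\vari(f_0)\le L^{r}_{\theta^{-n}\omega}(\vari f+\Phi)$.

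Second step (iterate). Induct over $j$:
\[\vari(f_k)\le \Big(\prod_{j=0}^{k-1}C_{\omega_j}\Big)\vari(f_0)+\sum_{i=0}^{k-1}\Big(\prod_{j=i+1}^{k-1}C_{\omega_j}\Big)D_{\omega_i}\,\Phi ,\]
where $\omega_j=\theta^{-n+r+jN_c}\omega$ and $C,D$ are taken at iterate $N_c$. The products run backward from $\omega$ along the $\theta^{-N_c}$ orbit.

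Third step (ergodic control). This is the main obstacle. The obstacle is that $\theta^{N_c}$ need not be ergodic, and that we must control the products uniformly over backward times, with a random constant depending only on $\omega$.

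- Apply Birkhoff's theorem to $\theta^{-N_c}$ for the integrable function $\log C_{\cdot,N_c}$. I would pass to the ergodic decomposition of $\theta^{N_c}$; its components are at most $N_c$ in number and are permuted by $\theta$. Averaging over the $N_c$ residues $r$ recovers $\int\log C_{\omega,N_c}\,d\prob=-N_c\xi$. This needs the standard trick of taking the maximum over the $N_c$ shifts $\theta^{-r}\omega$, as in \cite{buzzi2,atnip1}.
- This yields a finite measurable $K_\epsilon(\omega)$ such that $\prod_{j=i+1}^{k-1}C_{\omega_j}\le K_\epsilon(\omega)e^{-(\xi-\epsilon/2)(k-1-i)N_c}$ for all $i<k$. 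Obtaining this for all $i<k$ simultaneously also uses the tempered growth of the partial sums.
- By Birkhoff, $\log D_{\omega,N_c}$ and $\log L_{\omega}$ are integrable, hence tempered: $D_{\theta^{-m}\omega}\le K'_\epsilon(\omega)e^{\epsilon m/4}$.

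Conclusion. Putting these together:
- The first term is bounded by $K_\epsilon e^{-(\xi-\epsilon/2)(n-r)}L^r_{\theta^{-n}\omega}\vari f$.
- The remainder factor $L^r_{\theta^{-n}\omega}$ with $r<N_c$ is controlled by temperedness of $\log L$, at the cost of $e^{\epsilon n/4}$ times a constant in $\omega$.
- The sum multiplying $\Phi$ is a geometric series, $\sum_i e^{-(\xi-\epsilon/2)(k-1-i)N_c+\epsilon(k-i)N_c/4}$. This is bounded uniformly in $k$ once $\epsilon<\xi$; for $\epsilon\ge\xi$ the statement is trivial after enlarging constants.
- The $L^r\Phi$ term from the first block gets multiplied by the decaying product and is absorbed as well.

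Collecting constants into $C_\epsilon(\omega)$, measurable as a supremum of countably many measurable functions, gives the stated inequality.
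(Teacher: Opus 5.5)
There is a genuine gap in your third step. With $n=kN_c+r$ and the remainder placed at the start, your $N_c$-blocks sit at $\omega_j=\theta^{-(k-j)N_c}\omega$. So for every $n$ they lie on the $\theta^{-N_c}$-orbit of $\omega$ itself. The partial products you need, $\prod_{m=1}^{M}C_{\theta^{-mN_c}\omega,N_c}$, are therefore governed by Birkhoff averages of $\log C_{\cdot,N_c}$ under $\theta^{-N_c}$ starting at $\omega$. These converge to the average of $\log C_{\cdot,N_c}$ over the $\theta^{N_c}$-ergodic component containing $\omega$.

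You correctly note that these component values average to $-N_c\xi$. But that only guarantees that \emph{some} component has value $\le -N_c\xi$; on the others the value can be larger, even positive. For a two-point base $\Omega=\{0,1\}$ with $\theta$ the swap and $N_c=2$, nothing prevents $C_{0,2}<1<C_{1,2}$ with $C_{0,2}C_{1,2}<1$; then for $\omega=1$ your products are $C_{1,2}^{M}$, which grow. For $\omega$ in such a component, no finite $K_\epsilon(\omega)$ gives $\prod_{j=i+1}^{k-1}C_{\omega_j}\le K_\epsilon(\omega)e^{-(\xi-\epsilon/2)(k-1-i)N_c}$. Taking a maximum over the $N_c$ shifts cannot help. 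In your scheme $r$ only moves the starting fiber, never the residue class of the blocks, and a maximum of constants does not change an exponential rate.

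The missing idea, which is how the paper proceeds, is to let the alignment depend on $\omega$. Pick $0\le r_\omega\le N_c-1$ with $\lim_{m\to\infty}\frac1m\sum_{i=1}^{m}\log C_{\theta^{-r_\omega-iN_c}\omega,N_c}\le -N_c\xi$; this is possible because $\theta$ cyclically permutes the components. Then write $n=r_\omega+sN_c+d$ with $0\le d\le N_c-1$, that is, $\normopera^{n}_{\theta^{-n}\omega}=\normopera^{r_\omega}_{\theta^{-r_\omega}\omega}\circ\normopera^{sN_c}_{\theta^{-n+d}\omega}\circ\normopera^{d}_{\theta^{-n}\omega}$. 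The three pieces are handled as follows.
\begin{itemize}
\item The $N_c$-blocks now end at $\theta^{-r_\omega}\omega$, where Birkhoff's theorem gives the decay you want with a single constant anchored there.
\item The final segment of length $r_\omega$ adjacent to $\omega$ costs only the $n$-independent factor $N_c\prod_{i=1}^{N_c}L_{\theta^{-i}\omega}$, by iterating Lemma~\ref{lem6.12}. Its $F$-terms are pushed to $F_{\omega}(\normopera^{n}_{\theta^{-n}\omega}|f|)$ by Lemma~\ref{lem6.6}.
\item The initial leftover of length $d$ is handled by temperedness of $\log L$, as in your first step.
\end{itemize}
With this change the rest of your argument goes through: pushing the $F$-terms forward, the tempered bound on $D_{\cdot,N_c}$, and the geometric series.
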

\begin{proof}
    The proof is referred to \cite[Proposition 4.9]{atnip1} but we need to be careful with the second term, so there are some modifications related to random functional $F_\omega$.

    For a function $g:\Omega\to\R$, we can write $$\sum_{i=0}^{nN_{c}-1}g\circ\theta^{i}=\sum_{j=0}^{N_{c}-1}\sum_{i=0}^{n-1}g\circ\theta^{iN_{c}+j}.$$ Hence, there exists an integer $0\leq r_\omega\leq N_{c}-1$ such that $$\lim_{n\to\infty}\frac{1}{n}\sum_{i=0}^{n-1}\log C_{\theta^{-r_{\omega}-iN_{c}}\omega,N_{c}}\leq\xi.$$ For each $n\geq1$, we can write it as $n=s_{\omega}N_{c}+d_{\omega}+r_{\omega}$ where $s_{\omega}\geq0$ and $0\leq d_{\omega}\leq N_{c}-1$. For simplicity, we drop the symbol $\omega$ in the following proof. For $f\in BV(X_{\theta^{-n}\omega})$, we write $\normopera_{\theta^{-n}\omega}^{n}f$ as $\normopera_{\theta^{-r}\omega}^{r}\circ\normopera_{\theta^{-n+d}\omega}^{sN_{c}}\circ\normopera_{\theta^{-n}\omega}^{d}f$. For $0\leq r\leq N_{c}-1$, recall that $F_{\theta^{-i}\omega}(|f|)\leq F_{\omega}(\normopera_{\theta^{-i}\omega}^{i}|f|)$ for $i\geq0$ and $f\in BV(X_{\theta^{-i}\omega})$, then
    \begin{equation*}
        \begin{aligned}
            \vari(\normopera_{\theta^{-r}\omega}^{r}f)&=\vari(\normopera_{\theta^{-1}\omega}(\normopera_{\theta^{-r}\omega}^{r-1}f)) \\
            &\leq L_{\theta^{-1}\omega}(\vari(\normopera_{\theta^{-r}\omega}^{r-1}f)+F_{\omega}(\normopera_{\theta^{-r}\omega}^{r}|f|)) \\
            &\leq L_{\theta^{-1}\omega}L_{\theta^{-2}\omega}(\vari(\normopera_{\theta^{-r}\omega}^{r-2}f)+F_{\theta^{-1}\omega}(\normopera_{\theta^{-r}\omega}^{r-1}|f|))+L_{\theta^{-1}\omega}F_{\omega}(\normopera_{\theta^{-r}\omega}^{r}|f|) \\
            &\leq L_{\theta^{-1}\omega}L_{\theta^{-2}\omega}\vari(\normopera_{\theta^{-r}\omega}^{r-2}f)+(L_{\theta^{-1}\omega}L_{\theta^{-2}\omega}+L_{\theta^{-1}\omega})F_{\omega}(\normopera_{\theta^{-r}\omega}^{r}|f|) \\
            &\leq \cdots \\
            &\leq \vari(f)\prod_{i=1}^{r}L_{\theta^{-i}\omega}+F_{\omega}(\normopera_{\theta^{-r}\omega}^{r}|f|)\sum_{i=1}^{r}\prod_{k=1}^{i}L_{\theta^{-k}\omega}.
        \end{aligned}
    \end{equation*} Therefore, we have
    $$\vari(\normopera_{\theta^{-r}\omega}^{r}f)\leq C_{1}(\vari(f)+F_{\omega}(\normopera_{\theta^{-r}\omega}^{r}|f|))$$ for $f\in BV(X_{\theta^{-r}\omega})$, where $C_1:=C_{1}(\omega)=N_{c}\prod_{i=1}^{N_{c}}L_{\theta^{-i}\omega}$.

    Since $\log L_{\omega}\in L^1(\prob)$, by Birkhoff's ergodic theorem, we have $$\lim_{|k|\to\infty}\frac{1}{|k|}\log L_{\theta^{k}\omega}=0$$ for all $k\in\Z$. So for each $\delta>0$ there exists a measurable positive constant $C_\delta:=C_{\delta}(\omega)>0$ such that $L_{\theta^{k}\omega}\leq C_\delta e^{\delta|k|}$. It allows us to make the estimation below, let $f\in BV(X_{\theta^{-n}\omega})$,
    \begin{equation*}
        \begin{aligned}
            \vari(\normopera_{\theta^{-n}\omega}^{d}f)&\leq \vari(f)\prod_{i=n-d+1}^{n}L_{\theta^{-i}\omega}+F_{\omega}(\normopera_{\theta^{-n}\omega}^{n}|f|)\sum_{i=n-d+1}^{n}\prod_{k=n-d+1}^{i}L_{\theta^{-k}\omega} \\
            &\leq C_{\delta}^{d}e^{\delta\left(\sum_{i=n-d+1}^{n}i\right)}\vari(f)+F_{\omega}(\normopera_{\theta^{-n}\omega}^{n}|f|)\sum_{i=n-d+1}^{n}C_{\delta}^{i-n+d}e^{\delta\left(\sum_{k=n-d+1}^{i}k\right)} \\
            &\leq dC_{\delta}^{N_{c}}e^{nd\delta}(\vari(f)+F_{\omega}(\normopera_{\theta^{-n}\omega}^{n}|f|)) \\
            &\leq C_2 e^{\frac{n\epsilon}{2}}(\vari(f)+F_{\omega}(\normopera_{\theta^{-n}\omega}^{n}|f|))
        \end{aligned}
    \end{equation*} for chosen $0<\delta<\frac{\epsilon}{2N_{c}}$ and $C_2:=C_{2}(\omega)=N_{c}C_{\delta}^{N_{c}}$. Let $\tau:=\theta^{-n+d}\omega$ and $f\in BV(X_\tau)$, then we have
    \begin{equation*}
        \begin{aligned}
            \vari(\normopera_{\tau}^{sN_{c}}f)&=\vari(\normopera_{\theta^{(s-1)N_{c}}\tau}^{N_{c}}(\normopera_{\tau}^{(s-1)N_{c}}f)) \\
            &\leq C_{\theta^{(s-1)N_{c}}\tau,N_{c}}\vari(\normopera_{\tau}^{(s-1)N_{c}}f)+D_{\theta^{(s-1)N_{c}}\tau,N_{c}}F_{\tau}(|f|) \\
            &\leq \prod_{i=1}^{s}C_{\theta^{(s-i)N_{c}}\omega,N_{c}}\vari(f)+\sum_{i=1}^{s}\prod_{k=1}^{i}C_{\theta^{(s-k)N_{c}}\tau,N_{c}}D_{\theta^{(s-k)N_{c}}\tau,N_{c}}F_{\tau}(|f|).
        \end{aligned}
    \end{equation*} There exists a measurable function $C_{3}:=C_{3}(\omega)>0$ such that for every $i\geq1$, we have $$\prod_{k=1}^{i}C_{\theta^{(s-k)N_{c}}\tau,N_{c}}\leq C_{3}e^{-\left(\xi-\frac{\epsilon}{2}\right)iN_{c}}.$$ Since $\log D_{\omega,N_{c}}\in L^1(\prob)$, we can also find another measurable function $C_4:=C_{4}(\omega)\geq C_3$ such that for every $i\geq1$ we have $D_{\theta^{(s-i)N_{c}}\tau,N_{c}}\leq C_{4}e^{\frac{\epsilon}{2}}iN_{c}$. Therefore, by putting these inequalities into the above, we have
    \begin{equation*}
        \begin{aligned}
            \vari(\normopera_{\tau}^{sN_{c}}f)&\leq \prod_{i=1}^{s}C_{\theta^{(s-i)N_{c}}\omega,N_{c}}\vari(f)+\sum_{i=1}^{s}\prod_{k=1}^{i}C_{\theta^{(s-k)N_{c}}\tau,N_{c}}D_{\theta^{(s-k)N_{c}}\tau,N_{c}}F_{\tau}(|f|) \\
            &\leq C_{4}e^{-\left(\xi-\frac{\epsilon}{2}\right)sN_{c}}\vari(f)+\sum_{i=1}^{s}C_{4}e^{-\left(\xi-\frac{\epsilon}{2}\right)(s-i)N_{c}}C_{4}e^{\frac{\epsilon}{2}(s-i)N_{c}}F_{\tau}(|f|) \\
            &\leq C_{4}e^{-\left(\xi-\frac{\epsilon}{2}\right)sN_{c}}\vari(f)+\frac{C_{4}^{2}}{1-e^{-(\xi-\epsilon)N_{c}}}F_{\tau}(|f|) \\
            &\leq C_{5}e^{-\left(\xi-\frac{\epsilon}{2}\right)sN_{c}}\vari(f)+C_{5}F_{\tau}(|f|)
        \end{aligned}
    \end{equation*} where $C_{5}:=C_{5}(\omega)=\max\{C_{4},\frac{C_{4}^{2}}{1-e^{-(\xi-\epsilon)N_{c}}}\}$ is also a positive measurable function. Notice that $sN_{c}=n-d-r$ and $0\leq r,d\leq N_{c}-1$, we rewrite the power as $$-\left(\xi-\frac{\epsilon}{2}\right)sN_{c}+\frac{n\epsilon}{2}=-(\xi-\epsilon)n+\left(\xi-\frac{\epsilon}{2}\right)(d+r)\leq-(\xi-\epsilon)n+(2\xi-\epsilon)N_{c}.$$ Summarizing all estimations above together, we obtain
    \begin{equation*}
        \begin{aligned}
            \vari(\normopera_{\theta^{-n}\omega}^{n}f)&=\vari(\normopera_{\theta^{-r}\omega}^{r}\circ\normopera_{\theta^{-n+d}\omega}^{sN_{c}}\circ\normopera_{\theta^{-n}\omega}^{d}f) \\
            &\leq C_{1}\left(\vari\left(\normopera_{\theta^{-n+d}\omega}^{sN_{c}}\circ\normopera_{\theta^{-n}\omega}^{d}f\right)+F_{\omega}(\normopera_{\theta^{-n}\omega}^{n}|f|)\right) \\
            &\leq C_{1}\left(C_{5}e^{-\left(\xi-\frac{\epsilon}{2}\right)sN_{c}}\vari(\normopera_{\theta^{-n}\omega}^{d}f)+C_{5}F_{\theta^{-n+d}\omega}(\normopera_{\theta^{-n}\omega}^{d}|f|)+F_{\omega}(\normopera_{\theta^{-n}\omega}^{n}|f|)\right) \\
            &\leq C_{1}\left(C_{5}e^{-\left(\xi-\frac{\epsilon}{2}\right)sN_{c}}C_{2}e^{\frac{n\epsilon}{2}}(\vari(f)+F_{\omega}(\normopera_{\theta^{-n}\omega}^{n}|f|))+(1+C_5)F_{\omega}(\normopera_{\theta^{-n}\omega}^{n}|f|)\right) \\
            &\leq C_{1}C_{2}C_{5}e^{(2\xi-\epsilon)N_{c}}e^{-(\xi-\epsilon)n}\vari(f) \\
            &\qquad +C_{1}(C_{2}C_{5}e^{(2\xi-\epsilon)N_{c}}e^{-(\xi-\epsilon)n}+(1+C_5))F_{\omega}(\normopera_{\theta^{-n}\omega}^{n}|f|).
        \end{aligned}
    \end{equation*} Taking the measurable finite function $C_{\epsilon}(\omega)\geq C_{1}C_{2}C_{5}(1+C_5)e^{(2\xi-\epsilon)N_{c}}$ large enough finishes the proof.
\end{proof}
\section{Birkhoff cones and cone invariance}\label{sec4}
\subsection{Birkhoff cones}
We introduce an alternative and powerful method of proving the Ruelle-Perron-Frobenius theorem, called the \emph{Birkhoff cone}. It is different from the usual procedure of establishing a weak$^*$ limit of invariant measures in the case of distance expanding map on compact space; for details, see \cite[Chapter 13]{UrbanskiRoyMunday2}. This technique originated from lattice theory by G. Birkhoff \cite{birkhoff1}, it allows us to obtain the invariant measure by the constructive fixed-point theorem and thereafter the exponential rate for decay of correlations effectively. C. Liverani gives a systematic introduction to it; see \cite[Section 1]{liverani1}, \cite[Section 1]{liverani2} and \cite{Naud} as well.

Given a topological vector space $V$, a subset $C\subseteq V$ is called a \emph{convex cone} if it satisfies the following: (1) $C\cap -C=\varnothing$; (2) $\alpha C=C$ for all $\alpha>0$; (3) $C$ is convex; (4) for all $f,g\in C$ and all $\alpha_n\in\R$ with $\lim_{n\to\infty}\alpha_n=\alpha$, if $g-\alpha_{n}f\in C$ for each $n\geq1$, then $g-\alpha f\in C\cup\{0\}$. The relation $\preceq$ is defined on $V$ by condition $$f\preceq g\Longleftrightarrow g-f\in C\cup\{0\}$$ and it is a partial order. Define a distance $\Theta_{C}(f,g)$ on $C$ by $$\Theta_{C}(f,g):=\log\frac{\beta(f,g)}{\alpha(f,g)}$$ where $\alpha(f,g):=\sup\{a>0:af\preceq g\}$ and $\beta(f,g):=\inf\{b>0:g\preceq bf\}$. Note that $\Theta_{C}$ is a \emph{pseudo-metric} as two elements in the cone may have a distance of infinity. Furthermore, it is a projective metric, since any two proportional elements have zero distance according to definition. We call such a metric the \emph{Hilbert metric} on the convex cone $C$ and denote it by $\Theta_{C}$. We copy two useful inequalities from \cite{liverani1,liverani3}.
\begin{theorem}[Birkhoff's inequality, {\cite[Theorem 1.1]{liverani1}}]
    Let $V_1,V_2$ be two vector spaces and $C_1\subseteq V_1,C_2\subseteq V_2$ be two convex cones. Given a positive linear operator $L:V_1\to V_2$ such that $L(C_1)\subseteq C_2$, let $\Theta_{i}$ denote the Hilbert metric on each cone $C_i$ and $\Delta:=\diam_{C_2}(L(C_1))=\sup_{f,g\in C_1}\Theta_{2}(Lf,Lg)$, then $$\Theta_2(Lf,Lg)\leq\tanh\left(\frac{\Delta}{4}\right)\Theta_1(f,g),\ \forall\ f,g\in C_1.$$
\end{theorem}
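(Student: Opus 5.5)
We will prove Birkhoff's inequality by first reducing it to the case of two-dimensional cones via the definitions of $\alpha$ and $\beta$. We then estimate the contraction using the cross-ratio.

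Fix $f,g\in C_1$ and assume $\Theta_1(f,g)<\infty$; otherwise there is nothing to prove. Write $\alpha=\alpha_1(f,g)$ and $\beta=\beta_1(f,g)$. By the closedness axiom (4), we have $g-\alpha f\in C_1\cup\{0\}$ and $\beta f-g\in C_1\cup\{0\}$. If either one vanishes, then $g$ is proportional to $f$, $\Theta_1(f,g)=0$, and $Lg,Lf$ are proportional too; so assume both lie in $C_1$. Positivity, i.e.\ $L(C_1)\subseteq C_2$, gives $\phi_1:=L(g-\alpha f)\in C_2$ and $\phi_2:=L(\beta f-g)\in C_2$.

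Since $\Delta=\diam_{C_2}(L(C_1))$, we get $\Theta_2(\phi_1,\phi_2)\le\Delta$ (if $\Delta=\infty$ the claim is trivial, since $\tanh(\infty/4)=1$ and the weak contraction $\Theta_2(Lf,Lg)\le\Theta_1(f,g)$ follows directly from $\alpha Lf\preceq Lg\preceq\beta Lf$). Hence there are $\lambda,\mu>0$ with $\mu/\lambda\le e^{\Delta}$, $\lambda\phi_1\preceq\phi_2$ and $\phi_2\preceq\mu\phi_1$, where the order is taken in $C_2$. Now express $Lf,Lg$ through $\phi_1,\phi_2$: we have $(\beta-\alpha)Lf=\phi_1+\phi_2$ and $(\beta-\alpha)Lg=\beta\phi_1+\alpha\phi_2$. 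The main computation is then to use $\lambda\phi_1\preceq\phi_2\preceq\mu\phi_1$ to bound
\[
\alpha_2(Lf,Lg)\ge\frac{\beta+\alpha\mu}{1+\mu},\qquad \beta_2(Lf,Lg)\le\frac{\beta+\alpha\lambda}{1+\lambda}.
\]
For the first bound, note that $Lg-tLf$ is a combination $(\beta-t)\phi_1+(\alpha-t)\phi_2$; for $t\in(\alpha,\beta)$ this lies in $C_2$ as long as $(\beta-t)\ge(t-\alpha)\mu$, since $\phi_2\preceq\mu\phi_1$. The second bound follows symmetrically from $\lambda\phi_1\preceq\phi_2$. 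Consequently,
\[
\Theta_2(Lf,Lg)\le\log\frac{(\beta+\alpha\lambda)(1+\mu)}{(1+\lambda)(\beta+\alpha\mu)}.
\]

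The last step is elementary calculus and is where I expect the real work to lie. Set $e^{s}=\beta/\alpha$, so that $s=\Theta_1(f,g)$, and bound the right-hand side by its supremum over $\lambda<\mu$ with $\mu/\lambda\le e^{\Delta}$. Writing $\lambda=e^{u-\Delta/2}$ and $\mu=e^{u+\Delta/2}$ and optimizing in $u$, I would show that the supremum of
\[
\log\frac{(e^{s}+\lambda)(1+\mu)}{(1+\lambda)(e^{s}+\mu)}
\]
is at most $s\tanh(\Delta/4)$. I would do this by first checking that the expression is a concave function of $s$ vanishing at $s=0$, so that it is enough to control the derivative at $0$. That derivative equals $\frac{1}{1+\lambda}-\frac{1}{1+\mu}\le\frac{\sqrt{\mu/\lambda}-1}{\sqrt{\mu/\lambda}+1}=\tanh(\Delta/4)$ after optimizing over $\lambda\mu$. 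The concavity check and this optimization are the main obstacle; if concavity is awkward, I would instead follow Liverani's route, reducing via the cross-ratio to the standard projective estimate on $\R^2_+$.
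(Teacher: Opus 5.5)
The paper gives no proof of this statement; it is quoted from Liverani. Your reduction is the standard one, and it is correct: the identities $(\beta-\alpha)Lf=\phi_1+\phi_2$ and $(\beta-\alpha)Lg=\beta\phi_1+\alpha\phi_2$, and your lower bound on $\alpha_2(Lf,Lg)$ and upper bound on $\beta_2(Lf,Lg)$, give
\[
\Theta_2(Lf,Lg)\le h(s):=\log\frac{(e^{s}+\lambda)(1+\mu)}{(1+\lambda)(e^{s}+\mu)},\qquad s=\Theta_1(f,g).
\]
The gap is in the final step. For fixed $\lambda<\mu$, $h$ is \emph{not} concave in $s$. Indeed $h'(s)=\frac{(\mu-\lambda)e^{s}}{(e^{s}+\lambda)(e^{s}+\mu)}$, and $h''(s)$ has the sign of $\lambda\mu-e^{2s}$. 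So whenever $\lambda\mu>1$, $h$ is convex on $[0,\frac12\log(\lambda\mu)]$, and the inequality $h(s)\le h'(0)s$ you want to deduce fails there. Concretely, $\lambda=1$, $\mu=4$, $s=\log 2$ give $h(s)=\log\frac54\approx 0.223$, while $h'(0)s=0.3\log 2\approx 0.208$. The target $h(s)\le s\tanh(\Delta/4)$ still holds in this example, since $\tanh(\frac14\log 4)=\frac13$. But the route ``concave, vanishing at $0$, so control $h'(0)$'' does not prove it.

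The missing idea is to bound $h'$ uniformly in $\xi$, not only at $0$. For every $\xi$, AM--GM gives $(e^{\xi}+\lambda)(e^{\xi}+\mu)\ge e^{\xi}(\sqrt\lambda+\sqrt\mu)^2$, hence
\[
h'(\xi)\le\frac{\mu-\lambda}{(\sqrt\lambda+\sqrt\mu)^2}=\frac{\sqrt{\mu/\lambda}-1}{\sqrt{\mu/\lambda}+1}=\tanh\Bigl(\frac14\log\frac{\mu}{\lambda}\Bigr)\le\tanh\Bigl(\frac{\Delta}{4}\Bigr).
\]
Then $h(s)=\int_0^s h'(\xi)\,d\xi\le s\tanh(\Delta/4)$, which is the standard way to finish. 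Your own computation nearly contains this. Since $h'_{\lambda,\mu}(\xi)=h'_{\lambda e^{-\xi},\mu e^{-\xi}}(0)$ and the rescaling preserves $\mu/\lambda$, your optimization of the derivative at $0$ over $\lambda\mu$ already bounds $h'$ at every $\xi$, and no concavity is needed. Alternatively, $s\mapsto\sup_u h$ is concave, with the supremum attained at $u=s/2$, but that would require a separate proof. The uniform-derivative route also makes the reduction to $\mu/\lambda=e^{\Delta}$ exactly unnecessary, since $\tanh$ is increasing.
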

\begin{lemma}[{\cite[Lemma 2.2]{liverani3}}]\label{lem4.3}
    Let $\Vert\cdot\Vert$ be a norm on $V$ such that for all $f,g\in V$ if $-f\preceq g\preceq f$ then $\Vert g\Vert\leq\Vert f\Vert$. Let $\rho:C\to(0,\infty)$ be a homogeneous and order-preserving function; that is, for all $f\in C$ and all $c>0$, $\rho(cf)=c\rho(f)$ and for all $f,g\in C$, if $f\preceq g$ then $\rho(f)\leq\rho(g)$. Given $f,g\in C$ for which $\rho(f)=\rho(g)$, we have $$\Vert f-g\Vert\leq(e^{\Theta_{C}(f,g)}-1)\min\{\Vert f\Vert,\Vert g\Vert\}.$$
\end{lemma}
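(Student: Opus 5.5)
We prove Lemma \ref{lem4.3} by unwinding the definition of $\Theta_C$ and then applying the monotonicity of the norm to a suitably centred difference. Set $\alpha:=\alpha(f,g)$ and $\beta:=\beta(f,g)$, so that $\Theta_C(f,g)=\log(\beta/\alpha)$. If $\Theta_C(f,g)=\infty$ there is nothing to prove, so we assume it is finite. By definition, $af\preceq g$ for all $a<\alpha$ close to $\alpha$, and $g\preceq bf$ for all $b>\beta$ close to $\beta$. The closedness axiom (4) of the cone lets us pass to the limit, which gives
\[\alpha f\preceq g\preceq \beta f.\]

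Next we locate $\alpha$ and $\beta$ relative to $1$ using $\rho$. Since $\rho$ is homogeneous and order-preserving, the two-sided bound gives $\alpha\rho(f)\leq\rho(g)\leq\beta\rho(f)$. (If $\alpha f=g$ exactly, then $f,g$ are proportional and $\rho(f)=\rho(g)$ forces $f=g$, so the inequality is trivial.) Because $\rho(f)=\rho(g)>0$, we conclude $\alpha\leq1\leq\beta$.

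We now bound the difference $g-f$. Subtracting $f$ from $\alpha f\preceq g\preceq \beta f$ gives $(\alpha-1)f\preceq g-f\preceq(\beta-1)f$. Set $M:=\max\{\beta-1,1-\alpha\}\geq0$. Since $f\in C$, we have $(\beta-1)f\preceq Mf$ and $-Mf\preceq(\alpha-1)f$, so
\[-Mf\preceq g-f\preceq Mf.\]
Applying the norm hypothesis to the element $Mf$ yields $\Vert g-f\Vert\leq M\Vert f\Vert$.

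It remains to compare $M$ with $e^{\Theta}-1$, where $e^{\Theta}=\beta/\alpha$. On one side, $\beta-1\leq\beta/\alpha-1$ because $\alpha\leq1$. On the other side, $1-\alpha\leq(1-\alpha)/\alpha=1/\alpha-1\leq\beta/\alpha-1$ because $\beta\geq1$. Hence $M\leq e^{\Theta_C(f,g)}-1$ and
\[\Vert f-g\Vert\leq(e^{\Theta_C(f,g)}-1)\Vert f\Vert.\]

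The Hilbert metric is symmetric: $\alpha(g,f)=1/\beta(f,g)$ and $\beta(g,f)=1/\alpha(f,g)$. Running the same argument with the roles of $f$ and $g$ exchanged therefore gives the bound with $\Vert g\Vert$, and taking the smaller of the two gives the minimum.

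The only delicate point is justifying the attained bounds $\alpha f\preceq g\preceq\beta f$, and this is exactly where the closedness axiom (4) is needed.
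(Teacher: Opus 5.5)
Your proof is correct and is essentially the standard argument behind \cite[Lemma 2.2]{liverani3}, which the paper quotes without proof: sandwich $g$ between multiples of $f$, use $\rho$ to force $\alpha\leq 1\leq\beta$, trap $g-f$ between $\pm Mf$ and apply the norm hypothesis, then symmetrize. One minor point is that axiom (4) is not actually needed. Running the same steps with any $a<\alpha(f,g)$ and $b>\beta(f,g)$ satisfying $af\preceq g\preceq bf$ already gives $\Vert f-g\Vert\leq(b/a-1)\Vert f\Vert$, and letting $a\uparrow\alpha(f,g)$, $b\downarrow\beta(f,g)$ yields the claim without establishing the attained bounds.
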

\begin{remark}
    Clearly in Lemma \ref{lem4.3}, choosing $\rho(\cdot)=\Vert\cdot\Vert$ also satisfies this inequality since the norm on $V$ is homogeneous and order-preserving, so if $\Vert f\Vert=\Vert g\Vert$ then the inequality is reduced to $$\Vert f-g\Vert\leq(e^{\Theta_{C}(f,g)}-1)\Vert f\Vert.$$
\end{remark}
Fix a fiber $\omega\in\Omega$, given a positive cone parameter $a>0$, we construct the cones $$\Lambda_{\omega,a}:=\{f\in BV(X_\omega):f\not\equiv0,f\geq0,\,\vari(f)\leq aF_{\omega}(f)\}$$ and $$\Lambda_{\omega,+}:=\{f\in BV(X_\omega):f\not\equiv0,\,f\geq0\}.$$ We immediately obtain the following lemma by Proposition \ref{prop6.11}.
\begin{lemma}
    For every $\omega\in\Omega$, the open transfer operator $\opera_{\omega}$ is a weak contraction on $\Lambda_{\omega,+}$, that is, $\opera_{\omega}\Lambda_{\omega,+}\subseteq\Lambda_{\theta\omega,+}$. Hence, for all $n\geq1$, we have $\opera_{\omega}^{n}\Lambda_{\omega,+}\subseteq\Lambda_{\theta^{n}\omega,+}$.
\end{lemma}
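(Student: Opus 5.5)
The plan is to check the two defining properties of $\Lambda_{\omega,+}$ directly for $\opera_\omega f$, and then induct on $n$. The paper says the lemma follows from Proposition \ref{prop6.11}, and indeed bounded variation of the image is exactly what that proposition gives.

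Fix $\omega$ and let $f\in\Lambda_{\omega,+}$, so $f\in BV(X_\omega)$, $f\geq0$ and $f\not\equiv0$.

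First, $\opera_\omega f\geq0$ pointwise. This follows from the formula $\opera_\omega f(x)=\sum_{y\in T_\omega^{-1}x}f(y)\ind_\omega(y)g^1_{\omega,c}(y)$. Every term is nonnegative, and the series converges because $\supnorm{f}\,S^1_\omega<\infty$. So $\opera_\omega f\in B(X_{\theta\omega})$ with $\supnorm{\opera_\omega f}\leq \supnorm{f}S^1_\omega$.

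Second, $\opera_\omega f\in BV(X_{\theta\omega})$. Proposition \ref{prop6.11} with $n=1$ gives
\[\vari(\opera_\omega f)\leq A_{\omega,1}\vari(f)+B_{\omega,1}F_\omega(|f|)\leq A_{\omega,1}\vari(f)+B_{\omega,1}\supnorm{f}<\infty.\]

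Third, and this is the only delicate point, we need $\opera_\omega f\not\equiv0$. The plan is to use the functional $F$ rather than pointwise information. Lemma \ref{lem6.6} gives
\[F_{\theta\omega}(\opera_\omega f)\geq \rho_\omega F_\omega(f),\]
and $\rho_\omega>0$. So it is enough to know $F_\omega(f)>0$. If instead $F_\omega(f)=0$, I would argue pointwise: $f\not\equiv0$ and $f\geq0$ give a point $y\in X_\omega$ with $f(y)>0$. Its image $x=T_\omega y$ lies in $X_{\theta\omega}$. If $y\in J_\omega$, then $\opera_\omega f(x)\geq f(y)g^1_{\omega,c}(y)>0$, since $\inf g^1_{\omega,c}|_Z>0$ on every $Z\in\parti^1_\omega$.

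The remaining case, $f$ supported in $H_\omega$, is where I expect the argument could fail. There $\opera_\omega f=\opera_{\omega,c}(f\ind_\omega)\equiv0$, so nonvanishing cannot hold for arbitrary nonzero $f\geq0$. I would handle this by reading the cone as functions on $J_\omega$, in line with the paper's identification $\opera_\omega f=\opera_{\omega,c}(f\ind_\omega)$. That is, the statement is about $f\ind_\omega\not\equiv0$, and then the pointwise argument above applies. Alternatively, one can restrict to $f$ with $F_\omega(f)>0$, which is the regime used for the cones $\Lambda_{\omega,a}$, and then Lemma \ref{lem6.6} settles it.

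For the iterate statement, induct on $n$, using $\opera^n_\omega=\opera_{\theta^{n-1}\omega}\circ\opera^{n-1}_\omega$ and applying the one-step inclusion at the fiber $\theta^{n-1}\omega$. Alternatively, use the closed form $\opera^n_\omega f=\opera^n_{\omega,c}(f\ind_{K_{\omega,n-1}})$ together with the finiteness of $S^n_\omega$ and Proposition \ref{prop6.11} for general $n$. The main obstacle is the nonvanishing step, and specifically the convention about mass of $f$ lying in the hole.
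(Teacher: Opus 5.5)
You are right that the statement fails as written for $f\geq0$ supported in $H_\omega$. The paper's justification, ``by Proposition \ref{prop6.11}'', covers only what your first two steps do: $\opera_\omega f\geq0$ and $\opera_\omega f\in BV(X_{\theta\omega})$. It never addresses $\opera_\omega f\not\equiv0$.

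The gap is in your first repair. Mass can fall into the hole at any later time, not only at time $0$, so the class $\{f\geq0:\ f\ind_\omega\not\equiv0\}$ is not preserved by $\opera_\omega$. Consequently the one-step argument cannot be iterated, and your induction fails already at $n=2$. Suppose $U\subseteq J_\omega$ is an interval with $T_\omega(U)\subseteq H_{\theta\omega}$; nothing in the standing assumptions excludes this. Then $f=\ind_U$ satisfies $f\ind_\omega\not\equiv0$ and $\opera_\omega f\not\equiv0$. But $\opera_\omega f$ vanishes on $J_{\theta\omega}$, hence $\opera_\omega^{2}f=\opera_{\theta\omega,c}\bigl((\opera_\omega f)\ind_{\theta\omega}\bigr)\equiv0$.

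Your closed-form alternative has the same defect. The function $\opera_\omega^n f=\opera_{\omega,c}^n(f\ind_{K_{\omega,n-1}})$ is nonzero only if $f\ind_{K_{\omega,n-1}}\not\equiv0$. That condition depends on $n$ and is not implied by $f\ind_\omega\not\equiv0$. So the obstacle is not just a convention about mass in $H_\omega$ at time $0$.

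Commit to your second repair, which needs no induction. If $f\geq0$ and $F_\omega(f)>0$, Lemma \ref{lem6.6} gives $F_{\theta^n\omega}(\opera_\omega^n f)\geq\rho_\omega^n F_\omega(f)>0$ for every $n$. Since $F_{\theta^n\omega}(h)\leq\supnorm{h}$, this forces $\opera_\omega^n f\not\equiv0$. Proposition \ref{prop6.11} with general $n$ gives bounded variation. Thus $\opera_\omega^n$ maps $\{f\in BV(X_\omega): f\geq0,\ F_\omega(f)>0\}$ into the corresponding set at $\theta^n\omega$, for all $n$ at once.

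This set contains every $\Lambda_{\omega,a}$. Indeed, $f\in\Lambda_{\omega,a}$ with $F_\omega(f)=0$ would give $\vari(f)=0$, and then $\supnorm{f}\leq\vari(f)+F_\omega(f)=0$, contradicting $f\not\equiv0$. So the corrected version suffices wherever the lemma is applied to functions coming from the cones $\Lambda_{\cdot,a}$, as in Proposition \ref{prop6.29}.
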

We have an upper bound of the Hilbert metric of two specific elements in the cone.
\begin{lemma}[{\cite[Lemma 5.8]{atnip1}}]\label{lem6.15}
    Let $c\in(0,1)$, for all $f\in\Lambda_{\omega,ca}$, we have
    $$\Theta_{\Lambda_{\omega,a}}(\ind,f)\leq\log\frac{\sup f+c F_{\omega}(f)}{\min\{\inf_{X_\omega}f,(1-c)F_{\omega}(f)\}}.$$
\end{lemma}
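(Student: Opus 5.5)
We bound the Hilbert metric directly from its definition: find $\alpha>0$ with $\alpha\ind\preceq f$ and $\beta>0$ with $f\preceq\beta\ind$ in $\Lambda_{\omega,a}$. Then $\Theta_{\Lambda_{\omega,a}}(\ind,f)\leq\log(\beta/\alpha)$. Recall that $h\preceq k$ means $k-h\in\Lambda_{\omega,a}\cup\{0\}$, i.e.
\[
k-h\geq0\quad\text{and}\quad\vari(k-h)\leq aF_\omega(k-h).
\]
Two facts will be used throughout. First, $\vari(\ind)=0$ and $F_\omega(\ind)=1$, so $F_\omega(\ind_{X_\omega})=1$ as recorded in Section \ref{sec2.3}. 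Second, $F_\omega$ is linear with respect to adding constant multiples of $\ind$: $F_\omega(f-t\ind)=F_\omega(f)-t$. This holds because the quotient $\opera^n_\omega(f-t\ind)/\opera^n_\omega\ind_\omega$ is just $\opera^n_\omega f/\opera^n_\omega\ind_\omega$ shifted by $-t$ (the operator only sees $f\ind_\omega$), and the infimum commutes with adding a constant.

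\emph{Lower bound.} Take $t>0$ and consider $f-t\ind$. Positivity requires $t\leq\inf_{X_\omega}f$. For the cone condition, note $\vari(f-t\ind)=\vari(f)\leq caF_\omega(f)$, so it suffices to have
\[
caF_\omega(f)\leq a\bigl(F_\omega(f)-t\bigr),
\]
i.e. $t\leq(1-c)F_\omega(f)$. Hence $\alpha:=\min\{\inf_{X_\omega} f,(1-c)F_\omega(f)\}$ works. (If this is $0$, the bound is trivially $+\infty$.)

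\emph{Upper bound.} Take $s>0$ and consider $s\ind-f$. Positivity requires $s\geq\sup f$. For the cone condition we need
\[
\vari(f)\leq a\bigl(s-F_\omega(f)\bigr),
\]
and since $\vari(f)\leq caF_\omega(f)$, it suffices that $s\geq F_\omega(f)+cF_\omega(f)$. Here I expect the only mild obstacle: the stated bound has $\sup f+cF_\omega(f)$ rather than $\max\{\sup f,(1+c)F_\omega(f)\}$. But $F_\omega(f)\leq\supnorm{f}=\sup f$ (since $f\geq0$), so $s:=\sup f+cF_\omega(f)$ dominates both requirements. Thus $\beta:=\sup f+cF_\omega(f)$.

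Finally, $\alpha(\ind,f)\geq\alpha$ and $\beta(\ind,f)\leq\beta$, so
\[
\Theta_{\Lambda_{\omega,a}}(\ind,f)\leq\log\frac{\sup f+cF_\omega(f)}{\min\{\inf_{X_\omega}f,(1-c)F_\omega(f)\}}.
\]
One should check $\ind\in\Lambda_{\omega,a}$, which is immediate from $\vari(\ind)=0$. The boundary cases where the differences vanish are covered by the "$\cup\{0\}$" in the definition of $\preceq$.
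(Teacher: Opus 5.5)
Your approach is the same as the paper's: exhibit explicit constants with $A\ind\preceq f\preceq B\ind$ by checking positivity and the cone inequality. Your lower-bound step is fine. Because $\opera^n_\omega\ind=\opera^n_\omega\ind_\omega$, the identity $F_\omega(f-t\ind)=F_\omega(f)-t$ follows honestly from the definition.

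The upper-bound step, however, rests on $F_\omega(s\ind-f)=s-F_\omega(f)$, and your ``second fact'' does not give this. Shifting by constants only yields $F_\omega(s\ind-f)=s+F_\omega(-f)$. With $F_\omega$ defined as a limit of infima,
\[
F_\omega(-f)=-\lim_{n\to\infty}\sup_{S(n,n)}\frac{\opera^n_\omega f}{\opera^n_\omega\ind_\omega}\leq -F_\omega(f),
\]
which is the wrong direction for your reduction. Linearity of $F_\omega$ is only established in Proposition \ref{prop6.34}. That proof depends on the present lemma, through the finite-diameter lemma of Section \ref{sec5}, Proposition \ref{prop6.29} and Proposition \ref{prop6.31}, so invoking it here would be circular. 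As a result, your intermediate claim ``it suffices that $s\geq(1+c)F_\omega(f)$'' is unjustified.

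The repair is one line, and it is exactly the inequality the paper's proof actually uses. On $S(n,n)$ you have pointwise $\opera^n_\omega f\leq(\sup f)\,\opera^n_\omega\ind_\omega$, hence
\[
\frac{\opera^n_\omega(s\ind-f)}{\opera^n_\omega\ind_\omega}\geq s-\sup f ,
\]
and so $F_\omega(s\ind-f)\geq s-\sup f$ with no linearity needed. Now take $s=\sup f+cF_\omega(f)$. This gives $aF_\omega(s\ind-f)\geq caF_\omega(f)\geq\vari(f)=\vari(s\ind-f)$, and positivity holds since $s\geq\sup f$.

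The paper's write-up also asserts the equality $F_\omega(B\ind-f)=B-F_\omega(f)$ in passing. However, it only uses the lower bound $B-\sup f$, which holds directly. With this change your proof is correct and coincides with the paper's.
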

\begin{proof}
    We provide the detailed proof since that of the referred one is omitted. Let $f\in\Lambda_{\omega,ca}$, then $\vari(f)\leq caF_{\omega}(f)$. Suppose $A\ind$ is a constant function with positive constant $A$, the relation $A\ind\preceq f$ occurs in $\Lambda_{\omega,a}$ if and only if $A\leq\inf_{X_\omega}f$ and $\vari(f-A\ind)\leq aF_{\omega}(f-A\ind)$. But $F_{\omega}(f-A\ind)=F_{\omega}(f)-A$ and $\vari(f-A\ind)=\vari(f)$, therefore, $c F_{\omega}(f)\leq F_{\omega}(f)-A$ is required, hence $A\leq\min\{\inf_{X_\omega}f,(1-c)F_{\omega}(f)\}$.

    Suppose $B\ind$ is another constant function with positive constant $B$, $f\preceq B\ind$ if and only if $\sup f\leq B$ and $\vari(B\ind-f)\leq aF_{\omega}(B\ind-f)$. By $\vari(B\ind-f)=\vari(f)$ and $F_{\omega}(B\ind-f)=B-F_{\omega}(f)\geq B-\sup f$, we have that $B\geq\sup f+c F_{\omega}(f)$ is sufficient.

    Finally, the inequality of distance is obtained by \[\Theta_{\Lambda_{\omega,a}}(\ind,f)\leq\log\frac{\inf\{B>0:f\preceq B\ind\}}{\sup\{A>0:A\ind\preceq f\}}=\log\frac{\sup f+c F_{\omega}(f)}{\min\{\inf_{X_\omega}f,(1-c)F_{\omega}(f)\}}.\]
\end{proof}
\subsection{Good and bad fibers}
Dividing good and bad fibers in the base dynamical system is undertaken in J. Buzzi's work \cite{buzzi2,buzzi1}. As the name suggests, the probability space $\Omega$ is divided into two disjoint parts $\Omega_G$ and $\Omega_B$, referring to good and bad fibers, respectively. The good one guarantees uniform contraction of Birkhoff cones under iterates of transfer operator, while the bad one may not have the desired contraction. To deal with them, we need to group them together with enough good fibers and show they do not occur too frequently.

In Proposition \ref{prop6.13}, for $\epsilon>0$, define $B:=B(\epsilon)$ to be the smallest upper bound of $C_{\epsilon}(\omega)$ on a set of $\prob$-measure at least $1-\frac{\epsilon}{8}$, that is, $$\prob(\{\omega\in\Omega:C_{\epsilon}(\omega)\leq B\})\geq 1-\frac{\epsilon}{8}.$$
\begin{defn}\label{def6.16}
    We say $\omega\in\Omega$ is a \emph{good} fiber with respect to numbers $\epsilon>0,a>0,B>0,R_{a}=q_{a}N_{c}$ (a multiple of $N_{c}$ for some $q_{a}\geq1$) if

    (1) \[Bq_{a}e^{-\frac{\xi}{2}R_{a}}\leq u<1;\]

    (2) $$\bigg|\frac{1}{R_{a}}\sum_{k=0}^{q_a-1}\log L_{\theta^{kN_{c}}\omega}^{N_{c}}-\zeta\bigg|\leq\epsilon.$$
\end{defn}
\begin{lemma}
    Given $0<\epsilon<\min\{1,\frac{\xi}{2}\}$ and $a>0$, there exist two numbers $B$ and $R_{a}$ such that there is a set $\Omega_{G}\subseteq\Omega$ of good fibers $\omega\in\Omega$ with measure $\prob(\Omega_G)\geq 1-\frac{\epsilon}{4}$.
\end{lemma}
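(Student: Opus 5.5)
The plan is to choose $B$ first and then $R_a$, so that each condition of Definition \ref{def6.16} fails only on a set of small probability. By the definition of $B=B(\epsilon)$ preceding Definition \ref{def6.16}, the set $\Omega_1:=\{\omega:C_\epsilon(\omega)\le B\}$ has $\prob(\Omega_1)\ge 1-\frac{\epsilon}{8}$. Such a finite $B$ exists because $C_\epsilon$ from Proposition \ref{prop6.13} is a finite measurable function: the sets $\{C_\epsilon\le M\}$ increase to $\Omega$ as $M\to\infty$, so continuity of $\prob$ gives some finite $M$ with probability at least $1-\frac{\epsilon}{8}$. Condition (1) does not depend on $\omega$. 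Since $q_a=R_a/N_c$, it reads
\[
\frac{B}{N_c}R_a e^{-\frac{\xi}{2}R_a}\le u<1 .
\]
The left-hand side tends to $0$ as $R_a\to\infty$ because $\xi>0$. Fixing some $u\in(0,1)$, there is therefore $Q_1$ such that (1) holds for every multiple $R_a=q_aN_c$ with $q_a\ge Q_1$.

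For condition (2) I will use Birkhoff's ergodic theorem. The subtlety is that the sum runs along the subsequence $\theta^{kN_c}$, and $\theta^{N_c}$ need not be ergodic. I will apply the ergodic theorem to $\theta^{N_c}$, which preserves $\prob$, and to the integrable function $\psi(\omega):=\log L_\omega^{N_c}$, which lies in $L^1(\prob)$ since $\log L_\omega\in L^1(\prob)$ by Lemma \ref{lem6.12}. This gives a.e. convergence of $\frac{1}{q}\sum_{k=0}^{q-1}\psi(\theta^{kN_c}\omega)$ to the conditional expectation $E(\psi\mid\mathcal{I}_{N_c})$ with respect to the $\theta^{N_c}$-invariant $\sigma$-algebra. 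Dividing by $N_c$, the averages in (2) converge a.e. to $\frac{1}{N_c}E(\psi\mid\mathcal{I}_{N_c})$.

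This is the main obstacle: identifying the limit with $\zeta$. The paper already asserts the relevant identity, $\zeta=\lim_{n}\frac{1}{nN_c}\sum_{k<n}\log L^{N_c}_{\theta^{kN_c}\omega}$, directly after Lemma \ref{lem6.12}, and I will invoke it. If one wants to justify it independently, I would try two routes. The first is to note that $\psi=\sum_{j<N_c}\log L_{\theta^j\omega}$ is itself an $N_c$-block sum, so that $\sum_{k<q}\psi(\theta^{kN_c}\omega)=\sum_{i<qN_c}\log L_{\theta^i\omega}$. This is an ordinary Birkhoff sum for the ergodic map $\theta$, and it converges a.e. to $qN_c\int\log L\,\dd\prob$, which equals $q\int\psi\,\dd\prob=qN_c\zeta$. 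This route removes the ergodicity issue entirely, so I will take it. The second route, if needed, would be to pass to an ergodic component of $\theta^{N_c}$.

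Given a.e. convergence, Egorov's theorem (or simply convergence in probability) yields $Q_2$ such that for all $q_a\ge Q_2$ the set $\Omega_2$ where (2) holds has $\prob(\Omega_2)\ge 1-\frac{\epsilon}{8}$. Taking $q_a:=\max\{Q_1,Q_2\}$ and $R_a=q_aN_c$, every $\omega\in\Omega_G:=\Omega_1\cap\Omega_2$ is good in the sense of Definition \ref{def6.16}, and $\prob(\Omega_G)\ge 1-\frac{\epsilon}{4}$.

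The assumption $\epsilon<\min\{1,\frac{\xi}{2}\}$ plays no role in this argument beyond guaranteeing that $\xi-\epsilon>0$ in Proposition \ref{prop6.13}, which is what makes $C_\epsilon$ well defined. The cone parameter $a$ enters only through the labelling of $R_a$, and I will allow $R_a$ to be enlarged later if the cone estimates require it; this is harmless because both conditions hold for all sufficiently large multiples of $N_c$.
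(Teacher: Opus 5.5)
Your argument is correct and is essentially the paper's: $B$ comes from the tail of the finite function $C_\epsilon$, $R_a=q_aN_c$ is taken large enough for condition (1), and condition (2) follows from the ergodic theorem plus convergence in probability. Your remark that $\sum_{k=0}^{q_a-1}\log L^{N_c}_{\theta^{kN_c}\omega}=\sum_{i=0}^{R_a-1}\log L_{\theta^{i}\omega}$ identifies the limit with $\zeta=\int_\Omega\log L_\omega\,\dd\prob(\omega)$ using only ergodicity of $\theta$, a point the paper states without justification.

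The one difference is that the paper takes $\Omega_G:=\theta^{-R_a}(\Omega_1)\cap\Omega_2$ rather than $\Omega_1\cap\Omega_2$. This is because Lemma \ref{lem6.18} applies Proposition \ref{prop6.13} at the endpoint $\theta^{R}\omega$ and therefore needs $C_\epsilon(\theta^{R}\omega)\le B$. The distinction is invisible in the literal Definition \ref{def6.16}, which never mentions $C_\epsilon$. It also costs nothing, since $\theta$ preserves $\prob$. Still, $\theta^{-R_a}(\Omega_1)\cap\Omega_2$ is the set you should use so that the lemma serves its downstream purpose.
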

\begin{proof}
    We have the set $\Omega_{1}:=\{\omega\in\Omega:C_{\epsilon}(\omega)\leq B\}$ with measure at least $1-\frac{\epsilon}{8}$ according to definition. Since $\epsilon<\frac{\xi}{2}$, choose $R=qN_{c}$ as a sufficiently large multiple of $N_{c}$ such that $Bqe^{-(\xi-\epsilon)R}\leq Bqe^{-\frac{\xi}{2}R}\leq u$. Let $q'\geq q$ be an integer and define the set $$\Omega_2:=\left\{\omega\in\Omega:\bigg|\frac{1}{q'N_{c}}\sum_{k=0}^{q'-1}\log L_{\theta^{kN_{c}}\omega}^{N_{c}}-\zeta\bigg|\leq\epsilon\right\}.$$ Choose $q_a\geq q'$ such that $\prob(\Omega_2)\geq1-\frac{\epsilon}{8}$ and set $R_{a}:=q_{a}N_{c}$. Finally set $\Omega_{G}:=\theta^{-R_a}(\Omega_1)\cap\Omega_2$, then it becomes the set of all good fibers with respect to $B$ and $R_a$, and $\prob(\Omega_G)\geq1-\frac{\epsilon}{4}$.
\end{proof}
Given $B>0$, we restrict cone parameters $a\geq a_{0}=v^{-1}B$ where $v\in(0,1)$ satisfies $u+v<\frac{3}{4}$ and $(1-u)v\leq\frac{1}{2}$, and set the number $R:=q_{a_0}N_{c}$ depending on $B$ with integer $q_{a_0}\geq1$. The cones whose states are in good fibers enjoy a strong contraction for every $R$-step iterate of the open operator.
\begin{lemma}\label{lem6.18}
    For each $a\geq a_0$ and good fibers $\omega\in\Omega$ with respect to numbers $\epsilon,B,R$, we have $$\normopera_{\omega}^{R}\Lambda_{\omega,a}\subseteq\Lambda_{\theta^{R}\omega,(u+v)a}\subseteq\Lambda_{\theta^{R}\omega,a}.$$
\end{lemma}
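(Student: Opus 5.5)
We take $f\in\Lambda_{\omega,a}$ and aim to show three things: $\normopera_{\omega}^{R}f\geq 0$, $\normopera_{\omega}^{R}f\not\equiv 0$, and $\vari(\normopera_{\omega}^{R}f)\leq (u+v)aF_{\theta^{R}\omega}(\normopera_{\omega}^{R}f)$. The second inclusion $\Lambda_{\theta^{R}\omega,(u+v)a}\subseteq\Lambda_{\theta^{R}\omega,a}$ is immediate from $u+v<\frac34<1$.

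Positivity is just positivity of $\opera_{\omega}$ (the weak contraction lemma on $\Lambda_{\omega,+}$). Nonvanishing comes from Lemma~\ref{lem6.6}: $F_{\theta^{R}\omega}(\normopera_{\omega}^{R}f)\geq F_{\omega}(f)$, and $F_\omega(f)>0$ for $f\in\Lambda_{\omega,a}$. Indeed, if $F_\omega(f)=0$, the cone condition forces $\vari(f)=0$, so $f$ is constant; then $f\equiv F_\omega(f)=0$, which contradicts $f\not\equiv 0$.

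For the variation we apply Proposition~\ref{prop6.13} at the fiber $\theta^{R}\omega$ with $n=R$. This gives
\[\vari(\normopera_{\omega}^{R}f)\leq C_{\epsilon}(\theta^{R}\omega)e^{-(\xi-\epsilon)R}\vari(f)+C_{\epsilon}(\theta^{R}\omega)F_{\theta^{R}\omega}(\normopera_{\omega}^{R}f),\]
where $|f|=f$ because $f\geq0$. Since $\omega$ is good, $\theta^{R}\omega\in\Omega_1$ by the construction $\Omega_G=\theta^{-R}(\Omega_1)\cap\Omega_2$, so $C_\epsilon(\theta^R\omega)\leq B$. By the choice of $R$ (condition (1) of Definition~\ref{def6.16}, with $\epsilon<\xi/2$ and $q\geq1$), $Be^{-(\xi-\epsilon)R}\leq Bqe^{-\frac{\xi}{2}R}\leq u$. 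Combining these with $\vari(f)\leq aF_\omega(f)\leq aF_{\theta^R\omega}(\normopera_\omega^R f)$ from Lemma~\ref{lem6.6}, we get
\[\vari(\normopera_{\omega}^{R}f)\leq uaF_{\theta^{R}\omega}(\normopera_{\omega}^{R}f)+BF_{\theta^{R}\omega}(\normopera_{\omega}^{R}f).\]
Finally, $a\geq a_0=v^{-1}B$ gives $B\leq va$, so the right-hand side is at most $(u+v)aF_{\theta^R\omega}(\normopera_\omega^Rf)$, which is the required bound.

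The main point needing care is the bookkeeping in the good-fiber definition. Proposition~\ref{prop6.13} is stated with the constant evaluated at the endpoint fiber $\theta^{R}\omega$ and with $R$ steps pulled back from it. The set of good fibers must therefore control $C_\epsilon$ at $\theta^{R}\omega$ rather than at $\omega$, which is exactly what the shift $\theta^{-R}(\Omega_1)$ in the construction of $\Omega_G$ provides. A second point is that $R=q_{a_0}N_c$ must be chosen independently of $a\geq a_0$. This holds because the bound above uses only $B$, $u$, $v$ and $R$, while $a$ enters only through $B\leq va$.
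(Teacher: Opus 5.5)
Your proof is correct and follows the paper's own argument: apply Proposition~\ref{prop6.13} over $R$ steps, bound the constant by $B$, use $Be^{-(\xi-\epsilon)R}\leq u$, combine the cone condition with $F_\omega(f)\leq F_{\theta^R\omega}(\normopera_\omega^R f)$, and finish with $B\leq va$. Your checks of positivity and nonvanishing, and your observation that $C_\epsilon$ is evaluated at $\theta^R\omega$ (which the construction $\Omega_G=\theta^{-R}(\Omega_1)\cap\Omega_2$ controls), make explicit details the paper leaves implicit.
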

\begin{proof}
    For $f\in\Lambda_{\omega,a}$ and good fiber $\omega\in\Omega_G$, apply Proposition \ref{prop6.13} and $\vari(f)\leq aF_{\omega}(f)$, we have
    \begin{equation*}
        \begin{aligned}
            \vari(\normopera_{\omega}^{R}f)&\leq Be^{-(\xi-\epsilon)R}\vari(f)+BF_{\theta^{R}\omega}(\normopera_{\omega}^{R}f) \\
            &\leq Bq_{a_0}e^{-\frac{\xi}{2}R}\vari(f)+BF_{\theta^{R}\omega}(\normopera_{\omega}^{R}f) \\
            &\leq u\vari(f)+BF_{\theta^{R}\omega}(\normopera_{\omega}^{R}f) \\
            &\leq auF_{\omega}(f)+avF_{\theta^{R}\omega}(\normopera_{\omega}^{R}f) \\
            &\leq a(u+v)F_{\theta^{R}\omega}(\normopera_{\omega}^{R}f)
        \end{aligned}
    \end{equation*} since we have used $F_{\omega}(f)\leq F_{\theta^{R}\omega}(\normopera_{\omega}^{R}f)$. Since $u+v<1$, we have shown the $R$-step iterate of normalized operator $\normopera_{\omega}$ is uniformly contracting on the cone $\Lambda_{\omega,a}$ given $a\geq a_0$.
\end{proof}
We now deal with the complement $\Omega_{B}:=\Omega\setminus\Omega_{G}$, the set of \emph{bad} fibers with measure smaller than $\frac{\epsilon}{4}$, we shall not identify the subscript letter with the upper bound $B$ before. Given a fixed $\epsilon>0$, for $\omega\in\Omega$, let $j(\omega)$ be the smallest integer $1\leq j\leq R-1$ such that the following conditions are satisfied:

(1) $$\lim_{n\to\infty}\frac{1}{n}\#\{0\leq k\leq n-1:\theta^{\pm kR+j}\omega\in\Omega_{G}\}>1-\epsilon;$$

(2) $$\lim_{n\to\infty}\frac{1}{n}\#\{0\leq k\leq n-1:C_{\epsilon}(\theta^{\pm kR+j}\omega)\leq B\}>1-\epsilon.$$ We see $j(\omega)$ is measurable and $$j(\theta^{j(\omega)}\omega)=0,\quad j(\theta^{R}\omega)=j(\omega).$$

\begin{defn}\label{def6.19}
    For $\omega\in\Omega$, define the \emph{coating length} $l(\omega)$ to be the smallest integer $n\geq1$ such that $$\frac{1}{n}\sum_{\substack{0\leq k\leq n-1 \\ \theta^{kR}\omega\in\Omega_B}}\log\left(\prod_{k=0}^{q_{a_0}-1}L_{\theta^{kN_{c}}(\theta^{kR}\omega)}^{N_{c}}\right)\leq\zeta R\sqrt{\epsilon}$$ if $\omega\in\Omega_B$, or infinity if such $n$ does not exist. If $\omega\in\Omega_G$, then define $l(\omega)=1$.
\end{defn}
\begin{defn}
    We say a finite sequence $\{\omega,\theta\omega,\ldots,\theta^{l(\omega)R-1}\omega\}$ of $l(\omega)R$ fibers is a \emph{good block} originating at $\omega$ if $\omega\in\Omega_G$. If such $\omega\in\Omega_B$, then the sequence is called a \emph{bad block}.
\end{defn}
\begin{remark}\label{remark6.21}
    According to \cite{buzzi2} and \cite[Proposition 7.1]{atnip1}, for $\prob$-a.e. $\omega\in\Omega$ with $j(\omega)=0$, we have $l(\omega)<\infty$. If $\omega\in\Omega_B$, then $l(\omega)\geq2$.
\end{remark}
We copy a definition from \cite[Definition 7.3]{atnip1} to specify coating intervals.
\begin{defn}
    For each $\omega\in\Omega$, the \emph{coating intervals} along the orbit starting at $\omega$ are defined to be the bad blocks of the form $$\{\theta^{a_i R}\omega,\theta^{a_i R+1}\omega,\ldots,\theta^{b_i R-1}\omega\}$$ where $a_i,b_i$ are given by $$a_i:=\min\{k\geq b_{i-1}:\theta^{kR}\omega\in\Omega_B\},\quad b_i:=a_i+l(\theta^{a_i R}\omega)$$ for $i\geq1$ and $b_0:=-1$.
\end{defn}
For convenience, we denote by $$\Gamma(\omega):=\prod_{k=0}^{q_{a_0}-1}L_{\theta^{kN_{c}}\omega}^{N_{c}}.$$ Then for $f\in BV(X_\omega)$, for $q_{a_0}$ blocks of length $N_{c}$ and all $\omega\in\Omega$, by Lemma \ref{lem6.12} we obtain
\begin{equation*}
    \begin{aligned}
        \vari(\normopera_{\omega}^{R}f)&\leq L_{\omega}^{R}(\vari(f)+F_{\theta^{R}\omega}(\normopera_{\omega}^{R}|f|))\\
        &=\prod_{k=0}^{q_{a_0}-1}L_{\theta^{kN_{c}}\omega}^{N_{c}}(\vari(f)+F_{\theta^{R}\omega}(\normopera_{\omega}^{R}|f|)) \\
        &= \Gamma(\omega)(\vari(f)+F_{\theta^{R}\omega}(\normopera_{\omega}^{R}|f|)).
    \end{aligned}
\end{equation*}
We show the bad fibers also have strong contraction if they are grouped in coating intervals.
\begin{proposition}\label{prop6.23}
    For sufficiently small $\epsilon>0$ and $\prob$-a.e. $\omega\in\Omega$ with $j(\omega)=0$, we have $$\normopera_{\omega}^{l(\omega)R}\Lambda_{\omega,\tilde{a}}\subseteq\Lambda_{\theta^{l(\omega)R}\omega,(u+v)\tilde{a}}\subseteq\Lambda_{\theta^{l(\omega)R}\omega,\tilde{a}},$$ where $\tilde{a}:=\frac{Be^{\zeta R\sqrt{\epsilon}}}{(1-u)v}>a_0$.
\end{proposition}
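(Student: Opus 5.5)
Write $l:=l(\omega)$ and split the orbit segment $\omega,\theta^R\omega,\ldots,\theta^{(l-1)R}\omega$ into $R$-blocks. For each block we choose one of two one-step bounds and then iterate the variation along the whole block. If $\theta^{kR}\omega\in\Omega_G$, we use the estimate from the proof of Lemma \ref{lem6.18} (Proposition \ref{prop6.13} together with condition (1) of Definition \ref{def6.16}):
\[
\vari(\normopera^{R}_{\theta^{kR}\omega}f)\le u\,\vari(f)+B\,F_{\theta^{(k+1)R}\omega}(\normopera^{R}_{\theta^{kR}\omega}f).
\]
If $\theta^{kR}\omega\in\Omega_B$, we use the bound derived just before the proposition,
\[
\vari(\normopera^{R}_{\theta^{kR}\omega}f)\le \Gamma(\theta^{kR}\omega)\bigl(\vari(f)+F_{\theta^{(k+1)R}\omega}(\normopera^R_{\theta^{kR}\omega}f)\bigr).
\]
Set $\Gamma_k:=u$ on good blocks and $\Gamma_k:=\Gamma(\theta^{kR}\omega)$ on bad blocks. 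Take $f\in\Lambda_{\omega,\tilde a}$, so $f\ge0$ and $|f|=f$. Composing the $l$ one-step bounds, as in the first display of the proof of Proposition \ref{prop6.13}, gives
\[
\vari(\normopera^{lR}_\omega f)\le \prod_{k=0}^{l-1}\Gamma_k\,\vari(f)+\sum_{i=0}^{l-1}\Bigl(\prod_{k=i+1}^{l-1}\Gamma_k\Bigr)\max\{B,\Gamma(\theta^{iR}\omega)\}\,F_{\theta^{(i+1)R}\omega}(\normopera^{(i+1)R}_\omega f).
\]
By Lemma \ref{lem6.6}, $F_\omega(f)\le F_{\theta^{(i+1)R}\omega}(\normopera^{(i+1)R}_\omega f)\le F_{\theta^{lR}\omega}(\normopera^{lR}_\omega f)$ for every $i$. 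So every $F$-term, and also $\vari(f)\le\tilde a F_\omega(f)$, can be bounded by $F_{\theta^{lR}\omega}(\normopera^{lR}_\omega f)$.

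Next we bound the products using the definition of the coating length. Since $\omega\in\Omega_B$ and $j(\omega)=0$, Remark \ref{remark6.21} gives $2\le l<\infty$. Definition \ref{def6.19} says $\sum_{k<l,\ \theta^{kR}\omega\in\Omega_B}\log\Gamma(\theta^{kR}\omega)\le l\zeta R\sqrt\epsilon$. Because $\Gamma\ge 6^R>1$ (Lemma \ref{lem6.12}) and $u<1$, every partial product satisfies $\prod_{k\in A}\Gamma_k\le e^{l\zeta R\sqrt\epsilon}$ for any subset $A$ of the blocks. The full product is smaller: it equals $u^{\#\text{good}}\prod_{\text{bad}}\Gamma$.

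The main obstacle is turning these $l$-dependent bounds into something uniform, namely a coefficient of at most $u\tilde a$ on $\vari(f)$ and at most $v\tilde a$ on the $F$-term. The value $\tilde a=Be^{\zeta R\sqrt\epsilon}/((1-u)v)$ suggests the intended bookkeeping. The $F$-contributions should be summed as a geometric series in $u$, which gives the factor $(1-u)^{-1}$. The bad-block growth must then be absorbed into the single factor $e^{\zeta R\sqrt\epsilon}$, after normalizing per block as in Definition \ref{def6.19}, together with the fact that most blocks in a coating interval are good. Condition (1) in the definition of $j(\omega)$ gives a frequency of good fibers greater than $1-\epsilon$, so $u^{\#\text{good}}\le u^{(1-\epsilon)l}$ roughly.

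For the $\vari(f)$ term we would show $\prod_k\Gamma_k\le u$ for small $\epsilon$. The bound $u^{(1-\epsilon)l}e^{l\zeta R\sqrt\epsilon}$ decays exponentially in $l$ once $\zeta R\sqrt\epsilon<(1-\epsilon)\log(1/u)$, which holds for $\epsilon$ small. For the $F$-term, each summand is bounded by $B\,e^{\zeta R\sqrt\epsilon}u^{m_i}$, where $m_i$ counts the good blocks after position $i$. Summing over $i$ gives at most $Be^{\zeta R\sqrt\epsilon}/(1-u)=v\tilde a$. This last step is the one I expect to need the most care: it requires trailing partial sums of bad logarithms to be controlled by a single $\zeta R\sqrt\epsilon$ rather than $l\zeta R\sqrt\epsilon$. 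If that fails, I would fall back on the minimality of $l(\omega)$, which yields the reverse inequality for every shorter initial segment, and subtract to control tail segments.

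Combining the two estimates gives $\vari(\normopera^{lR}_\omega f)\le (u+v)\tilde a\,F_{\theta^{lR}\omega}(\normopera^{lR}_\omega f)$. Positivity of $\normopera^{lR}_\omega f$ follows from the weak contraction on $\Lambda_{\omega,+}$. Since $u+v<1$, we get $\Lambda_{\theta^{lR}\omega,(u+v)\tilde a}\subseteq\Lambda_{\theta^{lR}\omega,\tilde a}$. Finally, $\tilde a>a_0=B/v$ because $e^{\zeta R\sqrt\epsilon}/(1-u)>1$. For good $\omega$ we have $l=1$, and the statement reduces to Lemma \ref{lem6.18} with $a=\tilde a\ge a_0$.
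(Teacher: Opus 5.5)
Your proposal has a genuine gap, and it sits where you expected trouble. The step that fails is replacing the good-block coefficient $Be^{-(\xi-\epsilon)R}$ from Proposition \ref{prop6.13} by the constant $u$. The coating-length condition lets bad blocks grow by $e^{\zeta R\sqrt{\epsilon}}$ per block on average, and this allowance scales linearly in $R$; the constant $u$ does not. So your decay condition $\zeta R\sqrt{\epsilon}<(1-\epsilon)\log(1/u)$ is not a smallness condition on $\epsilon$ alone. The same holds for its tail analogue $u^{1-\gamma}e^{\zeta R\sqrt{\epsilon}}<1$, with $\gamma:=\zeta\sqrt{\epsilon}/\log 6$, which your fallback would need. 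The reason is that $R=q_{a_0}N_c$ is chosen after $\epsilon$, through $B=B(\epsilon)$ and the ergodic-theorem choice of $q_a$ in the construction of $\Omega_G$. It may grow arbitrarily fast as $\epsilon\to0$, so nothing forces $R\sqrt{\epsilon}\to0$. The paper keeps $E_{\theta^{kR}\omega}=Be^{-(\xi-\epsilon)R}$ on good blocks, so both rates are proportional to $R$. The required inequality $\zeta\sqrt{\epsilon}-(\xi-\epsilon)(1-\gamma)<-\frac{\xi}{2}$ is then independent of $R$, and $u$ enters only at the very end, through $Be^{-\frac{\xi}{2}R}\le u$.

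Two further steps also fail as written. First, the proportion of good blocks inside the coating interval does not follow from condition (1) in the definition of $j(\omega)$. That condition is an asymptotic frequency along the whole $R$-orbit and says nothing about the finite window $0\le k\le l-1$. The bound comes instead from the coating inequality together with $\log\Gamma\ge R\log 6$, which limits the number of bad blocks to $\gamma l$. Second, for the $F$-term, trailing bad products are controlled only by $e^{(l-j)\zeta R\sqrt{\epsilon}}$, not by a single factor $e^{\zeta R\sqrt{\epsilon}}$: a tail with $s$ bad blocks carries at least $6^{sR}$. The bound $\sum_i u^{m_i}\le(1-u)^{-1}$ is also unjustified when consecutive bad blocks make $m_i$ repeat. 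Your fallback is the right mechanism and is the paper's. Minimality of $l$ gives, for every $0\le j\le l-1$,
\[\sum_{\substack{j\le k\le l-1\\ \theta^{kR}\omega\in\Omega_B}}\log\Gamma(\theta^{kR}\omega)\le(l-j)\zeta R\sqrt{\epsilon},\]
so each tail $\{j,\ldots,l-1\}$ contains at most $\gamma(l-j)$ bad blocks. With the $R$-scaled good factor (and $B\ge1$) this yields $\prod_{k=j}^{l-1}E_{\theta^{kR}\omega}\le(Be^{-\frac{\xi}{2}R})^{l-j}\le u^{l-j}$. The coefficient of $\vari(f)$ is then at most $u^{l}\le u$. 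The $j$-th $F$-summand is at most $Bu^{l-j-1}$: it equals $B\prod_{k>j}E_{\theta^{kR}\omega}$ if $\theta^{jR}\omega\in\Omega_G$ and $\prod_{k\ge j}E_{\theta^{kR}\omega}$ otherwise. Hence the $F$-coefficient is at most $B/(1-u)\le v\tilde{a}$. This closes only with the $R$-scaled good-block factor, not with $u$. The rest of your plan is fine: monotonicity of $F$ via Lemma \ref{lem6.6}, $\tilde{a}>a_0$, and the good case via Lemma \ref{lem6.18}.
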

\begin{proof}
    Lemma \ref{lem6.18} has already shown the case of $\omega\in\Omega_G$ since $l(\omega)=1$ for good fibers $\omega$. We turn to deal with bad fibers, set $l:=l(\omega)\geq2$ by Remark \ref{remark6.21}, for $f\in\Lambda_{\omega,+}$, by proof of Proposition \ref{prop6.13} we have 
    \begin{equation}\label{ineq4.1}
        \vari(\normopera_{\omega}^{lR}f)\leq\left(\prod_{i=0}^{l-1}E_{\theta^{iR}\omega}\right)\vari(f)+\sum_{i=0}^{l-1}\left(G_{\theta^{iR}\omega}\prod_{k=i+1}^{l-1}E_{\theta^{kR}\omega}\right)F_{\theta^{lR}\omega}(\normopera_{\omega}^{lR}f)
    \end{equation} where $E_{\theta^{iR}\omega}:=Be^{-(\xi-\epsilon)R}$ for $\theta^{iR}\omega\in\Omega_G$ and $\Gamma(\theta^{iR}\omega)$ for $\theta^{iR}\omega\in\Omega_B$; $G_{\theta^{iR}\omega}:=B$ for $\theta^{iR}\omega\in\Omega_G$ and $\Gamma(\theta^{iR}\omega)$ for $\theta^{iR}\omega\in\Omega_B$; see \cite[Lemma 7.5]{atnip1} for more details.
    
    For $0\leq j\leq l-1$, we can divide $$\sum_{\substack{0\leq k\leq l-1 \\ \theta^{kR}\omega\in\Omega_B}}\log\Gamma(\theta^{kR}\omega)=\sum_{\substack{0\leq k\leq j-1 \\ \theta^{kR}\omega\in\Omega_B}}\log\Gamma(\theta^{kR}\omega)+\sum_{\substack{j\leq k\leq l-1 \\ \theta^{kR}\omega\in\Omega_B}}\log\Gamma(\theta^{kR}\omega).$$ From Definition \ref{def6.19} on $l$, it implies that $$\frac{1}{j}\sum_{\substack{0\leq k\leq j-1 \\ \theta^{kR}\omega\in\Omega_B}}\log\Gamma(\theta^{kR}\omega)>\zeta R\sqrt{\epsilon}$$ and $$\frac{1}{l-j}\sum_{\substack{j\leq k\leq l-1 \\ \theta^{kR}\omega\in\Omega_B}}\log\Gamma(\theta^{kR}\omega)\leq\zeta R\sqrt{\epsilon}.$$ For given sufficiently small $\epsilon>0$, since $L_{\omega}^{n}\geq 6^n$, we have $\log\Gamma(\omega)\geq R\log6$ and then
    \begin{equation*}
        \begin{aligned}
            \frac{1}{l-j}\#\{j\leq k\leq l-1:\theta^{kR}\omega\in\Omega_B\}&=\frac{1}{l-j}\sum_{\substack{j\leq k\leq l-1 \\ \theta^{kR}\omega\in\Omega_B}}\ind \\
            &\leq \frac{1}{(l-j)R\log 6}\sum_{\substack{j\leq k\leq l-1 \\ \theta^{kR}\omega\in\Omega_B}}\log\Gamma(\theta^{kR}\omega) \\
            &\leq \frac{(l-j)\zeta R\sqrt{\epsilon}}{(l-j)R\log6}\leq\gamma<1
        \end{aligned}
    \end{equation*} provided such $\epsilon$ small enough that $\zeta\sqrt{\epsilon}<\log6$. When $0\leq j\leq l-1$, for the first coefficient, from $\#\{j\leq k\leq l-1:\theta^{kR}\omega\in\Omega_B\}\leq\gamma(l-j)$ we deduce \[\#\{j\leq k\leq l-1:\theta^{kR}\omega\in\Omega_G\}\geq(1-\gamma)(l-j).\] For the good fibers we have already assumed $Be^{-(\xi-\epsilon)R}<1$, therefore,
    \begin{equation*}
        \begin{aligned}
            \prod_{k=j}^{l-1}E_{\theta^{kR}\omega}&=\prod_{\substack{j\leq k\leq l-1 \\ \theta^{kR}\omega\in\Omega_G}}Be^{-(\xi-\epsilon)R}\cdot\prod_{\substack{j\leq k\leq l-1 \\ \theta^{kR}\omega\in\Omega_B}}\Gamma(\theta^{kR}\omega) \\
            &\leq (Be^{-(\xi-\epsilon)R})^{(1-\gamma)(l-j)}e^{(l-j)\zeta R\sqrt{\epsilon}} \\
            &\leq B^{l-j}(e^{R(\zeta\sqrt{\epsilon}-(\xi-\epsilon)(1-\gamma))})^{l-j}.
        \end{aligned}
    \end{equation*}
    Since $B\geq1$, $\Gamma(\omega)\geq1$, we have that for $0\leq j\leq l-1$, $$E_{\theta^{jR}\omega}\leq B\Gamma(\theta^{jR}\omega)\leq B\prod_{\substack{j\leq k\leq l-1 \\ \theta^{kR}\omega\in\Omega_B}}\Gamma(\theta^{kR}\omega)\leq B(e^{\zeta R\sqrt{\epsilon}})^{l-j}.$$ By putting them into the initial inequality (\ref{ineq4.1}), we obtain
    \begin{equation*}
        \begin{aligned}
            \vari(\normopera_{\omega}^{lR}f)&\leq B^{l}e^{R(\zeta\sqrt{\epsilon}-(\xi-\epsilon)(1-\gamma))l}\vari(f)\\
            &\qquad+\sum_{j=0}^{l-1}B(e^{\zeta R\sqrt{\epsilon}})^{l-j}\left(Be^{R(\zeta\sqrt{\epsilon}-(\xi-\epsilon)(1-\gamma))}\right)^{l-j-1}F_{\theta^{lR}\omega}(\normopera_{\omega}^{lR}f) \\
            &\leq B^{l}e^{R(\zeta\sqrt{\epsilon}-(\xi-\epsilon)(1-\gamma))l}\vari(f)\\
            &\qquad+Be^{\zeta R\sqrt{\epsilon}}\sum_{j=0}^{l-1}\left(Be^{R(\zeta\sqrt{\epsilon}-(\xi-\epsilon)(1-\gamma))}\right)^{l-j-1}F_{\theta^{lR}\omega}(\normopera_{\omega}^{lR}f).
        \end{aligned}
    \end{equation*} Here we use an estimation of power in \cite[Observation 7.4]{atnip1} that for sufficiently small $\epsilon>0$, we must have \[\zeta\sqrt{\epsilon}-(\xi-\epsilon)(1-\gamma)<-\frac{\xi}{2},\] hence by $Be^{-\frac{\xi}{2}R}\leq u$, we have
    \begin{equation}\label{ineq4.2}
        \begin{aligned}
            \vari(\normopera_{\omega}^{lR}f)&\leq(Be^{-\frac{\xi}{2}R})^{l}\vari(f)+Be^{\zeta R\sqrt{\epsilon}}\sum_{j=0}^{l-1}(Be^{-\frac{\xi}{2}R})^{l-j-1}F_{\theta^{lR}\omega}(\normopera_{\omega}^{lR}f) \\
            &\leq u^{l}\vari(f)+Be^{\zeta R\sqrt{\epsilon}}F_{\theta^{lR}\omega}(\normopera_{\omega}^{lR}f)\sum_{j=0}^{l-1}u^{l-j-1} \\
            &\leq u^{l}\vari(f)+\frac{Be^{\zeta R\sqrt{\epsilon}}}{1-u}F_{\theta^{lR}\omega}(\normopera_{\omega}^{lR}f).
        \end{aligned}
    \end{equation} Set $\tilde{a}:=\frac{Be^{\zeta R\sqrt{\epsilon}}}{(1-u)v}$, for $f\in\Lambda_{\omega,\tilde{a}}$, inequality (\ref{ineq4.2}) is changed to
    \begin{equation*}
        \begin{aligned}
            \vari(\normopera_{\omega}^{lR}f)&\leq u^{l}\tilde{a}F_{\omega}(f)+v\tilde{a}F_{\theta^{lR}\omega}(\normopera_{\omega}^{lR}f) \\
            &\leq u\tilde{a}F_{\theta^{lR}\omega}(\normopera_{\omega}^{lR}f)+v\tilde{a}F_{\theta^{lR}\omega}(\normopera_{\omega}^{lR}f) \\
            &\leq (u+v)\tilde{a}F_{\theta^{lR}\omega}(\normopera_{\omega}^{lR}f).
        \end{aligned}
    \end{equation*} That is, $\normopera_{\omega}^{l(\omega)R}f\in\Lambda_{\theta^{l(\omega)R}\omega,(u+v)\tilde{a}}$ and hence we have the contraction $$\normopera_{\omega}^{l(\omega)R}\Lambda_{\omega,\tilde{a}}\subseteq\Lambda_{\theta^{l(\omega)R}\omega,(u+v)\tilde{a}}\subseteq\Lambda_{\theta^{l(\omega)R}\omega,\tilde{a}}$$ with cone parameters $\tilde{a}=\frac{Be^{\zeta R\sqrt{\epsilon}}}{(1-u)v}=\frac{e^{\zeta R\sqrt{\epsilon}}}{1-u}a_0>a_0$.
\end{proof}
According to \cite{atnip1}, for $n\geq1$, define $K_n\geq0$ and $0\leq k(n)\leq R-1$ such that $n=K_{n}R+k(n)$. Given an $\omega_0\in\Omega$, let $\omega_{i}:=\theta^{l(\omega_{i-1})R}(\omega_{i-1})$ for each $i\geq1$. Then it is possible to divide the $n$-length orbit of $\omega_0$ into $h_{\omega_0}(n)$ blocks of length $l(\omega_i)R$ for $0\leq i\leq h_{\omega_0}(n)$ and the remainder block of length $r_{\omega_0}(n)R$ for $1\leq r_{\omega_0}(n)\leq l(\omega_{h_{\omega_0}(n)+1})$ plus with the last segment of length $k(n)$, that is, $$n=\sum_{0\leq i\leq h_{\omega_0}(n)}l(\omega_i)R+r_{\omega_0}(n)R+k(n).$$ Hence $K_n=\sum_{0\leq i\leq h_{\omega_0}(n)}l(\omega_i)+r_{\omega_0}(n)$.
\begin{lemma}[{\cite[Lemma 7.6]{atnip1}}]\label{lem6.24}
    For $\prob$-a.e. $\omega_0\in\Omega$ with $j(\omega_0)=0$, there exists a measurable $N_0:\Omega\to\N$ such that for all $n\geq N_0(\omega_0)$ we have $$\sum_{\substack{0\leq i\leq h_{\omega_0}(n) \\ \omega_i\in\Omega_B}}l(\omega_i)+r_{\omega_0}(n)\leq O(\sqrt{\epsilon})n.$$
\end{lemma}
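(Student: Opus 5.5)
Lemma \ref{lem6.24} bounds the total length of bad blocks, together with the unfinished remainder block, by $O(\sqrt{\epsilon})n$. My plan is to split this quantity into two pieces and control each with Birkhoff's ergodic theorem along the $\theta^R$-orbit of $\omega_0$. Since $j(\omega_0)=0$, the frequency conditions (1) and (2) defining $j$ hold along $\{\theta^{kR}\omega_0\}_{k\ge0}$. In particular, the proportion of $k\le K_n$ with $\theta^{kR}\omega_0\in\Omega_B$ is at most $\epsilon$ for $K_n$ large. Moreover, $\log\Gamma\in L^1(\prob)$ and $\prob(\Omega_B)\le\epsilon/4$, so by absolute continuity of the integral the ergodic averages
\[
\frac1{K}\sum_{\substack{0\le k\le K-1\\ \theta^{kR}\omega_0\in\Omega_B}}\log\Gamma(\theta^{kR}\omega_0)
\]
converge to $\frac1R\int_{\Omega_B}\log\Gamma\,\dd\prob$ (applied to the $\theta^R$-ergodic component, which is legitimate by the choice $j(\omega_0)=0$). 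This limit is $o(1)$ as $\epsilon\to0$, and I would show it is at most $C\epsilon\zeta R$ by using $\log\Gamma\ge R\log6$ and the good-fiber condition (2) in Definition \ref{def6.16}.

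\textbf{Step 1: completed bad blocks.} Fix a completed bad block $[a_iR,b_iR)$ with $l=l(\omega_i)\ge2$. The minimality in Definition \ref{def6.19} gives, at index $l-1$,
\[
\sum_{\substack{0\le k\le l-2\\ \theta^{kR}\omega_i\in\Omega_B}}\log\Gamma(\theta^{kR}\omega_i)>(l-1)\zeta R\sqrt\epsilon .
\]
Hence $l(\omega_i)\le 2(\zeta R\sqrt\epsilon)^{-1}\sum_{\text{block}}\ind_{\Omega_B}\log\Gamma$. The bad blocks are disjoint subintervals of $[0,K_n)$, so summing over them gives
\[
\sum_{\omega_i\in\Omega_B}l(\omega_i)\le\frac{2}{\zeta R\sqrt\epsilon}\sum_{\substack{0\le k<K_n\\ \theta^{kR}\omega_0\in\Omega_B}}\log\Gamma(\theta^{kR}\omega_0).
\]
By the ergodic-average estimate above, for $n\ge N_0(\omega_0)$ the right-hand side is at most $\frac{2}{\zeta R\sqrt\epsilon}\cdot C\epsilon\zeta R\,K_n=O(\sqrt\epsilon)K_n\le O(\sqrt\epsilon)n$.

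\textbf{Step 2: the remainder block.} The remainder $r_{\omega_0}(n)$ is either at most $1$ (the good case) or the initial segment of an incomplete bad block. In the bad case the same minimality argument bounds $r_{\omega_0}(n)-1$ by $(\zeta R\sqrt\epsilon)^{-1}$ times the bad $\log\Gamma$-sum over that segment, and Step 1's estimate already covers this sum. The resulting bound is $O(\sqrt\epsilon)n+1$. The additive $1$ is absorbed by enlarging $N_0$ so that $1\le\sqrt\epsilon\,n$ for $n\ge N_0(\omega_0)$.

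\textbf{Step 3: measurability of $N_0$.} Define $N_0(\omega_0)$ as the first time after which the ergodic averages stay within the tolerance required above, together with the absorption of the additive constant. This is measurable because it is expressed through countably many measurable conditions.

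\textbf{Main obstacle.} The hard step is the bound on $\frac1R\int_{\Omega_B}\log\Gamma\,\dd\prob$ in terms of $\epsilon\zeta R$. Absolute continuity alone only gives $o(1)$, not $O(\epsilon)$. If this cannot be done cleanly, I would instead track the dependence of $\epsilon$ on $\sqrt\epsilon$ as in \cite[Lemma 7.6]{atnip1}. That means choosing $\epsilon$ small enough that $\int_{\Omega_B}\log\Gamma\le\epsilon\zeta R$, which is possible because $\prob(\Omega_B)\to0$ as $\epsilon\to0$. This yields the stated $O(\sqrt\epsilon)$ bound.
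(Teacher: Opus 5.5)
Your skeleton is the right one. Minimality of $l$ gives $\sum_{0\le k\le l-2,\ \theta^{kR}\omega_i\in\Omega_B}\log\Gamma(\theta^{kR}\omega_i)>(l-1)\zeta R\sqrt\epsilon$ on each coating interval. These intervals and the remainder segment are disjoint in $[0,K_n)$. So everything reduces to bounding
\[A:=\limsup_{K\to\infty}\frac1K\sum_{\substack{0\le k<K\\ \theta^{kR}\omega_0\in\Omega_B}}\log\Gamma(\theta^{kR}\omega_0).\]
The proof needs $A=O(\epsilon R)$, and this is exactly what you leave open.

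Your Steps 1--2 give a bad-block density of at most $2A/(\zeta R\sqrt\epsilon)$. The $o(R)$ bound you get from absolute continuity therefore yields only $o(1)/\sqrt\epsilon$, which need not be small.

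The fallback does not close the gap. $\Omega_B$, $R$ and $\Gamma$ all depend on $\epsilon$, so ``choose $\epsilon$ small enough that $\int_{\Omega_B}\log\Gamma\,\dd\prob\le\epsilon\zeta R$'' is circular. Nor does $\prob(\Omega_B)\le\epsilon/4$ give a rate for this integral, because $\log\Gamma/R$ is unbounded on $\Omega_B$; an atypically large $\log\Gamma$ is one way a fiber becomes bad.

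The lower bound $\log\Gamma\ge R\log 6$ cannot help here. Its only role is to force $l\ge2$ on $\Omega_B$ (given $\zeta\sqrt\epsilon<\log6$), which your inequality $l\le 2(l-1)$ needs.

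Finally, $j(\omega_0)=0$ does not make $\theta^R$ ergodic, nor does it identify the $\theta^R$-Birkhoff limit of $\ind_{\Omega_B}\log\Gamma$ with $\int_{\Omega_B}\log\Gamma\,\dd\prob$. The two conditions defining $j$ only control the frequencies of $\Omega_G$ and of $\{C_\epsilon\le B\}$. Also, the factor $\frac1R$ you put in front of that integral is wrong.

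The missing estimate comes pathwise from condition (2) of Definition \ref{def6.16}, which gives $\log\Gamma\ge R(\zeta-\epsilon)$ on $\Omega_G$. Note that $\Gamma(\omega)=\prod_{i=0}^{R-1}L_{\theta^{i}\omega}$ and $\zeta=\int_\Omega\log L_\omega\,\dd\prob(\omega)$. Hence $\sum_{k<K}\log\Gamma(\theta^{kR}\omega_0)=\sum_{i<KR}\log L_{\theta^{i}\omega_0}$, and Birkhoff's theorem for the ergodic map $\theta$ gives average $R\zeta$ along the $\theta^R$-orbit, with no ergodicity of $\theta^R$ needed.

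Now use condition (1) of $j(\omega_0)=0$: the frequency of $\Omega_G$ along $\theta^{kR}\omega_0$ exceeds $1-\epsilon$. Also $\zeta-\epsilon>0$, since $\zeta\ge\log 6$. Subtracting the good part then gives
\[A\le R\zeta-(1-\epsilon)R(\zeta-\epsilon)\le(\zeta+1)\epsilon R\le 2\epsilon\zeta R.\]

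Choose $N_0(\omega_0)$ so that for $n\ge N_0(\omega_0)$ three things hold up to time $K_n$:
\begin{itemize}
\item the $\log\Gamma$-average along the orbit is at most $R(\zeta+\epsilon)$;
\item the good frequency is at least $1-\epsilon$;
\item $1\le\sqrt\epsilon\,n$.
\end{itemize}
Your Steps 1--2 then give a total of at most $\frac{2(\zeta+2)}{\zeta}\sqrt\epsilon\,K_n+1=O(\sqrt\epsilon)\,n$, with a constant independent of $\epsilon$. With this step inserted, your proof is complete.
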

The proof is omitted here as it is the same in the case of countably many branches. It means the total length of bad blocks is small enough such that it is proportional to the $\theta$-orbit of an initial $\omega_0$ almost everywhere.
\subsection{Auxiliary lemmas}
We prepare some lemmas for the following propositions in Section \ref{sec5} and Section \ref{sec6}. We revisit the properties of $F_{\omega}$ now; the proof is referred to \cite[Lemma 3.8]{liverani4} and \cite[Lemma 1.8.1]{atnip3}.
\begin{lemma}\label{lem6.25}
    For every $f\in BV(X_\omega)$, there exists $C>1$ such that $$F_{\theta^{lR}\omega}(\normopera_{\omega}^{lR}\ind_\omega)F_{\omega}(f)\leq F_{\theta^{lR}\omega}(\normopera_{\omega}^{lR}f)\leq C F_{\theta^{lR}\omega}(\normopera_{\omega}^{lR}\ind_\omega)F_{\omega}(f)$$ for $\prob$-a.e. $\omega\in\Omega$ with $j(\omega)=0$, where $l=l(\omega)\geq1$ is the coating length with respect to $\omega\in\Omega$.
\end{lemma}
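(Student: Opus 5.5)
The lower bound is immediate from Lemma \ref{lem6.6}; the content is the upper bound, which I plan to obtain from the pointwise estimate $f\le\vari(f)+F_\omega(f)$, with $C$ allowed to depend on $f$ (and on $\omega$) as the statement permits. Throughout, $f$ is taken nonnegative, which is the case needed later for cone elements. The ratio in the statement is unchanged by normalization, since both sides carry the same factor $(\rho_\omega^{lR})^{-1}$. So I work with $\opera_\omega^{lR}$ and write $l=l(\omega)$, which is finite for a.e. $\omega$ with $j(\omega)=0$ by Remark \ref{remark6.21}.

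\emph{Lower bound.} Apply the first inequality of Lemma \ref{lem6.6} with $n=lR$:
\[
F_{\theta^{lR}\omega}(\opera_\omega^{lR}\ind_\omega)F_\omega(f)\le F_{\theta^{lR}\omega}(\opera_\omega^{lR}f).
\]
Dividing by $\rho_\omega^{lR}$ gives the left inequality for $\normopera_\omega^{lR}$.

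\emph{Upper bound when $F_\omega(f)>0$.} From $f(x)\le\vari(f)+F_\omega(f)$ on $X_\omega$ and positivity of $\opera_\omega^{lR}$, we get pointwise
\[
\opera_\omega^{lR}f\le(\vari(f)+F_\omega(f))\,\opera_\omega^{lR}\ind_\omega .
\]
The functional $F_{\theta^{lR}\omega}$ is monotone and positively homogeneous: for $g\le h$ the quotients $\opera^n g/\opera^n\ind$ are ordered pointwise, and this ordering passes to infima and then to the limit. Hence
\[
F_{\theta^{lR}\omega}(\opera_\omega^{lR}f)\le\Bigl(1+\frac{\vari(f)}{F_\omega(f)}\Bigr)F_\omega(f)\,F_{\theta^{lR}\omega}(\opera_\omega^{lR}\ind_\omega).
\]
So $C:=\max\{2,\,1+\vari(f)/F_\omega(f)\}>1$ works. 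For $f$ in a cone $\Lambda_{\omega,a}$ this even gives the uniform value $C=1+a$.

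\emph{The case $F_\omega(f)=0$, $f\not\equiv0$.} This is the main obstacle: here the right-hand side vanishes, so one must show $F_{\theta^{lR}\omega}(\opera_\omega^{lR}f)=0$. I would first rewrite the left side as a single quotient of the same length $m:=n+lR$. Using $\opera_{\theta^{lR}\omega}^{n}\opera_\omega^{lR}=\opera_\omega^{m}$, for $x\in S(m,n)$,
\[
\frac{\opera^n_{\theta^{lR}\omega}(\opera_\omega^{lR}f)(x)}{\opera^n_{\theta^{lR}\omega}\ind(x)}
=\frac{\opera_\omega^{m}f(x)}{\opera_\omega^{m}\ind_\omega(x)}\cdot\frac{\opera^n_{\theta^{lR}\omega}(\opera_\omega^{lR}\ind_\omega)(x)}{\opera^n_{\theta^{lR}\omega}\ind(x)} .
\]
The second factor is bounded above by $\supnorm{\normopera_\omega^{lR}\ind_\omega}\,\rho_\omega^{lR}$, independently of $n$ and $x$. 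Taking infima over $x$, the left side is at most this constant times $\inf_x\opera_\omega^m f/\opera_\omega^m\ind_\omega$. The latter infimum tends to $F_\omega(f)=0$, since the infimum over $S(m,n)$ is at most the infimum over the smaller set $S(m,m)$ and $m\to\infty$. Hence $F_{\theta^{lR}\omega}(\opera_\omega^{lR}f)=0$, and the inequality holds with any $C>1$, say $C=2$.

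The same single-quotient comparison, applied when $F_\omega(f)>0$, would also give an $f$-independent constant $\supnorm{\normopera_\omega^{lR}\ind_\omega}/F_{\theta^{lR}\omega}(\normopera_\omega^{lR}\ind_\omega)$ in place of $1+\vari(f)/F_\omega(f)$. This requires checking carefully that the supports $S(m,n)$ and $S(m,m)$ are nested as claimed, and that the supremum bound on the second factor is legitimate on the relevant set. That verification is where I expect the work to lie.
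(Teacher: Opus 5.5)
Your argument is correct for $f\ge0$ and for the statement as literally quantified. The sign restriction is in fact forced: for $F_\omega(f)<0$, the chain $a\le b\le Ca$ with $a=F_{\theta^{lR}\omega}(\normopera_\omega^{lR}\ind_\omega)F_\omega(f)<0$ and $C>1$ is impossible. The real problem is that letting $C$ depend on $f$ empties the lemma. Once $F_\omega(f)>0$, you could simply take $C$ to be the ratio $F_{\theta^{lR}\omega}(\normopera_\omega^{lR}f)/\bigl(F_{\theta^{lR}\omega}(\normopera_\omega^{lR}\ind_\omega)F_\omega(f)\bigr)$. This ratio is finite because $F_{\theta^{lR}\omega}(\normopera_\omega^{lR}\ind_\omega)\ge1$ by Lemma \ref{lem6.6}. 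What the paper proves, and what Lemma \ref{lem6.27} needs, is the $f$-independent constant $C=1+\tilde a$. There the lemma is applied to $f=\ind_Z$ for every $Z\in\parti_{\omega,g}^{n}$, and the contradiction uses $C<2\tilde a$ together with $F_\omega(\ind_Z)<1/(8\tilde a^3)$ from Lemma \ref{lem6.26}. Your constant $1+\vari(\ind_Z)/F_\omega(\ind_Z)$ blows up in exactly that regime. Your cone value $1+a$ does not help either, because these indicators lie far outside $\Lambda_{\omega,\tilde a}$. So the case you treat as the main one, in the form you prove it, does not deliver the content the lemma is used for.

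The paper's proof is the single-quotient comparison from your last paragraph, applied to every $f\ge0$. It gives $F_{\theta^{lR}\omega}(\normopera_\omega^{lR}f)\le\supnorm{\normopera_\omega^{lR}\ind_\omega}F_\omega(f)$, and your zero case is exactly this inequality. The checks you postpone are routine:
\begin{itemize}
\item $S(m,m)\subseteq S(m,n)$ because $S_{\omega',j}$ decreases in $j$.
\item Take the infimum only over $S(m,m)$, where $\opera_\omega^{m}\ind_\omega>0$; on $S(m,n)\setminus S(m,m)$ your factorization reads $0/0$.
\item The bound on the second factor is just positivity of $\opera_{\theta^{lR}\omega}^{n}$.
\end{itemize}
The step you are missing is the last one. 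By Proposition \ref{prop6.23}, $\normopera_\omega^{lR}\ind_\omega\in\Lambda_{\theta^{lR}\omega,\tilde a}$. Then $\supnorm{g}\le\vari(g)+F_{\theta^{lR}\omega}(g)$ gives $\supnorm{\normopera_\omega^{lR}\ind_\omega}\le(1+\tilde a)F_{\theta^{lR}\omega}(\normopera_\omega^{lR}\ind_\omega)$, hence $C=1+\tilde a$, uniform in $f$ and $\omega$. That step tacitly needs $\ind_\omega\in\Lambda_{\omega,\tilde a}$, i.e.\ $\vari(\ind_{J_\omega})\le\tilde a$. Your $\omega$-dependent constant $\supnorm{\normopera_\omega^{lR}\ind_\omega}/F_{\theta^{lR}\omega}(\normopera_\omega^{lR}\ind_\omega)$ avoids that hypothesis. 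However, nothing bounds it by $2\tilde a$, which is what Lemma \ref{lem6.27} uses.
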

\begin{proof}
    The first inequality is already shown in Lemma \ref{lem6.6}. Let $n,m\geq1$ and $x\in S(n+m,m)$, then
    \begin{equation*}
        \begin{aligned}
            \frac{\normopera_{\omega}^{n+m}f(x)}{\normopera_{\theta^{n}\omega}^{m}\ind_{\theta^{n}\omega}(x)}&=\frac{\normopera_{\omega}^{n+m}f(x)}{\normopera_{\omega}^{n+m}\ind_\omega(x)}\cdot\frac{\normopera_{\omega}^{n+m}\ind_\omega(x)}{\normopera_{\theta^{n}\omega}^{m}\ind_{\theta^{n}\omega}(x)} \\
            &=\frac{\normopera_{\omega}^{n+m}f(x)}{\normopera_{\omega}^{n+m}\ind_\omega(x)}\cdot\frac{\normopera_{\theta^{n}\omega}^{m}(\normopera_{\omega}^{n}\ind_{\omega}\cdot\ind_{\theta^{n}\omega})(x)}{\normopera_{\theta^{n}\omega}^{m}\ind_{\theta^{n}\omega}(x)} \\
            &\leq\frac{\normopera_{\omega}^{n+m}f(x)}{\normopera_{\omega}^{n+m}\ind_\omega(x)}\supnorm{\normopera_{\omega}^{n}\ind_{\omega}}.
        \end{aligned}
    \end{equation*} Take the infimum over $x\in S(n+m,m)$ and note that \[\frac{\normopera_{\omega}^{n+m}f(x)}{\normopera_{\theta^{n}\omega}^{m}\ind_{\theta^{n}\omega}(x)}=\frac{\normopera_{\theta^{n}\omega}^{m}(\normopera_{\omega}^{n}f)(x)}{\normopera_{\theta^{n}\omega}^{m}\ind_{\theta^{n}\omega}(x)},\] letting $m\to\infty$ on both sides gives $$F_{\theta^{n}\omega}(\normopera_{\omega}^{n}f)\leq\supnorm{\normopera_{\omega}^{n}\ind_{\omega}}F_{\omega}(f).$$ By Proposition \ref{prop6.23}, let $n=lR$ here, since $\ind_\omega\in\Lambda_{\omega,\tilde{a}}$, then $\normopera_{\omega}^{lR}\ind_\omega\in\Lambda_{\theta^{lR}\omega,\tilde{a}}$ and we have
    \begin{equation*}
        \begin{aligned}
            \supnorm{\normopera_{\omega}^{lR}\ind_\omega}&\leq \vari(\normopera_{\omega}^{lR}\ind_\omega)+F_{\theta^{lR}\omega}(\normopera_{\omega}^{lR}\ind_\omega) \\
            &\leq (\tilde{a}+1)F_{\theta^{lR}\omega}(\normopera_{\omega}^{lR}\ind_\omega).
        \end{aligned}
    \end{equation*} Hence $$F_{\theta^{lR}\omega}(\normopera_{\omega}^{lR}f)\leq(\tilde{a}+1)F_{\theta^{lR}\omega}(\normopera_{\omega}^{lR}\ind_\omega)F_{\omega}(f).$$ Taking $C:=\tilde{a}+1>1$ finishes the proof.
\end{proof}
Following \cite[Lemma 3.10]{liverani4} and \cite[Lemma 1.8.2]{atnip3}, we obtain the supremum of $F_\omega(\ind_Z)$ for $Z\in\parti_{\omega}^{n}$.
\begin{lemma}\label{lem6.26}
    Given $\tilde{a}>0$, for each $\omega\in\Omega$, there exists an integer $N_{\omega,\tilde{a}}\geq1$ such that for all $n\geq N_{\omega,\tilde{a}}$, $$\underset{Z\in\parti_{\omega}^{n}}{\sup}\,F_{\omega}(\ind_Z)<\frac{1}{8\tilde{a}^3}.$$
\end{lemma}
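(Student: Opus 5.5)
The plan is to prove the statement by contradiction, using two facts. First, the partitions $\parti_{\omega}^{n}$ refine as $n$ grows and generate $\B$. Second, $F_\omega$ is a positive, normalized functional controlled by the variation.

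\emph{Step 1: monotonicity in $n$.} Each $Z\in\parti_{\omega}^{n+1}$ lies in a unique element $Z'\in\parti_{\omega}^{n}$. Since $F_\omega$ is monotone (because $\opera_\omega^n$ is a positive operator), $F_\omega(\ind_Z)\le F_\omega(\ind_{Z'})$. Hence $s_n:=\sup_{Z\in\parti_\omega^n}F_\omega(\ind_Z)$ is nonincreasing in $n$ and converges to some $s\ge0$. It therefore suffices to show $s=0$, and then to take $N_{\omega,\tilde a}$ to be the first $n$ with $s_n<(8\tilde a^3)^{-1}$.

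\emph{Step 2: suppose $s>0$ and extract a nested chain.} Assume $s>0$. There may be infinitely many branches, so I would first show that for each $n$ only finitely many $Z\in\parti_\omega^n$ satisfy $F_\omega(\ind_Z)\ge s/2$. This should follow from additivity of $F_\omega$ on disjoint finite families: $F_\omega$ is a $\liminf$-type limit of ratios, which gives superadditivity $F_\omega(f+g)\ge F_\omega(f)+F_\omega(g)$. Together with $F_\omega(\ind_{X_\omega})=1$, this bounds the number of such $Z$ by $2/s$. A König-type (compactness) argument then produces a nested sequence $Z_1\supseteq Z_2\supseteq\cdots$ with $Z_n\in\parti_\omega^n$ and $F_\omega(\ind_{Z_n})\ge s/2$ for all $n$.

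\emph{Step 3: the main obstacle, namely excluding concentration on a point.} By the generating assumption, $\bigcap_n \overline{Z_n}$ is a single point $x_0$. We must show that $F_\omega$ cannot assign mass at least $s/2$ to arbitrarily small neighbourhoods of $x_0$. Following \cite[Lemma 3.10]{liverani4}, I would bound $F_\omega(\ind_{Z_n})$ by pushing forward. Lemma \ref{lem6.6} and the estimate in the proof of Lemma \ref{lem6.25} give
\[
F_\omega(\ind_{Z_n})\le \frac{F_{\theta^{m}\omega}(\normopera_\omega^{m}\ind_{Z_n})}{\rho\text{-type lower bound}}\le \frac{\supnorm{\normopera_\omega^{m}\ind_{Z_n}}}{F_{\theta^m\omega}(\normopera^m_\omega\ind_\omega)}.
\]
Here $\normopera_\omega^{m}\ind_{Z_n}$ has a single inverse branch, so it is bounded by $(\rho_\omega^m)^{-1}\supnorm{g_\omega^m}$ once $m\le n$. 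Taking $m=n$ and using the contracting condition $\lim\frac1n\log\supnorm{g_\omega^n}<\lim\frac1n\log\rho_\omega^n$, the numerator decays exponentially. The denominator is bounded below by $1$, since $\rho_\omega^n\le F_{\theta^n\omega}(\opera_\omega^n\ind_\omega)$. Combining these gives $F_\omega(\ind_{Z_n})\to0$, which contradicts Step 2.

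This direct route may even make Steps 1–2 unnecessary: one can apply the bound uniformly over all $Z\in\parti_\omega^n$ and obtain $s_n\le \supnorm{g_\omega^n}/\rho_\omega^n\to0$. The step I expect to be most delicate is justifying the inequality $F_\omega(\ind_Z)\le F_{\theta^n\omega}(\opera_\omega^n\ind_Z)/\rho_\omega^n$. Lemma \ref{lem6.6} only gives the reverse ($\ge$) direction with $\rho$, while the upper bound from Lemma \ref{lem6.25} carries the constant $\supnorm{\normopera^n_\omega\ind_\omega}$. I would therefore try to prove it directly from the definition of $F_\omega$ via the ratio $\opera^{n+k}_\omega\ind_Z/\opera^{n+k}_\omega\ind_\omega$ on $S(n+k,n+k)$. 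If that fails, I would fall back on the compactness argument of Steps 1–2 combined with the BV bound $\supnorm{\normopera^{n}_\omega\ind_\omega}\le(\tilde a+1)F(\normopera^n_\omega\ind_\omega)$ along coating times $n=lR$.
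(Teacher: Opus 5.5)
Your direct route is correct and is essentially the paper's argument. For every $Z\in\parti_{\omega}^{n}$, Lemma~\ref{lem6.6} with $f=\ind_Z$ gives $F_{\omega}(\ind_Z)\leq(\rho_{\omega}^{n})^{-1}F_{\theta^{n}\omega}(\opera_{\omega}^{n}\ind_Z)\leq\supnorm{g_{\omega}^{n}}/\rho_{\omega}^{n}$, since $\opera_{\omega}^{n}\ind_Z$ has only one inverse branch, and this bound tends to $0$ by the contracting condition, so Steps 1--2 and the fallback are unnecessary. The inequality you flag as delicate is not the reverse of Lemma~\ref{lem6.6}: it is exactly its second inequality divided by $\rho_{\omega}^{n}$, and your own displayed chain in Step 3 already uses it correctly. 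The paper simply re-derives the same bound inline from the definition of $F_\omega$ on $S(n+m,n+m)$ instead of citing the lemma.
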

\begin{proof}
    Choose such an integer $N_{\omega,\tilde{a}}\geq1$ such that for all $n\geq N_{\omega,\tilde{a}}$, we have $\supnorm{g_{\omega}^{n}}<\frac{1}{8\tilde{a}^3}\rho_{\omega}^{n}$. Fix such $n$, let $m\geq1$, for $Z\in\parti_{\omega}^{n}$, we have $$\opera_{\omega}^{n}\ind_{Z}\leq\supnorm{g_{\omega}^{n}}<\frac{1}{8\tilde{a}^3}\rho_{\omega}^{n}.$$ For $x\in S(n+m,n+m)$, we have
    \begin{equation*}
        \begin{aligned}
            \frac{\opera_{\omega}^{n+m}\ind_{Z}(x)}{\opera_{\omega}^{n+m}\ind_{\omega}(x)}&\leq\frac{\supnorm{\opera_{\omega}^{n}\ind_{Z}}\opera_{\theta^{n}\omega}^{m}\ind_{\theta^{n}\omega}(x)}{\opera_{\omega}^{n+m}\ind_{\omega}(x)} \\
            &< \frac{\rho_{\omega}^{n}\opera_{\theta^{n}\omega}^{m}\ind_{\theta^{n}\omega}(x)}{8\tilde{a}^{3}\opera_{\theta^{n}\omega}^{m}(\opera_{\omega}^{n}\ind_{\omega})(x)} \\
            &\leq \frac{\rho_{\omega}^{n}}{8\tilde{a}^{3}}\left(\underset{S(n+m,m)}{\inf}\,\frac{\opera_{\theta^{n}\omega}^{m}(\opera_{\omega}^{n}\ind_{\omega})}{\opera_{\theta^{n}\omega}^{m}\ind_{\theta^{n}\omega}}\right)^{-1}
        \end{aligned}
    \end{equation*} Take the infimum over $x\in S(n+m,n+m)$ and let $m\to\infty$; by Lemma \ref{lem6.6}, the last term becomes \[(F_{\theta^{n}\omega}(\opera_{\omega}^{n}\ind_{\omega}))^{-1}\leq(\rho_{\omega}^{n})^{-1}.\] We obtain $$F_{\omega}(\ind_{Z})<\frac{\rho_{\omega}^{n}}{8\tilde{a}^{3}}(\rho_{\omega}^{n})^{-1}=\frac{1}{8\tilde{a}^3}$$ for every $Z\in\parti_{\omega}^{n}$; then taking the supremum over all intervals $Z$ completes the proof.
\end{proof}
We derive a lemma from \cite{liverani3,liverani4} that the functions in the cone $\Lambda_{\omega,a}$ cannot be too small, the infimum is bounded below.
\begin{lemma}\label{lem6.27}
    For $\prob$-a.e. $\omega\in\Omega$ with $j(\omega)=0$, for all $f\in\Lambda_{\omega,\tilde{a}}$ and all $n\geq N_{\omega,\tilde{a}}$, there exists an interval $Z_{f}\in\parti_{\omega,g}^{n}$ such that $$\inf_{Z_f}f\geq\frac{1}{2}F_{\omega}(f).$$
\end{lemma}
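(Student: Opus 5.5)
We argue by contradiction, following \cite[Lemma 3.9]{liverani4} and \cite[Lemma 1.8.3]{atnip3}. Fix $\omega$ with $j(\omega)=0$, $f\in\Lambda_{\omega,\tilde a}$ and $n\geq N_{\omega,\tilde a}$, and normalize by homogeneity so that $F_\omega(f)=1$; then $\vari(f)\leq\tilde a$. Suppose that $\inf_U f<\frac12$ for every $U\in\parti_{\omega,g}^{n}$. The aim is to show $F_\omega(f)<1$, which is a contradiction.

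\emph{Decomposition.} Write $f\ind_{K_{\omega,n-1}}=\sum_{U}f\ind_U$ over $U\in\parti_{\omega,g}^{n}\cup\parti_{\omega,b}^{n}$. The elements of $\tilde{\mathcal U}$ disjoint from $K_{\omega,n-1}$ do not contribute to $F_\omega$, because $F_\omega$ only sees $\opera_\omega^m$ for large $m$ and $\opera_\omega^m$ kills everything outside $K_{\omega,m-1}\subseteq K_{\omega,n-1}$ once $m\geq n$. Hence $F_\omega(f)\leq \sum_{U}F_\omega(f\ind_U)$, using superadditivity of the $\liminf$ in the reverse direction; more precisely, $F_\omega$ is a limit of infima of ratios, and I will bound the ratio $\opera^m_\omega f/\opera^m_\omega\ind_\omega$ at a single point $x$ before taking the infimum. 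On each $U$ we have $f\ind_U\leq(\inf_U f+\vari_U f)\ind_U$. The bad elements satisfy $F_\omega(\ind_U)=0$. The good elements satisfy $F_\omega(\ind_U)\leq\sup_{Z\in\parti_\omega^n}F_\omega(\ind_Z)<\frac{1}{8\tilde a^3}$ by Lemma \ref{lem6.26}, since each $U$ lies inside an element of $\parti_\omega^n$.

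\emph{Main estimate.} Summing pointwise ratios gives
\[
\frac{\opera_\omega^m f(x)}{\opera_\omega^m\ind_\omega(x)}\leq \sum_U(\inf_U f)\frac{\opera^m_\omega\ind_U(x)}{\opera^m_\omega\ind_\omega(x)}+\vari(f)\max_U\frac{\opera^m_\omega\ind_U(x)}{\opera^m_\omega\ind_\omega(x)}.
\]
In the first sum, the good terms are bounded by $\frac12\sum_U\opera^m_\omega\ind_U/\opera^m_\omega\ind_\omega\leq\frac12$. The bad terms are handled with $\inf_U f\leq\sup f\leq \tilde a+1$, which is finite, multiplied by a ratio whose $F_\omega$-limit is $0$. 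The second term must be made small using the smallness of $F_\omega(\ind_U)$ together with $\vari(f)\leq\tilde a$.

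\emph{Main obstacle.} The difficulty is interchanging the limit defining $F_\omega$ with these bounds, since $F_\omega$ is only superadditive and taking an infimum of a sum does not split. To handle this, I will choose a point $x$ at which the good ratios are all approximately equal to their $F_\omega$ values; this is possible because $\tilde{\mathcal U}$ is finite. Then $F_\omega(f)\leq$ the ratio at such near-optimal points. This gives
\[
1=F_\omega(f)\leq \tfrac12+\#(\text{good})\cdot\tilde a\cdot\tfrac{1}{8\tilde a^3}+o(1).
\]
This bound is not yet good enough, because $\#(\text{good})$ is not controlled. Instead I will bound $\vari$ only along the elements that carry mass: since $\sum_U F_\omega(\ind_U)\leq 1$, the variation contribution is at most $\sup_U F_\omega(\ind_U)\cdot\tilde a$ times a combinatorial constant, and this is less than $\frac{1}{8\tilde a^2}<\frac12$ provided $\tilde a\geq1$, which holds since $\tilde a>a_0\geq B\geq 1$. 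Together with the good part at most $\frac12$, this yields $F_\omega(f)<1$, the desired contradiction. So some $Z_f\in\parti^n_{\omega,g}$ has $\inf_{Z_f}f\geq\frac12F_\omega(f)$.
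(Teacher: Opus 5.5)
\textbf{Verdict: there is a genuine gap.} Your setup matches the paper's in shape: you normalize $F_\omega(f)=1$, assume $\inf_U f<\frac12$ on every good $U$, split $\opera_\omega^m f/\opera_\omega^m\ind_\omega$ pointwise, and use $\sum_U\vari_U(f)\leq\vari(f)$ together with Lemma \ref{lem6.26}. The gap is in passing from the pointwise inequality to an upper bound on $F_\omega(f)$.

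At this stage $F_\omega$ is only a limit of infima of ratios. It is monotone, positively homogeneous and superadditive, but it is not known to be subadditive. Your opening claim $F_\omega(f)\leq\sum_U F_\omega(f\ind_U)$ already uses the wrong direction, since superadditivity gives $\geq$. Likewise, $F_\omega(\ind_U)=0$ or $F_\omega(\ind_U)<\frac{1}{8\tilde{a}^3}$ only says the ratio $\opera_\omega^m\ind_U/\opera_\omega^m\ind_\omega$ is small at \emph{some} points, not at a common point.

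Your assertion that one can pick $x$ where all these ratios are simultaneously near their infima ``because $\tilde{\mathcal U}$ is finite'' does not follow, and finiteness is irrelevant. For two complementary sets the two ratios sum to $1$ at every point, yet nothing at this stage prevents their infima from summing to less than $1$. What you would need is convergence of these ratios uniformly in $x$. That is exactly what Proposition \ref{prop6.34} proves later, via the cone contraction of Proposition \ref{prop6.29}. That in turn rests on the finite-diameter lemma, which uses the present lemma, so appealing to it is circular.

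The same defect affects your bad terms. It also affects your final step with the unspecified ``combinatorial constant'', where $F_\omega\bigl(\sum_U\vari_U(f)\ind_U\bigr)\leq\vari(f)\sup_U F_\omega(\ind_U)$ tacitly assumes subadditivity.

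\textbf{The missing idea.} First make each piece \emph{uniformly} small by pushing it along a coating block, and only then apply the functional at the later fiber. With $M=l(\omega)R$, the paper combines inequality (\ref{ineq4.2}), the bound $\supnorm{g}\leq\vari(g)+F(g)$, and Lemma \ref{lem6.25}. This gives, at every point and for good $Z$,
\[
\normopera_{\omega}^{M}\ind_{Z}\leq\bigl(2u^{l}+(1+v\tilde{a})CF_{\omega}(\ind_{Z})\bigr)F_{\theta^{M}\omega}(\normopera_{\omega}^{M}\ind_{\omega}),
\]
and $\normopera_{\omega}^{M}\ind_{Z}\leq 2u^{l}$ for bad $Z$. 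Hence $\normopera_\omega^M f\leq\frac12F_\omega(f)\normopera_\omega^M\ind_\omega+c$ with an explicit constant $c$. To such a right-hand side $F_{\theta^M\omega}$ can legitimately be applied, using only monotonicity, homogeneity and $F(g+c)=F(g)+c$.

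Dividing by $F_{\theta^M\omega}(\normopera_\omega^M\ind_\omega)$ and using Lemma \ref{lem6.6}, i.e.\ $F_{\theta^M\omega}(\normopera_\omega^M f)\geq F_{\theta^M\omega}(\normopera_\omega^M\ind_\omega)F_\omega(f)$, brings you back to $F_\omega(f)$. Only after that is $\sum_Z\vari_Z(f)F_\omega(\ind_Z)\leq\vari(f)\sup_Z F_\omega(\ind_Z)$ a harmless numerical inequality. The $u^l$ terms are then removed as in the paper, yielding the contradiction.
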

\begin{proof}
    We prove this lemma by contradiction. Suppose for every element $Z\in\parti_{\omega,g}^{n}$ we have $\inf_{Z}f<\frac{1}{2}F_{\omega}(f)$. We let $lR=M$ for convenience, for $N_{\omega,\tilde{a}}\leq n\leq M-1$,
    \begin{equation}\label{ineq2}
        \begin{aligned}
            \normopera_{\omega}^{M}f(x)&=\sum_{Z\in\parti_{\omega,g}^{n}}\normopera_{\omega}^{M}(f\ind_Z)(x)+\sum_{Z\in\parti_{\omega,b}^{n}}\normopera_{\omega}^{M}(f\ind_Z)(x) \\
            &\leq \frac{1}{2}F_{\omega}(f)\sum_{Z\in\parti_{\omega,g}^{n}}\normopera_{\omega}^{M}\ind_{Z}(x)+\sum_{Z\in\parti_{\omega,g}^{n}}\vari_{Z}(f)\normopera_{\omega}^{M}\ind_{Z}(x)+\sum_{Z\in\parti_{\omega,b}^{n}}\supnorm{f}\normopera_{\omega}^{M}\ind_{Z}(x)
        \end{aligned}
    \end{equation} For $Z\in\parti_{\omega,b}^{n}$, we have $0\leq F_{\theta^{M}\omega}(\normopera_{\omega}^{M}\ind_{Z})\leq C F_{\theta^{M}\omega}(\opera_{\omega}^{M}\ind)F_{\omega}(\ind_{Z})=0$ by Lemma \ref{lem6.25} and $F_{\omega}(\ind_Z)=0$ from definition of bad subcollection, hence
    $$\normopera_{\omega}^{M}\ind_{Z}\leq F_{\theta^{M}\omega}(\normopera_{\omega}^{M}\ind_Z)+\vari(\normopera_{\omega}^{M}\ind_Z)\leq u^{l}\vari(\ind_Z)+v\tilde{a}F_{\theta^{M}\omega}(\normopera_{\omega}^{M}\ind_Z)\leq2u^l.$$ For $Z\in\parti_{\omega,g}^{n}$, we have
    \begin{equation*}
        \begin{aligned}
            \normopera_{\omega}^{M}\ind_Z&\leq\vari(\normopera_{\omega}^{M}\ind_Z)+F_{\theta^{M}\omega}(\normopera_{\omega}^{M}\ind_Z) \\
            &\leq (u^{l}\vari(\ind_{Z})+v\tilde{a}F_{\theta^{M}\omega}(\normopera_{\omega}^{M}\ind_{Z}))+F_{\theta^{M}\omega}(\normopera_{\omega}^{M}\ind_Z) \\
            &\leq (u^{l}\vari(\ind_{Z})+v\tilde{a}CF_{\theta^{M}\omega}(\normopera_{\omega}^{M}\ind_\omega)F_{\omega}(\ind_Z))+CF_{\theta^{M}\omega}(\normopera_{\omega}^{M}\ind_\omega)F_{\omega}(\ind_Z) \\
            &\leq 2u^{l}+(1+v\tilde{a})CF_{\theta^{M}\omega}(\normopera_{\omega}^{M}\ind_\omega)F_{\omega}(\ind_Z) \\
            &\leq \left(2u^{l}+(1+v\tilde{a})CF_{\omega}(\ind_Z)\right)F_{\theta^{M}\omega}(\normopera_{\omega}^{M}\ind_\omega)
        \end{aligned}
    \end{equation*} where the last inequality is obtained by $F_{\theta^{n}\omega}(\normopera_{\omega}^{n}\ind_\omega)\geq1$ from Lemma \ref{lem6.6}. Applying functional $F_{\theta^{M}\omega}$ to both sides of inequality (\ref{ineq2}), we have that the first term $$F_{\theta^{M}\omega}\left(\frac{1}{2}F_{\omega}(f)\sum_{Z\in\parti_{\omega,g}^{n}}\normopera_{\omega}^{M}\ind_{Z}\right)\leq\frac{1}{2}F_{\omega}(f)F_{\theta^{M}\omega}(\normopera_{\omega}^{M}\ind_\omega),$$ the second term 
    \begin{equation*}
        \begin{aligned}
            F_{\theta^{M}\omega}\left(\sum_{Z\in\parti_{\omega,g}^{n}}\vari_{Z}(f)\normopera_{\omega}^{M}\ind_{Z}\right)&\leq F_{\theta^{M}\omega}\left(\sum_{Z\in\parti_{\omega,g}^{n}}\vari_{Z}(f)\left(2u^{l}+(1+v\tilde{a})CF_{\omega}(\ind_Z)\right)F_{\theta^{M}\omega}(\normopera_{\omega}^{M}\ind_\omega)\right) \\
            &\leq \sum_{Z\in\parti_{\omega,g}^{n}}\vari_{Z}(f)\left(2u^{l}+(1+v\tilde{a})CF_{\omega}(\ind_Z)\right)F_{\theta^{M}\omega}(\normopera_{\omega}^{M}\ind_\omega)
        \end{aligned}
    \end{equation*}
    since the term inside the second big bracket is just number, and the third term $$F_{\theta^{M}\omega}\left(\sum_{Z\in\parti_{\omega,b}^{n}}\supnorm{f}\normopera_{\omega}^{M}\ind_{Z}\right)\leq\sum_{Z\in\parti_{\omega,b}^{n}}2u^{l}\supnorm{f}F_{\theta^{M}\omega}(\normopera_{\omega}^{M}\ind_{\omega}).$$ Sum these terms together, we have
    \begin{equation*}
        \begin{aligned}
            F_{\theta^{M}\omega}(\normopera_{\omega}^{M}f)&\leq \frac{1}{2}F_{\omega}(f)F_{\theta^{M}\omega}(\normopera_{\omega}^{M}\ind_\omega)\\
            &\qquad+\sum_{Z\in\parti_{\omega,g}^{n}}\vari_{Z}(f)\left(2u^{l}+(1+v\tilde{a})CF_{\omega}(\ind_Z)\right)F_{\theta^{M}\omega}(\normopera_{\omega}^{M}\ind_\omega) \\
            &\qquad+ \sum_{Z\in\parti_{\omega,b}^{n}}2u^{l}\supnorm{f}F_{\theta^{M}\omega}(\normopera_{\omega}^{M}\ind_{\omega}),
        \end{aligned}
    \end{equation*} divide on both sides by $F_{\theta^{M}\omega}(\normopera_{\omega}^{M}\ind_\omega)$ and let $l\to\infty$, hence $M\to\infty$ since $R$ is fixed, then $u^l\to0$ and the third term also tends to $0$, we obtain
    \begin{equation*}
        \begin{aligned}
            F_{\omega}(f)&\leq \frac{1}{2}F_{\omega}(f)+\sum_{Z\in\parti_{\omega,g}^{n}}\vari_{Z}(f)(1+v\tilde{a})CF_{\omega}(\ind_Z) \\
            &\leq  \frac{1}{2}F_{\omega}(f)+\vari(f)(1+v\tilde{a})C\underset{Z\in\parti_{\omega,g}^{n}}{\sup}\,F_{\omega}(\ind_Z) \\
            &\leq \left(\frac{1}{2}+\tilde{a}(1+v\tilde{a})C\underset{Z\in\parti_{\omega,g}^{n}}{\sup}\,F_{\omega}(\ind_Z)\right)F_{\omega}(f).
        \end{aligned}
    \end{equation*}
    Since for $n\geq N_{\omega,\tilde{a}}$, we have $\sup_{Z\in\parti_{\omega,g}^{n}}F_{\omega}(\ind_Z)<\frac{1}{8\tilde{a}^3}$ by Lemma \ref{lem6.26}. By $B\geq1$ and $a_0:=v^{-1}B>1$, we have $\tilde{a}>a_{0}>1$, then $C=1+\tilde{a}<2\tilde{a}$, we see $\tilde{a}(1+v\tilde{a})C<4\tilde{a}^3$, hence $$\tilde{a}(1+v\tilde{a})C\underset{Z\in\parti_{\omega,g}^{n}}{\sup}\,F_{\omega}(\ind_Z)<\frac{1}{8\tilde{a}^3}4\tilde{a}^3=\frac{1}{2},$$ leading to $F_{\omega}(f)<F_{\omega}(f)$ for all $f\in\Lambda_{\omega,\tilde{a}}$, which is a contradiction. Therefore, there must exist an interval $Z_f$ such that $\inf_{Z_f}f\geq\frac{1}{2}F_{\omega}(f)$.
\end{proof}
\section{Finite diameter of cones}\label{sec5}
Recall that in the setting of Lemma \ref{lem6.24}, given an initial $\omega_0\in\Omega$ with $j(\omega_0)=0$, let $\omega_i:=\theta^{l(\omega_{i-1})R}\omega_{i-1}$ be defined for each $i\geq1$, for simplicity, let $l_i:=l(\omega_i)$, then the total length is given by $$\tilde{M}:=\sum_{i=0}^{k-1}l_{i}R$$ for $k\geq1$. We choose such $\tilde{M}\geq M=l(\omega_0)R$ large enough such that $$\underset{S(\tilde{M},\tilde{M})}{\inf}\,\frac{\opera_{\omega}^{\tilde{M}}\ind_{Z}}{\opera_{\omega}^{\tilde{M}}\ind_{\omega}}\geq\frac{1}{2}F_{\omega}(\ind_Z)$$ for every $Z\in\parti_{\omega,g}^{N_{\omega,\tilde{a}}}$. It is possible since we always have $F_{\omega}(\ind_Z)>0$. Obviously such an integer $\tilde{M}$ must be larger than $N_{\omega,\tilde{a}}$. We consider the following conditions, partially inspired by \cite{liverani4} and \cite{atnip1}.

(1) There exists $\tilde{C}\geq1$ such that $$\tilde{C}^{-1}\leq\underset{S(\tilde{M},\tilde{M})}{\inf}\,\normopera_{\omega}^{\tilde{M}}\ind_{\omega}\leq\supnorm{\normopera_{\omega}^{\tilde{M}}\ind_{\omega}}\leq\tilde{C}.$$

(2) For all $Z\in\parti_{\omega,g}^{N_{\omega,\tilde{a}}}$, there exists $0<\tilde{c}\leq\tilde{C}$ such that $$F_{\omega}(\ind_{Z})\geq\tilde{c}.$$
Hence, we can choose a set $\Omega_F$ of fibers with measure at least $1-\epsilon$ such that the above two conditions hold, $\theta^{-R}(\Omega_F)\subseteq\Omega_G$, and the cone $\Lambda_{\omega,\tilde{a}}$ has finite diameter after iterates of $\normopera_\omega$.
\begin{lemma}
    For all $\omega\in\Omega_{F}$ with $j(\omega)=0$, we have $$\normopera_{\omega}^{\tilde{M}}\Lambda_{\omega,\tilde{a}}\subseteq\Lambda_{\theta^{\tilde{M}}\omega,(u+v)\tilde{a}}\subseteq\Lambda_{\theta^{\tilde{M}}\omega,\tilde{a}}$$ and $$\diam_{\Lambda_{\theta^{\tilde{M}}\omega,\tilde{a}}}\left(\normopera_{\omega}^{\tilde{M}}\Lambda_{\omega,\tilde{a}}\right)\leq \Delta:=2\log\frac{4\tilde{C}^{2}C((u+v)C+1)}{\tilde{c}}<\infty.$$
\end{lemma}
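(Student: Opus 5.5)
The plan has two parts: first the cone invariance, then a uniform bound on the Hilbert distance from images to $\ind$ via Lemma \ref{lem6.15}.

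\emph{Cone invariance.} Since $\tilde{M}=\sum_{i=0}^{k-1}l_iR$ is a concatenation of coating blocks starting at $\omega_0=\omega$ with $j(\omega)=0$, I apply Proposition \ref{prop6.23} block by block. Each $\omega_i$ again satisfies $j(\omega_i)=0$, because $j(\theta^{R}\omega)=j(\omega)$. Hence $\normopera_{\omega_i}^{l_iR}\Lambda_{\omega_i,\tilde{a}}\subseteq\Lambda_{\omega_{i+1},\tilde{a}}$ for $i<k-1$, and for the last block I keep the sharper conclusion $\Lambda_{\theta^{\tilde{M}}\omega,(u+v)\tilde{a}}$. Composing these inclusions gives the first claim.

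\emph{Finite diameter.} By the triangle inequality, $\diam\le 2\sup_{f}\Theta_{\Lambda_{\theta^{\tilde{M}}\omega,\tilde{a}}}(\ind,\normopera_{\omega}^{\tilde{M}}f)$. Fix $f\in\Lambda_{\omega,\tilde{a}}$ and set $h:=\normopera_{\omega}^{\tilde{M}}f\in\Lambda_{\theta^{\tilde M}\omega,(u+v)\tilde{a}}$. Lemma \ref{lem6.15} with $c=u+v<3/4$ gives
\[\Theta(\ind,h)\le\log\frac{\sup h+(u+v)F(h)}{\min\{\inf h,(1-u-v)F(h)\}}.\]
The numerator is handled by $\sup h\le\vari(h)+F(h)\le((u+v)\tilde{a}+1)F(h)$.

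The key quantity is a lower bound on $\inf h$ over $S(\tilde{M},\tilde{M})$, where $h$ is supported. This is the main obstacle. I plan to use Lemma \ref{lem6.27}: since $\tilde{M}>N_{\omega,\tilde{a}}$, there is $Z_f\in\parti_{\omega,g}^{N_{\omega,\tilde a}}$ with $\inf_{Z_f}f\ge\frac12F_\omega(f)$. Then, for $x\in S(\tilde{M},\tilde{M})$,
\[h(x)\ge\tfrac12F_\omega(f)\,\normopera_\omega^{\tilde{M}}\ind_{Z_f}(x)\ge\tfrac12F_\omega(f)\cdot\tfrac12F_\omega(\ind_{Z_f})\,\normopera_\omega^{\tilde{M}}\ind_\omega(x)\ge\frac{\tilde{c}}{4\tilde{C}}F_\omega(f).\]
This chain uses the choice of $\tilde{M}$, then condition (2), then condition (1).

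To compare $F_\omega(f)$ with $F(h)$, I use Lemma \ref{lem6.25} (iterated over the blocks, or applied with the total length). It gives $F(h)\le C\,F(\normopera_\omega^{\tilde{M}}\ind_\omega)F_\omega(f)\le C\tilde{C}F_\omega(f)$, using $F\le\sup$ and condition (1). Hence
\[\inf h\ge\frac{\tilde{c}}{4\tilde{C}^2C}F(h).\]
Since $\tilde{c}\le\tilde{C}$ and $C>1$, this is at most $\frac14F(h)\le(1-u-v)F(h)$, so the minimum in the denominator is controlled by this lower bound.

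\emph{Conclusion.} Combining the two estimates,
\[\Theta(\ind,h)\le\log\frac{4\tilde{C}^2C((u+v)\tilde{a}+1)}{\tilde{c}}.\]
Since $\tilde{a}<C$, the factor $(u+v)\tilde{a}+1$ is at most $(u+v)C+1$. Doubling yields $\Delta$.

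The delicate step I expect is justifying the use of Lemma \ref{lem6.25} over the full $\tilde{M}$ rather than a single coating length. If that fails, I will instead bound $F(h)\le\sup h\le\tilde{C}$-type quantities via $\sup h\le\supnorm{f}\,\normopera^{\tilde{M}}\ind_\omega$ together with $\supnorm{f}\le(\tilde{a}+1)F_\omega(f)$, absorbing the resulting constant into $C$.
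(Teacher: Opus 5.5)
Your proposal is correct and is essentially the paper's proof. It uses the same chain of ingredients and reaches the same $\Delta$: Lemma \ref{lem6.18} and Proposition \ref{prop6.23} for invariance, Lemma \ref{lem6.15} with $c=u+v$, Lemma \ref{lem6.27} with the choice of $\tilde M$ and conditions (1)--(2) for $\inf h\ge\frac{\tilde c}{4\tilde C}F_\omega(f)$, and $F(h)\le C\tilde C F_\omega(f)$. The only difference is that you normalize by $F(h)$, whereas the paper normalizes by $F_\omega(f)$ and uses Lemma \ref{lem6.6} to handle the term $(1-(u+v))F(h)$.

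Your fallback $F(h)\le\supnorm{f}\,\supnorm{\normopera_\omega^{\tilde M}\ind_\omega}\le C\tilde C F_\omega(f)$ already settles the step you were worried about, with exactly the constant needed. Note also that the numerator in Lemma \ref{lem6.15} carries the extra term $(u+v)F(h)$, so its bound is exactly $((u+v)C+1)F(h)$ rather than a relaxation of $((u+v)\tilde a+1)F(h)$; this leaves $\Delta$ unchanged.
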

\begin{proof}
    By Lemma \ref{lem6.18} and Proposition \ref{prop6.23}, for $f\in\Lambda_{\omega,\tilde{a}}$ we have $\normopera_{\omega}^{\tilde{M}}f\in\Lambda_{\theta^{\tilde{M}}\omega,(u+v)\tilde{a}}\subseteq\Lambda_{\theta^{\tilde{M}}\omega,\tilde{a}}$ and $$\Theta_{\Lambda_{\theta^{\tilde{M}}\omega,\tilde{a}}}\left(\ind_{\theta^{\tilde{M}}\omega},\normopera_{\omega}^{\tilde{M}}f\right)\leq\log\frac{\sup\normopera_{\omega}^{\tilde{M}}f+(u+v)F_{\theta^{\tilde{M}}\omega}(\normopera_{\omega}^{\tilde{M}}f)}{\min\{\inf_{S(\tilde{M},\tilde{M})}\normopera_{\omega}^{\tilde{M}}f,(1-(u+v))F_{\theta^{\tilde{M}}\omega}(\normopera_{\omega}^{\tilde{M}}f)\}}.$$ By Lemma \ref{lem6.25}, the numerator is
    \begin{equation*}
        \begin{aligned}
            \sup\,\normopera_{\omega}^{\tilde{M}}f+(u+v)F_{\theta^{\tilde{M}}\omega}(\normopera_{\omega}^{\tilde{M}}f)&\leq \vari(\normopera_{\omega}^{\tilde{M}}f)+(u+v+1)F_{\theta^{\tilde{M}}\omega}(\normopera_{\omega}^{\tilde{M}}f) \\
            &\leq ((u+v)\tilde{a}+(u+v+1))F_{\theta^{\tilde{M}}\omega}(\normopera_{\omega}^{\tilde{M}}f) \\
            &\leq C((u+v)C+1)F_{\theta^{\tilde{M}}\omega}(\normopera_{\omega}^{\tilde{M}}\ind_{\omega})F_{\omega}(f) \\
            &\leq \tilde{C}C((u+v)C+1)F_{\omega}(f).
        \end{aligned}
    \end{equation*} where we have applied $F_{\theta^{\tilde{M}}\omega}(\normopera_{\omega}^{\tilde{M}}\ind_\omega)\leq\supnorm{\normopera_{\omega}^{\tilde{M}}\ind_\omega}\leq\tilde{C}$. Since for each $f\in\Lambda_{\omega,\tilde{a}}$ there exists an interval $Z_f\in\parti_{\omega,g}^{N_{\omega,\tilde{a}}}$ such that $\inf_{Z_f}f\geq\frac{1}{2}F_{\omega}(f)$ according to Lemma \ref{lem6.27}, by $$\inf_{S(\tilde{M},\tilde{M})}\frac{\normopera_{\omega}^{\tilde{M}}\ind_{Z_f}}{\normopera_{\omega}^{\tilde{M}}\ind_\omega}\geq\frac{1}{2}F_{\omega}(\ind_{Z_f})\geq\frac{\tilde{c}}{2},$$ we have that
    \begin{equation*}
        \begin{aligned}
            \underset{S(\tilde{M},\tilde{M})}{\inf}\,\normopera_{\omega}^{\tilde{M}}f &\geq \underset{S(\tilde{M},\tilde{M})}{\inf}\,\normopera_{\omega}^{\tilde{M}}(f\ind_{Z_f}) \\
            &\geq \underset{Z_f}{\inf}\,f\cdot\underset{S(\tilde{M},\tilde{M})}{\inf}\,\normopera_{\omega}^{\tilde{M}}\ind_{Z_f} \\
            &\geq \frac{1}{2}F_{\omega}(f)\cdot\underset{S(\tilde{M},\tilde{M})}{\inf}\,\normopera_{\omega}^{\tilde{M}}\ind_{Z_f} \\
            &\geq \frac{1}{2}F_{\omega}(f)\cdot\underset{S(\tilde{M},\tilde{M})}{\inf}\,\frac{\normopera_{\omega}^{\tilde{M}}\ind_{Z_f}}{\normopera_{\omega}^{\tilde{M}}\ind_\omega}\cdot\underset{S(\tilde{M},\tilde{M})}{\inf}\,\normopera_{\omega}^{\tilde{M}}\ind_{\omega} \\
            &\geq \frac{\tilde{c}}{4\tilde{C}}F_{\omega}(f).
        \end{aligned}
    \end{equation*} Since $\tilde{c}\leq\tilde{C}$, we have $\frac{\tilde{c}}{4\tilde{C}}\leq\frac{1}{4}$. Also recall that $F_{\theta^{\tilde{M}}\omega}(\normopera_{\omega}^{\tilde{M}}f)\geq F_{\omega}(f)$ by Lemma \ref{lem6.6}, by the positivity of $F_{\omega}(f)$ and $1-(u+v)>\frac{1}{4}$, we deduce \[(1-(u+v))F_{\theta^{\tilde{M}}\omega}(\normopera_{\omega}^{\tilde{M}}f)>\frac{1}{4}F_{\omega}(f)\geq\frac{\tilde{c}}{4\tilde{C}}F_{\omega}(f),\] hence the denominator attains the minimum $\frac{\tilde{c}}{4\tilde{C}}F_{\omega}(f)$. Combine the two estimations together, we have $$\Theta_{\Lambda_{\theta^{\tilde{M}}\omega,\tilde{a}}}\left(\ind_{\theta^{\tilde{M}}\omega},\normopera_{\omega}^{\tilde{M}}f\right)\leq\log\frac{\tilde{C}C((u+v)C+1)F_{\omega}(f)}{\frac{\tilde{c}}{4\tilde{C}}F_{\omega}(f)}=\log\frac{4\tilde{C}^{2}C((u+v)C+1)}{\tilde{c}}.$$ The right side is finite and is denoted by $\frac{\Delta}{2}$, by triangle inequality the diameter of the cone $\Lambda_{\theta^{\tilde{M}}\omega,\tilde{a}}$ is at most $\Delta<\infty$.
\end{proof}
Due to the existence of set $\Omega_F$, we shall consider a new random variable $0\leq i:=i(\omega)\leq R-1$ to be the smallest integer such that the following conditions hold:

(1) $$\lim_{n\to\infty}\frac{1}{n}\#\{0\leq k\leq n-1:\theta^{\pm kR+i}\omega\in\Omega_{G}\}>1-\epsilon;$$

(2) $$\lim_{n\to\infty}\frac{1}{n}\#\{0\leq k\leq n-1:\theta^{\pm kR+i}\omega\in\Omega_{F}\}>1-\epsilon.$$ It immediately implies $i(\theta^{i(\omega)}\omega)=0$. When $i(\omega)=0$, we also have $j(\omega)=0$ either. We have a result of exponential decay of Hilbert metric below.
\begin{proposition}\label{prop6.29}
    Let $\epsilon>0$ be sufficiently small and let $V:\Omega\to(0,\infty)$ be a measurable function, there exists a constant $D\in(0,1)$ and a measurable function $n:\Omega\to\N$ such that for $\prob$-a.e. $\omega\in\Omega$, for all $n\geq n(\omega)$, all $m\geq0$ and all $|p|\leq n$, $$\Theta_{\Lambda_{\theta^{n+p}\omega,+}}\left(\normopera_{\theta^{p}\omega}^{n}f_{\theta^{p}\omega},\normopera_{\theta^{p-m}\omega}^{n+m}g_{\theta^{p-m}\omega}\right)\leq \Delta D^{n}$$ holds for all nonnegative $f,g\in BV_{\Omega}(I)$ with $\vari(f_{\theta^{p}\omega})\leq V(\omega)e^{n\epsilon}$ and $\vari(g_{\theta^{p-m}\omega})\leq V(\omega)e^{(n+m)\epsilon}$.
\end{proposition}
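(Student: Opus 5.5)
Since $\Theta_{\Lambda_{\theta^{n+p}\omega,+}}\leq\Theta_{\Lambda_{\theta^{n+p}\omega,\tilde a}}$ on the smaller cone, it suffices to bound the Hilbert metric of $\Lambda_{\cdot,\tilde a}$, once both functions have been pushed into that cone. The argument follows the scheme of \cite[Proposition 8.1]{atnip1} and \cite{buzzi2}: first absorb the possibly large variation of $f,g$ in a short initial segment, then chain contractions along good and bad blocks.

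\emph{Step 1 (entering the cone).} Write $\omega'=\theta^{p}\omega$ (respectively $\theta^{p-m}\omega$) and choose the smallest $k\geq0$ such that $\theta^{k}\omega'$ is the start of a block with $i(\theta^k\omega')=0$; then $k\leq R-1$. Proposition \ref{prop6.13}, applied at the fiber $\theta^{k+s}\omega'$, gives
\[
\vari(\normopera^{k+s}_{\omega'}f)\leq C_\epsilon\, e^{-(\xi-\epsilon)(k+s)}V(\omega)e^{n\epsilon}+C_\epsilon F(\normopera^{k+s}_{\omega'}f).
\]
Since $F(\normopera^{k+s}f)\geq F(f)>0$ for nonzero $f$, the first term becomes at most $\tilde a\,F$ once $s$ is a fixed small fraction of $n$. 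Here I use $\xi-\epsilon>\epsilon$, $|p|\leq n$, and temperedness of $C_\epsilon(\theta^{j}\omega)$, which comes from log-integrability and Birkhoff's theorem. This step pushes both functions into $\Lambda_{\cdot,\tilde a}$ at a common fiber $\theta^{s'}\omega'$ with $s'\leq\alpha n$ for a small $\alpha$. To handle the second term $C_\epsilon F$, I pick $s'$ so that the landing fiber is good, so $C_\epsilon\leq B\leq v\tilde a$. The same argument applies to $g$ after its $m$ extra steps; these steps only help, because $\normopera^{m}$ first maps $g$ into the positive cone.

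\emph{Step 2 (contraction).} From the landing fiber I decompose the remaining orbit into blocks of length $l(\omega_i)R$ as in Lemma \ref{lem6.24}. By Lemma \ref{lem6.18} and Proposition \ref{prop6.23}, each block maps $\Lambda_{\cdot,\tilde a}$ into itself, so the Hilbert metric does not increase. Whenever a block starts at a fiber of $\Omega_F$, I take $\tilde M$ steps and use the finite-diameter lemma together with Birkhoff's inequality; this contracts by the factor $\tanh(\Delta/4)<1$. By condition (2) in the definition of $i(\omega)$ and Lemma \ref{lem6.24}, the number of such contracting blocks is at least $c\,n$ for some $c>0$ independent of $\omega$, once $n\geq n(\omega)$. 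Here $n(\omega)$ comes from the convergence times in the ergodic averages, and from $\sqrt\epsilon$-smallness of the total bad length. The bound at the first contracting step is $\Delta$, since both images lie in the image cone, so the final estimate is
\[
\Delta\tanh(\Delta/4)^{cn-1}\leq\Delta D^{n}.
\]

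\emph{Step 3 (uniformity).} $D$ must not depend on $p,m$. The shifts $\theta^{p}$ with $|p|\leq n$ require controlling the frequency of good and $\Omega_F$ fibers along windows $[p,p+n]$. I would use the two-sided condition in the definition of $i(\omega)$ (the $\pm kR$ version), so that windows of length $n$ inside $[-n,2n]$ have good density once $n\geq n(\omega)$.

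\emph{Main obstacle.} I expect the main difficulty in Step 1 together with Step 3: simultaneously absorbing the growth $V(\omega)e^{n\epsilon}$ and the tempered constants at shifted fibers $\theta^{p}\omega$ with $|p|\leq n$, while keeping a linear proportion of $\Omega_F$-blocks in the window. My first attempt would fix $\alpha=2\epsilon/(\xi-\epsilon)$ plus a slack term, and take $n(\omega)$ large enough for the tempered bounds $C_\epsilon(\theta^j\omega)\leq K(\omega)e^{\epsilon|j|}$ and $V(\omega)\leq e^{\epsilon n}$ to hold.
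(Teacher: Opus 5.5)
Your route is the paper's. You run a short initial segment of length $\hat i=i+dR=O(\epsilon)n$ to a fiber $\theta^{p'}\omega$ with $i(\theta^{p'}\omega)=0$ and $C_\epsilon(\theta^{p'}\omega)\le B$, taking $\hat i\ge(\epsilon n+\log V(\omega))/(\xi-\epsilon)$. Proposition \ref{prop6.13} is then meant to put $\normopera^{\hat i}f$ and $\normopera^{m+\hat i}g$ into $\Lambda_{\theta^{p'}\omega,\tilde a}$. After that you contract block by block via Lemma \ref{lem6.24} and the finite-diameter lemma, using two-sided densities for $|p|\le n$.

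\textbf{The gap: absorption in Step 1.} After the initial segment the first term is $C_\epsilon e^{-(\xi-\epsilon)\hat i}\vari(f)$. Taking $\hat i$ a fixed fraction of $n$ makes this small in absolute terms. But it must be dominated by a multiple of $F_{\theta^{p'}\omega}(\normopera^{\hat i}f)\ge F(f)$. The hypothesis only bounds $\vari(f)$ from above, so $\vari(f)/F(f)$ is unbounded over admissible $f$.
\begin{itemize}
\item Mere positivity of $F(f)$ would give an $s$ depending on $f$, which destroys the uniform $n(\omega)$.
\item Positivity itself fails in this open setting: $F(f)>0$ for $0\not\equiv f\ge 0$ is false. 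Any $f$ supported in $H_\omega$, or $f=\ind_U$ with $U\in\parti^{n}_{\omega,b}$, has $F_\omega(f)=0$.
\end{itemize}
In fact the statement as written is false. Take $p=m=0$, $g\equiv 1$, and $f$ a small multiple of $\ind_Z$, where $Z$ is a short interval contained in $K_{\omega,n-1}$ and in one element of $\parti^{n}_{\omega}$. Then $\normopera^{n}_{\omega}f$ is supported on the short interval $T^{n}_{\omega}(Z)$. For $Z$ short enough it vanishes at points where $\normopera^{n}_{\omega}\ind>0$, so $\Theta_{\Lambda_{\theta^{n}\omega,+}}(\normopera^{n}_{\omega}f,\normopera^{n}_{\omega}g)=\infty$.

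The paper's proof makes the same tacit assumption. It passes from $B\vari(f_{\theta^{p}\omega})/(V(\omega)e^{n\epsilon})+BF$ to $2BF$, which needs $F_{\theta^{p'}\omega}(\normopera^{\hat i}f_{\theta^{p}\omega})\ge 1$.

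\textbf{The repair.} Assume the scale-invariant bound $\vari(f_{\theta^{p}\omega})\le V(\omega)e^{n\epsilon}F_{\theta^{p}\omega}(f_{\theta^{p}\omega})$, and likewise for $g$ with $e^{(n+m)\epsilon}$. This is what the applications actually provide:
\begin{itemize}
\item Proposition \ref{prop6.31} really controls the ratio $\vari(f)/F(f)$, through condition (2) of the class $\mathcal{T}$.
\item Lemma \ref{lem6.40} normalises $F=\nu$ to $1$.
\end{itemize}
With this hypothesis, $F(f)\le F(\normopera^{\hat i}f)$ (Lemma \ref{lem6.6}) and $C_\epsilon(\theta^{p'}\omega)\le B\le \tilde a/2$ give $\vari(\normopera^{\hat i}f)\le BF(f)+BF(\normopera^{\hat i}f)\le \tilde a\,F(\normopera^{\hat i}f)$. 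The rest of your argument then runs as in the paper.

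\textbf{Two smaller points.}
\begin{itemize}
\item $\omega\in\Omega_G$ gives $C_\epsilon(\theta^{R}\omega)\le B$, not $C_\epsilon(\omega)\le B$. Choose the landing fiber so that $\theta^{-R}$ of it lies in $\Omega_G$.
\item You claim at least $cn$ contracting $\tilde M$-windows with $c$ independent of $\omega$. This needs $\tilde M$ bounded on $\Omega_F$, which has to be built into the choice of $\Omega_F$.
\end{itemize}
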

\begin{proof}
    We follow the scheme of J. Buzzi's proof in the case of random Lasota-Yorke maps \cite[Proposition 4.1]{buzzi2}. J. Atnip et al. provided a more detailed version \cite[Lemma 8.1]{atnip1}.
    
    Let $i:=i(\theta^{p}\omega)$ and set $d:=d(\theta^{p}\omega)\geq0$ to be the smallest integer satisfying

    (1) $i+dR\geq\frac{\epsilon n+\log V(\omega)}{\xi-\epsilon}$;

    (2) $C_{\epsilon}(\theta^{p+i+dR}\omega)\leq B$.

    The two conditions fail for at most $\frac{|p|+|p+n|}{R}O(\epsilon)=\frac{n}{R}O(\epsilon)$ integers $d$ in the interval $[-\frac{|p|}{R},\frac{|p+n|}{R}]$ by definition of $i$, thus $dR\leq O(\epsilon)n$ when $n$ is large enough. Now let $\hat{i}:=i+dR$ and $p':=p+\hat{i}$, then condition (2) turns to $C_{\epsilon}(\theta^{p'}\omega)\leq B$. For $\theta^{p'}\omega$, we observe $i(\theta^{p'}\omega)=0$ and hence $j(\theta^{p'}\omega)=0$. By Proposition \ref{prop6.13}, for $f_{\theta^{p}\omega}\in\Lambda_{\theta^{p}\omega,+}$,
    \begin{equation*}
        \begin{aligned}
            \vari(\normopera_{\theta^{p}\omega}^{\hat{i}}f_{\theta^{p}\omega})&\leq C_{\epsilon}(\theta^{\hat{i}}\theta^{p}\omega)e^{-(\xi-\epsilon)\hat{i}}\vari(f_{\theta^{p}\omega})+C_{\epsilon}(\theta^{\hat{i}}\theta^{p}\omega)F_{\theta^{\hat{i}}\theta^{p}\omega}(\normopera_{\theta^{p}\omega}^{\hat{i}}f_{\theta^{p}\omega}) \\
            &= C_{\epsilon}(\theta^{p'}\omega)e^{-(\xi-\epsilon)\hat{i}}\vari(f_{\theta^{p}\omega})+C_{\epsilon}(\theta^{p'}\omega)F_{\theta^{p'}\omega}(\normopera_{\theta^{p}\omega}^{\hat{i}}f_{\theta^{p}\omega}) \\
            &\leq Be^{-(\xi-\epsilon)\hat{i}}\vari(f_{\theta^{p}\omega})+BF_{\theta^{p'}\omega}(\normopera_{\theta^{p}\omega}^{\hat{i}}f_{\theta^{p}\omega}) \\
            &\leq B\frac{\vari(f_{\theta^{p}\omega})}{V(\omega)e^{n\epsilon}}+BF_{\theta^{p'}\omega}(\normopera_{\theta^{p}\omega}^{\hat{i}}f_{\theta^{p}\omega}) \\
            &\leq 2BF_{\theta^{p'}\omega}(\normopera_{\theta^{p}\omega}^{\hat{i}}f_{\theta^{p}\omega}) \\
            &\leq \tilde{a}F_{\theta^{p'}\omega}(\normopera_{\theta^{p}\omega}^{\hat{i}}f_{\theta^{p}\omega}).
        \end{aligned}
    \end{equation*} Therefore, $\normopera_{\theta^{p}\omega}^{\hat{i}_{*}}f_{\theta^{p}\omega}\in\Lambda_{\theta^{p'}\omega,\tilde{a}}$. Doing a similar calculation for $g_{\theta^{p-m}\omega}\in\Lambda_{\theta^{p-m}\omega,+}$ with condition $\vari(g_{\theta^{p-m}\omega})\leq V(\omega)e^{(n+m)\epsilon}$, we have $\normopera_{\theta^{p-m}\omega}^{m+\hat{i}_*}g_{\theta^{p-m}\omega}\in\Lambda_{\theta^{p'}\omega,\tilde{a}}$.

    The next step is to overcome the difficulty of iterate $n$ and to show the existence of such a measurable function $n(\omega)$. We observe that 
    $$\Theta_{\Lambda_{\theta^{n+p}\omega,+}}\left(\normopera_{\theta^{p}\omega}^{n}f_{\theta^{p}\omega},\normopera_{\theta^{p-m}\omega}^{n+m}g_{\theta^{p-m}\omega}\right)=\Theta_{\Lambda_{\theta^{n+p}\omega,+}}\left(\normopera_{\theta^{p'}\omega}^{n'}\circ\normopera_{\theta^{p}\omega}^{\hat{i}}f_{\theta^{p}\omega},\normopera_{\theta^{p'}\omega}^{n'}\circ\normopera_{\theta^{p-m}\omega}^{m+\hat{i}}g_{\theta^{p-m}\omega}\right)$$ where $n':=n-\hat{i}=n-(p'-p)\geq(1-O(\epsilon))n$. For convenience, set $\tilde{f}:=\normopera_{\theta^{p}\omega}^{\hat{i}}f_{\theta^{p}\omega}$ and $\tilde{g}:=\normopera_{\theta^{p-m}\omega}^{m+\hat{i}}g_{\theta^{p-m}\omega}$. For such $n'$, write $n'=tR+r$ with $0\leq r\leq R-1$. Define $\tau:=\theta^{p'}\omega$ and let $L$ be the total length of the intersections of coating intervals of the orbit starting at $\tau$ with $[0,t+1)$. By Lemma \ref{lem6.24}, for sufficiently large $t\geq t_{0}(\omega)$ for measurable $t_0:\Omega\to\N$, $L\leq O(\sqrt{\epsilon})t$. From measurability of $t$, the existence of measurable $n(\omega)$ with $n\geq n(\omega)$ is guaranteed as $n$ depending on $t$. If the first block $\tau$ is good, set $k=1$, otherwise set $k=l(\tau)+1$, then $k\leq L+1 \leq O(\sqrt{\epsilon})t$ for all large $t$. Since $\normopera_{\tau}^{n'}=\normopera_{\theta^{p+n'-r}\tau}^{r}\circ\normopera_{\tau}^{tR}$ and $\normopera_{\theta^{p+n'-r}\tau}^{r}$ preserves the cone $\Lambda_{\theta^{tR}\tau,+}$,
    \begin{equation*}
        \begin{aligned}
            \Theta_{\Lambda_{\theta^{n+p}\omega,+}}\left(\normopera_{\theta^{p'}\omega}^{n'}\tilde{f},\normopera_{\theta^{p'}\omega}^{n'}\tilde{g}\right)&=\Theta_{\Lambda_{\theta^{n+p}\omega,+}}\left(\normopera_{\tau}^{n'}\tilde{f},\normopera_{\tau}^{n'}\tilde{g}\right) \\
            &\leq \Theta_{\Lambda_{\theta^{tR}\tau,+}}\left(\normopera_{\tau}^{tR}\tilde{f},\normopera_{\tau}^{tR}\tilde{g}\right) \\
            &\leq \Theta_{\Lambda_{\theta^{tR}\tau,\tilde{a}}}\left(\normopera_{\tau}^{tR}\tilde{f},\normopera_{\tau}^{tR}\tilde{g}\right).
        \end{aligned}
    \end{equation*} Since $\normopera_{\theta^{jR}\tau}^{l(\theta^{jR}\tau)}$ is a weak contraction on $\Lambda_{\theta^{jR}\tau,\tilde{a}}$ if $\theta^{jR}\tau$ is bad, is a strong contraction by a factor $\tilde{D}\in(0,1)$ if $\theta^{jR}\tau$ is good, we have $$\Theta_{\Lambda_{\theta^{tR}\tau,\tilde{a}}}\left(\normopera_{\tau}^{tR}\tilde{f},\normopera_{\tau}^{tR}\tilde{g}\right)\leq\tilde{D}^{t-L-1}\Theta_{\Lambda_{\theta^{kR}\tau,\tilde{a}}}\left(\normopera_{\tau}^{kR}\tilde{f},\normopera_{\tau}^{kR}\tilde{g}\right),$$ here $LR$ is the total length of coating intervals and $L\leq O(\sqrt{\epsilon})t$, so $\tilde{D}^{t-L-1}\leq\tilde{D}^{(1-O(\sqrt{\epsilon}))t}$. By Proposition \ref{prop6.23}, we have $$\normopera_{\tau}^{kR}\Lambda_{\tau,\tilde{a}}\subseteq\normopera_{\theta^{(k-1)R}\tau}^{R}\Lambda_{\theta^{(k-1)R}\tau,\tilde{a}}$$ for all $1\leq k\leq L+1$. Note that the diameter is at most $\Delta<\infty$ for good fiber $\theta^{(k-1)R}\tau$, hence $$\Theta_{\Lambda_{\theta^{n+p}\omega,+}}\left(\normopera_{\theta^{p'}\omega}^{n'}\tilde{f},\normopera_{\theta^{p'}\omega}^{n'}\tilde{g}\right)\leq \Delta D^{n}$$ where $D:=\tilde{D}^{\frac{1-O(\sqrt{\epsilon})}{R}}\in(0,1)$. Moreover, $D$ does not depend on function $V(\omega)$.
\end{proof}
In conjunction with Lemma \ref{lem4.3}, we obtain the second inequality.
\begin{corollary}\label{coro6.30}
    Suppose the hypotheses hold in Proposition \ref{prop6.29}, there exists $\bar{D}\in(0,1)$ and a measurable function $n:\Omega\to\N$ such that for $\prob$-a.e. $\omega\in\Omega$, for all $n\geq n(\omega)$, all $m\geq0$ and all $|p|\leq n$,
    $$\supnorm{\normopera_{\theta^{p}\omega}^{n}f_{\theta^{p}\omega}-\normopera_{\theta^{p-m}\omega}^{n+m}g_{\theta^{p-m}\omega}}\leq\left(e^{\Delta\bar{D}^{n}}-1\right)\supnorm{\normopera_{\theta^{p}\omega}^{n}f_{\theta^{p}\omega}}.$$
\end{corollary}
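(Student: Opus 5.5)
The plan is to combine Proposition \ref{prop6.29} with Lemma \ref{lem4.3}, applied on the cone $\Lambda_{\theta^{n+p}\omega,\tilde{a}}$ with the supremum norm and a suitable homogeneous, order-preserving functional $\rho$. Fix $\omega$ and $n\geq n(\omega)$, $m\geq0$, $|p|\leq n$, and set
\[
F:=\normopera_{\theta^{p}\omega}^{n}f_{\theta^{p}\omega},\qquad G:=\normopera_{\theta^{p-m}\omega}^{n+m}g_{\theta^{p-m}\omega}.
\]

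First I would check that the proof of Proposition \ref{prop6.29} really gives more than stated. After the initial $\hat{i}$ steps, both pushed-forward functions lie in $\Lambda_{\theta^{p'}\omega,\tilde{a}}$. Their images under $\normopera^{tR}_\tau$ lie in $\Lambda_{\theta^{tR}\tau,\tilde{a}}$ at Hilbert distance at most $\Delta D^n$. A further $r\leq R-1$ steps do not increase this distance in the cone of nonnegative functions. Two things are needed:
\begin{enumerate}[(i)]
\item membership of $F,G$ in a cone on which $\supnorm{\cdot}$ satisfies the hypothesis of Lemma \ref{lem4.3};
\item a Hilbert-metric bound in that same cone.
\end{enumerate}
The cleanest choice is the positive cone $\Lambda_{\theta^{n+p}\omega,+}$. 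There $-F\preceq H\preceq F$ means $|H|\leq F$ pointwise, so $\supnorm{H}\leq\supnorm{F}$ and the norm hypothesis holds.

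Next I would normalize. Lemma \ref{lem4.3} requires $\rho(F)=\rho(G)$. Take $\rho=F_{\theta^{n+p}\omega}$, which is homogeneous and order-preserving on the positive cone, and is positive on the relevant elements by Lemma \ref{lem6.6}. Rescaling is harmless because the Hilbert metric is projective. Lemma \ref{lem4.3} then gives
\[
\supnorm{F-cG}\leq\bigl(e^{\Delta D^n}-1\bigr)\supnorm{F},\qquad c=\rho(F)/\rho(G).
\]

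The main obstacle is removing the scalar $c$, that is, showing $\rho(F)=\rho(G)$, or at least that $c$ is close to $1$. In the intended applications (e.g.\ $f=g=\ind$, or after normalization by $\nu$), the functionals agree. In general I would handle this in one of two ways. The first is to use the Remark after Lemma \ref{lem4.3}: choose $\rho=\supnorm{\cdot}$, obtain the estimate for the sup-normalized functions, and then argue that $\supnorm{F}$ and $\supnorm{G}$ are comparable up to a factor $e^{\Theta}$. The second is to use $\Theta_+(F,G)\leq\Delta D^n$ directly: this gives $\alpha G\leq F\leq\beta G$ with $\beta/\alpha\leq e^{\Delta D^n}$, and then I would show $\alpha\leq1\leq\beta$ by integrating against the conformal functional and using $F_\omega(f)\leq F_{\theta^n\omega}(\normopera^n f)$ together with the common normalization. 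Under that normalization,
\[
|F-G|\leq\max\{\beta-1,\,1-\alpha\}\,G\leq\bigl(e^{\Delta D^n}-1\bigr)e^{\Delta D^n}F.
\]

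Finally I would absorb the extra factor $e^{\Delta D^n}\leq e^{\Delta}$ by choosing $\bar{D}\in(D,1)$ and enlarging $n(\omega)$. This uses $(e^{x}-1)e^{x}\leq e^{2x}-1$ and $2\Delta D^n\leq\Delta\bar{D}^n$ for $n$ large. The enlargement keeps $n(\omega)$ measurable, since it depends only on $\Delta$, $D$, $\bar D$ and the original $n(\omega)$.
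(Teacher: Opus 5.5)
\textbf{Gap: the statement needs a normalization it does not have.} Your route is the paper's route. The paper's entire justification is the phrase ``in conjunction with Lemma \ref{lem4.3}'': feed the Hilbert-metric bound of Proposition \ref{prop6.29} in $\Lambda_{\theta^{n+p}\omega,+}$ into Lemma \ref{lem4.3} with the sup norm. As you note, in that cone $-F\preceq H\preceq F$ forces $|H|\leq F$. You have also located the weak point correctly: Lemma \ref{lem4.3} needs $\rho(F)=\rho(G)$.

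This cannot be argued away, because without a normalization the statement is false. Take $f\equiv\ind$, $g\equiv\frac12\ind$ and $m=0$. The variation hypotheses hold trivially. Since $G=\frac12F$, we get $\Theta_{\Lambda_{\theta^{n+p}\omega,+}}(F,G)=0$. Yet $\supnorm{F-G}=\frac12\supnorm{F}\geq\frac12$, because $\supnorm{F}\geq F_{\theta^{n+p}\omega}(F)\geq F_{\theta^{p}\omega}(\ind)=1$ by Lemma \ref{lem6.6}. The right-hand side, by contrast, tends to $0$. The paper's one-line proof tacitly assumes the same missing normalization.

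\textbf{Why neither of your repairs closes it.} Your first repair claims $\supnorm{F}$ and $\supnorm{G}$ are comparable up to $e^{\Theta}$. This fails because $\Theta$ is projective and carries no scale information, as the example shows.

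Your second repair rests on $\alpha\leq1\leq\beta$, which you would get from $F_{\theta^{n+p}\omega}(F)=F_{\theta^{n+p}\omega}(G)$. The one-sided inequality $F_{\omega}(h)\leq F_{\theta^{k}\omega}(\normopera_{\omega}^{k}h)$ of Lemma \ref{lem6.6} cannot turn a normalization of $f,g$ on their initial fibres into this equality. It only gives both values $\geq1$. Equality would need the conformality $F_{\theta\omega}(\normopera_{\omega}h)=F_{\omega}(h)$, which is proved only later in Proposition \ref{prop6.34}. Even then, some normalization of $f$ and $g$ must still be assumed.

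\textbf{What to add.} The missing ingredient is an explicit extra hypothesis, for instance $F_{\theta^{n+p}\omega}(F)=F_{\theta^{n+p}\omega}(G)>0$. This is exactly what holds where the corollary is used in Proposition \ref{prop6.31}, since $F_{\omega}(\tilde f_{\omega,n})=1$ for every $n$.

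With this hypothesis, Lemma \ref{lem4.3} with $\rho=F_{\theta^{n+p}\omega}$ gives the bound immediately with $\bar D=D$ and $\min\{\supnorm{F},\supnorm{G}\}\leq\supnorm{F}$. Liverani's argument only uses $\rho(F)=\rho(G)>0$ for the pair at hand, so the possible vanishing of $F_{\theta^{n+p}\omega}$ elsewhere on the positive cone is harmless. Your detour through $\alpha,\beta$ and the absorption of the extra factor $e^{\Delta D^n}$ then becomes unnecessary.
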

\section{Conformal and invariant measures}\label{sec6}
\subsection{Construction of density functions}
For a function $f\in BV_{\Omega}(I)$ we say $f$ belongs to the class $\mathcal{T}$ provided for every $\epsilon>0$ and every $n\in\Z$ there exists a measurable function $V_{f,\epsilon}:\Omega\to(0,\infty)$ such that 

(1) $\vari(f_{\theta^{n}\omega})\leq V_{f,\epsilon}(\omega)e^{|n|\epsilon}$;

(2) $F_{\omega}(|f_{\theta^{n}\omega}|)\geq V_{f,\epsilon}(\omega)^{-1}e^{-|n|\epsilon}$.


We denote by $\mathcal{T}^+$ these functions $f\in\mathcal{T}$ whose $f_{\omega}\geq0$ for every $\omega\in\Omega$.
\begin{proposition}\label{prop6.31}
    For $\prob$-a.e. $\omega\in\Omega$, there exists a measurable function $\lambda_{\omega}>0$ and $q_{\omega}\in BV(X_\omega)$ such that $$\opera_{\omega}q_{\omega}=\lambda_{\omega}q_{\theta\omega}$$ and $$F_{\omega}(q_\omega)=1.$$ Moreover, we have $\log\lambda_{\omega}\in L^1(\prob)$ and $\rho_{\omega}\leq\lambda_{\omega}$ for $\prob$-a.e. $\omega\in\Omega$.
\end{proposition}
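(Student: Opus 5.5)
The plan is to build $q$ as a pullback limit, following the scheme of \cite{buzzi2} and \cite[Section 8]{atnip1}. The seed is the constant family $\ind$. The function $\ind$ belongs to $\mathcal{T}^+$, since its variation is zero on every fiber and $F_\omega(\ind_{X_\omega})=1$. For $\prob$-a.e. $\omega$ and $n\geq1$, set
\[
q_{\omega}^{(n)}:=\frac{\normopera_{\theta^{-n}\omega}^{n}\ind_{\theta^{-n}\omega}}{F_{\omega}(\normopera_{\theta^{-n}\omega}^{n}\ind_{\theta^{-n}\omega})}.
\]
The denominator is at least $1$ by Lemma \ref{lem6.6}, and $F_\omega(q^{(n)}_\omega)=1$.

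The first step is to show that $(q_\omega^{(n)})_n$ is Cauchy in $\supnorm{\cdot}$. I apply Proposition \ref{prop6.29} with $p=-n$, $f=g=\ind$ and arbitrary $m\geq0$. The variation hypotheses hold trivially. This gives
\[
\Theta_{\Lambda_{\omega,+}}\bigl(\normopera^{n}_{\theta^{-n}\omega}\ind,\normopera^{n+m}_{\theta^{-n-m}\omega}\ind\bigr)\leq\Delta D^n .
\]
Next I apply Lemma \ref{lem4.3} on the cone $\Lambda_{\omega,\tilde a}$, with the homogeneous order-preserving functional $F_\omega$ and the sup norm. The hypothesis of that lemma holds: if $-f\preceq g\preceq f$ in $\Lambda_{\omega,+}$ then $|g|\leq f$ pointwise. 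Normalizing both functions so that their $F_\omega$ values agree then yields
\[
\supnorm{q^{(n)}_\omega-q^{(n+m)}_\omega}\leq (e^{\Delta D^n}-1)\supnorm{q^{(n)}_\omega}.
\]
The sup norms $\supnorm{q^{(n)}_\omega}$ are uniformly bounded. Indeed, for large $n$ these functions lie in $\Lambda_{\omega,\tilde a}$ by the proof of Proposition \ref{prop6.29}, so $\supnorm{q^{(n)}_\omega}\leq(\tilde a+1)F_\omega(q^{(n)}_\omega)=\tilde a+1$. Hence $q_\omega^{(n)}\to q_\omega$ uniformly. Lower semicontinuity of the variation under uniform convergence gives $\vari(q_\omega)\leq\tilde a$, so $q_\omega\in BV(X_\omega)$. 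Measurability in $\omega$ follows because $q_\omega$ is a pointwise limit of measurable functions.

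The second step is the identity $F_\omega(q_\omega)=1$. This requires continuity of $F_\omega$ in sup norm: from monotonicity and the bound $\inf f\leq F_\omega(f)\leq\supnorm f$ one gets $|F_\omega(f)-F_\omega(h)|\leq\supnorm{f-h}$. Passing to the limit gives $F_\omega(q_\omega)=1$.

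The third step is the eigen-equation. Write $c_n(\omega):=F_\omega(\normopera^n_{\theta^{-n}\omega}\ind)$. Then
\[
\normopera_{\omega}q^{(n)}_\omega=\frac{c_{n+1}(\theta\omega)}{c_n(\omega)}\,q^{(n+1)}_{\theta\omega}.
\]
The operator $\normopera_\omega$ is bounded in sup norm, with $\supnorm{\opera_\omega f}\leq S^1_\omega\supnorm f$. So the left side converges to $\normopera_\omega q_\omega$. Applying $F_{\theta\omega}$ to both sides, the ratio converges to $\hat\lambda_\omega:=F_{\theta\omega}(\normopera_\omega q_\omega)$. This gives $\normopera_\omega q_\omega=\hat\lambda_\omega q_{\theta\omega}$. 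Setting $\lambda_\omega:=\rho_\omega\hat\lambda_\omega$ produces $\opera_\omega q_\omega=\lambda_\omega q_{\theta\omega}$.

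By Lemma \ref{lem6.6}, $\hat\lambda_\omega\geq F_\omega(q_\omega)=1$, so $\lambda_\omega\geq\rho_\omega$, and in particular $\lambda_\omega>0$. For the upper bound, $\hat\lambda_\omega\leq\supnorm{\normopera_\omega q_\omega}\leq\rho_\omega^{-1}\supnorm{\opera_\omega\ind_\omega}(\tilde a+1)$. Hence
\[
\log\rho_\omega\leq\log\lambda_\omega\leq\log\supnorm{\opera_\omega\ind_\omega}+\log(\tilde a+1).
\]
Both bounds are integrable, so $\log\lambda_\omega\in L^1(\prob)$.

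The main obstacle is the first step. Proposition \ref{prop6.29} only holds for $n\geq n(\omega)$, and it is stated in the cone $\Lambda_{\cdot,+}$, whereas Lemma \ref{lem4.3} needs a cone on which $F_\omega$ is well behaved. I would handle the cone issue by redoing the last stage of that proof in $\Lambda_{\omega,\tilde a}$ itself. The proof already bounds the distance in $\Lambda_{\theta^{tR}\tau,\tilde a}$ before comparing with $\Lambda_{\cdot,+}$. The remaining $r<R$ steps would be absorbed by choosing $n$ so that $\theta^{-n}\omega$ lands on an admissible block start. This works along a subsequence with $i(\cdot)=0$, and a general $n$ then follows from the $m$-uniformity of the estimate.
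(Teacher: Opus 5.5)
Your scheme is the paper's: pull back normalized iterates, get the Cauchy property from Proposition \ref{prop6.29} and Lemma \ref{lem4.3}, and pass to the limit in $c_{n+1}(\theta\omega)/c_n(\omega)$. The Lipschitz bound $|F_\omega(f)-F_\omega(h)|\le\supnorm{f-h}$ is a clean way to avoid the paper's subsequence. There is, however, a genuine gap in the claim that $q^{(n)}_\omega\in\Lambda_{\omega,\tilde a}$ for large $n$. You use it for $\supnorm{q^{(n)}_\omega}\le\tilde a+1$, for $\vari(q_\omega)\le\tilde a$, and above all for the bound $\log\lambda_\omega\le\log\supnorm{\opera_\omega\ind_\omega}+\log(\tilde a+1)$ that gives $\log\lambda_\omega\in L^1(\prob)$. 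The proof of Proposition \ref{prop6.29} only places the iterate in $\Lambda_{\cdot,\tilde a}$ at $\tau=\theta^{p'}\omega$ and at the block endpoints $\theta^{kR}\tau$. The final $r<R$ steps, and a possibly unfinished coating interval, only preserve $\Lambda_{\cdot,+}$. Choosing $n$ cleverly does not help, because the endpoint of the pullback is $\omega$ for every $n$. Varying $n$ moves only the starting fiber, and nothing forces the block chain from $\tau$ to end exactly at $\omega$: one would need $\omega\in\{\theta^{kR}\tau\}$, and even then $\omega$ may fall strictly inside a coating interval. Nor should a deterministic bound $\vari(q_\omega)\le\tilde a F_\omega(q_\omega)$ for a.e. $\omega$ be expected. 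The only one-step control of $\vari/F$ available is through $L_\omega$ of Lemma \ref{lem6.12}, which is merely log-integrable. Accordingly, the paper only claims that $\bvnorm{q_{\theta^n\omega}}$ is tempered (Lemma \ref{lem6.39}).

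The repair needs different inputs. For the BV property and the uniform sup bound, apply Proposition \ref{prop6.13} to the constant seed. Since $\vari(\ind)=0$, it gives $\vari(q^{(n)}_\omega)\le C_\epsilon(\omega)$ for all $n$. Hence $\vari(q_\omega)\le C_\epsilon(\omega)$ by lower semicontinuity, and $\supnorm{q_\omega}\le C_\epsilon(\omega)+1$. Uniform boundedness of $\supnorm{q^{(n)}_\omega}$ also follows from the $\min$ in Lemma \ref{lem4.3} with the first index frozen at $n(\omega)$. But $C_\epsilon(\omega)$ is only finite and measurable, so feeding it into your upper bound leaves $\log(1+C_\epsilon(\omega))$, whose integrability is unknown, and the $L^1$ claim is not obtained. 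The missing ingredient is the inequality $F_{\theta\omega}(\opera_\omega g)\le\supnorm{\opera_\omega\ind_\omega}F_\omega(g)$ for $g\ge0$. It is proved in the first half of the proof of Lemma \ref{lem6.25} and uses no block structure. Applied to $g=q_\omega$, it gives $\rho_\omega\le\lambda_\omega=F_{\theta\omega}(\opera_\omega q_\omega)\le\supnorm{\opera_\omega\ind_\omega}$, which is exactly how the paper concludes $\log\lambda_\omega\in L^1(\prob)$. Finally, the cone difficulty in your last paragraph is not real. $F_\omega$ is homogeneous and monotone for the pointwise order of $\Lambda_{\omega,+}$, which is all the proof of Lemma \ref{lem4.3} needs for two elements with $F_\omega=1$. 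So the estimate of Proposition \ref{prop6.29} in $\Lambda_{\omega,+}$ can be used directly, as in Corollary \ref{coro6.30}.
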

\begin{proof}
    For $f\in\mathcal{T}^+$, let $$f_{\omega,n}:=\frac{\rho_{\theta^{-n}\omega}^{n}}{F_{\omega}(\opera_{\theta^{-n}\omega}^{n}f_{\theta^{-n}\omega})}f_{\theta^{-n}\omega}\in\Lambda_{\theta^{-n}\omega,+}$$ be defined for all $n\geq1$. Since $\rho_{\theta^{-n}\omega}^{n}F_{\theta^{-n}\omega}(f_{\theta^{-n}\omega})\leq F_{\omega}(\opera_{\theta^{-n}\omega}^{n}f_{\theta^{-n}\omega})$ by the second inequality in Lemma \ref{lem6.6}, then there exists a measurable $V_{f,\epsilon}:\Omega\to(0,\infty)$ such that $$\vari(f_{\omega,n})=\frac{\rho_{\theta^{-n}\omega}^{n}}{F_{\omega}(\opera_{\theta^{-n}\omega}^{n}f_{\theta^{-n}\omega})}\vari(f_{\theta^{-n}\omega})\leq\frac{\vari(f_{\theta^{-n}\omega})}{F_{\theta^{-n}\omega}(f_{\theta^{-n}\omega})}\leq V_{f,\epsilon}(\omega)^{2}e^{2n\epsilon}=V_{f,\epsilon'}(\omega)e^{n\epsilon'}$$ holds for sufficiently large $n\geq1$, where $V_{f,\epsilon'}(\omega):=V_{f,\epsilon}(\omega)^{2}$ and $\epsilon'=2\epsilon$. Therefore, set function $V(\omega):=V_{f,\epsilon'}(\omega)^{2}$ in Corollary \ref{coro6.30}, as $n\geq n(\omega)$ tends to infinity, the sequence $$\{\tilde{f}_{\omega,n}\}_{n=1}^{\infty}=\{\normopera_{\theta^{-n}\omega}^{n}f_{\omega,n}\}_{n=1}^{\infty}=\left\{\frac{\opera_{\theta^{-n}\omega}^{n}f_{\theta^{-n}\omega}}{F_{\omega}(\opera_{\theta^{-n}\omega}^{n}f_{\theta^{-n}\omega})}\right\}_{n=1}^{\infty}$$ is a Cauchy sequence in cone $\Lambda_{\omega,+}$, hence the limit exists and is denoted by $q_{\omega,f}\in\Lambda_{\omega,+}$, $$q_{\omega,f}:=\lim_{n\to\infty}\frac{\opera_{\theta^{-n}\omega}^{n}f_{\theta^{-n}\omega}}{F_{\omega}(\opera_{\theta^{-n}\omega}^{n}f_{\theta^{-n}\omega})}\geq0.$$ We then have $F_{\omega}(q_{\omega,f})=1$. For $$\opera_{\omega}q_{\omega,f}=\lim_{n\to\infty}\frac{\opera_{\theta^{-n}\omega}^{n+1}f_{\theta^{-n}\omega}}{F_{\omega}(\opera_{\theta^{-n}\omega}^{n}f_{\theta^{-n}\omega})},$$ we observe that $$\frac{F_{\theta\omega}(\opera_{\theta^{-n}\omega}^{n+1}f_{\theta^{-n}\omega})}{F_{\omega}(\opera_{\theta^{-n}\omega}^{n}f_{\theta^{-n}\omega})}\geq\frac{\rho_{\omega}F_{\omega}(\opera_{\theta^{-n}\omega}^{n}f_{\theta^{-n}\omega})}{F_{\omega}(\opera_{\theta^{-n}\omega}^{n}f_{\theta^{-n}\omega})}=\rho_{\omega},$$ and $$\frac{F_{\theta\omega}(\opera_{\theta^{-n}\omega}^{n+1}f_{\theta^{-n}\omega})}{F_{\omega}(\opera_{\theta^{-n}\omega}^{n}f_{\theta^{-n}\omega})}\leq\frac{\supnorm{\opera_{\omega}\ind_{\omega}}F_{\omega}(\opera_{\theta^{-n}\omega}^{n}f_{\theta^{-n}\omega})}{F_{\omega}(\opera_{\theta^{-n}\omega}^{n}f_{\theta^{-n}\omega})}=\supnorm{\opera_{\omega}\ind_{\omega}}.$$ Recall that $\supnorm{\opera_{\omega}\ind_{\omega}}$ is bounded above since the potential $\varphi$ is summable and $S_{\omega}^{1}<\infty$. Therefore the sequence $$\left\{\frac{F_{\theta\omega}(\opera_{\theta^{-n}\omega}^{n+1}f_{\theta^{-n}\omega})}{F_{\omega}(\opera_{\theta^{-n}\omega}^{n}f_{\theta^{-n}\omega})}\right\}_{n=1}^{\infty}$$ is bounded and hence there exists a subsequence $\{n_k\}_{k=1}^{\infty}$ such that the sequence has a limit $\lambda_{\omega,f}$ along $\{n_k\}_{k=1}^{\infty}$ as $k\to\infty$: $$\lambda_{\omega,f}:=\lim_{k\to\infty}\frac{F_{\theta\omega}(\opera_{\theta^{-n_k}\omega}^{n_{k}+1}f_{\theta^{-n_k}\omega})}{F_{\omega}(\opera_{\theta^{-n_k}\omega}^{n_k}f_{\theta^{-n_k}\omega})}.$$ We have 
    \begin{equation*}
        \begin{aligned}
            \opera_{\omega}q_{\omega,f}&=\lim_{k\to\infty}\frac{\opera_{\theta^{-n_k}\omega}^{n_k+1}f_{\theta^{-n_k}\omega}}{F_{\omega}(\opera_{\theta^{-n_k}\omega}^{n_k}f_{\theta^{-n_k}\omega})} \\
            &=\lim_{k\to\infty}\frac{\opera_{\theta^{-n_k}\omega}^{n_k+1}f_{\theta^{-n_k}\omega}}{F_{\theta\omega}(\opera_{\theta^{-n_k}\omega}^{n_k+1}f_{\theta^{-n_k}\omega})}\cdot\lim_{k\to\infty}\frac{F_{\theta\omega}(\opera_{\theta^{-n_k}\omega}^{n_k+1}f_{\theta^{-n_k}\omega})}{F_{\omega}(\opera_{\theta^{-n_k}\omega}^{n_k}f_{\theta^{n_k}\omega})} \\
            &=\lambda_{\omega,f}q_{\theta\omega,f}.
        \end{aligned}
    \end{equation*}
    It is natural to replace $n_k$ here with $n$, and then we have $$\lambda_{\omega,f}=\lim_{n\to\infty}\frac{F_{\theta\omega}(\opera_{\theta^{-n}\omega}^{n+1}f_{\theta^{-n}\omega})}{F_{\omega}(\opera_{\theta^{-n}\omega}^{n}f_{\theta^{-n}\omega})}.$$

    We claim $q_{\omega,f}$ and $\lambda_{\omega,f}$ produced here do not depend on function $f\in\mathcal{T}^+$. For functions $f,g\in\mathcal{T}^+$, set $$V(\omega):=\max\left\{V_{f,\epsilon'}(\omega)^2,V_{g,\epsilon'}(\omega)^2\right\}$$ and apply Proposition \ref{prop6.29}, by triangle inequality, we have
    \begin{equation*}
        \begin{aligned}
            \Theta_{\Lambda_{\omega,+}}(q_{\omega,f},q_{\omega,g})&\leq\Theta_{\Lambda_{\omega,+}}(q_{\omega,f},\tilde{f}_{\omega,n})+\Theta_{\Lambda_{\omega,+}}(\tilde{f}_{\omega,n},\tilde{g}_{\omega,n})+\Theta_{\Lambda_{\omega,+}}(\tilde{g}_{\omega,n},q_{\omega,g}) \\
            &\leq 3\Delta D^{n}
        \end{aligned}
    \end{equation*} for sufficiently large $n\geq1$ and it hence goes to $0$. It yields $$\supnorm{q_{\omega,f}-q_{\omega,g}}\leq\supnorm{q_{\omega,f}}\left(\exp(\Theta_{\Lambda_{\omega,+}}(q_{\omega,f},q_{\omega,g}))-1\right)\to0$$ as $n\to\infty$, hence $q_{\omega,f}=q_{\omega,g}$ for all $f,g\in\mathcal{T}^+$, which also implies $\lambda_{\omega,f}=\lambda_{\omega,g}$. We denote the unique value by $q_\omega$, $\lambda_\omega$ respectively and hence $q_{\omega}\in BV(X_\omega)$, $F_{\omega}(q_\omega)=1$ and $\opera_{\omega}q_{\omega}=\lambda_{\omega}q_{\theta\omega}$ for $\prob$-a.e. $\omega\in\Omega$.

    From previous calculations we see $0<\rho_\omega\leq\lambda_{\omega}\leq\supnorm{\opera_{\omega}\ind_{\omega}}$, hence $\lambda_\omega>0$ for $\prob$-a.e. $\omega\in\Omega$. Since $\log\rho_{\omega}^{n},\log\supnorm{\opera_{\omega}\ind_\omega}\in L^1(\prob)$, we also deduce $\log\lambda_{\omega}\in L^1(\prob)$. The function $\omega\mapsto\lambda_\omega$ is measurable since the sequence where $f_{\theta^{-n}\omega}$ replaced with $\ind_{\theta^{-n}\omega}$ is measurable; then the limit is measurable. The measurability of maps $\omega\mapsto\supnorm{q_\omega}$ and $\omega\mapsto\inf q_\omega$ are also followed in the same way.
\end{proof}
We apply this result repeatedly to obtain the same equations for $n$ iterates.
\begin{corollary}
    For all $n\geq1$, we have $$\opera_{\omega}^{n}q_\omega=\lambda_{\omega}^{n}q_{\theta^{n}\omega}$$ where $\lambda_{\omega}^{n}:=\prod_{i=0}^{n-1}\lambda_{\theta^{i}\omega}\geq\rho_{\omega}^{n}$.
\end{corollary}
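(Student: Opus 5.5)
The plan is to argue by induction on $n$, using only the one-step identity $\opera_{\omega}q_{\omega}=\lambda_{\omega}q_{\theta\omega}$ from Proposition \ref{prop6.31}, which holds on a full-measure set $\Omega'\subseteq\Omega$. First I will replace $\Omega'$ by the $\theta$-invariant full-measure set $\Omega'':=\bigcap_{k\in\Z}\theta^{k}\Omega'$. This is possible because $\theta$ is invertible and preserves $\prob$. As a result, for $\omega\in\Omega''$ the one-step identity holds simultaneously at every fiber $\theta^{i}\omega$, $i\geq0$.

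For the induction itself, the case $n=1$ is exactly Proposition \ref{prop6.31}, with $\lambda_{\omega}^{1}=\lambda_{\omega}$. Suppose $\opera_{\omega}^{n}q_\omega=\lambda_{\omega}^{n}q_{\theta^{n}\omega}$. I will use the cocycle property $\opera_{\omega}^{n+1}=\opera_{\theta^{n}\omega}\circ\opera_{\omega}^{n}$, which follows from $\opera_{\omega}^{n}f=\opera_{\omega,c}^{n}(f\ind_{K_{\omega,n-1}})$ together with $K_{\omega,n}=K_{\omega,n-1}\cap T_{\omega}^{-n}J_{\theta^{n}\omega}$. Combined with linearity, this gives
\[\opera_{\omega}^{n+1}q_{\omega}=\opera_{\theta^{n}\omega}\left(\lambda_{\omega}^{n}q_{\theta^{n}\omega}\right)=\lambda_{\omega}^{n}\lambda_{\theta^{n}\omega}q_{\theta^{n+1}\omega}=\lambda_{\omega}^{n+1}q_{\theta^{n+1}\omega}.\]

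The inequality $\lambda_{\omega}^{n}\geq\rho_{\omega}^{n}$ will follow by taking the product over $0\leq i\leq n-1$ of the pointwise bounds $0<\rho_{\theta^{i}\omega}\leq\lambda_{\theta^{i}\omega}$ from Proposition \ref{prop6.31}, again valid for $\omega\in\Omega''$. All factors are positive, so the products compare termwise.

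There is no serious obstacle here. The only point requiring care is the measure-theoretic bookkeeping, namely ensuring the a.e. statement transfers along the whole forward orbit, which the $\theta$-invariant intersection handles. The cocycle identity for the open operator is the one slightly non-trivial algebraic step, and it reduces to the description of survivor sets given in Section \ref{sec2.3}.
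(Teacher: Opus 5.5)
Your proposal is correct and follows the paper, which simply applies the one-step identity $\opera_{\omega}q_{\omega}=\lambda_{\omega}q_{\theta\omega}$ and the bound $\rho_{\omega}\leq\lambda_{\omega}$ from Proposition \ref{prop6.31} repeatedly. Your induction via the cocycle identity $\opera_{\omega}^{n+1}=\opera_{\theta^{n}\omega}\circ\opera_{\omega}^{n}$ and your passage to a $\theta$-invariant full-measure set spell out details the paper leaves implicit.
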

We deduce such functions $\{q_\omega\}_{\omega\in\Omega}$ are strictly positive on the support set $S(1,\infty)$.
\begin{proposition}\label{prop6.33}
    For $\prob$-a.e. $\omega\in\Omega$, we have $$\underset{S(1,\infty)}{\inf}\,q_\omega>0.$$
\end{proposition}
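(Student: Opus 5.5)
The idea is to exploit the fixed-point relation $q_\omega=(\lambda^{n}_{\theta^{-n}\omega})^{-1}\opera^{n}_{\theta^{-n}\omega}q_{\theta^{-n}\omega}$ from the corollary above, together with the lower bound on cone functions from Lemma \ref{lem6.27} and the uniform lower bound on $\normopera^{\tilde M}\ind$ on supports available for fibers in $\Omega_F$. I would first show that $q_\omega$ lies in the cone $\Lambda_{\omega,\tilde a}$ for $\prob$-a.e.\ $\omega$ with $i(\omega)=0$. Since $q_\omega$ is the limit of $\normopera^{n}_{\theta^{-n}\omega}f_{\omega,n}$ with $f\equiv\ind$, Proposition \ref{prop6.29} and Corollary \ref{coro6.30} show that these iterates enter $\Lambda_{\theta^{p'}\omega,\tilde a}$ after the initial stretch and are then preserved by the block contractions (Lemma \ref{lem6.18}, Proposition \ref{prop6.23}). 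Closedness of the cone condition $\vari(f)\le \tilde aF(f)$ under uniform limits (lower semicontinuity of $\vari$, and $F$ continuous in sup norm with $F(q_\omega)=1$) then gives $q_\omega\in\Lambda_{\omega,\tilde a}$ along the relevant fibers.

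Next, I would pick $\omega'=\theta^{-\tilde M}\omega$ with $\omega'\in\Omega_F$ and $i(\omega')=0$; by the frequency conditions defining $i$, such return times exist for a.e.\ $\omega$, although possibly only after shifting by some multiple of $R$. I would write
\[
q_\omega=\frac{\rho^{\tilde M}_{\omega'}}{\lambda^{\tilde M}_{\omega'}}\,\normopera^{\tilde M}_{\omega'}q_{\omega'} .
\]
Lemma \ref{lem6.27} gives an interval $Z\in\parti^{N_{\omega',\tilde a}}_{\omega',g}$ with $\inf_Z q_{\omega'}\ge \tfrac12 F_{\omega'}(q_{\omega'})=\tfrac12$. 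Repeating the computation in the finite-diameter lemma then yields
\[
\inf_{S(\tilde M,\tilde M)}\normopera^{\tilde M}_{\omega'}q_{\omega'}\ \ge\ \frac{\tilde c}{4\tilde C}>0 ,
\]
where the supports are indexed relative to $\omega$. The prefactor $\rho^{\tilde M}/\lambda^{\tilde M}$ is a finite positive number because $\log\rho,\log\lambda\in L^1$. This gives a positive lower bound for $q_\omega$ on $S_{\omega,\tilde M}\supseteq S_{\omega,\infty}$.

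To pass from these fibers to all a.e.\ $\omega$ and to the set $S(1,\infty)$, I would use $q_{\theta^k\omega}=(\lambda^k_\omega)^{-1}\opera^k_\omega q_\omega$. For $x$ in the support of $\opera^k_\omega\ind_\omega$ there is a preimage $y\in K_{\omega,k-1}$ on which $q_\omega$ is already bounded below, and $g^k_\omega(y)\ge\inf g^k_{\omega,c}>0$ by summability condition (1). One subtlety is matching the support sets: $S(1,\infty)$ at $\theta^k\omega$ versus the sets on which the bound was obtained. Here I would use that the $S_{\cdot,n}$ are descending and that $T$ maps supports forward.

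The main obstacle I expect is the first step. The paper states the diameter bound and the cone-entry property only for good or $\Omega_F$ fibers with $i=0$, so keeping track of which fibers carry $q$ inside $\Lambda_{\tilde a}$, and then propagating positivity along the orbit with measurable, a.e.-finite constants, is where the care is needed. If the closedness argument is awkward, I would instead apply Lemma \ref{lem6.27} directly to the approximants $\normopera^{n}f_{\omega,n}$, obtain the uniform lower bound for each $n$, and pass to the limit using the uniform convergence from Corollary \ref{coro6.30}.
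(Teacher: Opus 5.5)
Your seed estimate is sound, and it starts differently from the paper, which reads a lower bound for $q$ on a survivor set directly off $F_\omega(q_\omega)=1$. On $\omega_*=\theta^{\tilde M}\omega'$, Lemma \ref{lem6.27} together with the $\Omega_F$ bounds does give $q_{\omega_*}\ge\frac{\tilde c}{4\tilde C}$ on $S_{\omega_*,\tilde M}$. However, such fibers form only a set of positive measure, so the proposition rests on your propagation step, and that step fails as written. Summability condition (1) gives $\inf_Z g^1_{\omega,c}>0$ only on each individual branch $Z\in\parti^1_\omega$. With infinitely many branches, condition (3) forces $\sup_Z g^1_{\omega,c}\to0$ along the branches, so $\inf_I g^k_{\omega,c}=0$; this is exactly where the countable-branch case differs from the finite one. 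As $x$ ranges over $S_{\theta^k\omega_*,\infty}$, the preimage $y\in K_{\omega_*,k-1}\cap S_{\omega_*,\tilde M}$ may lie in branches of arbitrarily small weight. Your argument therefore only gives $q_{\theta^k\omega_*}(x)>0$ pointwise, not a positive infimum, which is the whole content of the statement.

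The repair is to bound the operator applied to an indicator rather than individual weights. Since $q_{\omega_*}\ge\frac{\tilde c}{4\tilde C}\ind_{S_{\omega_*,\tilde M}}$ and $\opera^{\tilde M}_{\omega'}\ind_{\omega'}\le\supnorm{\opera^{\tilde M}_{\omega'}\ind_{\omega'}}\ind_{S_{\omega_*,\tilde M}}$, for $\omega=\theta^k\omega_*$ and $n=k+\tilde M$ you get
\[
q_\omega\ \ge\ \frac{\tilde c}{4\tilde C\,\lambda^k_{\omega_*}\supnorm{\opera^{\tilde M}_{\omega'}\ind_{\omega'}}}\,\opera^{n}_{\theta^{-n}\omega}\ind_{\theta^{-n}\omega}.
\]
What is still needed is $\inf_{S_{\omega,\infty}}\opera^{n}_{\theta^{-n}\omega}\ind_{\theta^{-n}\omega}>0$. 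This is the same kind of input as condition (1) in the definition of $\Omega_F$, but for the longer iterate $n$. It is the quantity in which the open contracting condition is phrased, and it is precisely what the paper's proof uses in its final line.

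A smaller point concerns your first step. Proposition \ref{prop6.29} puts the iterates into $\Lambda_{\tilde a}$ only at the entry fiber and at endpoints of the subsequent block decomposition. That decomposition changes with $n$, and $\omega'$ need not be an endpoint (it may sit inside a coating interval), so closedness of the cone does not directly give $q_{\omega'}\in\Lambda_{\omega',\tilde a}$. It is cleaner to apply Proposition \ref{prop6.13} to $q_{\omega'}=\normopera^{m}_{\theta^{-m}\omega'}q_{\theta^{-m}\omega'}$ and let $m\to\infty$ using the temperedness in Lemma \ref{lem6.39}. This gives $\vari(q_{\omega'})\le C_\epsilon(\omega')$, hence $q_{\omega'}\in\Lambda_{\omega',\tilde a}$ whenever $C_\epsilon(\omega')\le B$, since $\tilde a\ge 2B$. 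Finally, the prefactor $\rho^{\tilde M}/\lambda^{\tilde M}$ is exactly $1$ by Proposition \ref{prop6.34}.
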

\begin{proof}
    Since $F_{\omega}(q_\omega)=1$, then for sufficiently large $n\geq1$ we have $$\underset{S(n,n)}{\inf}\,\opera_{\omega}^{n}q_\omega>0,$$ which implies $\inf_{K_{\omega,n-1}}q_\omega>0$. Therefore, for $\prob$-a.e. $\omega\in\Omega$ and all $n\geq1$,
    \begin{equation*}
        \begin{aligned}
            \underset{S(1,\infty)}{\inf}\,q_\omega&=(\lambda_{\theta^{-n}\omega}^{n})^{-1}\underset{S(1,\infty)}{\inf}\,\opera_{\theta^{-n}\omega}^{n}q_{\theta^{-n}\omega} \\
            &\geq (\lambda_{\theta^{-n}\omega}^{n})^{-1}\underset{S(1,\infty)}{\inf}\,\opera_{\theta^{-n}\omega}^{n}\ind_{\theta^{-n}\omega}\cdot\underset{K_{\theta^{-n}\omega,n-1}}{\inf}\,q_{\theta^{-n}\omega}>0
        \end{aligned}
    \end{equation*} completes the proof since each term is strictly positive.
\end{proof}
\subsection{Construction of conformal measures}
\begin{proposition}\label{prop6.34}
    For $\prob$-a.e. $\omega\in\Omega$, the random functional $F_\omega$ is positive, linear, and $$F_{\theta\omega}(\opera_{\omega}f)=\lambda_{\omega}F_{\omega}(f)$$ for all $f\in BV(X_\omega)$. Moreover, we have $\lambda_\omega=\rho_\omega=F_{\theta\omega}(\opera_\omega \ind_\omega)$ for $\prob$-a.e. $\omega\in\Omega$.
\end{proposition}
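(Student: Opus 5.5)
The plan is to identify $F_\omega$ with a limit of normalized operators applied to $f$, using the density $q$ from Proposition \ref{prop6.31} and the cone contraction of Corollary \ref{coro6.30}. Positivity is immediate: if $f\geq0$ then $\opera_\omega^n f\geq0$, so every infimum defining $F_\omega(f)$ is nonnegative. Homogeneity for $c\geq0$ and superadditivity $F_\omega(f+g)\geq F_\omega(f)+F_\omega(g)$ follow from the definition as a monotone limit of infima. Since $F_\omega(f+c\ind_\omega)=F_\omega(f)+c$, additivity with constants also holds. The real content is full additivity.

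\textbf{Step 1: additivity on the cone.} Fix $f\in\Lambda_{\omega,+}$ with $f$ supported on $J_\omega$; otherwise replace $f$ by $f\ind_\omega$, which changes neither $\opera_\omega^n f$ nor $F_\omega$. Apply Corollary \ref{coro6.30} along the forward orbit, with $g=q$ and with $f$ extended to a function in $\mathcal{T}^+$ that is constant off the fiber $\omega$. This gives
\[
\frac{\opera_\omega^n f}{F_{\theta^n\omega}(\opera_\omega^n f)}-q_{\theta^n\omega}\to0
\]
uniformly, at rate $(e^{\Delta\bar D^n}-1)$ times a sup norm that is bounded because the functions lie in $\Lambda_{\theta^n\omega,\tilde a}$ after a finite number of steps. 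Using $\opera_\omega^n\ind_\omega\approx F_{\theta^n\omega}(\opera_\omega^n\ind_\omega)\,q_{\theta^n\omega}$ and $\inf_{S(1,\infty)}q>0$ from Proposition \ref{prop6.33}, I obtain
\[
\frac{\opera_\omega^n f(x)}{\opera_\omega^n\ind_\omega(x)}=\frac{F_{\theta^n\omega}(\opera_\omega^n f)}{F_{\theta^n\omega}(\opera_\omega^n\ind_\omega)}\,(1+o(1))
\]
uniformly in $x\in S(n,n)$. Hence $F_\omega(f)=\lim_n F_{\theta^n\omega}(\opera^n_\omega f)/F_{\theta^n\omega}(\opera^n_\omega\ind_\omega)$, and the ratio $\opera^n_\omega f/\opera^n_\omega\ind_\omega$ converges uniformly to the constant $F_\omega(f)$.

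Here $\sup$ and $\inf$ of the ratio squeeze together. This is the main obstacle. The contraction statements are phrased for backward orbits $\theta^{-n}\omega$ and need the shift $p$; I will use Proposition \ref{prop6.29} with $p=-n$, and control the variation growth of a single fixed $f$ via the class $\mathcal{T}$ hypothesis. A uniform ratio limit is additive in $f$. For general $f\in BV(X_\omega)$, write $f=(f+c)-c$ with $c=\supnorm f$, so that both pieces lie in $\Lambda_{\omega,+}$. Constants are handled by $F_\omega(\ind)=1$. This gives linearity and also $F_\omega(-f)=-F_\omega(f)$.

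\textbf{Step 2: conformality.} By Step 1, $F_\omega(f)=\lim_n \opera_\omega^n f(x)/\opera_\omega^n\ind_\omega(x)$ uniformly. Then
\[
F_{\theta\omega}(\opera_\omega f)=\lim_n\frac{\opera_\omega^{n+1}f}{\opera_\omega^{n+1}\ind_\omega}\cdot\frac{\opera_\omega^{n+1}\ind_\omega}{\opera_{\theta\omega}^{n}\ind_{\theta\omega}}\to F_\omega(f)\,F_{\theta\omega}(\opera_\omega\ind_\omega).
\]
Here the second factor tends to $F_{\theta\omega}(\opera_\omega\ind_\omega)$, again by the uniform ratio limit on the fiber $\theta\omega$. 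This gives $F_{\theta\omega}(\opera_\omega f)=\rho_\omega F_\omega(f)$.

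\textbf{Step 3: $\lambda_\omega=\rho_\omega$.} Apply Step 2 to $q_\omega$ and use $F(q)=1$ together with $\opera_\omega q_\omega=\lambda_\omega q_{\theta\omega}$. This yields $\lambda_\omega=\lambda_\omega F_{\theta\omega}(q_{\theta\omega})=F_{\theta\omega}(\opera_\omega q_\omega)=\rho_\omega$, so the identity in Step 2 is exactly $F_{\theta\omega}(\opera_\omega f)=\lambda_\omega F_\omega(f)$. Measurability and the a.e. statements are inherited from the full-measure sets of Proposition \ref{prop6.29} and Proposition \ref{prop6.31}.
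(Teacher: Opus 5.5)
Your overall plan is the paper's: show that $\opera_\omega^n f/\opera_\omega^n\ind_\omega$ becomes uniformly constant on $S(n,n)$, so that $F_\omega$ is a genuine limit and hence additive. Then conformality is a product of two limits, and plugging in $q_\omega$ gives $\lambda_\omega=\rho_\omega$. Your Steps 2 and 3 are what the paper does. The gap is in how you execute Step 1.

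Comparing $\opera_\omega^n f$ and $\opera_\omega^n\ind_\omega$ separately with $q_{\theta^n\omega}$ in sup norm, and then dividing, does not give uniformity on $S(n,n)$. The function $q_{\theta^n\omega}$ is a limit of images $\opera^k_{\theta^{n-k}\omega}(\cdot)$, which are supported on the descending sets $S_{\theta^n\omega,k}$. So $q_{\theta^n\omega}$ vanishes off the eventual support $S(n,\infty)$, while the infimum defining $F_\omega$ runs over the larger set $S(n,n)$, where $\opera_\omega^n\ind_\omega>0$. At points of $S(n,n)\setminus S(n,\infty)$ your estimates only say that both normalized functions are $O(e^{\Delta\bar D^n}-1)$. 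That says nothing about their ratio, the infimum can sit there, and additivity does not follow.

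Even on $S(n,\infty)$ you would need $\inf q_{\theta^n\omega}$ to decay more slowly than $\bar D^n$ along the forward orbit. That is a temperedness statement, and Proposition \ref{prop6.33} (fiberwise positivity only) does not provide it.

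Two further slips:
\begin{itemize}
\item Forward iterates from $\omega$ correspond to $p=0$ in Proposition \ref{prop6.29}, not $p=-n$. The choice $p=-n$ gives the pullback $\normopera^n_{\theta^{-n}\omega}$ landing at $\omega$, which is what builds $q_\omega$.
\item The null set excluded by Proposition \ref{prop6.29} depends on $V$, so $V$ must be fixed before $f$ is chosen. For example, take $V\equiv1$ and note that $\vari(f)\le e^{n\epsilon}$ for all large $n$. Otherwise you have not shown that a single full-measure set of $\omega$ works for every $f\in BV(X_\omega)$.
\end{itemize}

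The fix is to leave $q$ out of Step 1 altogether. In $\Lambda_{\theta^n\omega,+}$ the Hilbert metric between nonnegative functions with common support is $\log\bigl(\sup(h/g)/\inf(h/g)\bigr)$. So Proposition \ref{prop6.29} with $p=m=0$ and $g=\ind$ gives, for $n\geq n(\omega)$,
\[
\sup_{S(n,n)}\frac{\opera_\omega^n f}{\opera_\omega^n\ind_\omega}\leq e^{\Delta D^n}\inf_{S(n,n)}\frac{\opera_\omega^n f}{\opera_\omega^n\ind_\omega},
\]
because the normalizing factors of $\normopera^n_\omega$ cancel in the ratio. This is exactly the paper's argument. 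It yields uniform convergence of the ratio to $F_\omega(f)$ on all of $S(n,n)$, and it needs neither Proposition \ref{prop6.33} nor any lower bound on $q$. With that replacement, the rest of your proof goes through as written.
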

\begin{proof}
    The random functional $F_\omega$ is obviously positive. For arbitrary two sequences $\{x_n\}_{n=0}^{\infty}$ and $\{y_n\}_{n=0}^{\infty}$ with $x_n,y_n\in S(n,n)$, we have
    \begin{equation*}
        \begin{aligned}
            \lim_{n\to\infty}\bigg|\frac{\opera_{\omega}^{n}f}{\opera_{\omega}^{n}\ind_{\omega}}(x_n)-\frac{\opera_{\omega}^{n}f}{\opera_{\omega}^{n}\ind_{\omega}}(y_n)\bigg|&=\lim_{n\to\infty}\bigg|\frac{\opera_{\omega}^{n}f}{\opera_{\omega}^{n}\ind_{\omega}}(y_n)\bigg|\cdot\bigg|\frac{\opera_{\omega}^{n}f}{\opera_{\omega}^{n}\ind_{\omega}}(x_n)\cdot\frac{\opera_{\omega}^{n}\ind_\omega}{\opera_{\omega}^{n}f}(y_n)-1\bigg| \\
            &\leq \supnorm{f}\limsup_{n\to\infty}\,\exp\left(\Theta_{\Lambda_{\theta^{n}\omega,+}}(\normopera_{\omega}^{n}f,\normopera_{\omega}^{n}\ind_\omega)-1\right),
        \end{aligned}
    \end{equation*} the right side tends to $0$ as $n\to\infty$. Therefore, it is sufficient to rewrite $$F_{\omega}(f):=\lim_{n\to\infty}\frac{\opera_{\omega}^{n}f}{\opera_{\omega}^{n}\ind_\omega}(x_n)$$ for all $f\in\Lambda_{\omega,+}$ and all $x_n\in S(n,n)$. Hence the linearity follows. It is natural to extend it to general function $f\in BV(X_\omega)$ by writing $f=f^{+}-f^{-}$ with $f^+:=\max\{0,f\},f^-:=\max\{0,-f\}\in\Lambda_{\omega,+}$, then we have $F_{\omega}(f)=F_{\omega}(f^+)-F_{\omega}(f^-)$. Furthermore,
    \begin{equation*}
        \begin{aligned}
            F_{\theta\omega}(\opera_{\omega}f)&=\lim_{n\to\infty}\frac{\opera_{\omega}^{n+1}f}{\opera_{\theta\omega}^{n}\ind_{\theta\omega}}(x_{n+1}) \\
            &=\lim_{n\to\infty}\frac{\opera_{\omega}^{n+1}f}{\opera_{\omega}^{n+1}\ind_\omega}(x_{n+1})\cdot\lim_{n\to\infty}\frac{\opera_{\omega}^{n+1}\ind_\omega}{\opera_{\theta\omega}^{n}\ind_{\theta\omega}}(x_{n+1}) \\
            &=F_{\omega}(f)\cdot F_{\theta\omega}(\opera_{\omega}\ind_\omega).
        \end{aligned}
    \end{equation*} Let $f=q_\omega\in BV(X_\omega)$, by $F_\omega(q_\omega)=1$ and $\opera_{\omega}q_{\omega}=\lambda_{\omega}q_{\theta\omega}$, we obtain $F_{\theta\omega}(\opera_{\omega}q_{\omega})=\lambda_{\omega}F_{\theta\omega}(q_{\theta\omega})=\lambda_\omega$, while on the other hand, $F_{\omega}(q_\omega)F_{\theta\omega}(\opera_{\omega}\ind_\omega)=F_{\theta\omega}(\opera_{\omega}\ind_\omega)$. Recall that $\rho_\omega=F_{\theta\omega}(\opera_{\omega}\ind_\omega)$, we have $\lambda_\omega=\rho_\omega=F_{\theta\omega}(\opera_{\omega}\ind_\omega)$ for $\prob$-a.e. $\omega\in\Omega$.
\end{proof}
\begin{proposition}\label{prop6.35}
    For $\prob$-a.e. $\omega\in\Omega$, there exists an atomless Borel probability measure $\nu_{\omega}\in\M(I)$ such that $$F_{\omega}(f)=\int_{X_\omega}f\,\dd\nu_\omega$$ for all $f\in BV(X_\omega)$ and $\nu_{\omega}(X_\omega)=1$. We have $$\nu_{\theta\omega}(\opera_{\omega}f)=\lambda_{\omega}\nu_{\omega}(f)$$ for all $f\in BV(X_\omega)$ and $\supp(\nu_\omega)\subseteq K_{\omega,\infty}$ for $\prob$-a.e. $\omega\in\Omega$.
\end{proposition}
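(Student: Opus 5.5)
We will realize $F_\omega$ as integration against a measure via a Riesz-type argument, after extending it from $BV$ to continuous functions. By Proposition \ref{prop6.34}, $F_\omega$ is a positive linear functional on $BV(X_\omega)$ with $F_\omega(\ind_{X_\omega})=1$ and $|F_\omega(f)|\le\supnorm{f}$. Since $C(I)\cap BV(I)$ (e.g. Lipschitz functions) is sup-norm dense in $C(I)$, $F_\omega$ extends uniquely to a positive normalized functional on $C(I)$. The Riesz representation theorem then gives a Borel probability measure $\nu_\omega$ on $I$ with $F_\omega(f)=\int f\,\dd\nu_\omega$ for continuous $f$.

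The main obstacle is extending this identity from continuous functions to all of $BV(X_\omega)$, since $BV$ functions on the fiber may be discontinuous at the (countable, dense) singular set $S_\omega$. We first show $\nu_\omega$ is atomless: for a point $x$ and any $n$, $x$ lies in the closure of at most two elements of $\parti^n_\omega$. Lemma \ref{lem6.26}, more precisely its proof, which gives $F_\omega(\ind_Z)\le\supnorm{g^n_\omega}/\rho^n_\omega\to0$, together with the contracting assumption, shows that $F_\omega(\ind_Z)\to0$ uniformly over $Z\in\parti^n_\omega$. Approximating $\ind_{\{x\}}$ from above by continuous functions supported on such small neighbourhoods and using the monotonicity of $F_\omega$, we get $\nu_\omega(\{x\})=0$. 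In particular $\nu_\omega(S_\omega)=0$, so $\nu_\omega(X_\omega)=1$.

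Next, for an interval $U$ we sandwich $\ind_U$ between continuous functions $h_k^-\le\ind_U\le h_k^+$ that differ only on shrinking neighbourhoods of the endpoints. Positivity gives $F_\omega(h_k^-)\le F_\omega(\ind_U)\le F_\omega(h_k^+)$, and the absence of atoms (both for $\nu_\omega$, and for $F_\omega$ on small intervals, by the same partition estimate) forces $F_\omega(\ind_U)=\nu_\omega(U)$. Linearity extends this to step functions. A general $f\in BV$ is a uniform limit of step functions, since it has at most countably many jumps and one-sided limits. Because both $F_\omega$ and $\nu_\omega$ are sup-norm continuous, this yields $F_\omega(f)=\nu_\omega(f)$.

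Conformality $\nu_{\theta\omega}(\opera_\omega f)=\lambda_\omega\nu_\omega(f)$ is then immediate from Proposition \ref{prop6.34}, because $\opera_\omega f\in BV(X_{\theta\omega})$. For the support, we apply this relation $n$ times to $f=\ind_{X_\omega\setminus K_{\omega,n-1}}$. We have $\opera^n_\omega f=\opera^n_{\omega,c}(f\ind_{K_{\omega,n-1}})=0$, so $\lambda^n_\omega\nu_\omega(X_\omega\setminus K_{\omega,n-1})=0$. Since $\lambda_\omega>0$, each survivor set has full measure. Intersecting over $n$ gives $\nu_\omega(K_{\omega,\infty})=1$. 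Finally, the measurability in $\omega$ needed later follows from that of $F_\omega(f)$ for fixed $f$, which is a limit of measurable quantities.
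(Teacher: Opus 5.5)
Your outline follows the paper's proof closely: extend $F_\omega$ to continuous functions and apply Riesz, use $F_\omega(\ind_Z)\le\supnorm{g_{\omega}^{n}}/\rho_{\omega}^{n}$ for $Z\in\parti_{\omega}^{n}$ to control mass near points, sandwich interval indicators, approximate $BV$ functions uniformly, and use conformality for the support. Applying Riesz on the compact interval $I$ is actually better than the paper, since $X_\omega=I\setminus S_\omega$ is not compact. However, the atomlessness step has a genuine gap. From ``$x$ lies in the closure of at most two elements of $\parti_{\omega}^{n}$'' and a uniform bound on $F_\omega(\ind_Z)$, you conclude that $x$ has neighbourhoods of small $F_\omega$-mass. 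That requires some neighbourhood of $x$ to meet only finitely many elements of $\parti_{\omega}^{n}$, which fails at accumulation points of the partition. With countably many branches such points always exist, because $D_\omega$ is an infinite subset of the compact interval $I$. For example, $0$ for the Gauss map lies in the closure of no element at all. At such an $x$, every neighbourhood meets infinitely many elements. A uniform bound on the individual $F_\omega(\ind_Z)$ then says nothing about $F_\omega$ of the neighbourhood: $F_\omega$ is only known to be finitely additive, and you have no tail control. Your interval sandwich, the identity $\nu_\omega(X_\omega)=1$, and the passage to $BV$ functions (which may jump at such points) all rest on atomlessness at every point, so the gap propagates.

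The missing ingredient is the summability hypothesis, which is exactly where the countable-branch setting has to enter. For $N_\delta:=(x-\delta,x+\delta)\cap X_\omega$, Lemma \ref{lem6.6} and injectivity of $T_{\omega}^{n}$ on each $Z\in\parti_{\omega}^{n}$ give
\[\rho_{\omega}^{n}F_{\omega}(\ind_{N_\delta})\le\supnorm{\opera_{\omega}^{n}\ind_{N_\delta}}\le\sum_{Z\in\parti_{\omega}^{n},\,Z\cap N_{\delta}\neq\varnothing}\ \sup_{Z\cap K_{\omega,n-1}}g_{\omega,c}^{n}.\]
The series over all of $\parti_{\omega}^{n}$ is finite, being bounded by $\prod_{i=0}^{n-1}S_{\theta^{i}\omega}^{1}$. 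By dominated convergence as $\delta\to0$, only the at most two elements with $x\in\overline{Z}$ survive. This gives $\limsup_{\delta\to0}F_{\omega}(\ind_{N_\delta})\le 2\supnorm{g_{\omega}^{n}}/\rho_{\omega}^{n}$, which tends to $0$ by the open contracting condition. With this in place, the rest of your argument goes through. The paper handles the same issue differently: it uses $\sum_{U\in\parti_{\omega}^{n}}F_{\omega}(\ind_U)=1$ to choose a subcollection of small total mass covering a neighbourhood of each endpoint. That identity is itself a countable-additivity claim needing exactly this tail control. A smaller point concerns the support step. With countably many branches, $\ind_{K_{\omega,n-1}}$ need not have bounded variation. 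So before applying the conformality relation to $\ind_{X_\omega\setminus K_{\omega,n-1}}$, you should extend it from $BV$ to bounded Borel functions. Sup-norm density plus a monotone-class argument does this, since $\opera_{\omega}$ respects bounded pointwise limits by summability. The paper glosses over this as well.
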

\begin{proof}
    We adapt from \cite[Lemma 4.10]{liverani3} in covering weighted deterministic systems. From Proposition \ref{prop6.34} we see $F_\omega$ is positive and linear on $BV(X_\omega)$ for every $\omega\in\Omega$. Since $X_\omega$ is compact, every continuous function $f\in C(X_\omega)$ can be approximated by a sequence of functions $\{f_n\}_{n\geq1}$ in $BV(X_\omega)$ in the supremum norm and so are the random continuous functions. For each $\omega\in\Omega$, we can extend $F_\omega$ to $C(X_\omega)$. By Riesz representation theorem, we can identify it with a Borel probability measure $\nu_\omega$ on $X_\omega$ for each $\omega\in\Omega$. We want to show $\nu_{\omega}(Z)=F_{\omega}(\ind_Z)$ for all non-degenerate intervals $Z\subseteq X_\omega$, we assume $Z$ is an interval with $\overline{Z}=[a_1,a_2]$. Let $\epsilon>0$ and let $V_1,V_2\subseteq I$ be open interval containing $a_1,a_2$ respectively, we assume $F_{\omega}(\ind_{V_1})<\frac{\epsilon}{2}$ and $F_{\omega}(\ind_{V_2})<\frac{\epsilon}{2}$. Such open intervals can be constructed by the following way.

    Since the potential is contracting, fo all $n\geq1$ and all intervals $U\in\parti_{\omega}^{n}$, we have that \[F_{\omega}(\ind_U)\leq(\rho_{\omega}^{n})^{-1}F_{\theta^{n}\omega}(\opera_{\omega}^{n}\ind_U)\leq\frac{\supnorm{g_{\omega}^{n}}}{\rho_{\omega}^{n}}<\frac{\epsilon}{4}\] for given $\epsilon>0$ since $T_{\omega}^{n}$ is injective on every element of $\parti_{\omega}^{n}$. Notice that $\parti_{\omega}^{n}$ is a partition of $X_\omega$, we have $\sum_{U\in\parti_{\omega}^{n}}F_{\omega}(\ind_U)=1$. It allows us to find a subcollection $\tilde{\parti}_{1}$ of $\parti_{\omega}^{n}$ such that $\sum_{U\in\tilde{\parti}_{1}}F_{\omega}(\ind_U)<\frac{\epsilon}{2}$ and numbers $b_1,c_1\in\mathrm{int}(\bigcup_{U\in\tilde{\parti}_1}U)$ with $b_1\leq a_1\leq c_1$. Choose $V_1:=\mathrm{int}(\bigcup_{U\in\tilde{\parti}_1}U)$ as the first open interval containing $a_1$. In the same steps, we choose the second open interval $V_2$ containing $a_2$.

    Let $f\in BV(X_\omega)\cap C(X_\omega)$ be a function such that \[\ind_{Z}\leq f|_{V_1\cup V_2\cup Z}\leq\ind\] and $f\equiv0$ outside the union. We calculate \[\nu_{\omega}(\ind_Z)\leq F_{\omega}(f)\leq F_{\omega}(\ind_{V_1})+F_{\omega}(\ind_{V_2})+F_{\omega}(\ind_Z)\leq\epsilon+F_{\omega}(\ind_Z),\] as $\epsilon$ is arbitrary, we have $\nu_{\omega}(Z)\leq F_{\omega}(\ind_Z)$ for all $Z\subseteq X_\omega$. Now for any two disjoint intervals $Z_{1},Z_{2}\subseteq X_{\omega}$ with $Z_{1}\cup Z_{2}=X_\omega$, we have \[1=\nu_{\omega}(Z_1)+\nu_{\omega}(Z_2)\leq F_{\omega}(\ind_{Z_1})+F_{\omega}(\ind_{Z_2})=1.\] Then set $Z_1=Z$ and $Z_2=X_{\omega}\setminus Z$ we obtain $F_{\omega}(\ind_Z)=1-\nu_{\omega}(X_{\omega}\setminus Z)=\nu_{\omega}(Z)$ for all $Z\subseteq X_\omega$. Since $Z$ is non-degenerate, we deduce $\nu_\omega$ is atomless.
    
    Recall that $\ind_{\omega}:=\ind_{J_\omega}$ where $J_\omega:=X_{\omega}\setminus H_\omega$, we then have $F_{\omega}(\ind_{\omega})=1=\nu_{\omega}(J_\omega)$ by above. Therefore, for every $\omega\in\Omega$, $\nu_\omega$ is Borel probability measure on $X_\omega$ with full measure on $J_\omega$. Since the singular set $S_\omega$ is countable and disjoint from $X_\omega$, then $\nu_{\omega}(S_\omega)=0$. By this we can extend it to a Borel probability measure on $I$ with full measure on $J_\omega$ and we use the same notation $\nu_\omega$.
    
    We shall extend $F_\omega$ to $BV(I)$ by defining $F_{\omega}(f):=F_{\omega}(f|_{X_\omega})$ for every $f\in BV(I)$. We need to show $\nu_{\omega}(f)=F_{\omega}(f)$ for all $f\in BV(I)$. Let $\epsilon>0$, since the jump discontinuities of $f$ can be larger than $\epsilon$ on only a finite set, choose a piecewise continuous function $f_\epsilon:I\to\R$ such that $\supnorm{f-f_\epsilon}<\epsilon$. Since $\nu_\omega(f_\epsilon)=F_{\omega}(f_\epsilon)$, we have $$|\nu_\omega(f)-F_{\omega}(f)|\leq|\nu_\omega(f)-\nu_\omega(f_\epsilon)|+|\nu_\omega(f_\epsilon)-F_{\omega}(f_\epsilon)|+|F_{\omega}(f_\epsilon)-F_{\omega}(f)|\leq 2\supnorm{f-f_\epsilon}<2\epsilon.$$ We then have $\nu_\omega(f)=F_\omega(f)$ for all $f\in BV(I)$. Next we show the measure $\nu_\omega$ is supported in the survivor set.

    Fix $n\geq1$, suppose $f\in L^1(\nu_{\omega,c})$ on $I$ and $f\equiv0$ on survivor set $K_{\omega,n-1}$, the operators are also applicable to integrable functions, then $$\int_{I}f\,\dd\nu_\omega=(\lambda_{\omega}^{n})^{-1}\int_{I}\opera_{\omega}^{n}f\,\dd\nu_{\theta^{n}\omega}=(\lambda_{\omega}^{n})^{-1}\int_{I}\opera_{\omega,c}^{n}(f\ind_{K_{\omega,n-1}})\,\dd\nu_{\theta^{n}\omega}=0,$$ since $\lambda_{\omega}^{n}$ is finite, let $n\to\infty$, we have $\supp(\nu_\omega)\subseteq K_{\omega,\infty}$ for $\prob$-a.e. $\omega\in\Omega$.
\end{proof}
By \cite{atnip4,denker2}, the measures are conformal.
\begin{corollary}
    The family of Borel probability measures $\{\nu_\omega\}_{\omega\in\Omega}$ is conformal, for each Borel subset $A$ on which $T_{\omega}^{n}|_A$ is one-to-one, we have $$\nu_{\theta^{n}\omega}(T_{\omega}^{n}(A))=\lambda_{\omega}^{n}\int_{A}e^{-S_{n}\varphi_\omega}\,\dd\nu_\omega.$$
\end{corollary}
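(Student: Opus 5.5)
The plan is to deduce the conformality identity from the conformal relation $\nu_{\theta\omega}(\opera_\omega f)=\lambda_\omega\nu_\omega(f)$ of Proposition \ref{prop6.35}, iterated to $n$ steps. First I will iterate Proposition \ref{prop6.35} (equivalently Proposition \ref{prop6.34} with $\lambda_\omega^n=\prod_{i=0}^{n-1}\lambda_{\theta^i\omega}$) to get
\[\nu_{\theta^n\omega}(\opera_\omega^n f)=\lambda_\omega^n\nu_\omega(f)\]
for all $f\in BV(I)$. Then I will extend this by monotone class and approximation to all bounded Borel $f$, and to nonnegative measurable $f$ by monotone convergence. The approximation uses that both sides are positive linear functionals, that BV functions (indicators of intervals) generate $\B$, and that $\opera_\omega^n$ is a positive operator given by a convergent sum. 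The summability $S^n_\omega<\infty$ justifies the monotone convergence on the left.

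Next I will take a Borel set $A$ on which $T_\omega^n$ is injective and set $f:=\ind_A e^{-S_n\varphi_\omega}$. To avoid the possibly infinite weight, I truncate by intersecting $A$ with sets where $S_n\varphi_\omega>-k$ and let $k\to\infty$ at the end. By definition,
\[\opera_\omega^n f(x)=\sum_{y\in T_\omega^{-n}x}\ind_A(y)\ind_{K_{\omega,n-1}}(y)e^{-S_n\varphi_\omega(y)}g^n_{\omega,c}(y).\]
Since $g_\omega^n=e^{S_n\varphi_\omega}$ on $K_{\omega,n-1}$, and injectivity leaves at most one preimage in $A$, this equals $\ind_{T_\omega^n(A\cap K_{\omega,n-1})}(x)$. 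Integrating against $\nu_{\theta^n\omega}$ gives
\[\nu_{\theta^n\omega}\bigl(T^n_\omega(A\cap K_{\omega,n-1})\bigr)=\lambda^n_\omega\int_A e^{-S_n\varphi_\omega}\,\dd\nu_\omega.\]

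Finally I need to replace $A\cap K_{\omega,n-1}$ by $A$. Since $\supp(\nu_\omega)\subseteq K_{\omega,\infty}\subseteq K_{\omega,n-1}$, the right-hand side is unchanged when $A$ is replaced by $A\cap K_{\omega,n-1}$. On the left, this replacement is valid as long as $\nu_{\theta^n\omega}$ gives zero mass to $T^n_\omega(A\setminus K_{\omega,n-1})$. The conformality statement should therefore be read with $\varphi_\omega$ defined on $J$, i.e.\ for $A\subseteq K_{\omega,n-1}$, or modulo that null image. I will note this interpretation explicitly. The measurability of $T^n_\omega(A)$ follows because $A$ splits into countably many pieces on elements of $\parti^n_\omega$, where $T^n_\omega$ is a homeomorphism onto its image.

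The main obstacle is the extension of the functional identity from $BV(I)$ to unbounded, indicator-weighted functions. I would handle it first for $A$ contained in a single $Z\in\parti^n_\omega$ with $\inf_Z g^n_{\omega}>0$, where $e^{-S_n\varphi_\omega}$ is bounded. Then I sum over the countably many $Z$ using $\sigma$-additivity, since the images $T^n_\omega(A\cap Z)$ are disjoint by injectivity.
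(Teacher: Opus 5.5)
Your argument is correct. It differs from the paper mainly in that the paper gives no argument: it simply invokes \cite{atnip4,denker2}. You instead derive conformality directly from the eigenmeasure identity of Proposition~\ref{prop6.35}, in three steps.
\begin{enumerate}
\item You iterate the identity. This uses that $\opera_\omega$ preserves $BV$, by Proposition~\ref{prop6.11} with $n=1$; alternatively, extend the one-step identity first and iterate afterwards.
\item You extend it to nonnegative Borel functions by a $\pi$--$\lambda$ argument on intervals followed by monotone convergence. The bound $\supnorm{\opera^n_\omega\ind}\le S^n_\omega<\infty$ is what makes the left side a finite measure.
\item You test it on $\ind_A e^{-S_n\varphi_\omega}$, branch by branch on $\parti^n_\omega$. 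There $\inf_Z g^n_{\omega,c}>0$ follows from summability condition (1) applied along the itinerary of $Z$. With that, the truncation in $k$ is harmless but not really needed.
\end{enumerate}
This is the standard mechanism behind the cited results. What you gain is a self-contained proof in the paper's own notation; what the citation buys is brevity.

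Your derivation also exposes something the paper's phrasing hides: restricting to $A\subseteq K_{\omega,n-1}$, or requiring $\nu_{\theta^n\omega}(T^n_\omega(A\setminus K_{\omega,n-1}))=0$, is needed, not merely a convention. Since $\nu_\omega(H_\omega)=0$, any $A\subseteq H_\omega$ makes the right-hand side vanish. Meanwhile $\nu_{\theta^n\omega}(T^n_\omega A)$ can be positive: take $n=1$ and $A$ a full branch lying in the hole, for which the left side equals $1$. So the corollary should be read for Borel $A\subseteq K_{\omega,n-1}$, consistent with $\varphi_\omega=\varphi_{\omega,c}|_{J_\omega}$, and that is exactly the version you prove.
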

As a consequence of Proposition \ref{prop6.31}, Proposition \ref{prop6.34} and Proposition \ref{prop6.38}, we have the following results on the semi-normalized operator.
\begin{corollary}
    For the semi-normalized open operator $\normopera_{\omega}:=\rho_{\omega}^{-1}\opera_{\omega}=\lambda_{\omega}^{-1}\opera_{\omega}$, we have $$\normopera_{\omega}q_{\omega}=q_{\theta\omega},\quad \nu_{\theta\omega}(\normopera_{\omega}f)=\nu_{\omega}(f).$$ Moreover, for $n\geq1$ we have $$\normopera_{\omega}^{n}q_{\omega}=q_{\theta^{n}\omega},\quad \nu_{\theta^{n}\omega}(\normopera_{\omega}^{n}f)=\nu_{\omega}(f).$$
\end{corollary}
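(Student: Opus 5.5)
The statement is essentially algebraic: it follows from Proposition \ref{prop6.31}, Proposition \ref{prop6.34} and Proposition \ref{prop6.35}. Since Proposition \ref{prop6.34} gives $\lambda_\omega=\rho_\omega$ for $\prob$-a.e. $\omega$, the two normalizations $\rho_\omega^{-1}\opera_\omega$ and $\lambda_\omega^{-1}\opera_\omega$ coincide. I will fix this full-measure set $\Omega'$ and then intersect it with its images under $\theta^{k}$, $k\in\Z$. This is still a full-measure set because $\theta$ is invertible and preserves $\prob$, and it is $\theta$-invariant, so every identity holds simultaneously along the whole orbit of $\omega$.

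For the one-step identities I use $\opera_\omega q_\omega=\lambda_\omega q_{\theta\omega}$ from Proposition \ref{prop6.31}. Dividing by $\lambda_\omega=\rho_\omega$ gives $\normopera_\omega q_\omega=q_{\theta\omega}$. For the measure identity, Proposition \ref{prop6.35} gives $\nu_{\theta\omega}(\opera_\omega f)=\lambda_\omega\nu_\omega(f)$ for $f\in BV(X_\omega)$. Since $\nu_\omega(S_\omega)=0$ and $F_\omega$ was extended to $BV(I)$, the identity also holds for $f\in BV(I)$. Linearity of the integral then yields $\nu_{\theta\omega}(\normopera_\omega f)=\nu_\omega(f)$.

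For the $n$-step identities I argue by induction on $n$. The relation $\normopera_\omega^{n}=(\rho_\omega^{n})^{-1}\opera_\omega^{n}$ equals the composition $\normopera_{\theta^{n-1}\omega}\circ\cdots\circ\normopera_\omega$, because the open operators compose as $\opera_\omega^{n}=\opera_{\theta^{n-1}\omega}\circ\cdots\circ\opera_\omega$ and the scalars multiply to $\rho_\omega^{n}=\lambda_\omega^{n}$. Assuming $\normopera_\omega^{n}q_\omega=q_{\theta^n\omega}$, apply $\normopera_{\theta^n\omega}$ and use the one-step identity at $\theta^n\omega$, which lies in the invariant full-measure set. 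Similarly,
\[\nu_{\theta^{n+1}\omega}(\normopera_\omega^{n+1}f)=\nu_{\theta^{n+1}\omega}\big(\normopera_{\theta^n\omega}(\normopera_\omega^n f)\big)=\nu_{\theta^n\omega}(\normopera_\omega^n f)=\nu_\omega(f).\]

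The only point needing care is that $\normopera_\omega^n f\in BV(X_{\theta^n\omega})$, so that the one-step identity can be applied again at $\theta^n\omega$. This follows from Proposition \ref{prop6.11}, which bounds the variation of $\opera_\omega^n f$. The remaining subtlety, that the a.e. statements hold along entire orbits, is handled by the invariant full-measure set. I do not expect a genuine obstacle; the step most deserving of a check is that $\lambda_\omega=\rho_\omega$ holds on a $\theta$-invariant set of full measure.
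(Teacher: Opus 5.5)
Your proof is correct and follows the paper's route. The paper gives no separate argument: it records the corollary as a consequence of Propositions \ref{prop6.31} and \ref{prop6.34} together with the conformality relation. It cites Proposition \ref{prop6.38} for that relation, but the identity $\nu_{\theta\omega}(\opera_\omega f)=\lambda_\omega\nu_\omega(f)$ actually comes from Proposition \ref{prop6.35}, as you use it. Your division by $\lambda_\omega=\rho_\omega$ and your induction along a $\theta$-invariant full-measure set simply make that one-line deduction explicit.
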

We define the fully normalized open operator $\fullnormopera_{\omega}:BV(X_\omega)\to BV(X_{\theta\omega})$ by $$\fullnormopera_{\omega}f:=\frac{1}{q_{\theta\omega}}\normopera_{\omega}(fq_\omega)=\frac{1}{\lambda_{\omega}q_{\theta\omega}}\opera_{\omega}(fq_\omega),$$ then we have $\fullnormopera_{\omega}\ind_\omega=\ind_{\theta\omega}$. For $\omega\in\Omega$, define the Borel probability measure $\mu_{\omega}\in\M(I)$ by $$\mu_{\omega}(f):=\int_{K_{\omega,\infty}}fq_{\omega}\,\dd\nu_\omega$$ for all $f\in L^1(\nu_\omega)$, for simplicity, we write it as $\mu_{\omega}:=q_{\omega}\nu_{\omega}$. We observe $\mu_\omega$ is also atomless and it is supported in $K_{\omega,\infty}$. Furthermore, $\mu_\omega$ is absolutely continuous with respect to $\nu_\omega$ for every $\omega\in\Omega$. We close this section with the following result by a simple calculation as that in \cite[Proposition 8.11]{atnip1}.
\begin{proposition}\label{prop6.38}
    The family of Borel probability measures $\{\mu_\omega\}_{\omega\in\Omega}$ is $T$-invariant, that is,
    $$\int_{K_{\omega,\infty}}f\circ T_\omega\,\dd\mu_\omega=\int_{K_{\theta\omega,\infty}}f\,\dd\mu_{\theta\omega}$$ for all $f\in L^1(\mu_{\theta\omega})$.
\end{proposition}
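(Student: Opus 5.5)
The plan is to run the standard duality computation between the transfer operator and composition, using the conformality relation $\nu_{\theta\omega}(\opera_\omega h)=\lambda_\omega\nu_\omega(h)$ from Proposition \ref{prop6.35} together with the eigen-equation $\opera_\omega q_\omega=\lambda_\omega q_{\theta\omega}$ from Proposition \ref{prop6.31}. The only algebraic ingredient beyond these is the pointwise identity
\[\opera_\omega\bigl((f\circ T_\omega)\,h\bigr)=f\cdot\opera_\omega h,\]
valid for $h$ bounded on $X_\omega$ and $f$ bounded on $X_{\theta\omega}$. It follows directly from the definition $\opera_\omega h(x)=\sum_{y\in T_\omega^{-1}x}h(y)\ind_\omega(y)g_{\omega,c}^1(y)$, since $f(T_\omega y)=f(x)$ for every preimage $y$. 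The sum converges absolutely by summability, because $S_\omega^1<\infty$.

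First I would treat $f\in BV(I)$, or $f$ bounded Borel. Since $\nu_\omega$, and hence $\mu_\omega$, is supported in $K_{\omega,\infty}\subseteq J_\omega$, the integral over $K_{\omega,\infty}$ equals the integral of $(f\circ T_\omega)q_\omega\ind_\omega$ against $\nu_\omega$. Applying conformality and then the identity above gives
\[\mu_\omega(f\circ T_\omega)=\lambda_\omega^{-1}\nu_{\theta\omega}\bigl(\opera_\omega((f\circ T_\omega)q_\omega)\bigr)=\lambda_\omega^{-1}\nu_{\theta\omega}\bigl(f\,\opera_\omega q_\omega\bigr)=\nu_{\theta\omega}(fq_{\theta\omega})=\mu_{\theta\omega}(f).\]
Equivalently, $\mu_\omega(f\circ T_\omega)=\mu_{\theta\omega}(f\,\fullnormopera_\omega\ind_\omega)$ with $\fullnormopera_\omega\ind_\omega=\ind_{\theta\omega}$. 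The set $K_{\theta\omega,\infty}$ appears on the right because $\supp\nu_{\theta\omega}\subseteq K_{\theta\omega,\infty}$.

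The main obstacle is a subtlety in the middle step. The conformality relation was established in Proposition \ref{prop6.35} only for $BV$ functions, while $(f\circ T_\omega)q_\omega$ need not be of bounded variation on $I$, even for $f\in BV$, when there are countably many branches. I would handle this in two steps.

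First, extend $\nu_{\theta\omega}(\opera_\omega h)=\lambda_\omega\nu_\omega(h)$ to bounded Borel $h$. Both sides are finite positive measures in $h$: the left side defines a measure because $\opera_\omega$ is positive, and monotone convergence passes through the countable sum defining $\opera_\omega$. The two measures agree on indicators of intervals, since these lie in $BV$. By a $\pi$–$\lambda$ argument they therefore agree on all Borel sets, and then by linearity and monotone convergence on all bounded Borel $h$.

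Second, pass from bounded $f$ to $f\in L^1(\mu_{\theta\omega})$. It suffices to take $f\ge0$, truncate to $f\wedge k$, and use monotone convergence on both sides; the general case follows by splitting $f=f^+-f^-$. The identity with $f=|f|$ also shows that $f\circ T_\omega\in L^1(\mu_\omega)$, which makes the left-hand side well defined.

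Finally, I would note that the family $\{\mu_\omega\}$ consists of probability measures: taking $f=\ind$ gives $\mu_\omega(I)=\mu_{\theta\omega}(I)$, and with the normalization $\nu(q)=1$ (or $F_\omega(q_\omega)$ adjusted accordingly) each $\mu_\omega$ has mass one. Everything holds on the full-measure set of $\omega$ where Propositions \ref{prop6.31} and \ref{prop6.35} apply, and this set can be taken $\theta$-invariant.
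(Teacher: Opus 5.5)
Your argument is correct and is the same duality computation the paper uses, namely $\mu_{\theta\omega}(f)=\nu_{\theta\omega}(f\,\normopera_\omega q_\omega)=\nu_{\theta\omega}(\normopera_\omega((f\circ T_\omega)q_\omega))=\nu_\omega((f\circ T_\omega)q_\omega)=\mu_\omega(f\circ T_\omega)$, which is your chain with $\lambda_\omega^{-1}\opera_\omega$ written as $\normopera_\omega$. The one difference is that the paper applies conformality to $(f\circ T_\omega)q_\omega$ without comment, although Proposition \ref{prop6.35} only gives it for $BV$ functions; your $\pi$--$\lambda$ extension to bounded Borel functions, followed by truncation to reach $L^1(\mu_{\theta\omega})$, supplies that missing justification.
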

\begin{proof}
    For all $f\in L^1(\mu_{\theta\omega})$, we have
    \begin{equation*}
        \begin{aligned}
            \int_{K_{\theta\omega,\infty}}f\,\dd\mu_{\theta\omega}&=\int_{K_{\theta\omega,\infty}}fq_{\theta\omega}\,\dd\nu_{\theta\omega}=\int_{K_{\theta\omega,\infty}}f\cdot\normopera_{\omega}q_{\omega}\,\dd\nu_{\theta\omega} \\
            &=\int_{K_{\theta\omega,\infty}}\normopera_{\omega}(f\circ T_{\omega}\cdot q_{\omega})\,\dd\nu_{\theta\omega}=\int_{K_{\omega,\infty}}f\circ T_{\omega}\cdot q_{\omega}\,\dd\nu_{\omega} \\
            &=\int_{K_{\omega,\infty}}f\circ T_\omega\,\dd\mu_{\omega},
        \end{aligned}
    \end{equation*} which exactly means $\mu_{\omega}\circ T_{\omega}^{-1}=\mu_{\theta\omega}$ for $\prob$-a.e. $\omega\in\Omega$, hence $\{\mu_\omega\}_{\omega\in\Omega}$ is $T$-invariant.
\end{proof}
\section{Decay of correlations}\label{sec7}
We have obtained conformal measures and density function for piecewise monotone random interval map with countably many branches with hole assigned. Now we establish the results of exponential decay of correlations of two observables in this open system. From \cite[Section 2]{gundlach}, we say a function $f:\Omega\to\R$ is \emph{tempered} if $$\lim_{|n|\to\infty}\frac{1}{|n|}\log|f(\theta^{n}\omega)|=0$$ for $\prob$-a.e. $\omega\in\Omega$ and all $n\in\Z$. It is equivalent to the fact that for every $\epsilon>0$, there exists a measurable finite $C(\omega,\epsilon)>0$ such that $f(\theta^{n}\omega)\leq C(\omega,\epsilon)e^{|n|\epsilon}$ holds for all $n\in\Z$. We cite a lemma to state that the $BV$-norm of density $q_\omega$ does not grow too much along the $\theta$-orbit of fibers, and it is bounded above by an exponential term with a measurable random coefficient; briefly, the $BV$-norm is tempered.
\begin{lemma}[{\cite[Lemma 8.5]{atnip1}}]\label{lem6.39}
    For every $\epsilon>0$ there exists a measurable random constant $C(\omega,\epsilon)>0$ such that for $\prob$-a.e. $\omega\in\Omega$ and $n\in\Z$, we have $$\bvnorm{q_{\theta^{n}\omega}}\leq C(\omega,\epsilon)e^{|n|\epsilon}.$$
\end{lemma}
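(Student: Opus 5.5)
We will bound $\supnorm{q_{\theta^{n}\omega}}$ and $\vari(q_{\theta^{n}\omega})$ separately, using the representation of $q$ as a limit of normalized iterates (Proposition \ref{prop6.31}, with $f=\ind$, which lies in $\mathcal{T}^+$ since $\vari(\ind)=0$ and $F_\omega(\ind)=1$).

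\emph{Variation.} By Proposition \ref{prop6.34} we have $\lambda_\omega=\rho_\omega$. Hence
\[q_{\theta^{n}\omega}=\lim_{k\to\infty}\normopera_{\theta^{n-k}\omega}^{k}\ind_{\theta^{n-k}\omega}\Big/F_{\theta^{n}\omega}(\normopera_{\theta^{n-k}\omega}^{k}\ind_{\theta^{n-k}\omega}),\]
and the normalization is at least $1$ by Lemma \ref{lem6.6}. Alternatively, we can use $q_{\theta^n\omega}=\normopera^{k}_{\theta^{n-k}\omega}q_{\theta^{n-k}\omega}$ directly. Applying Proposition \ref{prop6.13} at the fiber $\theta^{n}\omega$ with $f=q_{\theta^{n-k}\omega}\ge0$ gives
\[\vari(q_{\theta^{n}\omega})\le C_\epsilon(\theta^{n}\omega)e^{-(\xi-\epsilon)k}\vari(q_{\theta^{n-k}\omega})+C_\epsilon(\theta^n\omega)F_{\theta^n\omega}(q_{\theta^n\omega}).\]
The last factor $F_{\theta^n\omega}(q_{\theta^n\omega})$ equals $1$. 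It is cleaner to apply the same inequality to the approximants $\normopera^{k}\ind$, for which $\vari(\ind)=0$. This yields $\vari(\normopera^{k}_{\theta^{n-k}\omega}\ind)\le C_\epsilon(\theta^n\omega)F_{\theta^n\omega}(\normopera^k\ind)$. After dividing by $F$ and letting $k\to\infty$, with lower semicontinuity of variation under the uniform convergence of Corollary \ref{coro6.30}, we get $\vari(q_{\theta^n\omega})\le C_\epsilon(\theta^n\omega)$.

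\emph{Sup norm.} The inequality $\supnorm{f}\le\vari(f)+F_\omega(f)$ from Section \ref{sec2.3}, together with $F_{\theta^n\omega}(q_{\theta^n\omega})=1$, gives
\[\bvnorm{q_{\theta^n\omega}}\le 2\vari(q_{\theta^n\omega})+1\le 2C_\epsilon(\theta^n\omega)+1.\]

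\emph{Temperedness.} It remains to show that $\omega\mapsto C_\epsilon(\omega)$ is tempered, i.e. $C_\epsilon(\theta^n\omega)\le C(\omega,\epsilon)e^{|n|\epsilon}$. This is the main obstacle: $C_\epsilon$ was built in Proposition \ref{prop6.13} from $C_1,\dots,C_5$, each defined through products of $L_{\theta^{-i}\omega}$ for boundedly many $i$ and through sup-type constants from Birkhoff's theorem.

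The plan is to check each piece. The quantities $C_1=N_c\prod_{i\le N_c}L_{\theta^{-i}\omega}$ are log-integrable, so $\frac1{|n|}\log C_1(\theta^n\omega)\to0$ by the standard Borel–Cantelli/Birkhoff argument. The constants $C_\delta$, $C_3$, $C_4$ are of the form $\sup_k a_{\theta^k\omega}e^{-\delta|k|}$ with $\log a\in L^1$. Such constants are tempered, since shifting $\omega$ by $n$ changes the sup by at most a factor $e^{\delta|n|}$; this uses $|k+n|\ge|k|-|n|$.

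If verifying temperedness of $C_\epsilon$ directly proves messy, the fallback is to set $C_\epsilon(\omega)$ to be the minimal constant for which Proposition \ref{prop6.13} holds, which is measurable. We then show the bound by a cocycle argument: run Proposition \ref{prop6.13} at $\omega$ for $n+k$ steps and at $\theta^n\omega$ for $k$ steps. This controls $\vari(q_{\theta^n\omega})$ in terms of $\prod_{i<|n|}L_{\theta^i\omega}$ and $\vari(q_\omega)$, but that product grows like $e^{\zeta|n|}$.

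So the first route is preferred. Replacing $\epsilon$ by $\epsilon/2$ throughout, we conclude $\bvnorm{q_{\theta^n\omega}}\le 3C(\omega,\epsilon)e^{|n|\epsilon}$, which is the claimed estimate after renaming the constant.
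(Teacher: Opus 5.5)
Your reduction is sound. Apply Proposition~\ref{prop6.13} at the fiber $\theta^{n}\omega$ to the constant function, which has variation $0$; since $\opera_\omega\ind=\opera_\omega\ind_\omega$, this produces the same approximants as Proposition~\ref{prop6.31}. Dividing by $F$ and using lower semicontinuity of the variation gives $\vari(q_\omega)\le C_\epsilon(\omega)$, hence $\bvnorm{q_\omega}\le 2C_\epsilon(\omega)+1$.

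The gap is that the whole content of the lemma, subexponential growth along the orbit, has been moved into the claim that $C_\epsilon$ is tempered, and that claim is not proved. Proposition~\ref{prop6.13} only produces \emph{some} finite measurable $C_\epsilon$ (taken ``large enough''), and a finite measurable function need not be tempered. Your shift argument does work for $C_1$, $C_\delta$ and the $D$-part of $C_4$. It fails for $C_3$, which is not of the form $\sup_k a(\theta^k\omega)e^{-\delta|k|}$. The minimal choice is $C_3(\omega)=\sup_{i\ge1}e^{(\xi-\epsilon/2)iN_c}\prod_{k=1}^{i}C_{\theta^{-r_\omega-kN_c}\omega,N_c}$, a maximal function of backward Birkhoff products along the residue class selected by $r_\omega$. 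Passing from $\omega$ to $\theta^n\omega$ brings in the product of the $C_{\cdot,N_c}$ over the whole stretch between $0$ and $n$, along the class $r_{\theta^n\omega}$. The inequality $|k+n|\ge|k|-|n|$ says nothing about that product. You would need an ergodic-theorem bound on these partial products, uniform over all windows in $[-|n|,|n|]$, together with compatibility of $r_{\theta^n\omega}$ with $r_\omega$. Since $C_4\ge C_3$ and both $C_5$ and $C_\epsilon$ are built from $C_4$, the gap propagates to $C_\epsilon$.

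You can bypass temperedness of $C_\epsilon$ entirely, using only what you have already shown.
\begin{enumerate}
\item Choose $B$ with $\prob(\{C_\epsilon\le B\})>0$. By Birkhoff's theorem for $\theta$ and $\theta^{-1}$, the gaps between consecutive visits of $(\theta^{k}\omega)_{k\in\Z}$ to this set inside $[-|n|,|n|]$ are $o(|n|)$.
\item For each $n\in\Z$, let $p'\le n$ be the last visit time. Then $n-p'=o(|n|)$, and your estimate gives $\bvnorm{q_{\theta^{p'}\omega}}\le 2B+1$.
\item Lemma~\ref{lem6.12} with a single step, together with $\normopera_\omega q_\omega=q_{\theta\omega}$ and $F_{\theta\omega}(q_{\theta\omega})=1\le\bvnorm{q_\omega}$, gives $\bvnorm{q_{\theta\omega}}\le 2L_\omega\bvnorm{q_\omega}$. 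Hence $\bvnorm{q_{\theta^{n}\omega}}\le(2B+1)\prod_{i=p'}^{n-1}2L_{\theta^{i}\omega}$.
\item Since $\log L_\omega\in L^1(\prob)$, two-sided Birkhoff sums show that a product of the $2L_{\theta^i\omega}$ over a window of length $o(|n|)$ lying within distance $2|n|$ of $0$ is $e^{o(|n|)}$.
\end{enumerate}
Therefore $C(\omega,\epsilon):=\sup_{n\in\Z}\bvnorm{q_{\theta^n\omega}}e^{-\epsilon|n|}$ is finite, and measurable as a countable supremum. This is the same device as the choice of $d$ in the proof of Proposition~\ref{prop6.29}: move to a nearby fiber where $C_\epsilon\le B$, and pay only a subexponential price to get back.
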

We have a useful lemma from \cite[Corollary 8.9]{atnip1}.
\begin{lemma}\label{lem6.40}
    For $\prob$-a.e. $\omega\in\Omega$ and for nonnegative $f\in BV_{\Omega}(I)$ with $\nu_{\omega}(f_\omega)=1$, there exists $\iota\in(0,1)$ and a finite measurable function $C_{f}:\Omega\to(0,\infty)$ such that for all $n\geq1$ and all $|p|\leq n$, if $\vari(f_{\theta^{n+p}\omega})\leq V_{f,\epsilon}(\omega)e^{n\epsilon}$ for sufficiently small $\epsilon>0$, then we have \[\supnorm{\normopera_{\theta^{p}\omega}^{n}f_{\theta^{p}\omega}-q_{\theta^{n+p}\omega}}\leq C_{f}(\omega)\bvnorm{f_{\theta^{p}\omega}}\iota^n.\]
\end{lemma}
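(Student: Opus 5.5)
The estimate follows by comparing $\normopera_{\theta^{p}\omega}^{n}f_{\theta^{p}\omega}$ with $q_{\theta^{n+p}\omega}$ in the Hilbert metric of $\Lambda_{\theta^{n+p}\omega,+}$. Proposition \ref{prop6.29} gives exponential decay of that metric, Lemma \ref{lem4.3} converts it into a sup-norm bound, and Lemma \ref{lem6.39} controls the size of $q$ along the orbit.

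First, by the corollary after Proposition \ref{prop6.31}, $q_{\theta^{n+p}\omega}=\normopera_{\theta^{p-m}\omega}^{n+m}q_{\theta^{p-m}\omega}$ for every $m\ge0$. Take $g:=q$ in Proposition \ref{prop6.29}. By Lemma \ref{lem6.39} (with $\epsilon$ replaced by a small multiple of itself), $\vari(q_{\theta^{p-m}\omega})\le C(\omega,\epsilon)e^{(|p|+m)\epsilon}\le C(\omega,\epsilon)e^{(n+m)\epsilon}$ since $|p|\le n$. By hypothesis $\vari(f_{\theta^{p}\omega})\le V_{f,\epsilon}(\omega)e^{n\epsilon}$; I read the hypothesis as stated on the fiber $\theta^{p}\omega$ being iterated. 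Setting $V(\omega):=\max\{V_{f,\epsilon}(\omega),C(\omega,\epsilon)\}$, Proposition \ref{prop6.29} (with $m=0$ or any $m$) gives, for $n\ge n(\omega)$,
\[\Theta_{\Lambda_{\theta^{n+p}\omega,+}}\bigl(\normopera_{\theta^{p}\omega}^{n}f_{\theta^{p}\omega},q_{\theta^{n+p}\omega}\bigr)\le\Delta D^{n}.\]

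Second, I apply Lemma \ref{lem4.3} on $\Lambda_{\theta^{n+p}\omega,+}$ with the homogeneous order-preserving functional $\rho:=F_{\theta^{n+p}\omega}=\nu_{\theta^{n+p}\omega}$. By the semi-normalized conformality corollary, $\nu_{\theta^{n+p}\omega}(\normopera_{\theta^{p}\omega}^{n}f_{\theta^{p}\omega})=\nu_{\theta^{p}\omega}(f_{\theta^{p}\omega})$, and $\nu(q)=1$. So both functions have equal $\rho$-value once $f$ is normalized on the fiber $\theta^{p}\omega$. The normalization $\nu_\omega(f_\omega)=1$ should be read fiberwise; otherwise I rescale by $\nu_{\theta^{p}\omega}(f_{\theta^{p}\omega})\le\supnorm{f_{\theta^p\omega}}$ and absorb the factor. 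The sup norm is monotone for the order of $\Lambda_+$, because $-h\preceq k\preceq h$ means $|k|\le h$ pointwise. Hence
\[\supnorm{\normopera_{\theta^{p}\omega}^{n}f_{\theta^{p}\omega}-q_{\theta^{n+p}\omega}}\le(e^{\Delta D^{n}}-1)\supnorm{q_{\theta^{n+p}\omega}}\le 2\Delta D^{n}\,C(\omega,\epsilon)e^{2n\epsilon}\]
for $n$ large, using Lemma \ref{lem6.39} and $|n+p|\le 2n$. Choosing $\epsilon$ small so that $\iota:=De^{2\epsilon}<1$ gives decay of order $\iota^{n}$.

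Third, the finitely many $n<n(\omega)$ are handled crudely. The sup norm of the difference is at most $\supnorm{\normopera^n f}+\supnorm{q_{\theta^{n+p}\omega}}$. The first term is bounded, using $\supnorm{h}\le\vari(h)+F(h)$ and Proposition \ref{prop6.13}, by a tempered function times $\bvnorm{f_{\theta^{p}\omega}}$. Since $\nu_{\theta^{p}\omega}(f_{\theta^{p}\omega})=1$ implies $\bvnorm{f_{\theta^{p}\omega}}\ge1$, every term can be bounded by a multiple of $\bvnorm{f_{\theta^{p}\omega}}$. I then enlarge $C_f(\omega)$ by $\iota^{-n(\omega)}$ times these bounds, which are finite and measurable.

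The main obstacle is the bookkeeping in the first step. The variation bound must hold uniformly in $|p|\le n$ and $m$, while all random constants stay measurable in $\omega$ alone rather than in $\theta^{p}\omega$. This needs temperedness of $C_\epsilon$ and of $V_{f,\epsilon}$, together with the fact that $n(\omega)$ from Proposition \ref{prop6.29} already covers all $|p|\le n$.
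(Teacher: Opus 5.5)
Your proposal is correct and follows the paper's argument. The paper likewise applies Proposition \ref{prop6.29} with $g=q$ and $V(\omega)=\max\{V_{f,\epsilon}(\omega),C(\omega,\epsilon)\}$, then Lemma \ref{lem4.3} with $\rho=\nu_{\theta^{n+p}\omega}$ and the sup norm, then Lemma \ref{lem6.39} to get $\supnorm{q_{\theta^{n+p}\omega}}\le C(\omega,\epsilon)e^{2n\epsilon}$, and finally absorbs a crude bound for $n<n(\omega)$ into $C_f(\omega)$ via a factor $\iota^{-n(\omega)}$. The only difference is that you take the crude bound on $\supnorm{\normopera_{\theta^{p}\omega}^{n}f_{\theta^{p}\omega}}$ from Proposition \ref{prop6.13} rather than from Lemma \ref{lem6.12}; this is slightly more careful, since Lemma \ref{lem6.12} is only stated for $n\le N_c$, while Proposition \ref{prop6.13} holds for all $n$ and only finitely many fibers occur when $n<n(\omega)$. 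You should drop your fallback of rescaling a non-normalized $f$, because that would give convergence to $\nu_{\theta^p\omega}(f_{\theta^p\omega})q_{\theta^{n+p}\omega}$ rather than to $q_{\theta^{n+p}\omega}$; the fiberwise reading you adopt is the one the proof needs.
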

\begin{proof}
    By $\nu_{\omega}(f_\omega)=1$ we have $\supnorm{f_\omega}\geq1$ and hence $\bvnorm{f_\omega}\geq1$, by $\nu_{\omega}(q_\omega)=1$ we also have $\supnorm{q_\omega}\geq1$. For $\prob$-a.e. $\omega\in\Omega$ and all $n\geq1$, we have
    \begin{equation*}
        \begin{aligned}
            \supnorm{\normopera_{\theta^{p}\omega}^{n}f_{\theta^{p}\omega}-q_{\theta^{n+p}\omega}}&\leq\supnorm{\normopera_{\theta^{p}\omega}^{n}f_{\theta^{p}\omega}}+\supnorm{q_{\theta^{n+p}\omega}} \\
            &\leq 2L_{\theta^{p}\omega}^{n}\bvnorm{f_{\theta^{p}\omega}}\supnorm{q_{\theta^{n+p}\omega}} \\
            &\leq 2L_{\theta^{-n}\omega}^{3n}\bvnorm{f_{\theta^{p}\omega}}\supnorm{q_{\theta^{n+p}\omega}}
        \end{aligned}
    \end{equation*} by Lemma \ref{lem6.12}. According to Lemma \ref{lem6.39} we have $\vari(q_{\theta^{n+p}\omega})\leq C(\omega,\epsilon)e^{|n+p|\epsilon}$ for sufficiently small $\epsilon>0$. Take $V(\omega):=\max\{V_{f,\epsilon}(\omega),C(\omega,\epsilon)\}$ in Proposition \ref{prop6.29}, we have \[\Theta_{\Lambda_{\theta^{n+p}\omega,+}}\left(\normopera_{\theta^{p}\omega}^{n}f_{\theta^{p}\omega},q_{\theta^{n+p}\omega}\right)=\Theta_{\Lambda_{\theta^{n+p}\omega,+}}\left(\normopera_{\theta^{p}\omega}^{n}f_{\theta^{p}\omega},\normopera_{\theta^{p}\omega}^{n}q_{\theta^{p}\omega}\right)\leq\Delta D^n\] for all $n\geq n(\omega)$. Applying Lemma \ref{lem4.3} with $\rho(\cdot)=\nu_{\theta^{p+n}\omega}(\cdot)$ and $\Vert\cdot\Vert=\supnorm{\cdot}$, we get
    \begin{equation*}
        \begin{aligned}
            \supnorm{\normopera_{\theta^{p}\omega}^{n}f_{\theta^{p}\omega}-q_{\theta^{n+p}\omega}}&\leq\supnorm{q_{\theta^{n+p}\omega}}\left(\exp\left(\Theta_{\Lambda_{\theta^{n+p}\omega,+}}\left(\normopera_{\theta^{p}\omega}^{n}f_{\theta^{p}\omega},q_{\theta^{n+p}\omega}\right)\right)-1\right) \\
            &\leq \supnorm{q_{\theta^{n+p}\omega}}(e^{\Delta D^n}-1) \\
            &\leq \supnorm{q_{\theta^{n+p}\omega}}\tilde{\iota}^n
        \end{aligned}
    \end{equation*} for some $\tilde{\iota}\in(0,1)$. Combine the two inequalities together, we have
    \begin{equation*}
        \begin{aligned}
            \supnorm{\normopera_{\theta^{p}\omega}^{n}f_{\theta^{p}\omega}-q_{\theta^{n+p}\omega}}&\leq 2\underset{0\leq i\leq n(\omega)}{\max}L_{\theta^{p}\omega}^{i}\tilde{\iota}^{-i}\bvnorm{f_{\theta^{p}\omega}}\supnorm{q_{\theta^{n+p}\omega}}\tilde{\iota}^n \\
            &= 2L_{\theta^{p}\omega}^{n(\omega)}\tilde{\iota}^{-n(\omega)}\bvnorm{f_{\theta^{p}\omega}}\supnorm{q_{\theta^{n+p}\omega}}\tilde{\iota}^n.
        \end{aligned}
    \end{equation*} Since \[\supnorm{q_{\theta^{n+p}\omega}}\leq\bvnorm{q_{\theta^{n+p}\omega}}\leq C(\omega,\epsilon)e^{|n+p|\epsilon}\leq C(\omega,\epsilon)e^{2n\epsilon},\] take \[C_{f}(\omega):=2L_{\theta^{n(\omega)}\omega}^{3n(\omega)}\tilde{\iota}^{-n(\omega)}C(\omega,\epsilon)>0\] and let $\iota:=e^{2\epsilon}\tilde{\iota}\in(0,1)$, we obtain the final inequality. The measurability of $C_{f}(\omega)$ is clear since it is a multiplication of measurable functions.
\end{proof}
We use the above lemma to prove the following theorem, excluding the condition of $\nu_{\omega}(f_\omega)=1$ for $\prob$-a.e. $\omega\in\Omega$.
\begin{theorem}\label{thm6.41}
    There exists a measurable finite function $\hat{C}_{f}:\Omega\to(0,\infty)$ and $\iota\in(0,1)$ from Lemma \ref{lem6.40} such that for all $f\in BV_{\Omega}(I)$ with $\supnorm{f_\omega},\log\vari(f_\omega),\log\inf|f_\omega|\in L^1(\prob)$ for every $\omega\in\Omega$, for all $n\geq1$ and all $|p|\leq n$, we have
    $$\supnorm{\normopera_{\theta^{p}\omega}^{n}f_{\theta^{p}\omega}-\nu_{\theta^{p}\omega}(f_{\theta^{p}\omega})q_{\theta^{p+n}\omega}}\leq\hat{C}_{f}(\omega)\bvnorm{f_{\theta^{p}\omega}}\iota^{n}.$$
\end{theorem}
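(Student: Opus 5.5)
The plan is to reduce Theorem \ref{thm6.41} to Lemma \ref{lem6.40} by splitting $f$ into nonnegative pieces and normalizing each piece in $\nu$. First I write $f_\omega=f_\omega^{+}-f_\omega^{-}$. A sign decomposition can produce pieces that vanish somewhere or have $\nu$-integral zero, so I instead use a shifted decomposition: set $h_\omega:=f_\omega+c_\omega\ind$ with $c_\omega:=\supnorm{f_\omega}+1$. Then $f_\omega=h_\omega-c_\omega\ind$, with $h_\omega\geq 1$, $\vari(h_\omega)=\vari(f_\omega)$ and $\bvnorm{h_\omega}\leq 3\bvnorm{f_\omega}+1$. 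Since $\nu_{\theta^{p}\omega}(h_{\theta^{p}\omega})\geq1$, the normalized functions $\bar h:=h/\nu(h)$ and $\ind$ (note $\nu(\ind)=1$) satisfy the hypothesis $\nu(\cdot)=1$ of Lemma \ref{lem6.40}. By linearity of $\normopera^{n}_{\theta^{p}\omega}$ and of $\nu_{\theta^p\omega}$,
\[
\normopera_{\theta^{p}\omega}^{n}f_{\theta^{p}\omega}-\nu_{\theta^{p}\omega}(f_{\theta^{p}\omega})q_{\theta^{p+n}\omega}
=\nu(h)\bigl(\normopera^{n}\bar h-q\bigr)-c\bigl(\normopera^{n}\ind-q\bigr),
\]
with all indices evaluated at the appropriate fibres.

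The next step is to verify the variation hypothesis of Lemma \ref{lem6.40} for $\bar h$ and $\ind$. For $\ind$ this is trivial. For $\bar h$ I use $\vari(\bar h_{\theta^{k}\omega})\leq\vari(f_{\theta^{k}\omega})$ together with the integrability assumption $\log\vari(f_\omega)\in L^1(\prob)$. By Birkhoff's ergodic theorem, $\frac1{|k|}\log^{+}\vari(f_{\theta^k\omega})\to0$, so $\vari(f_{\theta^k\omega})$ is tempered: $\vari(f_{\theta^{k}\omega})\leq V_{f,\epsilon}(\omega)e^{|k|\epsilon}$ with $V_{f,\epsilon}$ measurable. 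For $|p|\leq n$ this gives the required bound $V e^{2n\epsilon}$, and I rename $\epsilon$ as in the proof of Proposition \ref{prop6.31}. The same temperedness applies to $\supnorm{f_\omega}$, since it is integrable (and the assumption $\log\inf|f_\omega|\in L^1$ is only used to keep $\nu(|f|)$ away from zero, which is not strictly needed here).

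Lemma \ref{lem6.40} then gives
\[
\nu(h)\,\supnorm{\normopera^{n}\bar h-q}\leq C_{\bar h}(\omega)\,\bvnorm{h_{\theta^{p}\omega}}\iota^{n},
\qquad
c_{\theta^p\omega}\,\supnorm{\normopera^{n}\ind-q}\leq C_{\ind}(\omega)\,c_{\theta^p\omega}\,\bvnorm{\ind}\iota^n .
\]
For the first bound I use $\bvnorm{\bar h}\,\nu(h)=\bvnorm{h}$. For the second I use $c_{\theta^p\omega}\leq 2\bvnorm{f_{\theta^p\omega}}$, which holds because $\nu(f)$ forces $\supnorm{f}$ to be nonzero, unless $f\equiv0$, which is trivial. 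Adding the two bounds yields the estimate with $\hat C_f:=4C_{\bar h}+2C_{\ind}$.

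The main obstacle is checking that the constants produced by Lemma \ref{lem6.40} depend only on $\omega$ and not on $p$. In that proof, $C_f(\omega)$ is built from $n(\omega)$, $C(\omega,\epsilon)$ and an $L$-product. I would argue that $n(\omega)$ from Proposition \ref{prop6.29} is uniform in $|p|\leq n$. For the $L$-product over the finitely many initial times, I would bound it by a tempered quantity $L^{3n(\omega)}_{\theta^{-n(\omega)}\omega}$, absorbing the extra $e^{O(\epsilon) n}$ growth into $\iota$ by slightly enlarging it while keeping $\iota<1$. This absorption of tempered factors into the exponential rate, uniformly over $|p|\leq n$, is the delicate step, and I would handle it exactly as in Lemma \ref{lem6.40}.
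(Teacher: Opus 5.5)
Your shifted decomposition is a genuinely different route from the paper. The paper writes $f=f^{+}-f^{-}$ and applies Lemma \ref{lem6.40} to $f^{\pm}/\nu(f^{\pm})$. That normalization is automatically scale invariant, since $\nu(f^{\pm})\,\bvnorm{f^{\pm}/\nu(f^{\pm})}=\bvnorm{f^{\pm}}\le\bvnorm{f}$. However, it tacitly needs $\nu(f^{\pm})>0$ and temperedness of $\vari(f^{\pm})/\nu(f^{\pm})$, which is exactly the issue your shift is meant to avoid.

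\textbf{The gap is homogeneity.} Your shift $c_\omega=\supnorm{f_\omega}+1$ is not scale invariant, and the final bookkeeping step fails. To reach $\hat C_f=4C_{\bar h}+2C_{\ind}$ you need $\bvnorm{h_{\theta^p\omega}}\le 4\bvnorm{f_{\theta^p\omega}}$ and $c_{\theta^p\omega}\le 2\bvnorm{f_{\theta^p\omega}}$. But from $\bvnorm{h}\le 2\bvnorm{f}+1$ and $c=\supnorm{f}+1$, these hold only when $\bvnorm{f}$ is bounded below by an absolute constant. Knowing $\supnorm{f}\neq0$ does not give this. For example, for $f\equiv t\ind$ with $t$ small you get $c=t+1$, while $2\bvnorm{f}=2t$.

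You also cannot repair this by enlarging $\hat C_f(\omega)$. The quantity $\bvnorm{f_{\theta^p\omega}}$ varies with $p$, and the constant must be uniform over $|p|\le n$. If $\bvnorm{f_{\theta^p\omega}}$ decays like $e^{-\epsilon|p|}$, which the hypotheses allow, the additive $1$ costs a factor $e^{\epsilon|p|}$.

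There are two ways to fix this.

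(i) Keep your shift but use the hypothesis you discarded. From $\log\inf|f_\omega|\in L^1(\prob)$ you get $\bvnorm{f_{\theta^p\omega}}\ge\inf|f_{\theta^p\omega}|\ge V(\omega)^{-1}e^{-\epsilon|p|}\ge V(\omega)^{-1}e^{-\epsilon n}$. The extra factor is then at most $(1+V(\omega))e^{\epsilon n}$, and it is absorbed by replacing $\iota$ with $e^{\epsilon}\iota<1$.

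(ii) Cleaner: shift scale-invariantly by $c_\omega:=2\bvnorm{f_\omega}$ on fibres where $f_\omega\not\equiv0$. On fibres with $f_\omega\equiv0$ the inequality is trivial. With this shift:
\begin{itemize}
\item $h_\omega\ge\supnorm{f_\omega}+2\vari(f_\omega)\ge\bvnorm{f_\omega}$, so $\nu(h_\omega)\ge\bvnorm{f_\omega}$.
\item $\vari(\bar h_\omega)\le 1$, so the variation hypothesis of Lemma \ref{lem6.40} holds with $V\equiv1$ and no temperedness argument is needed.
\item $\bvnorm{h_\omega}\le 3\bvnorm{f_\omega}$ and $c_\omega\le 2\bvnorm{f_\omega}$.
\end{itemize}
Together these give $\hat C_f=3C_{\bar h}+2C_{\ind}$.

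Finally, your ``main obstacle'' paragraph is unnecessary. Lemma \ref{lem6.40} as stated already gives $C_f(\omega)$ uniform in $n\ge1$ and $|p|\le n$, so you can simply cite it. The delicate point in your argument is the homogeneity, not the uniformity in $p$.
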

\begin{proof}
    We refer to \cite[Theorem 10.3]{atnip1} for more details. One can observe such $f\in BV_{\Omega}(I)$ with conditions afterward belongs to the class $\mathcal{T}$. Suppose $f\geq0$, then \[\tilde{f}:=\{\tilde{f}_{\omega}\}_{\omega\in\Omega}=\left\{\frac{f_\omega}{\nu_{\omega}(f_\omega)}\right\}_{\omega\in\Omega}\in\mathcal{T}^+\] and $\nu_{\omega}(\tilde{f}_\omega)=1$. Apply Lemma \ref{lem6.40}, we obtain
    \[\bigg\Vert\normopera_{\theta^{p}\omega}^{n}\left(\frac{f_{\theta^{p}\omega}}{\nu_{\theta^{p}\omega}(f_{\theta^{p}\omega})}\right)-q_{\theta^{p+n}\omega}\bigg\Vert_{\infty}\leq C_{\tilde{f}}(\omega)\bigg\Vert\frac{f_{\theta^{p}\omega}}{\nu_{\theta^{p}\omega}(f_{\theta^{p}\omega})}\bigg\Vert_{BV}\iota^{n}\] for all $n\geq1$. Multiply both sides by $\nu_{\theta^{p}\omega}(f_{\theta^{p}\omega})$, it yields
    \[\supnorm{\normopera_{\theta^{p}\omega}^{n}(f_{\theta^{p}\omega})-\nu_{\theta^{p}\omega}(f_{\theta^{p}\omega})q_{\theta^{p+n}\omega}}\leq C_{\tilde{f}}(\omega)\bvnorm{f_{\theta^{p}\omega}}\iota^{n}\] with $C_{\tilde{f}}(\omega)$ measurable. For $f\in\mathcal{T}$, write it as $f=f^{+}-f^{-}$ with $f^+,f^-\geq0$, apply triangle inequality, we obtain \[\supnorm{\normopera_{\theta^{p}\omega}^{n}(f_{\theta^{p}\omega})-\nu_{\theta^{p}\omega}(f_{\theta^{p}\omega})q_{\theta^{p+n}\omega}}\leq \hat{C}_{f}(\omega)\bvnorm{f_{\theta^{p}\omega}}\iota^{n}\] where \[\hat{C}_{\tilde{f}}(\omega):=C_{\tilde{f^+}}(\omega)+C_{\tilde{f^-}}(\omega)\] is finite and measurable.
\end{proof}
It directly leads to the corollary below with respect to the fully normalized operator $\fullnormopera_\omega$ and probability measure $\mu_\omega$; for instance, see \cite[Theorem 10.3]{atnip1} and \cite[Theorem 1.11.2]{atnip3}.
\begin{corollary}\label{coro6.42}
    Suppose the hypotheses hold in Theorem \ref{thm6.41}, then there exists a measurable finite function $D_f:\Omega\to(0,\infty)$ and $\kappa\in(\iota,1)$ such that for $\prob$-a.e. $\omega\in\Omega$, for all $n\geq1$ and all $|p|\leq n$, we have \[\supnorm{\fullnormopera_{\theta^{p}\omega}^{n}f_{\theta^{p}\omega}-\mu_{\theta^{p}\omega}(f_{\theta^{p}\omega})\ind_{\theta^{p+n}\omega}}\leq D_{f}(\omega)\bvnorm{f_{\theta^{p}\omega}}\kappa^{n}.\]
\end{corollary}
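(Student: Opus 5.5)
The plan is to reduce the statement to Theorem \ref{thm6.41} applied to the product $f q$ and then unwind the normalisation. By definition of the fully normalized operator, for $n\geq1$,
\[\fullnormopera_{\theta^{p}\omega}^{n}f_{\theta^{p}\omega}=\frac{1}{q_{\theta^{p+n}\omega}}\normopera_{\theta^{p}\omega}^{n}(f_{\theta^{p}\omega}q_{\theta^{p}\omega}),\]
which follows by induction from $\fullnormopera_{\omega}f=q_{\theta\omega}^{-1}\normopera_{\omega}(fq_\omega)$. We also have $\mu_{\theta^{p}\omega}(f_{\theta^{p}\omega})=\nu_{\theta^{p}\omega}(f_{\theta^{p}\omega}q_{\theta^{p}\omega})$. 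Hence
\[\fullnormopera_{\theta^{p}\omega}^{n}f_{\theta^{p}\omega}-\mu_{\theta^{p}\omega}(f_{\theta^{p}\omega})\ind_{\theta^{p+n}\omega}=\frac{1}{q_{\theta^{p+n}\omega}}\Big(\normopera_{\theta^{p}\omega}^{n}(fq)_{\theta^{p}\omega}-\nu_{\theta^{p}\omega}((fq)_{\theta^{p}\omega})q_{\theta^{p+n}\omega}\Big)\]
on the support $S(1,\infty)$, where $q$ is strictly positive. Off this support both operator terms vanish, and the comparison is made on the relevant support.

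The first step is to check that $fq:=\{f_\omega q_\omega\}$ satisfies the hypotheses of Theorem \ref{thm6.41}, so that it lies in the class $\mathcal{T}$. For the variation, use $\vari(f_\omega q_\omega)\leq \supnorm{f_\omega}\vari(q_\omega)+\vari(f_\omega)\supnorm{q_\omega}$. Then temperedness of $\bvnorm{q_{\theta^n\omega}}$ from Lemma \ref{lem6.39}, together with the log-integrability of $\supnorm{f_\omega}$ and $\vari(f_\omega)$ and Birkhoff's theorem, gives $\vari((fq)_{\theta^n\omega})\leq V(\omega)e^{|n|\epsilon}$. The lower bound on $\nu(|fq|)$ follows from $\log\inf|f_\omega|\in L^1$ and $\nu_\omega(q_\omega)=1$, because then $\nu_\omega(|f_\omega|q_\omega)\geq \inf|f_\omega|$. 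Theorem \ref{thm6.41} then yields
\[\supnorm{\normopera_{\theta^{p}\omega}^{n}(fq)_{\theta^{p}\omega}-\nu_{\theta^{p}\omega}((fq)_{\theta^{p}\omega})q_{\theta^{p+n}\omega}}\leq \hat{C}_{fq}(\omega)\bvnorm{f_{\theta^p\omega}q_{\theta^p\omega}}\iota^n.\]
We bound $\bvnorm{f_{\theta^p\omega}q_{\theta^p\omega}}\leq 2\bvnorm{f_{\theta^p\omega}}\bvnorm{q_{\theta^p\omega}}\leq 2C(\omega,\epsilon)e^{n\epsilon}\bvnorm{f_{\theta^p\omega}}$, using $|p|\leq n$ and Lemma \ref{lem6.39}.

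The main obstacle is dividing by $q_{\theta^{p+n}\omega}$: we need a tempered lower bound $\inf_{S(1,\infty)}q_{\theta^{k}\omega}\geq c(\omega,\epsilon)e^{-|k|\epsilon}$ along the orbit, while Proposition \ref{prop6.33} only gives positivity fiberwise. I would first try to show that $\log\inf_{S(1,\infty)} q_\omega\in L^1(\prob)$ and then apply Birkhoff's theorem to obtain temperedness. The two inputs are the upper bound $\log\supnorm{q_\omega}\leq\log\bvnorm{q_\omega}$, which is tempered, and the lower bound from the proof of Proposition \ref{prop6.33},
\[\inf q_\omega\geq(\lambda_{\theta^{-n}\omega}^{n})^{-1}\inf\opera^{n}_{\theta^{-n}\omega}\ind\cdot\inf_{K} q_{\theta^{-n}\omega}.\]
For the lower bound I would take $n=\tilde M$ on fibers in $\Omega_F$, where condition (1) of Section \ref{sec5} gives $\tilde C^{-1}\leq\inf\normopera^{\tilde M}\ind$. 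Using $q=\normopera^{\tilde M}q$ together with the diameter bound $\Delta$, which by Lemma \ref{lem6.15} controls $\sup q/\inf q$, I get $\inf q_{\theta^{\tilde M}\omega}\geq e^{-\Delta}$ for those fibers. The return times to $\Omega_F$ along the orbit are sublinear by ergodicity, and the log-integrable one-step bounds $\log\lambda_\omega,\log\inf\opera_\omega\ind\in L^1$ interpolate between returns, so the lower bound becomes $c(\omega,\epsilon)e^{-|k|\epsilon}$.

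Combining the three estimates gives
\[\supnorm{\fullnormopera^n f-\mu(f)\ind}\leq \hat C_{fq}(\omega)\,2C(\omega,\epsilon)\,c(\omega,\epsilon)^{-1}e^{4n\epsilon}\bvnorm{f_{\theta^p\omega}}\iota^n.\]
We choose $\epsilon$ small so that $\kappa:=\iota e^{4\epsilon}\in(\iota,1)$, and set $D_f(\omega):=2\hat C_{fq}(\omega)C(\omega,\epsilon)c(\omega,\epsilon)^{-1}$, which is measurable as a product of measurable functions.
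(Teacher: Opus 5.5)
This is essentially the paper's argument. The paper only says the corollary follows directly from Theorem \ref{thm6.41} (citing \cite[Theorem 10.3]{atnip1}), and the intended derivation is your identity $\fullnormopera^{n}f-\mu(f)\ind=q^{-1}\big(\normopera^{n}(fq)-\nu(fq)\,q\big)$, then Theorem \ref{thm6.41} applied to $fq$, with the tempered factors $\bvnorm{q}$ and $(\inf q)^{-1}$ absorbed into $D_f$ and $\kappa$.

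The tempered lower bound on $\inf q_{\theta^{k}\omega}$ that you single out is taken for granted in the paper (it is only asserted later, in the proof of Proposition \ref{prop6.47}), so your sketch of it goes beyond the paper. If you write it out, three points need attention. First, use the estimate $\inf_{S(\tilde M,\tilde M)}\normopera^{\tilde M}_{\omega}f\geq\frac{\tilde c}{4\tilde C}F_\omega(f)$ from the finite-diameter lemma directly, rather than Lemma \ref{lem6.15}, which bounds $\Theta$ by $\sup/\inf$ and not conversely. Second, check $q_\omega\in\Lambda_{\omega,\tilde a}$ at the block start by letting $n\to\infty$ in Proposition \ref{prop6.13}, which gives $\vari(q_\omega)\leq C_\epsilon(\omega)\leq B$ on the fibers you use. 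Third, your one-step interpolation needs integrability of $\log\inf\opera_\omega\ind_\omega$ taken over supports, which for the open operator is an implicit assumption of the paper rather than a stated hypothesis.
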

\begin{theorem}\label{thm6.43}
    For $\prob$-a.e. $\omega\in\Omega$, for all $n\geq1$, all $|p|\leq n$, all $f\in BV_{\Omega}(I)$ with \[\supnorm{f_\omega},\log\vari(f_\omega),\log\inf|f_\omega|\in L^1(\prob)\] for every $\omega\in\Omega$ and all $g\in L^1(\mu)$, we have 
    \begin{equation*}
        \begin{aligned}
            &|\mu_{\theta^{p}\omega}\left(f_{\theta^{p}\omega}(g_{\theta^{p+n}\omega}\circ T_{\theta^{p}\omega}^{n})\right)-\mu_{\theta^{p}\omega}(f_{\theta^{p}\omega})\mu_{\theta^{p+n}\omega}(g_{\theta^{p+n}\omega})| \\
            &\leq D_f(\omega)\bvnorm{f_{\theta^{p}\omega}}\Vert g_{\theta^{p+n}\omega}\Vert_{L^1(\mu_{\theta^{p+n}\omega})}\kappa^n
        \end{aligned}
    \end{equation*} where $D_f(\omega)$ and $\kappa$ are from Corollary \ref{coro6.42}.
\end{theorem}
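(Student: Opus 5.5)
The plan is to use the standard duality trick: rewrite the correlation as the $\mu_{\theta^{p+n}\omega}$-integral of $g$ against a transferred density, and then invoke Corollary \ref{coro6.42}. Throughout, write $\omega':=\theta^{p}\omega$.

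First I would establish the duality identity for the fully normalized operator:
\[
\mu_{\theta^{n}\omega'}\bigl(h\cdot \fullnormopera_{\omega'}^{n}f\bigr)=\mu_{\omega'}\bigl(f\cdot(h\circ T_{\omega'}^{n})\bigr).
\]
This should hold for $f\in BV(X_{\omega'})$ and for any bounded measurable $h$, and also for $h\in L^1(\mu_{\theta^n\omega'})$ by a monotone approximation argument. The proof is a one-step computation that I would then iterate:
\[
\mu_{\theta\omega'}(h\,\fullnormopera_{\omega'}f)=\int h\,\normopera_{\omega'}(fq_{\omega'})\,\dd\nu_{\theta\omega'} .
\]
Next I would use $\normopera_{\omega'}((h\circ T_{\omega'})\,u)=h\,\normopera_{\omega'}u$, which holds pointwise from the definition of $\opera_{\omega'}$. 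The remaining ingredient is the conformality relation $\nu_{\theta\omega'}(\normopera_{\omega'}u)=\nu_{\omega'}(u)$ from the corollary following Proposition \ref{prop6.35}, applied to the integrable function $u=(h\circ T_{\omega'})fq_{\omega'}$. Integrability is exactly the point where the passage from $BV$ to $L^1$ functions is needed. I would handle it as in the proof of Proposition \ref{prop6.38}, either by accepting the operators on integrable functions as done there, or by approximating $h$ by bounded functions and using monotone convergence. This one-step argument is essentially the computation already carried out in Proposition \ref{prop6.38}, now with the factor $f$ included.

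Second, I would use $\fullnormopera_{\omega'}^{n}\ind_{\omega'}=\ind_{\theta^n\omega'}$ together with $\mu_{\theta^n\omega'}(J_{\theta^n\omega'})=1$, the latter because $\supp\mu\subseteq K_\infty$. With these, the correlation becomes
\[
\mu_{\omega'}(f(g\circ T^{n}_{\omega'}))-\mu_{\omega'}(f)\mu_{\theta^{n}\omega'}(g)=\int g\,\bigl(\fullnormopera_{\omega'}^{n}f-\mu_{\omega'}(f)\ind_{\theta^n\omega'}\bigr)\,\dd\mu_{\theta^{n}\omega'} .
\]
Bounding the integrand by the supremum norm of the bracket times $|g|$ gives the estimate
\[
\supnorm{\fullnormopera_{\omega'}^{n}f-\mu_{\omega'}(f)\ind_{\theta^n\omega'}}\;\Vert g_{\theta^{p+n}\omega}\Vert_{L^1(\mu_{\theta^{p+n}\omega})} .
\]
Corollary \ref{coro6.42} then bounds the first factor by $D_f(\omega)\bvnorm{f_{\theta^p\omega}}\kappa^n$, uniformly in $|p|\le n$, which is exactly the claimed inequality with the same $D_f$ and $\kappa$.

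The main obstacle is the duality step for $g$ that is merely in $L^1(\mu)$ rather than $BV$. For that step I would first prove the identity for bounded Borel $g$, where all quantities are finite and the conformality relation extends from $BV$ to bounded Borel functions via the Riesz identification $F_\omega=\nu_\omega$ in Proposition \ref{prop6.35}. I would then pass to $g\in L^1$ by splitting $g=g^+-g^-$ and truncating. Dominated convergence applies on the left because $f$ is bounded and $\mu_{\omega'}\circ (T^{n}_{\omega'})^{-1}=\mu_{\theta^n\omega'}$ by Proposition \ref{prop6.38}. On the right it applies because $\fullnormopera^{n}_{\omega'}f$ is bounded, being in $BV$ with $q_{\theta^n\omega'}$ bounded below on the relevant support by Proposition \ref{prop6.33}.
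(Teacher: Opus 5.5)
Your proposal is correct and is essentially the paper's argument. The paper likewise rewrites the correlation as $\mu_{\theta^{p+n}\omega}\bigl(\fullnormopera^{n}_{\theta^p\omega}\tilde f_{\theta^p\omega}\cdot g_{\theta^{p+n}\omega}\bigr)$ with $\tilde f=f-\mu(f)$, bounds it by $\supnorm{\fullnormopera^{n}_{\theta^p\omega}\tilde f_{\theta^p\omega}}\,\Vert g_{\theta^{p+n}\omega}\Vert_{L^1(\mu_{\theta^{p+n}\omega})}$, and applies Corollary \ref{coro6.42}. The only difference is that the paper uses the duality identity for $g\in L^1(\mu)$ without comment, whereas you justify it (first for bounded Borel $g$, then by truncation and dominated convergence), which fills in a step the paper leaves implicit.
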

\begin{proof}
    Let $\tilde{f}_{\omega}:=f_{\omega}-\mu_{\omega}(f_\omega)$ for all $\omega\in\Omega$. We rewrite
    \begin{equation*}
        \begin{aligned}
            &\,\,\mu_{\theta^{p}\omega}\left(f_{\theta^{p}\omega}(g_{\theta^{p+n}\omega}\circ T_{\theta^{p}\omega}^{n})\right)-\mu_{\theta^{p}\omega}(f_{\theta^{p}\omega})\mu_{\theta^{p+n}\omega}(g_{\theta^{p+n}\omega}) \\
            &=\mu_{\theta^{p+n}\omega}(\fullnormopera_{\theta^{p}\omega}^{n}f_{\theta^{p}\omega}\cdot g_{\theta^{p+n}\omega})-\mu_{\theta^{p}\omega}(f_{\theta^{p}\omega})\mu_{\theta^{p+n}\omega}(g_{\theta^{p+n}\omega}) \\
            &=\mu_{\theta^{p+n}\omega}(\fullnormopera_{\theta^{p}\omega}^{n}\tilde{f}_{\theta^{p}\omega}\cdot g_{\theta^{p+n}\omega}).
        \end{aligned}
    \end{equation*} By Corollary \ref{coro6.42} we have
    \[\supnorm{\fullnormopera_{\theta^{p}\omega}^{n}\tilde{f}_{\theta^{p}\omega}}=\supnorm{\fullnormopera_{\theta^{p}\omega}^{n}f_{\theta^{p}\omega}-\mu_{\theta^{p}\omega}(f_{\theta^{p}\omega})\ind_{\theta^{p+n}\omega}}\leq D_{f}(\omega)\bvnorm{f_{\theta^{p}\omega}}\kappa^n\] for $n\geq1$. Hence,
    \begin{equation*}
        \begin{aligned}
            &\,\,|\mu_{\theta^{p}\omega}\left(f_{\theta^{p}\omega}(g_{\theta^{p+n}\omega}\circ T_{\theta^{p}\omega}^{n})\right)-\mu_{\theta^{p}\omega}(f_{\theta^{p}\omega})\mu_{\theta^{p+n}\omega}(g_{\theta^{p+n}\omega})| \\
            &\leq\mu_{\theta^{p+n}\omega}(|\fullnormopera_{\theta^{p}\omega}^{n}\tilde{f}_{\theta^{p}\omega}\cdot g_{\theta^{p+n}\omega}|) \\
            &\leq\mu_{\theta^{n+p}\omega}(|g_{\theta^{p+n}\omega}|)\supnorm{\fullnormopera_{\theta^{p}\omega}^{n}\tilde{f}_{\theta^{p}\omega}} \\
            &\leq D_f(\omega)\bvnorm{f_{\theta^{p}\omega}}\mu_{\theta^{p+n}\omega}(|g_{\theta^{n+p}\omega}|)\kappa^n \\
            &=D_f(\omega)\bvnorm{f_{\theta^{p}\omega}}\Vert g_{\theta^{p+n}\omega}\Vert_{L^1(\mu_{\theta^{p+n}\omega})}\kappa^n
        \end{aligned}
    \end{equation*} holds for every $n\geq1$ and every $|p|\leq n$.
\end{proof}
\begin{proof}[Proof of Theorem \ref{main2}]
    For the two results, by $\normopera_{\omega}^{n}=(\lambda_{\omega}^{n})^{-1}\opera_{\omega}^{n}$, letting $p=0$ in Theorem \ref{thm6.41} and Theorem \ref{thm6.43} finishes the proof.
\end{proof}
\section{Random probability measure}\label{sec8}
In this section, we review the conformal measures $\{\nu_\omega\}_{\omega\in\Omega}$ produced in Proposition \ref{prop6.35} and clarify they are indeed random probability measures.
\begin{proposition}[{\cite[Proposition 10.4]{atnip1}}]
    The family of Borel probability measures $\nu=\{\nu_\omega\}_{\omega\in\Omega}$ is unique and can be defined by \[\nu_{\omega}(f_\omega):=\lim_{n\to\infty}\frac{\supnorm{\opera_{\omega}^{n}f_\omega}}{\supnorm{\opera_{\omega}^{n}\ind_\omega}}\] for all $f\in BV_{\Omega}(I)$ with $\supnorm{f_\omega},\log\vari(f_\omega),\log\inf|f_\omega|\in L^1(\prob)$ for every $\omega\in\Omega$. As a consequence, the family of Borel probability measures $\mu=\{\mu_\omega\}_{\omega\in\Omega}$ is unique.
\end{proposition}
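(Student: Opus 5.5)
The formula comes from Theorem \ref{thm6.41} with $p=0$. The statement that $\nu$ is unique reduces to an exponential convergence argument for the normalized operators, combined with the conformality relation.

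\textbf{The formula.} Fix $f\in BV_{\Omega}(I)$ satisfying the integrability hypotheses; such $f$ belongs to $\mathcal{T}$. Apply Theorem \ref{thm6.41} with $p=0$ to $f$ and to $\ind$. Using $\normopera_{\omega}^{n}=(\lambda_{\omega}^{n})^{-1}\opera_{\omega}^{n}$, this gives
\[\supnorm{(\lambda_{\omega}^{n})^{-1}\opera_{\omega}^{n}f_\omega-\nu_\omega(f_\omega)q_{\theta^{n}\omega}}\leq\hat{C}_f(\omega)\bvnorm{f_\omega}\iota^n,\qquad \supnorm{(\lambda_{\omega}^{n})^{-1}\opera_{\omega}^{n}\ind_\omega-q_{\theta^{n}\omega}}\leq\hat{C}_{\ind}(\omega)\iota^n.\]
Consequently $\supnorm{\normopera_\omega^n f_\omega}=|\nu_\omega(f_\omega)|\supnorm{q_{\theta^n\omega}}+O(\iota^n)$ and $\supnorm{\normopera_\omega^n\ind_\omega}=\supnorm{q_{\theta^n\omega}}+O(\iota^n)$.

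By Lemma \ref{lem6.39}, $\supnorm{q_{\theta^n\omega}}\leq C(\omega,\epsilon)e^{n\epsilon}$. In the other direction, $\supnorm{q_{\theta^n\omega}}\geq\nu_{\theta^n\omega}(q_{\theta^n\omega})=1$. Dividing the two expansions, the error terms are $O(\iota^n)$ against a denominator bounded below by $1-O(\iota^n)$, so the ratio $\supnorm{\opera_\omega^nf_\omega}/\supnorm{\opera_\omega^n\ind_\omega}$ converges to $|\nu_\omega(f_\omega)|$. For $f\geq0$ this is exactly $\nu_\omega(f_\omega)$, which gives the formula for nonnegative observables. These already determine the measure, since indicators of intervals are limits of such $f$ (for instance, strictly positive BV functions, for which the $\log\inf$ condition holds).

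\textbf{Uniqueness of $\nu$.} Suppose $\nu'=\{\nu'_\omega\}$ is another family with $\nu'_{\theta\omega}(\opera_\omega f)=\lambda'_\omega\nu'_\omega(f)$ and $\supp\nu'_\omega\subseteq K_{\omega,\infty}$. Iterating gives $\nu'_\omega(f)=\nu'_{\theta^n\omega}(\opera^n_\omega f)/\nu'_{\theta^n\omega}(\opera^n_\omega\ind_\omega)$.

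Insert the expansion above: $\opera_\omega^n f_\omega=\lambda^n_\omega\left(\nu_\omega(f_\omega)q_{\theta^n\omega}+E_n\right)$ with $\supnorm{E_n}=O(\iota^n)$, and similarly for $\ind_\omega$. This yields
\[\nu'_\omega(f_\omega)=\frac{\nu_\omega(f_\omega)\nu'_{\theta^n\omega}(q_{\theta^n\omega})+O(\iota^n)}{\nu'_{\theta^n\omega}(q_{\theta^n\omega})+O(\iota^n)}.\]

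\textbf{Main obstacle.} To pass to the limit we need $\nu'_{\theta^n\omega}(q_{\theta^n\omega})$ bounded below subexponentially along the orbit. The plan is to use Proposition \ref{prop6.33}: $q$ is bounded below on $S(1,\infty)$, and $\nu'$ is supported on the survivor set, which lies inside the support of the iterated operators. The lower bound $\inf q_{\theta^n\omega}$ along a.e. orbit should be tempered, since $\log\inf q$ is controlled through the cone bounds of Lemma \ref{lem6.27}. If instead one only has it on a positive-density set of $n$, Birkhoff's theorem supplies such $n$ infinitely often, and that suffices.

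Granting this lower bound, letting $n\to\infty$ gives $\nu'_\omega(f_\omega)=\nu_\omega(f_\omega)$ on a determining class, hence $\nu'=\nu$. Then $\mu=q\nu$ is unique as well, because $q$ is unique by Proposition \ref{prop6.31} together with the uniqueness of the limit.
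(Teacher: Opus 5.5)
Your derivation of the formula is correct, and a little cleaner than the paper's route, which applies Corollary \ref{coro6.42} to $f/q$. You use Theorem \ref{thm6.41} at $p=0$ for $f$ and for $\ind$, together with $\supnorm{q_{\theta^n\omega}}\geq\nu_{\theta^n\omega}(q_{\theta^n\omega})=1$. This gives convergence of the ratio to $|\nu_\omega(f_\omega)|$, and you rightly note that the formula as stated needs $f\geq0$.

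The gap is in uniqueness. The lower bound on $\nu'_{\theta^n\omega}(q_{\theta^n\omega})$ is the entire content of that step, you grant it, and the route you sketch does not deliver it.
\begin{itemize}
\item Proposition \ref{prop6.33} bounds $q_\omega$ from below only on the support of the backward iterates $\opera^n_{\theta^{-n}\omega}\ind_{\theta^{-n}\omega}$, i.e.\ on points reached by orbits that survived in the past. Off that set $q_\omega$ vanishes.
\item The hypothesis $\supp\nu'_\omega\subseteq K_{\omega,\infty}$ is about future survival, and the two sets need not be nested. A point of $J_\omega$ that never enters the hole but all of whose preimages lie in $H_{\theta^{-1}\omega}$ is in $K_{\omega,\infty}$ and outside that support.
\item Conformality cannot exclude $\nu'$-mass at such points, because every $\opera_{\theta^{-1}\omega}g$ vanishes there. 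Lemma \ref{lem6.27} only gives a lower bound on a single element $Z_f$.
\item Even positivity would not suffice. Conformality and $\opera^n_\omega q_\omega=\lambda^n_\omega q_{\theta^n\omega}$ give $\nu'_{\theta^n\omega}(q_{\theta^n\omega})=(\lambda'^n_\omega/\lambda^n_\omega)\,\nu'_\omega(q_\omega)$. To beat the $O(\iota^n)$ errors you would need $\nu'_\omega(q_\omega)>0$ \emph{and} an eigenvalue comparison $\lambda'^n_\omega\gg\lambda^n_\omega\iota^n$, and nothing in your argument provides either.
\end{itemize}
The paper's proof does not settle this point either, so you cannot import it from there. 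It identifies the limit from Corollary \ref{coro6.42} with $\tilde\nu_\omega(f_\omega)$, whereas that corollary yields $\mu_\omega(f_\omega/q_\omega)=\nu_\omega(f_\omega)$.

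The missing idea is to use the projective (relative) control instead of the additive sup-norm expansion.
\begin{enumerate}
\item Take $f_\omega\geq0$ with $\inf f_\omega>0$. Put $r_n:=\opera^n_\omega f_\omega/\opera^n_\omega\ind_\omega$ on $S(n,n)$; outside $S(n,n)$ both functions vanish.
\item Each $r_{n+1}(x)$ is a weighted average of values of $r_n$. Hence $\inf_{S(n,n)}r_n\leq F_\omega(f_\omega)=\nu_\omega(f_\omega)\leq\sup_{S(n,n)}r_n$.
\item By Proposition \ref{prop6.29} with $p=m=0$ and $g\equiv\ind$, $\Theta_{\Lambda_{\theta^n\omega,+}}(\normopera^n_\omega f_\omega,\normopera^n_\omega\ind)=\log(\sup r_n/\inf r_n)\leq\Delta D^n$. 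This is exactly the use made of it in Proposition \ref{prop6.34}.
\item Combining the two previous steps gives, pointwise,
\[|\opera^n_\omega f_\omega-\nu_\omega(f_\omega)\opera^n_\omega\ind_\omega|\leq(e^{\Delta D^n}-1)\,\nu_\omega(f_\omega)\,\opera^n_\omega\ind_\omega.\]
\item Integrate against $\nu'_{\theta^n\omega}$ and divide by $\nu'_{\theta^n\omega}(\opera^n_\omega\ind_\omega)=\lambda'^n_\omega\nu'_\omega(J_\omega)=\lambda'^n_\omega>0$. This yields $|\nu'_\omega(f_\omega)-\nu_\omega(f_\omega)|\leq(e^{\Delta D^n}-1)\nu_\omega(f_\omega)\to0$.
\end{enumerate}
Because the error is relative to $\opera^n_\omega\ind_\omega$, the unknown normalization cancels. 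No lower bound on $\nu'(q)$ and no comparison of $\lambda'$ with $\lambda$ is needed. The functions $\ind_Z+\delta$ with $\delta>0$ then force $\nu'_\omega=\nu_\omega$.
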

\begin{proof}
    Suppose $\tilde{\nu}=\{\tilde{\nu}_{\omega}\}_{\omega\in\Omega}$ is another family of Borel probability measures satisfying $\tilde{\nu}_{\theta\omega}(\opera_{\omega}f_\omega)=\tilde{\lambda}_{\omega}\tilde{\nu}_{\omega}(f_\omega)$ where $\tilde{\lambda}_{\omega}:=\tilde{\nu}_{\theta\omega}(\opera_\omega \ind_\omega)$. Since $f$ and $q$ are both tempered, their quotient $f/g$ belongs to class $\mathcal{T}$. Corollary \ref{coro6.42} gives \[\lim_{n\to\infty}\bigg\Vert\fullnormopera_{\omega}^{n}\left(\frac{f_\omega}{q_\omega}\right)\bigg\Vert_{\infty}=\tilde{\nu}_{\omega}(f_\omega).\] By changing expressions, we have \[\lim_{n\to\infty}\frac{\supnorm{\opera_{\omega}^{n}f_\omega}}{\supnorm{\opera_{\omega}^{n}\ind_\omega}}=\frac{\tilde{\nu}_{\omega}(f_\omega)}{\tilde{\nu}_{\omega}(\ind_\omega)}=\tilde{\nu}_{\omega}(f_\omega).\] The limit does not depend on $\tilde{\nu}_\omega$ and $q_\omega$, meaning the family $\{\nu_\omega\}_{\omega\in\Omega}$ is uniquely determined. The uniqueness of $\{\mu_\omega\}_{\omega\in\Omega}$ is also guaranteed since the density function $\{q_\omega\}_{\omega\in\Omega}$ is unique from \cite[Proposition 9.4]{atnip1}.
\end{proof}
The following proposition is following \cite[Proposition 9.3]{atnip1}. The family $\{\nu_\omega\}_{\omega\in\Omega}$ can be raised to a random probability measure with marginal $\prob$.
\begin{proposition}\label{prop6.45}
    The family of Borel probability measures $\{\nu_\omega\}_{\omega\in\Omega}$ gives rise to a random probability measure $\nu\in\probmeas_{\Omega}(\Omega\times I)$ defined by \[\nu(f)=\int_{\Omega}\int_{I}f_{\omega}\,\dd\nu_{\omega}\,\dd\prob(\omega)\] for all $f\in L^1(\nu)$ on $\Omega\times I$. The family of Borel probability measures $\{\mu_\omega\}_{\omega\in\Omega}$ gives rise to a random $T$-invariant probability measure $\mu\in\probmeas_{\Omega}(\Omega\times I)$ defined by \[\mu(f)=\int_{\Omega}\int_{I}f_{\omega}\,\dd\mu_{\omega}\,\dd\prob(\omega)\] for all $f\in L^1(\mu)$ on $\Omega\times I$. Moreover, $\mu$ is absolutely continuous with respect to $\nu$.
\end{proposition}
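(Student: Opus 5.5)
The proof reduces to three checks. First, that $\nu$ is a random probability measure in the sense of Definition \ref{def2.2}, so that the displayed formula defines a Borel probability measure on $\Omega\times I$ with marginal $\prob$. Second, that $\mu$ is such a measure as well and is $T$-invariant. Third, that $\mu\ll\nu$.

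The main work is measurability of $\omega\mapsto\nu_\omega(B)$ for every $B\in\B$. For intervals $B$ with $\ind_B\in BV(I)$, Proposition \ref{prop6.35} gives $\nu_\omega(B)=F_\omega(\ind_B|_{X_\omega})$. The functional $F_\omega$ is, by Proposition \ref{prop6.34}, the limit of $\opera_\omega^n\ind_B/\opera_\omega^n\ind_\omega$ evaluated along any choice of points $x_n\in S(n,n)$. Each $\opera^n_\omega\ind_B(x)$ is a countable sum of terms $\ind_B(y)\ind_{K_{\omega,n-1}}(y)g^n_{\omega,c}(y)$ over preimages. These are jointly measurable because $T$, $\varphi_c$ and $H$ are $\F\otimes\B$-measurable. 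Since the limit does not depend on the choice of $x_n$, it equals for instance
\[\lim_{n\to\infty}\sup_{x\in S(n,n)\cap\mathbb{Q}}\frac{\opera_\omega^n\ind_B(x)}{\opera_\omega^n\ind_\omega(x)},\]
where restricting to a countable dense set is justified because $\nu_\omega$ is atomless and the ratio converges uniformly on $S(n,n)$ by Corollary \ref{coro6.30}. It is therefore a measurable function of $\omega$. I would alternatively appeal to the representation $\nu_\omega(f_\omega)=\lim_n\supnorm{\opera^n_\omega f_\omega}/\supnorm{\opera^n_\omega\ind_\omega}$ from the preceding proposition, whose right-hand side is visibly measurable once sup norms are reduced to countable suprema. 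Intervals form a $\pi$-system generating $\B$, and the class of $B$ with $\omega\mapsto\nu_\omega(B)$ measurable is a Dynkin system, so by the $\pi$--$\lambda$ theorem measurability holds for all $B\in\B$. Property (2) of Definition \ref{def2.2} is Proposition \ref{prop6.35}. By the standard correspondence (Crauel), $\nu(A):=\int_\Omega\nu_\omega(A_\omega)\,\dd\prob$ defines a probability measure on $\Omega\times I$ with marginal $\prob$. The integral formula then holds first for indicators of measurable rectangles and afterwards for all $f\in L^1(\nu)$ by monotone class and linearity.

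For $\mu$, I would show measurability of $\omega\mapsto\mu_\omega(B)=\nu_\omega(q_\omega\ind_B)$. Here $q_\omega$ is the limit of the measurable functions $\opera^n_{\theta^{-n}\omega}\ind/F_\omega(\opera^n_{\theta^{-n}\omega}\ind)$ from Proposition \ref{prop6.31}. Hence $(\omega,x)\mapsto q_\omega(x)$ is jointly measurable, and $q_\omega\ind_B$ can be approximated by simple functions. Normalization $\mu_\omega(I)=\nu_\omega(q_\omega)=F_\omega(q_\omega)=1$ comes from Proposition \ref{prop6.31}. Invariance of $\mu$ under $T$ follows by integrating Proposition \ref{prop6.38} over $\Omega$ and using $\theta$-invariance of $\prob$: for $f\in L^1(\mu)$,
\[\mu(f\circ T)=\int_\Omega\mu_\omega(f_{\theta\omega}\circ T_\omega)\,\dd\prob=\int_\Omega\mu_{\theta\omega}(f_{\theta\omega})\,\dd\prob=\mu(f).\]
Finally, absolute continuity is immediate. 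If $\nu(A)=0$, then $\nu_\omega(A_\omega)=0$ for $\prob$-a.e. $\omega$, so $\mu_\omega(A_\omega)=\int_{A_\omega}q_\omega\,\dd\nu_\omega=0$, and integrating gives $\mu(A)=0$. The joint measurability of $q$ (equivalently of the Radon--Nikodym density $q$) is the step I expect to need the most care. I would handle it by choosing the approximating sequence with $f=\ind$ fixed, so that no subsequence selection depending on $\omega$ is involved.
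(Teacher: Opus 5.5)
Your proposal follows the paper's proof step for step. The shared steps are measurability of $\omega\mapsto\nu_\omega(Z)$ for intervals as a limit of measurable quantities, extension to all of $\B$ because intervals generate it, Crauel's correspondence, $T$-invariance from Proposition \ref{prop6.38}, and $\mu\ll\nu$ from $\mu_\omega=q_\omega\nu_\omega$. Your fallback sup-norm formula is exactly what the paper uses. Your primary route through Proposition \ref{prop6.34} has an advantage: it applies to $\ind_Z$ directly, whereas the sup-norm formula is stated in the preceding proposition only under $\log\inf|f_\omega|\in L^1(\prob)$, which $\ind_Z$ violates. Your $\pi$--$\lambda$ argument, joint measurability of $q$, and explicit integration of the invariance identity supply details the paper leaves implicit.

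One correction: the supremum over $S(n,n)\cap\mathbb{Q}$ is not safe. The fiber $X_{\theta^n\omega}$ omits the countable dense set $S_{\theta^n\omega}$, which may contain every rational (as for Gauss-type maps), so the intersection can be empty. What makes the choice of evaluation points irrelevant is the uniform convergence of the ratio on $S(n,n)$, not atomlessness of $\nu_\omega$. So what you actually need is a measurable selection of points in $S(n,n)$, which exists because $\prob$ is complete.
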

\begin{proof}
    For $\prob$-a.e. $\omega\in\Omega$, $\nu_{\omega}$ is a Borel probability measure on $I$. For every interval $Z\subseteq I$, the map $\omega\mapsto\nu_{\omega}(Z)$ is measurable since \[\nu_{\omega}(Z):=\nu_{\omega}(\ind_Z)=\lim_{n\to\infty}\frac{\supnorm{\opera_{\omega}^{n}\ind_Z}}{\supnorm{\opera_{\omega}^{n}\ind_\omega}}\] is a limit of measurable functions. For the $\sigma$-algebra $\B$ on $I$, since it is generated by intervals, the map $\omega\mapsto\nu_{\omega}(B)$ is measurable for every $B\in\B$. According to \cite[Proposition 3.3]{crauel}, the disintegration $\{\nu_\omega\}_{\omega\in\Omega}$ defines the random probability measure $\nu$ on $\Omega\times I$ with marginal $\prob$ on $\Omega$. Clearly this result also holds for $\{\mu_\omega\}_{\omega\in\Omega}$. Since $\mu_{\omega}\circ T_{\omega}^{-1}=\mu_{\theta\omega}$ by Proposition \ref{prop6.38}, then $\mu$ is $T$-invariant. The relation $\mu\prec\nu$ is obvious from $\mu_\omega:=q_\omega\nu_\omega$ for every $\omega\in\Omega$.
\end{proof}
The results of decay of correlation implicitly meaning ergodicity of invariant probability measure.
\begin{proposition}\label{prop6.46}
    The random probability measure $\mu$ is ergodic.
\end{proposition}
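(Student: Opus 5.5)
We prove ergodicity of $\mu$ for the skew product $T$ by deriving mixing along fibers from Theorem \ref{thm6.43}, and then turning it into triviality of invariant sets.

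Let $A\subseteq\Omega\times I$ be measurable with $T^{-1}A=A$ (mod $\mu$), and write $A_\omega$ for its fibers. Invariance gives $T_\omega^{-1}A_{\theta\omega}=A_\omega$ mod $\mu_\omega$. Hence $\ind_{A_\omega}=\ind_{A_{\theta^n\omega}}\circ T_\omega^n$ $\mu_\omega$-a.e. for all $n\geq1$.

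First we show each fiber is trivial. Fix $\omega$ and take $f_\omega:=\ind_Z$ for an interval $Z$; this lies in $BV$ with $\bvnorm{\ind_Z}\leq3$. We need $f$ to be admissible in Theorem \ref{thm6.43}, i.e. to satisfy the log-integrability hypotheses. To arrange this, use $f=\ind_Z+c$ for a constant $c>0$, so that $\inf|f_\omega|\geq c$, and then subtract the constant part. Apply Theorem \ref{thm6.43} with $p=0$ and $g:=\ind_A$, noting $\Vert g_{\theta^n\omega}\Vert_{L^1}\leq1$. This gives
\[|\mu_\omega(\ind_Z\ind_{A_\omega})-\mu_\omega(Z)\mu_{\theta^n\omega}(A_{\theta^n\omega})|\leq 4D_f(\omega)\kappa^n\to0.\]
Invariance of $\mu$ (Proposition \ref{prop6.38}) gives $\mu_{\theta^n\omega}(A_{\theta^n\omega})=\mu_\omega(T_\omega^{-n}A_{\theta^n\omega})=\mu_\omega(A_\omega)$. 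Therefore $\mu_\omega(Z\cap A_\omega)=\mu_\omega(Z)\mu_\omega(A_\omega)$ for every interval $Z$.

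Intervals generate $\B$ and form a $\pi$-system. By Dynkin's theorem the identity then extends to all Borel $Z$. Taking $Z=A_\omega$ gives $\mu_\omega(A_\omega)=\mu_\omega(A_\omega)^2$, so $\mu_\omega(A_\omega)\in\{0,1\}$ for $\prob$-a.e. $\omega$.

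Next we handle the base. The function $h(\omega):=\mu_\omega(A_\omega)$ is measurable, because $\mu$ is a random probability measure (Proposition \ref{prop6.45}). It satisfies $h(\theta\omega)=h(\omega)$ by the invariance computation above. Ergodicity of $\theta$ then makes $h$ a.e. constant, either $0$ or $1$. Consequently $\mu(A)=\int_\Omega h\,\dd\prob\in\{0,1\}$, which is ergodicity.

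The main obstacle is the admissibility of test functions. Theorem \ref{thm6.43} requires $\supnorm{f_\omega}$, $\log\vari(f_\omega)$ and $\log\inf|f_\omega|$ to be in $L^1(\prob)$. The shift by $c$ handles the infimum condition. To handle $\log\vari$, replace $\ind_Z$ by the same fixed interval indicator on every fiber, so the variation is constant. Alternatively, work only with $f_\omega=\ind_Z+1$, which has constant variation $\leq2$ and infimum $\geq1$. Then linearity in $f$, together with the fact that the constant $1$ trivially satisfies the mixing identity, recovers the identity for $\ind_Z$.
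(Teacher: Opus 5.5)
Your proposal is correct and takes the same approach as the paper. You apply Theorem~\ref{thm6.43} with $p=0$, use $\ind_{A_{\theta^n\omega}}\circ T_\omega^n=\ind_{A_\omega}$ to get $\mu_\omega(A_\omega)\in\{0,1\}$ on fibers, and then use ergodicity of $\theta$ on the $\theta$-invariant function $\omega\mapsto\mu_\omega(A_\omega)$. Your version is more careful than the paper's: the paper puts an arbitrary mean-zero $L^1$ function into the $BV$ slot of Theorem~\ref{thm6.43}, whereas you use only admissible test functions $\ind_Z+1$ and extend to all Borel sets by Dynkin's theorem. When you do this, restrict $Z$ to intervals with rational endpoints, so that only countably many exceptional null sets of $\omega$ arise.
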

\begin{proof}
    We refer to the proof of \cite[Proposition 4.7]{mayer} on distance expanding random maps. Let $A\subseteq\Omega\times I$ be a measurable set with $T^{-1}(A)=A$ where $T:\Omega\times I\to\Omega\times I$ is the skew-product map. For $\omega\in\Omega$, let $A_\omega:=\{x\in X_\omega:(\omega,x)\in A\}$, then $T_{\omega}^{-1}(A_{\theta\omega})=A_\omega$. Let $\tilde{\Omega}:=\{\omega\in\Omega:\mu_{\omega}(A_\omega)>0\}$ be a $\theta$-invariant subset of $\Omega$, we assume $\prob(\tilde{\Omega})>0$.

    We define a function $f$ on $\Omega\times I$ by $f_{\omega}:=\ind_{A_\omega}$ for every $\omega\in\Omega$, then $f_\omega\in L^1(\mu_\omega)$ and $f_{\theta^{n}\omega}\circ T_{\omega}^{n}=f_\omega$ for $\prob$-a.e. $\omega\in\Omega$. Let $\hat{\Omega}\subseteq\Omega$ be a subset with full measure such that Theorem \ref{thm6.43} holds. For $\omega\in\tilde{\Omega}\cap\hat{\Omega}$, let $g_\omega\in L^1(\mu_\omega)$ with $\mu_{\omega}(g_\omega)=0$, apply Theorem \ref{thm6.43} with $p=0$ and let $n\to\infty$, we see \[\lim_{n\to\infty}|\mu_{\omega}((f_{\theta^{n}\omega}\circ T_{\omega}^{n})g_{\omega})|=0.\] Therefore, $\int_{A_\omega}g_{\omega}\,\dd\mu_\omega=0$ holds for every $g_{\omega}\in L^1(\mu_\omega)$, meaning $\mu_{\omega}(A_\omega)=1$ for all $\omega\in\tilde{\Omega}\cap\hat{\Omega}$, which leads to $\mu(A)=1$. According to the definition of ergodic map, we see the skew-product map $T$ is ergodic and as a consequence, $\mu$ is ergodic measure.
\end{proof}
\begin{proof}[Proof of Theorem \ref{main1}]
    The existence of probability measures $\{\nu_\omega\}_{\omega\in\Omega}$ comes from Proposition 
    \ref{prop6.34} and Proposition \ref{prop6.38}, while the density functions $\{q_\omega\}_{\omega\in\Omega}$ and their corresponding positive eigenvalues $\{\lambda_\omega\}_{\omega\in\Omega}$ are produced from Proposition \ref{prop6.31}. The positivity of density function is proved in Proposition \ref{prop6.33}. The invariance of $\{\mu_\omega\}_{\omega\in\Omega}$ is from Proposition \ref{prop6.38}.

    Proposition \ref{prop6.45} says the families $\{\nu_\omega\}_{\omega\in\Omega}$ and $\{\mu_\omega\}_{\omega\in\Omega}$ are unique and can be raised to random probability measures $\nu,\mu$ respectively. Proposition \ref{prop6.46} shows $\mu$ is ergodic.
\end{proof}
\section{Expected pressure, escape rate, and conditionally invariant measure}\label{sec9}
During this section, we investigate the characteristics of random open systems.
\subsection{Expected pressure}\label{sec9.1}
Given a summable contracting (with respect to the open system) potential $\varphi_c:\Omega\times I\to\R\cup\{-\infty\}$, it generates the open potential $\varphi:=\varphi_{c}|_{J}$ in the random open system. However, for the closed and open operators, we associate them with the same potential $\varphi_c$, so the open potential $\varphi$ here does not relate to the open operator directly. We define and compare the expected pressures of the closed and open potentials, by studying two different eigenvalues produced in the closed and open systems; see \cite[Definition 1.12.1]{atnip3}. The expected pressures of potentials $\varphi_c,\varphi$ are defined respectively by
$$\pres{\varphi_c}:=\int_{\Omega}\log\lambda_{\omega,c}\,\dd\prob(\omega),\quad\pres{\varphi}:=\int_{\Omega}\log\lambda_{\omega}\,\dd\prob(\omega).$$ They are well defined and finite since $\log\lambda_{\omega,c},\log\lambda_{\omega}\in L^1(\prob)$. It also implies that $$\pres{\varphi_c}=\lim_{n\to\infty}\frac{1}{n}\log\lambda_{\omega,c}^{n},\quad\pres{\varphi}=\lim_{n\to\infty}\frac{1}{n}\log\lambda_{\omega}^{n}$$ by Birkhoff's ergodic theorem.

\begin{proposition}\label{prop6.47}
    For $\prob$-a.e. $\omega\in\Omega$, we have \[\lim_{n\to\infty}\bigg\Vert\frac{1}{n}\log\opera_{\omega}^{n}\ind_{\omega}-\frac{1}{n}\log\lambda_{\omega}^{n}\bigg\Vert_{\infty}=0.\]
\end{proposition}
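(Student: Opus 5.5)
The plan is to compare $\opera_{\omega}^{n}\ind_\omega$ with $\lambda_{\omega}^{n}q_{\theta^{n}\omega}$ using Theorem \ref{thm6.41}, and then to control $\log q_{\theta^{n}\omega}$ with temperedness. Since $\normopera_{\omega}^{n}=(\lambda_{\omega}^{n})^{-1}\opera_{\omega}^{n}$ and $\nu_{\omega}(\ind_\omega)=1$, taking $p=0$ and $f=\ind$ (constant in $\omega$, so trivially in the admissible class) in Theorem \ref{thm6.41} gives
\[\supnorm{(\lambda_{\omega}^{n})^{-1}\opera_{\omega}^{n}\ind_\omega-q_{\theta^{n}\omega}}\leq \hat{C}(\omega)\,2\,\iota^{n}.\]
It therefore suffices to show that $\frac1n\log$ of $(\lambda_\omega^n)^{-1}\opera_\omega^n\ind_\omega$ tends to $0$ uniformly. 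This has to be done on the set where the function is nonzero, namely $S(n,n)$; off the support the expression is $-\infty$, so the supremum norm is read on the support, as the paper implicitly does.

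\textbf{Upper bound.} We have $\supnorm{(\lambda_\omega^n)^{-1}\opera_\omega^n\ind_\omega}\leq \supnorm{q_{\theta^n\omega}}+2\hat C(\omega)\iota^n\leq C(\omega,\epsilon)e^{n\epsilon}+2\hat C(\omega)$ by Lemma \ref{lem6.39}. Hence $\limsup \frac1n\log\sup(\cdot)\leq\epsilon$ for every $\epsilon$.

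\textbf{Lower bound.} This is the main obstacle: $\inf q_{\theta^n\omega}$ may be small, and the additive error $\iota^n$ is useless once $q$ is exponentially small. I would avoid the additive estimate and work directly with the cone. By Proposition \ref{prop6.29}, $\Theta_{\Lambda_{\theta^n\omega,+}}(\normopera_\omega^n\ind_\omega,q_{\theta^n\omega})\leq\Delta D^n\leq\Delta$ for $n\geq n(\omega)$. Finiteness of the Hilbert distance in the positive cone yields pointwise comparability: $\normopera_\omega^n\ind_\omega\geq e^{-\Delta}\,\frac{\nu(\normopera^n_\omega\ind_\omega)}{\nu(q)}\,q_{\theta^n\omega}=e^{-\Delta}q_{\theta^n\omega}$, using $\nu_{\theta^n\omega}(\normopera_\omega^n\ind_\omega)=\nu_\omega(\ind_\omega)=1$. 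It then remains to show $\frac1n\log\inf_{S(n,n)}q_{\theta^n\omega}\to0$. For this I would use the argument of Proposition \ref{prop6.33} quantitatively. Write $q_{\theta^n\omega}=\normopera_{\theta^{n-k}\omega}^{k}q_{\theta^{n-k}\omega}$ for a fixed block length $k$, which gives a lower bound by $\inf\normopera^k\ind\cdot\inf_{K}q$ at fibre $\theta^{n-k}\omega$. This is a measurable positive function of $\theta^n\omega$, say $h(\theta^n\omega)>0$. If $\log h\in L^1$, or merely if $h$ is finite and positive, then by the standard Birkhoff/tempered argument $\frac1n\log h(\theta^n\omega)\to0$ along a.e. orbit. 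For this step I would use Birkhoff applied to $\log h\circ\theta^n - \log h\circ\theta^{n-1}$-type telescoping, or simply the fact that $\frac1n\log h\circ\theta^n\to0$ a.e. for any a.e. finite measurable $h$ whose log is integrable. Integrability should follow from $\log\inf_{S(1,\infty)}\opera\ind$ and $\log\supnorm{\opera\ind}\in L^1$ together with $\log\lambda\in L^1$.

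Combining the two bounds, $\frac1n\log(\lambda_\omega^n)^{-1}\opera_\omega^n\ind_\omega$ lies between $-\epsilon-o(1)$ and $\epsilon+o(1)$ uniformly on $S(n,n)$, which gives the statement. The delicate point I expect is justifying $\log\inf q_\omega\in L^1(\prob)$, or at least temperedness of $\inf q$. If the integrability route fails, the fallback is to bound $\inf q_{\theta^n\omega}$ below via Lemma \ref{lem6.27}-style estimates on good fibres in $\Omega_F$, which give $\inf\normopera^{\tilde M}f\geq\frac{\tilde c}{4\tilde C}F(f)$ with uniform constants, and which occur with positive frequency along the orbit.
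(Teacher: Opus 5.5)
Your upper bound is fine, and your multiplicative reduction of the lower bound is correct and sharper than what the paper does. Proposition~\ref{prop6.29} together with $\nu_{\theta^n\omega}(\normopera_\omega^n\ind_\omega)=\nu_{\theta^n\omega}(q_{\theta^n\omega})=1$ gives $\normopera_\omega^n\ind_\omega\geq e^{-\Delta}q_{\theta^n\omega}$. The paper instead lower-bounds $\inf q_{\theta^n\omega}$ by $\inf\normopera_\omega^n\ind_\omega-C_{\ind}(\omega)\iota^n$ via Lemma~\ref{lem6.40}, and then infers temperedness of $\inf\normopera_\omega^n\ind_\omega$ from log-integrability, which is essentially the lower half of the statement; so your objection to the additive error is well aimed.

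After your reduction, however, the whole content of the proposition is the claim $\frac{1}{n}\log\inf q_{\theta^n\omega}\to0$, and you do not prove it. Your primary route is circular. The bound $q_{\theta^n\omega}\geq\inf\normopera^k\ind\cdot\inf_K q_{\theta^{n-k}\omega}$ defines an $h$ that contains $\inf q$ at the fibre $\theta^{n-k}\omega$. So $\log h\in L^1(\prob)$ would need log-integrability of $\inf q$ itself. That does not follow from $\log\inf\opera_\omega\ind_\omega$, $\log\supnorm{\opera_\omega\ind_\omega}$, $\log\lambda_\omega\in L^1(\prob)$, which only bound the ratio $\inf q_{\theta\omega}/\inf q_\omega$ from below.

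The alternative you mention, that a merely finite positive $h$ satisfies $\frac{1}{n}\log h(\theta^n\omega)\to0$ a.e., is false. For a finite measurable $v$ one only gets $\frac{1}{n}v\circ\theta^n\to0$ in probability. For a Bernoulli shift with $v$ depending on one coordinate, a.e.\ convergence holds if and only if $v\in L^1$. Telescoping the bound with $k=1$ does not rescue it either: it only yields $\liminf_{n}\frac{1}{n}\log\inf q_{\theta^n\omega}\geq\int_\Omega(\log\inf_{S(1,\infty)}\opera_\omega\ind_\omega-\log\lambda_\omega)\,d\prob(\omega)$, which is $\leq0$ and in general negative.

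The missing ingredient is a coboundary lemma. Put $u(\omega):=-\log\inf q_\omega$, which is finite a.e.\ by Proposition~\ref{prop6.33}. The one-step inequality $q_{\theta\omega}=\lambda_\omega^{-1}\opera_\omega q_\omega\geq\lambda_\omega^{-1}(\inf q_\omega)\,\opera_\omega\ind_\omega$ holds with infima on the relevant supports, as in the proof of Proposition~\ref{prop6.33}. It gives $u\circ\theta-u\leq\psi$ with $\psi(\omega):=\log\lambda_\omega-\log\inf_{S(1,\infty)}\opera_\omega\ind_\omega\in L^1(\prob)$, under the integrability you already invoke.

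Since $(u\circ\theta-u)^+\in L^1(\prob)$, Birkhoff's theorem applies and yields $\frac{1}{n}(u\circ\theta^n-u)\to\int_\Omega(u\circ\theta-u)\,d\prob\in[-\infty,\infty)$ a.e. But $u\circ\theta^n$ has the same law as the a.e.\ finite function $u$, so $\frac{1}{n}u\circ\theta^n\to0$ in probability. This forces the integral to vanish, hence $u\circ\theta-u\in L^1(\prob)$ and $\frac{1}{n}u(\theta^n\omega)\to0$ a.e. That is exactly the temperedness of $\inf q_{\theta^n\omega}$ you need, obtained without any a priori integrability of $\log\inf q$; with it your argument is complete.

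Your fallback through fibres in $\Omega_F$ could also be made to work, but as written it is only a sketch. You would need three things:
\begin{itemize}
\item $q$ (or $\normopera^m\ind$) lying in $\Lambda_{\omega,\tilde a}$ at those times;
\item the distance from $n$ back to the last such time being $o(n)$, which does follow from positive density;
\item a log-integrable one-step lower bound to bridge that distance, which is the same ingredient as above.
\end{itemize}
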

\begin{proof}
    We follow \cite[Lemma 10.1]{atnip1} and \cite[Lemma 1.12.2]{atnip3} to take the same procedures. Since $\opera_{\omega}^{n}=\lambda_{\omega}^{n}\normopera_{\omega}^{n}$ for every $n\geq1$, it only needs to show \[\lim_{n\to\infty}\frac{1}{n}\supnorm{\log\normopera_{\omega}^{n}\ind_\omega}=0.\] We claim that $\inf_{S(n,\infty)}q_{\theta^{n}\omega}$ and $\supnorm{q_{\theta^{n}\omega}}$ are tempered for all $n\geq1$. Lemma \ref{lem6.40} implies \[\underset{S(n,\infty)}{\inf}\,q_{\theta^{n}\omega}\geq\underset{S(n,\infty)}{\inf}\,\normopera_{\omega}^{n}\ind_{\omega}-C_{\ind}(\omega)\iota^n\] for all sufficiently large $n\geq1$, further implying \[\frac{1}{n}\log\underset{S(n,\infty)}{\inf}\,q_{\theta^{n}\omega}\geq\frac{1}{n}\log(\underset{S(n,\infty)}{\inf}\,\normopera_{\omega}^{n}\ind_{\omega}-C_{\ind}(\omega)\iota^n).\] Since $\log\inf_{S(n,\infty)}\normopera_{\omega}^{n}\ind_{\omega}\in L^1(\prob)$, then it is tempered. The term $C_{\ind}(\omega)\iota^n$ is finite and it will be eliminated in the limit. Combine them together, we can see the limit satisfies \[\lim_{n\to\infty}\log\underset{S(n,\infty)}{\inf}\,q_{\theta^{n}\omega}\geq-\epsilon\] for sufficiently small $\epsilon>0$, meaning \[\lim_{n\to\infty}\log\underset{S(n,\infty)}{\inf}\,q_{\theta^{n}\omega}=0.\]  Regarding the supremum norm, we also obtain the same limit, then the claim is finished. We observe \[\frac{1}{n}\log(\underset{S(n,\infty)}{\inf}\,q_{\theta^{n}\omega}-C_{\ind}(\omega)\iota^n)\leq\frac{1}{n}\log\normopera_{\omega}^{n}\ind_{\omega}\leq\frac{1}{n}\log(\supnorm{q_{\theta^{n}\omega}}+C_{\ind}(\omega)\iota^n),\] both sides are tending to zero as $n\to\infty$ and we have the desired limit.
\end{proof}
Since $\opera_{\omega}\ind_{\omega}\leq\opera_{\omega,c}\ind_{X_\omega}$ for each $\omega\in\Omega$, by Proposition \ref{prop6.47} we have $\pres{\varphi}\leq\pres{\varphi_c}$. That is, the expected pressure of open potential cannot exceed that of the original one.
\subsection{Escape rate}
We import the concept of escape rate from \cite[Section 3.1]{poll} into random interval map with holes, for more details, see \cite[Section 12]{atnip3}.
\begin{defn}\label{def9.1}
    Given a hole $H\subseteq\Omega\times I$ and a random probability measure $\nu\in\probmeas_{\Omega}(\Omega\times I)$, for each $\omega\in\Omega$, the \emph{lower fiberwise escape rate} for $H$ is defined by \[\underline{R}_{\nu_{\omega}}(H):=-\limsup_{n\to\infty}\frac{1}{n}\log\nu_{\omega}(K_{\omega,n}),\] the \emph{upper fiberwise escape rate} for $H$ is defined by \[\overline{R}_{\nu_{\omega}}(H):=-\liminf_{n\to\infty}\frac{1}{n}\log\nu_{\omega}(K_{\omega,n}).\] If $\underline{R}_{\nu_{\omega}}(H)=\overline{R}_{\nu_{\omega}}(H)$, we say the \emph{escape rate} for $H$ exists and it is denoted by $R_{\nu_{\omega}}(H)$.
\end{defn}
\begin{proposition}[{\cite[Proposition 1.12.4]{atnip3}}]\label{prop6.49}
    Given a hole $H\subseteq X$, for $\prob$-a.e. $\omega\in\Omega$ we have \[R_{\nu_{\omega,c}}(H)=\pres{\varphi_c}-\pres{\varphi}.\]
\end{proposition}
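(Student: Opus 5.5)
The plan is to express $\nu_{\omega,c}(K_{\omega,n})$ through the closed conformality relation and then compare with the open operator. Since $K_{\omega,n}=\bigcap_{i=0}^{n}T_\omega^{-i}(J_{\theta^i\omega})$ and $\ind_{K_{\omega,n}}=\ind_{K_{\omega,n}}\ind_{X_\omega}$, iterating $\nu_{\theta\omega,c}(\opera_{\omega,c}f)=\lambda_{\omega,c}\nu_{\omega,c}(f)$ gives
\[
\nu_{\omega,c}(K_{\omega,n})=(\lambda_{\omega,c}^{n+1})^{-1}\nu_{\theta^{n+1}\omega,c}\bigl(\opera_{\omega,c}^{n+1}\ind_{K_{\omega,n}}\bigr)=(\lambda_{\omega,c}^{n+1})^{-1}\nu_{\theta^{n+1}\omega,c}\bigl(\opera_{\omega}^{n+1}\ind_{\omega}\bigr),
\]
using $\opera_\omega^{n+1}f=\opera_{\omega,c}^{n+1}(f\ind_{K_{\omega,n}})$. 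Taking $\frac1n\log$ and applying Birkhoff's ergodic theorem to $\log\lambda_{\omega,c}\in L^1(\prob)$ gives $\frac1n\log\lambda_{\omega,c}^{n+1}\to\pres{\varphi_c}$. It therefore remains to show that $\frac1n\log\nu_{\theta^{n+1}\omega,c}(\opera_\omega^{n+1}\ind_\omega)\to\pres{\varphi}$.

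For the upper bound, since $\nu_{\theta^{n+1}\omega,c}$ is a probability measure, $\nu_{\theta^{n+1}\omega,c}(\opera_\omega^{n+1}\ind_\omega)\le\supnorm{\opera_\omega^{n+1}\ind_\omega}$. Proposition \ref{prop6.47} gives $\frac1n\log\supnorm{\opera_\omega^{n+1}\ind_\omega}-\frac1n\log\lambda_\omega^{n+1}\to0$, and Birkhoff's theorem gives $\frac1n\log\lambda_\omega^{n+1}\to\pres{\varphi}$. Hence $\limsup\frac1n\log\nu_{\omega,c}(K_{\omega,n})\le\pres{\varphi}-\pres{\varphi_c}$.

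The lower bound is the main obstacle, because the open iterate $\opera_\omega^{n+1}\ind_\omega$ is only bounded below on its support $S(n+1,n+1)$, which may carry small closed measure. I would write $\opera_\omega^{n+1}\ind_\omega=\lambda_\omega^{n+1}\normopera_\omega^{n+1}\ind_\omega$ and use Theorem \ref{thm6.41} with $p=0$ and $f=\ind$:
\[
\normopera_\omega^{n+1}\ind_\omega\ge q_{\theta^{n+1}\omega}-\hat C(\omega)\iota^{n+1}.
\]
This gives
\[
\nu_{\theta^{n+1}\omega,c}(\opera_\omega^{n+1}\ind_\omega)\ge\lambda_\omega^{n+1}\bigl(\nu_{\theta^{n+1}\omega,c}(q_{\theta^{n+1}\omega})-\hat C(\omega)\iota^{n+1}\bigr).
\]
It then suffices to show that $\omega\mapsto\nu_{\omega,c}(q_\omega)$ is positive a.e. and tempered, so that it is not beaten by $\iota^{n}$ along the orbit.

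For positivity, Proposition \ref{prop6.33} gives $q_\omega>0$ on $S(1,\infty)$. This set contains $K_{\omega,\infty}$, which I would argue is nonempty and built from intervals, hence of positive $\nu_{\omega,c}$-measure since $\nu_{\omega,c}$ charges every interval by the closed conformal theory. For temperedness, I would argue by the same route as in the proof of Proposition \ref{prop6.47}, combining $\log\inf_{S(n,\infty)}q$ tempered with measurability and Birkhoff's theorem applied to $\log\nu_{\omega,c}(q_\omega)$, or alternatively Poincaré recurrence to a set where $\nu_{\omega,c}(q_\omega)\ge c>0$.

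If temperedness is hard to obtain directly, I would fall back on the following: along a subsequence of returns of $\theta^{n+1}\omega$ to $\{\nu_{\cdot,c}(q)\ge c\}$, the bound gives the liminf along that subsequence. The bounded ratio $\nu_{\omega,c}(K_{\omega,n+1})/\nu_{\omega,c}(K_{\omega,n})\ge\inf_{J_\omega}g^1_{\omega,c}$-type controls, which are log-integrable, then fill in the gaps, using that return times have sublinear gaps. Combining both bounds yields $R_{\nu_{\omega,c}}(H)=\pres{\varphi_c}-\pres{\varphi}$.
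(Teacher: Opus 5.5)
\textbf{Verdict: the overall structure matches the paper, but there is a genuine gap at the crux, in both the positivity and the temperedness arguments.}

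Your reduction is the paper's. You use the same conformality identity expressing $\nu_{\omega,c}(K_{\omega,n})$ through $\nu_{\theta^{n+1}\omega,c}(\opera_\omega^{n+1}\ind_\omega)$, followed by Theorem \ref{thm6.41}. Your upper bound via $\supnorm{\opera_\omega^{n+1}\ind_\omega}$ and Proposition \ref{prop6.47} is correct.

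You also correctly locate the crux: $\nu_{\theta^{n}\omega,c}(q_{\theta^{n}\omega})$ must be positive and not exponentially small. The paper settles this with the bound $\inf_{S(n,\infty)}q_{\theta^n\omega}\le\nu_{\theta^n\omega,c}(q_{\theta^n\omega})$. Since $q_\omega$ vanishes off $S_{\omega,\infty}$, that inequality tacitly assumes the same thing.

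Your own arguments for this step do not work.
\begin{itemize}
\item \emph{Positivity.} $K_{\omega,\infty}$ is a decreasing intersection of unions of intervals, typically Cantor-like. Whenever the escape rate is positive, $\nu_{\omega,c}(K_{\omega,\infty})\le\nu_{\omega,c}(K_{\omega,n})\to0$. Moreover, $K_{\omega,\infty}$ (points with surviving forward orbits) need not lie in $S_{\omega,\infty}$ (points with surviving backward histories), which is where $q_\omega$ lives.
\item \emph{Birkhoff route.} Applying Birkhoff's theorem to $\log\nu_{\omega,c}(q_\omega)$ presupposes that this function is in $L^1(\prob)$, which is exactly what is unknown.
\item \emph{Route through $\inf_{S(n,\infty)}q$.} This only gives $\nu_{\omega,c}(q_\omega)\ge\inf_{S_{\omega,\infty}}q_\omega\cdot\nu_{\omega,c}(S_{\omega,\infty})$, which leaves the same unknown.
\item \emph{Recurrence.} Recurrence alone controls only a subsequence.
\item \emph{Gap-filling ratio bound.} This bound is not available. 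By conformality, $\nu_{\omega,c}(K_{\omega,n+1})/\nu_{\omega,c}(K_{\omega,n})=\nu_{\theta^{n+1}\omega,c}(h\ind_{J_{\theta^{n+1}\omega}})/\nu_{\theta^{n+1}\omega,c}(h)$ with $h=\opera_\omega^{n+1}\ind_\omega$. This is the fraction of surviving mass that misses the hole at the next step, and nothing bounds it below in terms of the weight $g^1_{\omega,c}$.
\end{itemize}

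Both points can be repaired with what the paper already provides.

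\emph{Positivity.} $q_\omega\ge0$ has bounded variation, hence at most countably many discontinuities. Since $\nu_\omega(q_\omega)=1$ and $\nu_\omega$ is atomless (Proposition \ref{prop6.35}), $\{q_\omega>\epsilon\}$ is uncountable for some $\epsilon>0$. At a continuity point of $q_\omega$ in this set, $q_\omega>\epsilon/2$ on a nondegenerate interval. Hence $\nu_{\omega,c}(q_\omega)>0$, granting (as you do) that $\nu_{\omega,c}$ charges nondegenerate intervals.

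\emph{Temperedness, first option (no ratio bound needed).} Let $E=\{\nu_{\cdot,c}(q)\ge c\}$ with $\prob(E)>0$.
\begin{itemize}
\item The times $n_1<n_2<\cdots$ with $\theta^{n_k+1}\omega\in E$ satisfy $n_{k+1}/n_k\to1$ by Birkhoff.
\item Your lower bound holds along these times.
\item Monotonicity, $\nu_{\omega,c}(K_{\omega,m})\ge\nu_{\omega,c}(K_{\omega,n_{k+1}})$ for $m\le n_{k+1}$, fills the gaps.
\end{itemize}

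\emph{Temperedness, second option.}
\begin{itemize}
\item $\opera_\omega q_\omega=\lambda_\omega q_{\theta\omega}$ and closed conformality give $\nu_{\theta\omega,c}(q_{\theta\omega})=\lambda_{\omega,c}\lambda_\omega^{-1}\nu_{\omega,c}(q_\omega\ind_{J_\omega})$.
\item So $\psi(\omega)=\log\nu_{\omega,c}(q_\omega)$ satisfies $\psi\circ\theta-\psi\le\log\lambda_{\omega,c}-\log\lambda_\omega\in L^1(\prob)$.
\item A coboundary with integrable positive part is integrable with zero mean (standard lemma).
\item Hence $\frac1n\psi(\theta^n\omega)\to0$, which is exactly what both the paper's argument and yours require.
\end{itemize}
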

\begin{proof}
    We add more details into the proof. Let $\nu_c$ be the determined random probability measure of the closed system. For every $n\geq1$, we have
    \begin{equation*}
        \begin{aligned}
            \nu_{\omega,c}(K_{\omega,n-1})&=\frac{1}{\lambda_{\omega,c}^{n}}\nu_{\theta^{n}\omega,c}(\opera_{\omega,c}^{n}\ind_{K_{\omega,n-1}}) \\
            &=\frac{\lambda_{\omega}^{n}}{\lambda_{\omega,c}^{n}}\nu_{\theta^{n}\omega,c}(\normopera_{\omega}^{n}\ind_\omega) \\
            &=\frac{\lambda_{\omega}^{n}}{\lambda_{\omega,c}^{n}}\left(\nu_{\theta^{n}\omega,c}(q_{\theta^{n}\omega})-\nu_{\theta^{n}\omega,c}(\normopera_{\omega}^{n}\ind_{\omega}-q_{\theta^{n}\omega})\right).
        \end{aligned}
    \end{equation*} By Theorem \ref{thm6.41}, as \[\supnorm{\normopera_{\omega}^{n}\ind_\omega-\nu_{\omega}(\ind_\omega)q_{\theta^{n}\omega}}\leq\hat{C}_{\ind}(\omega)\bvnorm{\ind_\omega}\iota^{n}\] for measurable finite $\hat{C}_{\ind}(\omega)$ and $\iota\in(0,1)$, let $n\to\infty$ then $\nu_{\theta^{n}\omega,c}(\normopera_{\omega}^{n}\ind_{\omega}-q_{\theta^{n}\omega})\to0$ exponentially fast. Therefore, \[\limsup_{n\to\infty}\frac{1}{n}\log\nu_{\omega,c}(K_{\omega,n-1})=\liminf_{n\to\infty}\frac{1}{n}\log\nu_{\omega,c}(K_{\omega,n-1})=\lim_{n\to\infty}\frac{1}{n}\log\frac{\lambda_{\omega}^{n}}{\lambda_{\omega,c}^{n}}+\lim_{n\to\infty}\frac{1}{n}\log\nu_{\theta^{n}\omega,c}(q_{\theta^{n}\omega}).\] Since $\inf_{S(n,\infty)}q_{\omega}$ and $\supnorm{q_{\theta^{n}\omega}}$ are both tempered, it implies \[0=\lim_{n\to\infty}\frac{1}{n}\log\underset{S(n,\infty)}{\inf}\,q_{\theta^{n}\omega}\leq\lim_{n\to\infty}\frac{1}{n}\log\nu_{\theta^{n}\omega,c}(q_{\theta^{n}\omega})\leq\lim_{n\to\infty}\frac{1}{n}\log\supnorm{q_{\theta^{n}\omega}}=0,\] hence \[\lim_{n\to\infty}\frac{1}{n}\log\nu_{\theta^{n}\omega,c}(q_{\theta^{n}\omega})=0.\] By the definition of expected pressure we obtain \[R_{\nu_{\omega,c}}(H)=-\lim_{n\to\infty}\frac{1}{n}\log\frac{\lambda_{\omega}^{n}}{\lambda_{\omega,c}^{n}}=\lim_{n\to\infty}\frac{1}{n}\log\lambda_{\omega,c}^{n}-\lim_{n\to\infty}\frac{1}{n}\log\lambda_{\omega}^{n}=\pres{\varphi_c}-\pres{\varphi}.\]
\end{proof}
\subsection{Conditionally invariant measure}
We introduce conditionally invariant measure to random open dynamical systems. Recall $K_{\omega,n}$ is $(\omega,n)$-survivor set that \[K_{\omega,n}:=\bigcap_{i=0}^{n}T_{\omega}^{-i}(J_{\theta^{i}\omega})\] for $J_{\theta^{i}\omega}=I_{\theta^{i}\omega}\setminus H_{\theta^{i}\omega}$.
\begin{defn}[{\cite[Definition 1.2.1]{atnip3}}]
    A random probability measure $\tau\in\probmeas_{\Omega}(\Omega\times I)$ is a \emph{random conditionally invariant probability measure} if \[\tau_{\omega}(K_{\omega,n}\cap T_{\omega}^{-n}(A))=\tau_{\omega}(K_{\omega,n})\tau_{\theta^{n}\omega}(A)\] for all $n\geq0$, all $\omega\in\Omega$ and all Borel subsets $A\subseteq I$.
\end{defn}
If $\tau$ is random conditionally invariant measure, set $A=K_{\theta^{n}\omega,m}$ here, then $K_{\omega,n}\cap T_{\omega}^{-n}(A)=K_{\omega,n+m}$ for all $m,n\geq1$. Therefore, we have \[\tau_{\omega}(K_{\omega,n+m})=\tau_{\omega}(K_{\omega,n})\tau_{\theta^{n}\omega}(K_{\theta^{n}\omega,m}).\] By this property, we obtain \[\tau_{\omega}(K_{\omega,n})=\prod_{i=0}^{n-1}\tau_{\theta^{i}\omega}(K_{\theta^{i}\omega,1}).\] Let $c_{\omega}:=\tau_{\omega}(K_{\omega,1})$, thus $\tau_{\omega}(K_{\omega,n})=c_{\omega}^{n}:=\prod_{i=0}^{n-1}c_{\theta^{i}\omega}$. Furthermore, let $A=J_{\omega}=K_{\omega,0}$ and $n=0$, we observe \[\tau_{\omega}(J_\omega)=\tau_{\omega}(J_\omega)^{2},\] implying $\tau_{\omega}(J_\omega)$ is $0$ or $1$.
\begin{lemma}{\cite[Lemma 1.2.4]{atnip3}}\label{lem6.51}
    For $f,g\in BV(I)$ and all $\omega\in\Omega$, for all $n\geq1$ we have \[\int_{J_{\theta^{n}\omega}}f\cdot\opera_{\omega}^{n}g\,\dd\nu_{\theta^{n}\omega,c}=\lambda_{\omega,c}^{n}\int_{K_{\omega,n}}(f\circ T_{\omega}^{n})\cdot g\,\dd\nu_{\omega,c}.\]
\end{lemma}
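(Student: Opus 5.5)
The plan is to reduce the identity to the $n$-step conformality of the closed measure $\nu_c$, using only the definition of the open operator and the pull-out (module) property of transfer operators.

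First, I would rewrite the left-hand side with closed operators. By definition, $\opera_{\omega}^{n}g=\opera_{\omega,c}^{n}(g\ind_{K_{\omega,n-1}})$. For any bounded $h$ on $X_{\theta^{n}\omega}$ and bounded $G$ on $X_\omega$, the pull-out identity
\[
h\cdot\opera_{\omega,c}^{n}G=\opera_{\omega,c}^{n}\bigl((h\circ T_{\omega}^{n})\,G\bigr)
\]
holds pointwise. This is immediate from the formula $\opera_{\omega,c}^{n}G(x)=\sum_{y\in T_{\omega}^{-n}x}G(y)g_{\omega,c}^{n}(y)$, since $h(x)=h(T_{\omega}^{n}y)$ for every $y\in T_{\omega}^{-n}x$. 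Applying it with $h=f\ind_{J_{\theta^{n}\omega}}$ and $G=g\ind_{K_{\omega,n-1}}$ gives
\[
f\,\ind_{J_{\theta^{n}\omega}}\cdot\opera_{\omega}^{n}g=\opera_{\omega,c}^{n}\bigl((f\circ T_{\omega}^{n})\,g\,\ind_{K_{\omega,n-1}}\,(\ind_{J_{\theta^{n}\omega}}\circ T_{\omega}^{n})\bigr).
\]
Since $K_{\omega,n-1}\cap T_{\omega}^{-n}(J_{\theta^{n}\omega})=K_{\omega,n}$ by the definition of survivor sets, the argument on the right is exactly $(f\circ T_{\omega}^{n})\,g\,\ind_{K_{\omega,n}}$.

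Second, I would integrate against $\nu_{\theta^{n}\omega,c}$ and iterate the closed conformality $\nu_{\theta\omega,c}(\opera_{\omega,c}F)=\lambda_{\omega,c}\nu_{\omega,c}(F)$ $n$ times. This yields $\nu_{\theta^{n}\omega,c}(\opera_{\omega,c}^{n}F)=\lambda_{\omega,c}^{n}\nu_{\omega,c}(F)$ with $\lambda_{\omega,c}^{n}=\prod_{i=0}^{n-1}\lambda_{\theta^{i}\omega,c}$, using that $\opera_{\omega,c}^{n}=\opera_{\theta^{n-1}\omega,c}\circ\cdots\circ\opera_{\omega,c}$. With $F=(f\circ T_{\omega}^{n})g\ind_{K_{\omega,n}}$, this is exactly the claimed right-hand side.

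The main obstacle is that the conformality in the cited Ruelle--Perron--Frobenius theorem is stated only for $F\in BV(I)$. Here $F$ involves the indicator of $K_{\omega,n}$ (the hole is only measurable) and $f\circ T_{\omega}^{n}$, neither of which need be of bounded variation. I would therefore first extend conformality to all bounded Borel $F$, by a monotone class argument:
\begin{itemize}
\item Both $F\mapsto\nu_{\theta^{n}\omega,c}(\opera_{\omega,c}^{n}F)$ and $F\mapsto\lambda_{\omega,c}^{n}\nu_{\omega,c}(F)$ are positive linear functionals on bounded Borel functions.
\item They agree on indicators of intervals, which lie in $BV$.
\item Both are continuous under bounded monotone limits. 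For the first functional this uses monotone convergence inside the countable sum defining $\opera_{\omega,c}^{n}$ (finite thanks to summability, $S_{\omega}^{n}<\infty$), followed by monotone convergence for $\nu_{\theta^{n}\omega,c}$.
\end{itemize}
Since intervals form a $\pi$-system generating $\B$, the two induced measures agree on all Borel sets. The identity then passes to simple functions and, by uniform approximation, to all bounded Borel $F$. Finally, $\nu_{\omega,c}(S_\omega)=0$, since $S_\omega$ is countable and the closed measure is atomless (as in Proposition \ref{prop6.35}). So restricting to $X_\omega$, where the operators are defined, does not affect any of the integrals, and the lemma follows.
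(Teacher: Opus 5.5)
Your proof is correct. The paper gives no argument of its own here; it simply quotes \cite[Lemma 1.2.4]{atnip3}. Your computation is the standard derivation of that identity: rewrite $\opera_\omega^n g=\opera_{\omega,c}^n(g\ind_{K_{\omega,n-1}})$, pull $f\ind_{J_{\theta^n\omega}}$ inside via $h\cdot\opera_{\omega,c}^nG=\opera_{\omega,c}^n((h\circ T_\omega^n)G)$, use $K_{\omega,n-1}\cap T_\omega^{-n}J_{\theta^n\omega}=K_{\omega,n}$, and apply $n$-step conformality of $\nu_c$.

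What you add is the monotone-class extension of closed conformality from $BV(I)$ to bounded Borel functions, and this step is genuinely needed in this paper's setting. The closed Ruelle--Perron--Frobenius theorem imported from \cite{atnip1} is stated only for $f\in BV(I)$. But $\ind_{K_{\omega,n}}$ (the hole is merely measurable) and $f\circ T_\omega^n$ need not be of bounded variation, and the paper passes over this point. Your argument in fact proves the identity for all bounded Borel $f,g$. That is the generality actually required in Proposition \ref{prop6.53}, where the lemma is applied with $f=\ind_A$ for an arbitrary Borel set $A$.
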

We say a random conditionally invariant probability measure $\tau$ is absolutely continuous with respect to a random probability measure $\tau'$ if $\tau\prec\tau'$. We modify \cite[Lemma 1.1]{liverani4} into the random version and obtain the following lemma, giving an alternative definition.
\begin{lemma}\label{lem6.52}
    Given a function $f\in BV_{\Omega}(I)$, let $\tau:=(\ind_{J}f)\nu_c$ be a random probability measure on $J$ and assume $\tau\prec\nu_c$, then $\tau$ is a random conditionally invariant probability measure absolutely continuous with respect to $\nu_c$ if and only if there exists $c_\omega\in(0,1]$ such that \[\opera_{\omega}f_{\omega}=c_{\omega}\lambda_{\omega,c}f_{\theta\omega}\] holds for every $\omega\in\Omega$.
\end{lemma}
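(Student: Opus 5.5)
We prove the two implications separately. In both directions the tool is Lemma \ref{lem6.51}, which turns statements about $\tau$ on survivor sets into statements about the open operator, together with the conformality of $\nu_c$. Throughout, $\tau_\omega=\ind_{J_\omega}f_\omega\,\nu_{\omega,c}$ is a probability measure, so $\nu_{\omega,c}(\ind_{J_\omega}f_\omega)=1$.

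\emph{($\Leftarrow$)} Assume $\opera_\omega f_\omega=c_\omega\lambda_{\omega,c}f_{\theta\omega}$ for every $\omega$. Iterating this identity gives
\[
\opera^n_\omega f_\omega=c_\omega^n\lambda^n_{\omega,c}f_{\theta^n\omega},\qquad c^n_\omega:=\prod_{i=0}^{n-1}c_{\theta^i\omega}.
\]
Take a Borel set $A\subseteq I$. Apply Lemma \ref{lem6.51} with test function $\ind_A$ and $g=f_\omega$, so that the integrand on the right is $(\ind_A\circ T^n_\omega)f_\omega$. This yields
\[
\tau_\omega(K_{\omega,n}\cap T_\omega^{-n}A)=(\lambda^n_{\omega,c})^{-1}\int_{J_{\theta^n\omega}}\ind_A\,\opera^n_\omega f_\omega\,\dd\nu_{\theta^n\omega,c}=c^n_\omega\,\tau_{\theta^n\omega}(A).
\]
Here we use $K_{\omega,n}\subseteq J_\omega$, so the factor $\ind_J$ in $\tau_\omega$ is harmless.

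Taking $A=I$ (or $A=J_{\theta^n\omega}$) gives $\tau_\omega(K_{\omega,n})=c^n_\omega$, since $\tau_{\theta^n\omega}$ is a probability measure carried by $J_{\theta^n\omega}$. Substituting this back gives exactly the conditional invariance identity for all $n\geq1$. The case $n=0$ is trivial because $\tau_\omega(J_\omega)=1$. Absolute continuity $\tau\prec\nu_c$ holds by construction.

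Lemma \ref{lem6.51} is stated for $g\in BV(I)$, whereas here it is applied to indicator test functions $\ind_A$ of arbitrary Borel sets. We handle this either by approximating indicators of intervals by BV functions and passing to all Borel sets via a $\pi$–$\lambda$ argument, or by noting that the proof of Lemma \ref{lem6.51} uses only conformality and therefore works for bounded measurable functions.

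\emph{($\Rightarrow$)} Suppose $\tau$ is conditionally invariant. Set $c_\omega:=\tau_\omega(K_{\omega,1})\in[0,1]$; more precisely we use the $n=1$ identity with $\tau_\omega(K_{\omega,1})$ in the role of $c_\omega$, and we must still show it is positive. The $n=1$ identity combined with Lemma \ref{lem6.51} gives, for every Borel $A$,
\[
\int_A \ind_{J_{\theta\omega}}\opera_\omega f_\omega\,\dd\nu_{\theta\omega,c}=\lambda_{\omega,c}\,c_\omega\int_A\ind_{J_{\theta\omega}} f_{\theta\omega}\,\dd\nu_{\theta\omega,c}.
\]
Hence $\opera_\omega f_\omega=c_\omega\lambda_{\omega,c}f_{\theta\omega}$ holds $\nu_{\theta\omega,c}$-a.e. on $J_{\theta\omega}$.

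Two points remain.

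First, the identity holds only almost everywhere and only on $J_{\theta\omega}$. To get equality of BV functions we interpret $f$ as the canonical representative of $\tau_\omega$'s density. Then $\nu_c$-a.e. equality of the two densities becomes genuine equality, using that $\nu_{\omega,c}$ is atomless with full support on $X_\omega$ (the generator assumption), so a.e. equal BV representatives agree off a countable set. Off $J_{\theta\omega}$ we would replace $f$ by $\ind_J f$, which is the only object $\tau$ sees.

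Second, we need $c_\omega>0$. If $c_\omega=0$, then $\tau_{\omega}(K_{\omega,n})=0$ for all $n\geq1$, so $\opera_\omega f_\omega=0$ $\nu_c$-a.e.; we expect to rule this out using nontriviality of the hole and positivity of $\opera$ on $J$. This positivity step, together with the a.e.-versus-everywhere bookkeeping, is where we expect the main (if modest) difficulty; the rest reduces directly to Lemma \ref{lem6.51}.
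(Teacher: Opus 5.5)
Your route is the paper's own. Its proof transplants Lemma~1.1 of Liverani and Maume-Deschamps through Lemma~\ref{lem6.51}, i.e.\ it uses the duality $\tau_\omega(K_{\omega,n}\cap T_\omega^{-n}A)=(\lambda^n_{\omega,c})^{-1}\int_{J_{\theta^n\omega}}\ind_A\,\opera^n_\omega f_\omega\,\dd\nu_{\theta^n\omega,c}$ in both directions. Your ($\Leftarrow$) argument is fine, including the extension of Lemma~\ref{lem6.51} to Borel indicators.

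The genuine gap is the step you flagged in ($\Rightarrow$). The positivity $c_\omega>0$ does not follow from the hypotheses, so no argument via nontriviality of the hole and positivity of $\opera_\omega$ can produce it. The paper's definition of conditional invariance is in product form, so any $\tau$ with $\tau_\omega(K_{\omega,1})=0$ satisfies it trivially. For $n\geq1$ both sides vanish because $K_{\omega,n}\subseteq K_{\omega,1}$, and $n=0$ is automatic.

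For instance, let $\Omega$ be a single point. Take an interval $U\subseteq J$ with $\nu_c(U)>0$ and $T(U)\subseteq H$ (e.g.\ a full-branched map whose hole is one branch), and set $f:=\ind_U/\nu_c(U)\in BV(I)$. Then $\tau=(\ind_J f)\nu_c$ is conditionally invariant. Yet $\opera f=\opera_c\ind_U/\nu_c(U)$ vanishes on $J$ while $f$ does not, so no $c\in(0,1]$ exists. Nontriviality of the hole and positivity of the operator are exactly what make such $U$ exist.

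Positivity therefore has to be a hypothesis. It is built into the ratio form $\tau(K_1\cap T^{-1}A)/\tau(K_1)=\tau(A)$ of the definition in the deterministic source, which the paper's one-line proof imports silently. You should assume $\tau_\omega(K_{\omega,1})>0$ rather than try to prove it.

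Your bookkeeping off $J_{\theta\omega}$ also goes the wrong way. Testing with $A=H_{\theta\omega}$ shows that conditional invariance says nothing about $f_{\theta\omega}$ on the hole. Replacing $f$ by $\ind_J f$ does not help: $\opera_\omega(\ind_J f)_\omega=\opera_\omega f_\omega$ is in general positive on parts of $H_{\theta\omega}$, where $\ind_{J_{\theta\omega}}f_{\theta\omega}=0$.

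What the argument actually yields is the identity $\nu_{\theta\omega,c}$-a.e.\ on $J_{\theta\omega}$. That is all your ($\Leftarrow$) direction uses, since $\opera_{\theta\omega}$ only sees $f_{\theta\omega}$ on $J_{\theta\omega}$ and, by conformality, preserves $\nu_c$-a.e.\ equality. If you want the identity a.e.\ on all of $X_{\theta\omega}$, redefine $f_{\theta\omega}:=(c_\omega\lambda_{\omega,c})^{-1}\opera_\omega f_\omega$ for every $\omega$. This leaves $\tau$ unchanged, but again needs $c_\omega>0$.

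Finally, the ``canonical representative'' step cannot give pointwise equality. Two $\nu_c$-a.e.\ equal BV functions agree only off a countable set. Even that requires $\nu_{\omega,c}$ to charge every interval, which the generator assumption does not provide. The conclusion of the lemma should be read $\nu_c$-almost everywhere.
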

\begin{proof}
    Changing the corresponding symbols in \cite[Lemma 1.1]{liverani4} and adding the state $\omega$ to these subscripts, together with Lemma \ref{lem6.51}, completes the proof.
\end{proof}
\begin{proposition}[{\cite[Lemma 1.2.5]{atnip3}}]\label{prop6.53}
    The random probability measure $\tau\in\probmeas_{\Omega}(\Omega\times I)$ whose disintegration $\tau_{\omega}:=\ind_{\omega}q_{\omega}\nu_{\omega,c}$ is the unique random conditionally invariant probability measure absolutely continuous with respect to $\nu_c$ and $\supp(\tau)\subseteq J$.
\end{proposition}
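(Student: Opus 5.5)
The plan is to reduce everything to Lemma \ref{lem6.52}: a measure of the form $(\ind_J f)\nu_c$ with $f\in BV_\Omega(I)$ is a random conditionally invariant probability measure absolutely continuous with respect to $\nu_c$ exactly when $\opera_\omega f_\omega=c_\omega\lambda_{\omega,c}f_{\theta\omega}$ for some $c_\omega\in(0,1]$.

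\textbf{Existence.} I take $f=q/\nu_c(\ind_J q)$, i.e. fiberwise $f_\omega:=q_\omega/\nu_{\omega,c}(\ind_\omega q_\omega)$. First I must check the normalizer is positive. By Proposition \ref{prop6.33}, $q_\omega$ is strictly positive on $S(1,\infty)$. Since $F_\omega(q_\omega)=1$ and $\supp(\nu_\omega)\subseteq K_{\omega,\infty}$, the function $q_\omega$ is nonzero on a set that $\nu_{\omega,c}$ sees; this uses the covering/atomless properties of $\nu_{\omega,c}$ from the closed RPF theorem. Hence $\nu_{\omega,c}(\ind_\omega q_\omega)>0$.

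By Proposition \ref{prop6.31}, $\opera_\omega q_\omega=\lambda_\omega q_{\theta\omega}$. Therefore
\[\opera_\omega f_\omega=\frac{\lambda_\omega\,\nu_{\theta\omega,c}(\ind_{\theta\omega}q_{\theta\omega})}{\lambda_{\omega,c}\,\nu_{\omega,c}(\ind_\omega q_\omega)}\,\lambda_{\omega,c}f_{\theta\omega},\]
and I define $c_\omega$ as the prefactor. To see $c_\omega\le 1$, I compute it directly. Conformality of $\nu_c$ gives
\[\nu_{\theta\omega,c}(\ind_{\theta\omega}\opera_\omega q_\omega)\le\nu_{\theta\omega,c}(\opera_{\omega,c}(\ind_\omega q_\omega))=\lambda_{\omega,c}\,\nu_{\omega,c}(\ind_\omega q_\omega).\]
Dividing by $\lambda_{\omega,c}\nu_{\omega,c}(\ind_\omega q_\omega)$ gives $c_\omega=\tau_\omega(K_{\omega,1})\in(0,1]$. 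Lemma \ref{lem6.52} then yields that $\tau$ is a random conditionally invariant probability measure. Support in $J$ is immediate from the factor $\ind_J$, and $\tau\prec\nu_c$ is clear. Measurability in $\omega$ follows from measurability of $q$ and of $\nu_c$ (Proposition \ref{prop6.45}).

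\textbf{Uniqueness.} Let $\tau'=(\ind_J h)\nu_c$ be another such measure; by Radon--Nikodym its density can be taken supported in $J$. Assume first that $h\in BV_\Omega(I)$ is tempered, so $h$ lies in the class $\mathcal{T}^+$. Lemma \ref{lem6.52} gives $\opera_\omega h_\omega=c'_\omega\lambda_{\omega,c}h_{\theta\omega}$, and iterating,
\[\opera^n_{\theta^{-n}\omega}h_{\theta^{-n}\omega}=c'^{\,n}_{\theta^{-n}\omega}\lambda^n_{\theta^{-n}\omega,c}\,h_\omega .\]
Normalizing by $F_\omega$, the left side converges to $q_\omega$ by the construction in Proposition \ref{prop6.31}, since that limit is independent of the function in $\mathcal{T}^+$. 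Hence $h_\omega$ is proportional to $q_\omega$, and the normalization $\tau'_\omega(J_\omega)=1$ forces $\tau'=\tau$. As a consistency check, $c'_\omega\lambda_{\omega,c}=\lambda_\omega$, which is compatible with the escape-rate formula of Proposition \ref{prop6.49}.

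\textbf{Main obstacle.} The hard step is uniqueness when the density is only in $L^1(\nu_c)$ rather than in $BV_\Omega(I)\cap\mathcal{T}$. The direct approach is to restrict to $BV$ densities, since the statement implicitly does through Lemma \ref{lem6.52}. If that is not acceptable, I would approximate $h$ in $L^1$ by $BV$ functions and use contraction of $\normopera$ in the $\nu$-weak sense via Theorem \ref{thm6.41}. The difficulty there is that $\nu_c$ and $\nu$ are different reference measures, and controlling the approximation across them needs the duality of Lemma \ref{lem6.51}.
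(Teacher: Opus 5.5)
Your existence argument follows the route the paper indicates in its closing Remark (apply Lemma~\ref{lem6.52} to $q$), and your version is the more careful one. Because $q_\omega$ is normalized by $\nu_\omega(q_\omega)=1$ rather than by $\nu_{\omega,c}$, the paper's $\ind_\omega q_\omega\nu_{\omega,c}$ has mass $\nu_{\omega,c}(\ind_\omega q_\omega)$, which need not be $1$. The paper's $c_\omega=\lambda_\omega/\lambda_{\omega,c}\le 1$ also rests on a fiberwise inequality it never proves; it only shows $\pres{\varphi}\le\pres{\varphi_c}$. Your normalized $c_\omega$, together with the conformality bound $c_\omega\le 1$, repairs both points. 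Two small fixes are needed. First, make the positivity of the normalizer concrete. Conformality gives $\nu_{\omega,c}(\ind_\omega q_\omega)=\lambda_{\omega,c}^{-1}\lambda_\omega\,\nu_{\theta\omega,c}(q_{\theta\omega})$. Then $\nu_{\theta\omega,c}(q_{\theta\omega})>0$, because $q_{\theta\omega}\in BV$ with $\nu_{\theta\omega}(q_{\theta\omega})=1$ and $\nu_{\theta\omega}$ atomless force $q_{\theta\omega}$ to be bounded below on some non-degenerate interval. That interval is charged by $\nu_{\theta\omega,c}$, by full support coming from the covering hypothesis of \cite{atnip1}. Second, drop the ``consistency check'' $c'_\omega\lambda_{\omega,c}=\lambda_\omega$. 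It contradicts your own formula: the two sides differ by the coboundary factor $\nu_{\theta\omega,c}(\ind_{\theta\omega}q_{\theta\omega})/\nu_{\omega,c}(\ind_\omega q_\omega)$.

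The genuine gap is in uniqueness, and it goes beyond the $L^1$ case you flag.

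\emph{The reduction to $\mathcal T^+$.} The step ``$h$ is tempered, so $h\in\mathcal T^+$'' is unjustified. Temperedness of $\bvnorm{h_\omega}$ gives condition (1) of $\mathcal T$. Condition (2), however, is a subexponential \emph{lower} bound on $F_{\theta^n\omega}(h_{\theta^n\omega})=\nu_{\theta^n\omega}(h_{\theta^n\omega})$, and it is exactly what forces $h\propto q$. If $\nu_\omega(h_\omega)=0$, the relation $\nu_{\theta\omega}(\opera_\omega h_\omega)=\lambda_\omega\nu_\omega(h_\omega)$ says nothing about $c'_\omega\lambda_{\omega,c}$.

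\emph{Such densities exist, even in $BV$.} Take the deterministic doubling map with Lebesgue measure, so $\opera_c f(x)=\frac12\big(f(\frac x2)+f(\frac{x+1}2)\big)$. Let the hole be $H=(\frac12-r,\frac12+r)$ with $0<r<\frac16$, and set $V_n:=2^{-n-1}H\subseteq J$. The function $h:=\sum_{n\ge0}(2\beta)^n\ind_{V_n}$ satisfies $\opera h=\beta h+\frac12\ind_H$. Hence $\ind_J\opera^n h=\beta^n h$, and $\tau':=h\,\dd x/\int h\,\dd x$ obeys $\tau'(K_n\cap T^{-n}A)=\beta^n\tau'(A)$. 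Here $h\in BV$ for every $\beta\in(0,\frac12)$ and $h\in L^1$ for every $\beta\in(0,1)$. Yet $\nu(h)=0$, since $T^{n+1}V_n=H$, whereas $\nu(\ind_J q)=1$.

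\emph{Consequences.} The uniqueness clause cannot hold as written, and the paper's one-line appeal to uniqueness of $\nu_c$ does not address this. Your planned $L^1$-approximation cannot succeed either. Indeed, $\nu_\omega$ lives on $K_{\omega,\infty}$, which is $\nu_{\omega,c}$-null when the escape rate is positive, so $\nu_\omega(h_\omega)$ is not even determined by an $L^1(\nu_c)$ density. What your argument does prove is uniqueness among densities in $BV_\Omega(I)$ satisfying both conditions of $\mathcal T$, and you should state it that way.

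\emph{A technical point when running it.} Lemma~\ref{lem6.52} really yields the eigen-equation only on $J_{\theta\omega}$, $\nu_{\theta\omega,c}$-a.e.; the duality of Lemma~\ref{lem6.51} sees nothing on the hole. Since you took $h$ supported in $J$, first replace $h_{\theta\omega}$ by $(c'_\omega\lambda_{\omega,c})^{-1}\opera_\omega h_\omega$, which defines the same $\tau'$. Then use that two $BV$ functions agreeing $\nu_{\omega,c}$-a.e.\ differ only on a countable set, which the atomless $\nu_\omega$ does not see.
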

\begin{proof}
    For all $\omega\in\Omega$ and all subsets $A_\omega\subseteq I$ with $\nu_{\omega,c}(A_\omega)=0$, we immediately obtain $\tau_{\omega}(\ind_{A_\omega})=0$ and hence $\tau$ is absolutely continuous with respect to $\nu_c$. Since $\nu_c$ is supported in $J$, we also have $\supp(\tau)\subseteq J$. The uniqueness follows from that of the random probability measure $\nu_c$.

    Now we verify $\{\tau_\omega\}_{\omega\in\Omega}$ is random conditionally invariant probability measure. By Lemma \ref{lem6.52}, for $n\geq1$ we have
    \begin{equation*}
        \begin{aligned}
            \tau_{\omega}(K_{\omega,n}\cap T_{\omega}^{-n}(A))&=\int_{I}\ind_{T_{\omega}^{-n}(A)}\ind_{K_{\omega,n}}\,\dd\tau_{\omega} \\
            &=\int_{K_{\omega,n}}(\ind_{A}\circ T_{\omega}^{n})\cdot q_{\omega}\,\dd\nu_{\omega,c} \\
            &=(\lambda_{\omega,c}^{n})^{-1}\int_{J_{\theta^{n}\omega}}\ind_{A}\cdot\opera_{\omega}^{n}q_{\omega}\,\dd\nu_{\omega,c} \\
            &=\frac{\lambda_{\omega}^{n}}{\lambda_{\omega,c}^{n}}\int_{J_{\theta^{n}\omega}\cap A}q_{\theta^{n}\omega}\,\dd\nu_{\theta^{n}\omega} \\
            &=\frac{\lambda_{\omega}^{n}}{\lambda_{\omega,c}^{n}}\int_{J_{\theta^{n}\omega}\cap A}\,\dd\tau_{\theta^{n}\omega} \\
            &=\frac{\lambda_{\omega}^{n}}{\lambda_{\omega,c}^{n}}\tau_{\theta^{n}\omega}(J_{\theta^{n}\omega}\cap A).
        \end{aligned}
    \end{equation*} Let $A=J_{\theta^{n}\omega}$ on both sides of the equation. Then we see \[\tau_{\omega}(K_{\omega,n}\cap T_{\omega}^{-n}(J_{\theta^{n}\omega}))=\frac{\lambda_{\omega}^{n}}{\lambda_{\omega,c}^{n}}\tau_{\theta^{n}\omega}(J_{\theta^{n}\omega}).\] Since $K_{\omega,n}$ is included in $T_{\omega}^{-n}(J_{\theta^{n}\omega})$, and in addition $\tau_{\theta^{n}\omega}(J_{\theta^{n}\omega})=1$ because the measure $\tau_\omega$ is supported in $J_\omega$, we obtain \[\tau_{\omega}(K_{\omega,n})=\frac{\lambda_{\omega}^{n}}{\lambda_{\omega,c}^{n}}.\] The right side can be denoted by \[c_{\omega}^{n}=\prod_{i=0}^{n-1}\frac{\lambda_{\theta^{i}\omega}}{\lambda_{\theta^{i}\omega,c}}\in(0,1],\] since $0<\lambda_{\theta^{i}\omega}\leq\lambda_{\theta^{i}\omega,c}$ for $0\leq i\leq n-1$. Therefore, by Lemma \ref{lem6.52} we deduce that $\tau:=\{\tau_\omega\}_{\omega\in\Omega}$ is a random conditionally invariant measure.
\end{proof}
\begin{remark}
    Indeed, we recall $\opera_{\omega}q_{\omega}=\lambda_{\omega}q_{\theta\omega}$ from Proposition \ref{prop6.31}; rewriting it as $\opera_{\omega}q_{\omega}=(\lambda_{\omega,c}^{-1}\lambda_{\omega})\lambda_{\omega,c}q_{\theta\omega}$ and using Lemma \ref{lem6.52} finishes the proof immediately.
\end{remark}
Summarizing all results in this section, we finish the proof of the third main theorem.
\begin{proof}[Proof of Theorem \ref{main3}]
    The calculation of the escape rate $R_{\nu_{\omega,c}}(H)$ is directly from Proposition \ref{prop6.49}. The statement of the random conditionally invariant probability measure is collected from Proposition \ref{prop6.53}.
\end{proof}
\addtocontents{toc}{\protect\setcounter{tocdepth}{-1}}
\subsection*{Acknowledgements}
\addtocontents{toc}{\protect\setcounter{tocdepth}{1}}
The author would like to thank Associate Professor Johannes Jaerisch, for his guidance and encouragement throughout the draft of the paper. The author would also like to thank all other members at the `Ergodic Theory and Fractal Geometry for Conformal Dynamical Systems' seminar for many enthusiastic discussions and for providing a supportive research environment.
\bibliography{reference}
\bibliographystyle{alpha}
\end{document}